\documentclass{article} % For LaTeX2e
\usepackage{iclr2026_conference,times}

\newcommand{\p}{\mathbb{P}}

\newcommand{\R}{\mathbb{R}}

\newcommand{\Ex}{\mathbb{E}}
\usepackage{outlines}

\usepackage{amsmath}
\usepackage{amsfonts}
\usepackage{amssymb}
\usepackage{amsthm}
\usepackage{bbm}
\usepackage{natbib}
\usepackage{enumitem}
\usepackage[utf8]{inputenc} % allow utf-8 input
\usepackage[T1]{fontenc}    % use 8-bit T1 fonts
\usepackage{hyperref}       % hyperlinks
\usepackage{cleveref}
\usepackage{url}            % simple URL typesetting
\usepackage{booktabs}       % professional-quality tables
\usepackage{kantlipsum, lipsum}
\usepackage{xcolor}
\definecolor{darkblue}{rgb}{0, 0, 0.5}
\hypersetup{colorlinks=true, citecolor=darkblue, linkcolor=darkblue, urlcolor=darkblue}
\usepackage{etoc}
\usepackage{dsfont}
\usepackage{epigraph} 
\usepackage{etoc}
\usepackage{tikz}
\usepackage{pgfplots}
\usepackage{subcaption}
\renewcommand{\cite}{\citep}

\usepackage{aliascnt}

\usepackage[most,skins,theorems]{tcolorbox}

\DeclareMathOperator{\Var}{Var}

\DeclareMathOperator{\Cov}{Cov}
\DeclareMathOperator{\multiPPI}{MultiPPI}
\DeclareMathOperator{\Corr}{Corr}
\DeclareMathOperator{\supp}{supp}
\DeclareMathOperator{\range}{range}
\DeclareMathOperator{\tr}{tr}

\newtheorem{theorem}{Theorem}
\newaliascnt{lemma}{theorem}
\newaliascnt{proposition}{theorem}
\newaliascnt{corollary}{theorem}
\newtheorem{lemma}[lemma]{Lemma}
\aliascntresetthe{lemma}

\newtheorem{corollary}[corollary]{Corollary}
\newtheorem{proposition}[proposition]{Proposition}
\newtheorem{remark}[theorem]{Remark}
\newtheorem{definition}[theorem]{Definition}

\newtheorem{question}{Question}
\newcommand{\I}{\mathbbm{1}}
\newcommand{\wh}[1]{\widehat{#1}}
\newcommand{\ul}[1]{\underline{#1}}
\DeclareMathOperator{\argmin}{argmin}
\DeclareMathOperator{\classical}{classical}

\aliascntresetthe{proposition}
\aliascntresetthe{corollary}

\crefname{lemma}{lemma}{lemmas}
\Crefname{lemma}{Lemma}{Lemmas}

\tcbset{
  aibox/.style={
    width=\linewidth,
    top=6pt,
    bottom=0pt,
    colback=blue!6!white,
    colframe=black,
    colbacktitle=black,
    enhanced,
    center,
    attach boxed title to top left={yshift=-0.1in,xshift=0.15in},
    boxed title style={boxrule=0pt,colframe=white,},
  }
}
\newtcolorbox{AIbox}[2][]{aibox,title=#2,#1}

\tcbuselibrary{listings, breakable}
\tcbset{
    graybox/.style={
        colback=gray!20,  % Background color (light gray)
        colframe=black,   % Border color (black)
        boxrule=1pt,      % Border thickness
        arc=4mm,          % Box corner rounding
        width=\textwidth, % Box width to match the text width
        before=\vspace{10pt}, % Space before the box
        after=\vspace{10pt},  % Space after the box
        center             % Center the box
    }
}

\iclrfinalcopy

\usepackage{hyperref}
\usepackage{url}

\title{Multiple-Prediction-Powered Inference}

\author{Charlie Cowen-Breen$^{1\dagger}$ \hfill \quad Alekh Agarwal$^2$ \quad \hfill Stephen Bates$^1$ \AND  William W. Cohen$^3$ \qquad Jacob Eisenstein$^3$ \qquad Amir Globerson$^2$ \qquad  Adam Fisch$^3$ \\\\ $~^{1}$Massachusetts Institute of Technology, $~^2$Google Research, $~^3$Google DeepMind\\ $~^\dagger$Work done during internship at Google DeepMind.\\
\href{mailto:ccbreen@mit.edu}{\texttt{ccbreen@mit.edu}}, \href{mailto:fisch@google.com}{\texttt{fisch@google.com}}. 
}

\begin{document}

\maketitle

\begin{abstract}

Statistical estimation often involves tradeoffs between expensive, high-quality measurements and a variety of lower-quality proxies. We introduce Multiple-Prediction-Powered Inference (MultiPPI): a general framework for constructing statistically efficient estimates by optimally allocating resources across these diverse data sources. This work provides theoretical guarantees about the minimax optimality, finite-sample performance, and asymptotic normality of the MultiPPI estimator. Through experiments across three diverse large language model (LLM) evaluation scenarios, we show that MultiPPI consistently achieves lower estimation error than existing baselines. This advantage stems from its budget-adaptive allocation strategy, which strategically combines subsets of models by learning their complex cost and correlation structures.\looseness=-1

% \begin{center}\textbf{Version 2:}\end{center}
% A core challenge in modern AI model development is obtaining high-quality evaluation metrics in a cost-effective way. Such evaluation often involves tradeoffs between expensive, high-quality measurements and a variety of lower-quality proxies. We introduce Multiple-Prediction-Powered Inference (MultiPPI), a general framework for constructing statistically efficient estimates by optimally allocating resources across these diverse data sources. We provide theoretical guarantees about the minimax optimality, finite-sample performance, and asymptotic normality of the MultiPPI estimator. Through experiments across three diverse large language model (LLM) evaluation scenarios, we show that MultiPPI consistently achieves lower estimation error than existing baselines. This advantage stems from its budget-adaptive allocation strategy, which strategically combines subsets of models by learning their complex cost and correlation structures.

% and strategically incorporates more expensive, high-quality predictors as the budget increases.
\end{abstract}

\section{Introduction}
\label{sec:intro}

Efficiently estimating expectations of random variables under a fixed budget is a fundamental problem in many scientific settings. This paper addresses the general problem of optimally estimating such expectations when each random variable has an associated sampling cost, subject to a fixed total budget constraint. We are specifically motivated by the challenge presented by AI model evaluation, which is a critical, but often resource-intensive, step in model development and maintenance. %This paper focuses on the common scenario of choosing between a high-quality, but expensive, measurement process and \textbf{various}  cheaper, but lower-quality, proxies. 

More concretely, in the AI model evaluation setting, a variable $X_1$ might represent a high-quality but expensive metric computed for every model response to an input query, such as a score from a human annotator or a powerful proprietary model used as an "autorater". The remaining variables, $X_2, \ldots, X_k$,  might represent cheaper evaluation options (e.g., scores from smaller autoraters or rule-based systems), which can be viewed as covariates or proxies for the true score. Given the option to obtain samples of $X_1, \ldots, X_k$ (either jointly or independently), the primary objective is  often to then estimate the mean of the high-quality score, $\Ex[X_1]$. In other cases, we may be interested in the mean difference between two scores, say, $\Ex[X_1-X_2]$. The core difficulty in each case is in determining \emph{which} of these variables to query, how \emph{many times} to query them, and then finally how to \emph{combine} them together to produce a statistically efficient, consistent estimate of the ground truth.\looseness=-1
%Other objectives are also possible, such as estimating the difference between two models, $\Ex[X_1 - X_2]$\alekh{By two models here, we mean two autoraters? It is not two LLMs based on our notation here, if that i what we wanted to say.}

To formalize this, let $X := (X_1, \ldots, X_k)$ be a set of random variables with finite variance. We then consider the general problem of efficiently estimating any linear function of the mean of $X$ subject to a total observation budget $B$. That is, for some $a \in \mathbb{R}^k$, we want to estimate $\theta^* = a^\top \Ex [X]$ while spending no more than a total budget $B$ on collecting subsets of joint random variables $X_I = \{X_i\}_{i\in I}$ at cost $c_I$ for index subsets $I \subseteq \{1, \ldots, k\}$. More precisely, if $n_I$ is the number of times the subset $X_I$ is observed, we require that the $n_I$ satisfy a system of linear budget constraints of the form $\sum_{I} c_I n_I \leq B$, where the sum is over all such collected subsets $I$.\looseness=-1 %In fact, we further generalize this constraint to the case of satisfying multiple budget inequalities simultaneously, $\{B_1, \ldots, B_b\}$, where $B_i$ might represent an additional constraint such as a limit on the total number of observations of set $I$ (e.g., if we have a limit on the number of available prompts).\looseness=-1

Estimating linear functions of $\Ex[X]$ allows for flexibility in how $\theta^*$ is defined. Given the AI evaluation setting above, for example, measuring $\Ex[X_1]$ corresponds to  $a=(1,0,\ldots,0)$, while measuring  $\Ex[X_1 - X_2]$ corresponds to  $a=(1,{-}1,0,\ldots,0)$. 
The flexibility to observe subsets of $X$ also introduces a key trade-off that is unique with respect to previous related approaches to estimation. As we will show, observing variables jointly can be advantageous by reducing overall estimation variance. This benefit, however, must be weighed against the data acquisition costs, $c_I$. We make no assumptions about the structure of these costs (e.g., they may be non-additive over the components in $I$). For instance, in our AI evaluation setting, obtaining predictions from multiple autoraters can often be parallelized, so the cost of multiple predictions (in latency) is not significantly more than that of the single slowest one. This is not always true; in medical diagnostics, for example, ordering many tests may become too taxing for a patient, and therefore undesireable or impossible to do jointly.\looseness=-1

% Being able to observe general subsets of $X$ also adds flexibility.
% As we will show, observing variables jointly can be statistically advantageous, as it can reduce overall estimation variance. This benefit, however, must be weighed against the  cost of data acquisition. Furthermore, with respect to the data acquisition costs $c_I$: we do not make any assumptions on their structure (e.g., they may be non-additive over components indexed in the subset $I$). For example, in our AI model evaluation setting, obtaining predictions from multiple autoraters is often something that can be parallelized---so that the cost of obtaining multiple predictions is not significantly more expensive (at least in terms of latency) than obtaining the single slowest prediction. That said, this is not always the case. In medical diagnostics, for example, ordering too many tests may be taxing for a patient after a certain point and undesireable or impossible to do jointly.\looseness=-1

To solve this cost-optimal, multi-variate estimation problem,  we introduce the Multiple-Prediction-Powered Inference (MultiPPI) estimator, which is a cost-aware generalization of the Efficient Prediction-Powered Inference (PPI{++}) estimator of \citet{angelopoulos2023ppi++}, and extends it to \textbf{optimally leverage multiple types of predictions to power inference}. The MultiPPI estimator constructs a low-variance, consistent estimate of $\theta^\star$ by combining observations from judiciously chosen subsets of $X$. The core of our method is an optimization procedure that jointly determines the number of samples $n_I$ to draw from each subset $I$ and the corresponding linear weights $\lambda_I$ used to form the final estimate. We demonstrate that this allocation problem can be formulated as a second-order cone program (SOCP) for a single budget constraint, and a semidefinite program (SDP) for multiple budget constraints, and thus solved efficiently using standard techniques.\looseness=-1

Theoretically, we show that the MultiPPI estimator is minimax optimal when the joint covariance matrix, ${\Sigma = \Cov(X)}$, is known. For the typical case where it is unknown, however, we provide a  framework for  integrating an initial estimation phase where an approximation of the required covariance matrix, $\widehat{\Sigma}$, can be derived from either a small "burn-in" sample or a pre-existing labeled "transfer" dataset (a common scenario in applied settings)---and  provide finite-sample bounds on the performance degradation that is incurred by substituting $\widehat{\Sigma}$ for  $\Sigma$. Finally,  we empirically demonstrate the effectiveness of this approach across three diverse LLM evaluation settings, including choosing between autoraters of different sizes, autoraters with different test-time reasoning configurations, and complex multi-autorater-debate scenarios. In all cases, our method achieves lower mean-squared error and tighter confidence intervals for a given annotation budget than existing baselines. We demonstrate that MultiPPI achieves this by automatically tailoring its strategy to the available budget $B$: that is,  it learns to rely primarily on the cheaper autoraters when the budget is small, and naturally begins to incorporate more expensive, better autoraters as the budget increases. Taken together, our work provides a principled and computationally tractable framework for cost-effective, model-aided statistical inference, in settings with complex cost-versus-performance tradeoffs.\looseness=-1

In summary, our main contributions are as follows:
\begin{itemize}[leftmargin=*]
\item We introduce the \textbf{MultiPPI estimator} and frame the problem of finding the optimal subset sampling strategy and estimator weights as an efficient second-order cone program (SOCP).
\item We prove that the MultiPPI estimator is \textbf{minimax optimal} when the covariance matrix $\Sigma$ of $X_1, \ldots, X_k$ is known, and provide finite-sample performance guarantees for the practical setting where the covariance matrix must first be estimated as a part of the overall inference problem.\looseness=-1
\item We demonstrate MultiPPI’s applicability across multiple  LLM evaluation settings, and show how it can effectively combine signals from different model sizes, reasoning configurations, and multi-agent debates to achieve \textbf{lower error and tighter confidence intervals} for a given budget.\looseness=-1
\end{itemize}

\section{Related work}
% \amirg{there are works on regression under a budget on measurements, such as \url{https://arxiv.org/pdf/1108.4559} and \url{https://www.cs.huji.ac.il/~shais/papers/2011_JMLR_CesShalSham.pdf}. Would be good to mention, and to also discuss active learning in our setting.}
%\amirg{At some point we should emphasize that our setting is not regression from $X_2,\ldots,X_n$ to $X_1$. Presumably one could view our problem via that lens and use standard regression (with partial observation). Maybe need to say what that would give.}

Our work builds upon  Prediction-Powered Inference~\cite[PPI;][]{angelopoulos2023prediction}, a statistical framework for efficiently estimating population-level quantities by augmenting a small set of labeled data with predictions from a machine learning (ML) model. We specifically build on PPI{++}, the efficient extension of PPI introduced in \citet{angelopoulos2023ppi++}, which also further improves variance by optimally reweighting these predictions. We describe PPI in greater depth in \Cref{sec:preliminaries}.

%that also allows for learning an optimal reweighting of the machine learning model's predictions to improve estimator variance.

PPI is part of a broader class of statistical methods that leverage ML predictions for estimation. Its principles connect to classical control variates and difference estimators~\cite{Ripley87, model1992sarndal, chaganty-etal-2018-price}, which reduce  variance by subtracting a correlated random variable with a known mean; the correlated variable in PPI is the ML prediction, whose mean can be (cheaply) estimated on unlabeled data. This approach also shares theoretical foundations with modern semi-parametric  inference, particularly methods from the causal inference literature like Augmented Inverse Propensity Weighting~\cite[AIPW;][]{aipw_robins}, Targeted Maximum Likelihood Estimation~\cite[TMLE;][]{vanderLaan2006tmle}, and double  machine learning~\cite[DML;][]{chernozhukov2018dml}. Recently, PPI has been applied to  Generative AI evaluation, where human annotations (or more generally, annotations from some trusted source) are combined with cheaper "autorater" outputs for efficient, unbiased estimates of model performance~\cite{boyeau2024autoeval, chatzi2024ppirank, fisch2024stratified,angelopoulos2025costoptimalactiveaimodel, saad-falcon-etal-2024-ares, demirel2024prediction}.\looseness=-1

Existing PPI frameworks, however, assume either a single predictor~\cite{angelopoulos2023prediction, angelopoulos2023ppi++} or a fixed set of predictors queried together~\cite{miao2024assumptionleandataadaptivepostpredictioninference}. We address the common scenario where multiple predictors (e.g., autoraters) with different cost-performance profiles are available. This introduces a complex budget allocation problem: determining which  predictors to query (individually, jointly, or in any joint subset), how often to query them, and how to combine the measurements they provide for a minimum-variance estimate under a fixed budget. Our work partially generalizes \citet{angelopoulos2025costoptimalactiveaimodel}, which optimizes a sampling policy for a single predictor. Unlike that work, however, which focuses on input-conditional policies and expected budget constraints, we find a fixed allocation policy that always satisfies a hard budget constraint for every run.\looseness=-1

% Existing PPI frameworks, however, typically operate in a setting with a single predictor~\cite{angelopoulos2023prediction, angelopoulos2023ppi++} or a fixed, vector-valued set of predictors~\cite{miao2024assumptionleandataadaptivepostpredictioninference} that are always queried together. This leaves open the problem of optimal budget allocation in the common scenario where a practitioner has access to a wide variety of potential autoraters, each with a different cost-performance profile---which is the problem we study here. This introduces a complex resource allocation problem: one must determine not only the sampling ratios, but also which predictors to query, how often to query them (both individually and jointly), and how to combine the resulting measurements to form a minimum-variance estimate, all subject to a total budget constraint. In particular, this problem is also a partial generalization of the one studied in the related, recent work of \citet{angelopoulos2025costoptimalactiveaimodel}, which considers optimizing the sampling policy for a single predictor under cost constraints (though in this current work, we leave the derivation of sampling policies that are conditional on individual inputs to future study). Furthermore, \citet{angelopoulos2025costoptimalactiveaimodel} optimize for constraints that are required to hold only in expectation, whereas the budget constraints that we consider hold for every run.\looseness=-1

Our allocation problem is also related to budgeted regression with partially observed features~\cite{JMLR:v12:cesa-bianchi11a, hazan2012linear} and active learning or testing~\cite{settles2009active, activetesting, zhang2015active}. We emphasize, however, that our goal is estimation of a linear function of a population mean (i.e., $a^\top\Ex[X]$), and not regression (e.g., predicting $X_1$ from $X_2, \ldots, X_k$). While related, standard approaches to regression, including with partial observations, optimize for sample-wise predictive accuracy rather than for predictive accuracy of a population-level quantity. Our problem also connects to multi-armed bandit allocation for adaptive Monte Carlo estimation~\cite{pmlr-v32-neufeld14}. A key difference is that these frameworks often use sequential, input-dependent policies to minimize regret, making it difficult to derive valid confidence intervals (CIs). Our framework, in contrast, computes a fixed allocation policy over predictive models (not individual inputs as in active learning or testing) and guarantees unbiased estimates with valid CIs. Even more broadly, our work shares similar high-level goals with transfer learning and domain adaptation~\cite[][\emph{inter alia}]{Pan2010ASO, bendavid2010theory}---i.e., leveraging signals of varying quality and potential bias---though the statistical techniques are distinct.\looseness=-1

\section{Preliminaries}
\label{sec:preliminaries}

In the following section, we introduce the general estimation problem of interest and summarize existing approaches. Suppose that we are interested in the mean of a random variable $X_1$, which is dependent upon another random variable $X_2$ (corresponding to estimating $a^\top \Ex[X]$ for $a = (1, 0)$ as described in \S\ref{sec:intro}). For example, in the AI model evaluation setting, $X_2$ may be an autorater's score for a model output to a user's query, and $X_1$ may be the ground truth quality of the response as measured by an expert human annotator. Suppose we have access to a small number ($n$) of i.i.d. samples that contain labels from both the target rater ($X_1$) and autorater ($X_2$), and a large number ($N$) of i.i.d. samples that contain only the autorater predictions ($\tilde{X}_2$). A na\"ive approach to estimating the mean is to simply take the sample average of $X_1$ and ignore  $X_2$ entirely, which we denote by $\hat{\theta}_{\text{classic}} = \frac{1}{n}\sum_{j=1}^n X_1^{(j)}$. When the prediction $X_2$ is correlated with $X_1$ and easy to query, however, it is natural to consider the "prediction-powered" PPI estimator~\cite{angelopoulos2023prediction, angelopoulos2023ppi++}:\looseness=-1
\begin{equation}
\hat{\theta}_{\text{PPI}} = \frac{1}{n}\sum_{j=1}^n \left(X_1^{(j)}-X_2^{(j)}\right)
%_{\substack{\text{Variance }\frac{1}{n}\sigma^2_{X_1-X_2}\ll \frac{1}{n}\sigma^2_{X_1} \\ \text{if $X_1\approx X_2$ in $L^2$}}} + \underbrace{
+\frac{1}{N}\sum_{j=1}^{N}\tilde{X}_2^{(j)}
%}_{\text{Variance }\mathcal{O}(\frac{1}{N})}.
\end{equation}
%
%\jacob{the underbrace states that $\text{Variance }\frac{1}{n}\sigma^2_{X_1-X_2}\ll \frac{1}{n}\sigma^2_{X_1}$; we should acknowledge that this depends on $X_1$ and $X_2$ rather than imply that we think it is generally true.}\charlie{Good point, I suppose this is done in the subsequent text, but let me amend the equation}
When we can afford to take $N$ to be very large, it is clear that the variance of $\hat{\theta}_{\text{PPI}}$ is much smaller than that of $\hat{\theta}_{\text{classic}}$ provided that our model predictions $X_2$ are close to $X_1$ in mean-squared error. When that fails, \citet{angelopoulos2023ppi++} propose adding a linear fit of the form:
\begin{equation}\label{eqn:ppi++}
    \hat{\theta}_{\text{PPI{++}}} = \frac{1}{n}\sum_{j=1}^n \left(X_1^{(j)}-\lambda X_2^{(j)}\right) + \frac{1}{N}\sum_{j=1}^N\lambda\tilde{X}_2^{(j)}.
\end{equation}
The parameter $\lambda$ may be chosen to minimize the variance of $\hat{\theta}_{\text{PPI$++$}}$ based on the observed labeled data. This strategy yields an estimator which asymptotically improves on $\hat{\theta}_{\text{classic}}$ and $\hat{\theta}_{\text{PPI}}$ in the limit that $n\to\infty$ and $N\gg n$. Toward the setting where $n$ and $N$ may be comparable in size, if one is able to choose to or not to request a label $X_1^{(j)}$ for every observed unlabeled point $X_2^{(j)},$ a modification of $\hat{\theta}_{\text{PPI{++}}}$ allows one to do so in a cost-optimal way \cite{angelopoulos2025costoptimalactiveaimodel}. %In each case, knowledge of the \textit{covariance} matrix of $(X_1, X_2)$ is enough to determine the optimal strategy.

\subsection{Multiple predictive models}
\label{sec:multiple_models}
How should one adapt the preceding setting when one has access to many predictions, rather than just $X_2$? One option is to \textit{stack} all predictions into a vector $X_{2:k}:=(X_2,\dots,X_k)$ and choose $\lambda\in\R^{k-1}$ to be a vector in $\hat{\theta}_{\text{PPI++}}$; this is the estimator proposed by \citet{miao2024assumptionleandataadaptivepostpredictioninference}, and can be written\looseness=-1
\begin{equation}\label{eqn:ppi++vector}
    \hat{\theta}_{\text{PPI++ vector}} = \frac{1}{n}\sum_{j=1}^n \left(X_1^{(j)} - \lambda^\top X_{2:k}\right) + \frac{1}{N}\sum_{j=1}^N \lambda^\top \tilde{X}_{2:k}^{(j)}
\end{equation}
But this approach is suboptimal when (as is  becoming standard) the best models may be available only for the highest prices: if any of $X_2,\dots,X_k$ is expensive to obtain, our ability to sample $X_{2:k}$ will be limited. This yields suboptimal results, as we show in \S\ref{sec:results}. One may instead decide to perform PPI with just one model $X_{i}$, for whichever $i \neq 1$ has the best cost/accuracy tradeoff---but it is not clear \textit{a priori} which one this is, or how much worse it may be compared to some combination of a cost-effective subset of $X$.
%\amirg{If you have the covariance matrix this is probably easy to do, so not sure it should be mentioned as the weakness. Isn't it more that one model is suboptimal to multiple?}\charlie{Good point. I suppose I meant that, to a practitioner familiar with PPI++, it still might not be clear which model is best. But this point is not very important.}
Alternatively, perhaps it is  possible for cheaper models be used to recursively estimate the means of more expensive models, thus creating a \textit{PPI{++} cascade}: for instance, if $k=3$ and $(X_1,X_2,X_3)$ are in decreasing order of cost, we might consider\looseness=-1
\begin{equation}\label{eqn:cascade}
    \hat{\theta}_{\text{PPI++ cascade}} = \frac{1}{n}\sum_{j=1}^n \left(X_1^{(j)} - \lambda X_2^{(j)}\right) + \frac{1}{N}\sum_{j=1}^N\left(\lambda \tilde{X}_2^{(j)}-\lambda'\tilde{X}_3^{(j)}\right) + \frac{1}{M}\sum_{j=1}^M\lambda'\tilde{\tilde{X}}_3^{(j)}
\end{equation}
%
% \amirg{not clear if you are presenting this as how it related to what we do.}\jacob{not crucial for this version, but it might be nice for motivation to actually write this out as you've done in the slides} \looseness=-1
%
Each of these strategies can be realized as possible instances of the MultiPPI estimator we propose in the next section. Rather than coarsely limiting ourselves to sampling $X_{2:k}=(X_2,\dots,X_k)$ together, we allow the flexibility of sampling $X_I$ for generic index subsets $I\subseteq\{1,\dots,k\}$. %\textcolor{red}{We allow sampling of $X_I$ for $I$ lying in a predetermined family of index subsets $\mathcal{I}\subseteq 2^k$; this family is chosen to be large enough to afford the strategies described above, but not so large that our computational procedure (see \autoref{sec:computation}) is intractable or our finite-sample convergence is too slow (see \Cref{cor:finite}).

\section{Multiple-prediction-powered inference (MultiPPI)}\label{sec:methods}
As \Cref{sec:multiple_models} highlights, it is not obvious how to best allocate a budget across a diverse suite of predictive models, where each model has its own cost and performance tradeoffs. We begin by defining the class of permissible estimators: We require that the number of times, $n_I$, that $X_I$ is sampled satisfies a linear budget constraint, specified by a set of non-negative costs $c_I\geq0$ and total budget $B\geq0$, for each index subset $I\subseteq\{1,\dots,k\}$.\footnote{In \Cref{app:generalized}, we extend the methodology to multiple budget constraints.}
\begin{definition}
\label{def:budget}
    An estimator $\hat{\theta}$ is \textbf{budget-satisfying} if it a measurable function of $n_I$ i.i.d. samples of $X_I$, for each $I\subseteq\{1,\dots,k\}$, such that $\sum_{I} n_Ic_I \leq B$.
\end{definition}
To develop a principled search for the best budget-satisfying estimator, we begin by asking a simple question under idealized conditions:
\begin{question}
\label{eqn:minimax}
    If the  covariance matrix, $\Sigma = \Cov(X)$, is exactly  known, what is the minimax optimal, budget-satisfying estimator of $\theta^* = a^\top\mu$ with respect to the mean-squared error, $\Ex[(\hat{\theta} - \theta^*)^2]$?
\end{question}
The answer to \Cref{eqn:minimax} will provide us with a set of allocations $n_I$ and a corresponding budget-satisying estimator $\hat{\theta}_{\multiPPI}$ which we will evaluate on the $n_I$ samples of $X_I$, for each $I$. Once we have addressed this question, we address the case of unknown $\Sigma$ by describing strategies depending on the empirical covariance matrix $\wh{\Sigma}$, which may be estimated from data.

It turns out, perhaps surprisingly, that \Cref{eqn:minimax} reduces to the following tractable alternative:
\begin{question}
\label{eqn:linear}
If the  covariance matrix, $\Sigma = \Cov(X)$, is exactly  known, what is the minimum variance, \underline{linear}, \underline{unbiased} budget-satisfying estimator of $\theta^*= a^\top\mu$?
\end{question}
We demonstrate the equivalence of \Cref{eqn:minimax} and \Cref{eqn:linear} in \Cref{thm:minimax}. For now, the "oracle" assumption on knowing the covariance matrix $\Sigma$  allows us to isolate the resource allocation problem from the separate challenge of estimating how closely related  $(X_1, \ldots, X_k)$ are to begin with, and to analyze what a good procedure for leveraging multiple predictive models under cost constraints should look like in theory. All proofs of our theoretical results are deferred to Appendix~\ref{app:proofs}.

%Furthermore, while the current restriction on $\hat{\theta}$ to the class of linear unbiased estimators may seem strong, we will later show that the resulting best such estimator of this kind is in fact minimax optimal among \emph{all} possible estimators that satisfy our observation budget constraint $B$. 

% Under this assumption, we derive a minimax-optimal, unbiased, and consistent estimator for our general task: estimating a linear function $a^\top\mu$ of the mean $\mu = \Ex[X]$ for a collection of random variables $X = (X_1, \ldots, X_k)$ subject to a budget on querying subsets of these variables.

\subsection{MultiPPI($\Sigma$): A minimax optimal algorithm}
Recalling notation from \Cref{sec:intro}, let $X\in\R^k$ denote a random vector of finite second moment with distribution $\p$. Let $\mathcal{I} \subseteq 2^{\{1, \ldots, k\}}$ denote a collection of index subsets which may be queried, and for any $I \in \mathcal{I}$, let  $X_I = \{X_i\}_{i \in I}$ be the corresponding subset of $X$. Next, let $\underline{n} = \{n_I\}_{I \in \mathcal{I}}$, $n_I \in \mathbb{N}$ be an allocation of sample sizes, where $n_I$ i.i.d. samples are drawn for each subset $I$, and let $\underline{\lambda} = \{\lambda_I\}_{I \in \mathcal{I}}$, $\lambda_I \in \mathbb{R}^{|I|}$ define a corresponding set of weighting vectors for each subset  $I$. Finally, let $\hat{\theta}(\underline{n}, \underline{\lambda})$ denote the weighted sum of sample means from each non-empty subset, i.e.,\looseness=-1
\begin{equation}\label{eq:theta-n-lambda}
    \hat{\theta}(\underline{n}, \underline{\lambda}) = \sum_{I:n_I>0} \frac{1}{n_I}\sum_{j=1}^{n_I} \lambda_I^\top X_I^{(j)}.
\end{equation}
The MultiPPI estimator, $\hat{\theta}_{\text{MultiPPI}}$, is then defined as the optimal estimator in this class that minimizes the MSE subject to our unbiasedness (\textbf{U}) and budget (\textbf{B}) constraints:
\begin{equation}
\label{eq:multippi_opt}
    \hat{\theta}_{\text{MultiPPI}} = \underset{\hat{\theta}(\underline{n}, \underline{\lambda})}{\argmin} ~\Ex\left[\Big(\hat{\theta}(\underline{n}, \underline{\lambda}) - \theta^*\Big)^2\right] \quad \text{s.t.} \quad \text{
    \textbf{U} and \textbf{B} hold}, 
\end{equation}
where the constraints \textbf{U}  and \textbf{B} are
\begin{align*}
    \textbf{U}\iff \Ex[ \hat{\theta}(\ul{n},\ul{\lambda})] = \theta^*\text{ for all $\p$ of finite second moment} \quad\quad\text{and} \quad\quad \textbf{B}\iff\sum_{I}n_Ic_I\leq B.
\end{align*}
It can be shown that \textbf{U} reduces to a linear constraint on $\ul{\lambda}$, which makes our optimization convenient.

As previously discussed, the estimators of \Cref{eqn:ppi++vector} and \Cref{eqn:cascade} can be viewed as special cases of this setup. For instance, \Cref{eqn:ppi++vector} corresponds to imposing the additional restriction that $\lambda_I=0$ for all $I\in 2^{\{1,\dots,k\}}$ except for $I=\{1,\dots,k\}$ and $I=\{2,\dots,k\}$; \Cref{eqn:cascade} corresponds to the additional restriction that $\lambda_I=0$ for all $I$ except for $\{1,2\}$, $\{2,3\}$ and $\{3\}$.\looseness=-1

\subsubsection{Optimization}
Solving \Cref{eq:multippi_opt} is, in general, non-trivial. Since $\hat{\theta}(\ul{n},\ul{\lambda})$ is linear in $X$, it can be shown that the optimal $(\ul{n},\ul{\lambda})$ depend only on the covariance matrix $\Sigma$ of $X$, and so we will denote by $\hat{\theta}_{\multiPPI(\Sigma)}$ the solution to \Cref{eq:multippi_opt} given any distribution such that $\Sigma=\Cov(X)$. Then, it can be further shown (this follows from \Cref{thm:minimax}, presented next) that the MSE of $\hat{\theta}_{\multiPPI(\Sigma)}$ is
\begin{equation}
    \label{eq:variance}
    \mathcal{V}_B=\underset{\substack{\ul{n}\,:\,\text{\textbf{B} holds} \\ \supp(a)\subseteq\bigcup\{I:n_I>0\}}}{\min} a^\top S(\ul{n}) a,\quad\quad S(\ul{n})=\left(\sum_{I\in\mathcal{I}} n_I \Sigma_I^\dagger\right)^\dagger
\end{equation}
where $\Sigma_I$ denotes the principle submatrix of $\Sigma$ on $I$, embedded back into $\R^{k\times k}$, and $\dagger$ denotes the Moore-Penrose pseudo-inverse.\footnote{More formally, if $P_I\in\R^{k\times k}$ denotes the orthogonal projection onto $\text{span}(I)\subseteq\R^k$, we define $\Sigma_I=P_I\Sigma P_I^\top$, and so $\Sigma_I^\dagger := (P_I\Sigma P_I^\top)^\dagger$.}
The minimizing $\ul{n}$ of the above expression then also determines the optimal $\lambda_I$ to be the restriction of $n_I\Sigma_I^\dagger S(\ul{n})a$ to the coordinates $I$. If the integrality constraints on $n_I$ are relaxed, we show in the appendix that this reduces to a second-order cone problem in the case of a single budget constraint, and a semi-definite program in the case of multiple budget constraints. This allows for  \Cref{eq:variance} to be solved efficiently using standard techniques (\Cref{sec:computation}).\looseness=-1 %, as we show in the appendix, and provide justification for the integral relaxation.

\subsubsection{Minimax optimality}
The minimal MSE $\mathcal{V}_B$ shown in \Cref{eq:variance} has a more fundamental characterization. Here we show that it is in fact the minimax optimal MSE achievable by \textbf{any} budget-satisfying estimator, taken over the set of distributions $\mathcal{P}$ of covariance $\Sigma$. Consequently, the estimator defined by $\hat{\theta}_{\text{MultiPPI}(\Sigma)}$ is minimax optimal over the set of distributions $$\mathcal{P}_\Sigma = \{\text{distribution }P\text{ on }\mathbb{R}^k \colon \Cov(X) = \Sigma\text{ for }X \sim P\}.$$ 
Specifically, given costs $(c_I)_I$ and a budget $B$, let $\Theta_B$ denote the set of budget-satisfying estimators $\hat{\theta}$ per \Cref{def:budget}. We emphasize that we make \textbf{no}  restriction on $\Theta_B$ to include only linear or unbiased estimators. 
 Then the following result holds:
 
\begin{theorem}[Minimax optimality of MultiPPI for known $\Sigma$]
\label{thm:minimax}
For all $\Sigma\succ0$, we have
    \[
    \inf_{\hat{\theta}\in\Theta_B}\sup_{P\in\mathcal{P}_\Sigma} \Ex\left[(\hat{\theta}-\theta^*)^2\right] = \Var 
    \left(\hat{\theta}_{\multiPPI(\Sigma)}\right) = \mathcal{V}_B,
    \]
    where the variance is with respect to any distribution $P\in\mathcal{P}_\Sigma$.
\end{theorem}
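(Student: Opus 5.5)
We prove the two inequalities separately, then identify the common value with $\mathcal{V}_B$. For a fixed allocation $\ul{n}$, we first compute the best linear unbiased estimator explicitly. Unbiasedness \textbf{U} for all $\p$ amounts to $\sum_{I}\lambda_I=a$, with each $\lambda_I$ embedded in $\R^k$ and supported on $I$. The variance is $\sum_I \lambda_I^\top\Sigma_I\lambda_I/n_I$, by independence across subsets. Minimizing this convex quadratic under the linear constraint by Lagrange multipliers gives $\lambda_I=n_I\Sigma_I^\dagger S(\ul n)a$ and minimal variance $a^\top S(\ul n)a$, provided $\supp(a)\subseteq\bigcup\{I:n_I>0\}$; otherwise the constraint is infeasible. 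The range of $S(\ul n)^\dagger=\sum_I n_I\Sigma_I^\dagger$ is the span of the coordinates that are covered, so $a$ lies in this range. This is a weighted generalized least squares computation and is routine. Minimizing over budget-feasible $\ul n$ gives the upper bound: the MSE of $\hat\theta_{\multiPPI(\Sigma)}$ equals $\mathcal{V}_B$ for every $P\in\mathcal{P}_\Sigma$. Hence $\inf\sup\le\mathcal{V}_B$.

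For the lower bound we use a Bayesian (Gaussian) argument. A general budget-satisfying estimator may use any feasible $\ul n$, and this choice is deterministic under \Cref{def:budget}. So it suffices to show that for each fixed $\ul n$, every measurable $\hat\theta$ of the samples satisfies $\sup_{P\in\mathcal{P}_\Sigma}\Ex(\hat\theta-\theta^*)^2\ge a^\top S(\ul n)a$. We restrict to the Gaussian location family $P_\mu=N(\mu,\Sigma)\subseteq\mathcal{P}_\Sigma$ and put a prior $\mu\sim N(0,\tau^2 I)$. The observations $X_I^{(j)}\sim N(\mu_I,\Sigma_{II})$ are independent, so the Fisher information for $\mu$ is exactly $\sum_I n_I\Sigma_I^\dagger=S(\ul n)^\dagger$. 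By Gaussian conjugacy, the posterior covariance of $\mu$ is $(S(\ul n)^\dagger+\tau^{-2}I)^{-1}$. The Bayes risk for estimating $a^\top\mu$ is therefore $a^\top(S(\ul n)^\dagger+\tau^{-2}I)^{-1}a$, and minimax risk is at least Bayes risk.

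We let $\tau\to\infty$. If $a$ lies in the range of $S(\ul n)^\dagger$, this quantity increases to $a^\top S(\ul n)a$: diagonalize, and on the null space the component of $a$ vanishes. If instead some coordinate of $\supp(a)$ is never observed, the quantity diverges. This matches the convention that such $\ul n$ are excluded, with value $+\infty$. Taking the minimum over feasible $\ul n$ yields $\inf\sup\ge\mathcal{V}_B$.

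The main obstacle is this lower-bound step. Two points need care. First, we must confirm that minimax over general (nonlinear, possibly biased) estimators is controlled by the Gaussian submodel; this is the Bayes-risk argument. Second, we need a clean justification of the pseudo-inverse limit when $\Sigma_I$ is embedded as a singular $k\times k$ matrix, which amounts to checking that $\Sigma_I^\dagger$ is precisely the information contributed by one sample of $X_I$. Since $\Sigma\succ0$, each $\Sigma_{II}$ is invertible, and $\Sigma_I^\dagger$ is $\Sigma_{II}^{-1}$ padded with zeros, so this identification is direct. A final subtlety is whether $\ul n$ could be chosen randomly or adaptively. \Cref{def:budget} fixes $n_I$ in advance, so no adaptive allocation needs to be handled.
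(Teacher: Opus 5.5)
Your proposal is correct and follows the paper's proof essentially step for step. You reduce to a fixed allocation $\ul{n}$ and get the upper bound from the explicit linear unbiased estimator with $\lambda_I = n_I\Sigma_I^\dagger S(\ul n)a$; the lower bound comes from the Gaussian location family with a $\mathcal{N}(0,\tau^2 I)$ prior and $\tau\to\infty$. The only minor difference is the case $\supp(a)\not\subseteq\bigcup\{I:n_I>0\}$, which the paper treats by a separate mean-shift argument, whereas you get it from the divergence of the same Bayes risk, a slightly more uniform treatment.
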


\subsection{MultiPPI($\widehat{\Sigma}$): A practical algorithm}\label{sec:practical}

%\subsubsection{Finite-sample bounds}
In practice, $\Sigma$ is rarely known and must be approximated by an estimated covariance matrix $\wh{\Sigma}$. In general, there are many methods for constructing an estimate $\wh{\Sigma}$ of $\Sigma$, and many of the theoretical properties of MultiPPI are agnostic to the particular choice made. The following theorem shows that, for any $\wh{\Sigma}$ which converges in probability to $\Sigma$ as our budget tends to infinity, the MultiPPI estimator is asymptotically normal and achieves the optimal variance of \Cref{thm:minimax}. For this result, we need a technical condition which amounts to \Cref{eq:multippi_opt} having a unique minimizer $\ul{n}$ as $B\to\infty$; we state it formally in \Cref{sec:asymptotic_proof}.\looseness=-1

\begin{theorem}\label{thm:asymptotic_revised}
Suppose $X\in\R^k$ has finite second moment, and suppose that $\Sigma=\Cov(X)$ satisfies condition \ref{eqn:condition}. Suppose that $\wh{\Sigma}\overset{p}{\to}\Sigma$ in the operator norm as $B\to\infty$. Then for $\hat{\theta}_{\multiPPI(\widehat{\Sigma})}$ arbitrarily dependent on any potential samples used to estimate $\widehat{\Sigma}$, we have
    \[
    \sqrt{B}\left(\hat{\theta}_{\multiPPI(\widehat{\Sigma})}-\theta^*\right)\overset{d}{\to}\mathcal{N}(0,\mathcal{V}^*)
    \]
    as $B\to\infty$, where $\mathcal{V}^*=\lim_{B\to\infty} B\mathcal{V}_{B}$, and $\mathcal{V}_{B}$ is defined in \Cref{eq:variance}.
\end{theorem}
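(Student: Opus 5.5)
The plan is to decompose the error into a conditional-on-allocation CLT term, then use continuity of the optimal allocation in $\Sigma$ to transfer from $\widehat{\Sigma}$ to $\Sigma$. Write $\ul{n}(\Sigma')$ and $\ul{\lambda}(\Sigma')$ for the (relaxed, then rounded) minimizers of \Cref{eq:variance} computed with a plug-in covariance $\Sigma'$. Set $\ul{\hat n}=\ul{n}(\wh{\Sigma})$ and $\ul{\hat\lambda}=\ul{\lambda}(\wh{\Sigma})$.

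\textbf{Step 1: unbiasedness.} Since \textbf{U} holds for every $\Sigma'$, the identity $\sum_I P_I\lambda_I = a$ holds deterministically. Hence
\[
\hat{\theta}_{\multiPPI(\wh{\Sigma})}-\theta^* \;=\; \sum_{I:\hat n_I>0}\hat\lambda_I^\top\Big(\tfrac{1}{\hat n_I}\textstyle\sum_{j}(X_I^{(j)}-\mu_I)\Big).
\]
The key point is that the samples $X_I^{(j)}$ used for estimation are fresh draws. Even if $\wh{\Sigma}$ depends on burn-in or transfer data, we condition on $\wh{\Sigma}$, and the estimation samples remain i.i.d.\ and independent of it. If the statement is meant to allow the estimation samples themselves to be reused for $\wh{\Sigma}$, this conditioning fails. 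In that case I would instead fix the deterministic limit allocation $\ul{n}^*_B$ and show that the error from replacing it with the random one is $o_p(B^{-1/2})$, using the arrays of partial sums.

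\textbf{Step 2: scaling limit of the allocation.} By homogeneity of \Cref{eq:variance} in $B$, with fractions $\ul{f}=\ul{n}/B$ we get $B\mathcal{V}_B \to \mathcal{V}^*=\min_{\ul f:\sum c_I f_I\le 1} a^\top S(\ul f)a$, up to rounding errors of order $1/B$. Condition \ref{eqn:condition} provides a unique minimizer $\ul f^*(\Sigma)$. I would then invoke Berge's maximum theorem. The objective $(\ul f,\Sigma')\mapsto a^\top S(\ul f)a$ is jointly continuous near $(\ul f^*,\Sigma)$ on the relevant support, and the feasible set is compact. Together with uniqueness, this makes $\ul f^*(\cdot)$ continuous at $\Sigma$, and hence $\ul{\hat n}/B\overset{p}{\to}\ul f^*(\Sigma)$. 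The map $\lambda_I=(n_I\Sigma_I^\dagger S(\ul n)a)_I$ is invariant to scaling $\ul n$, so $\ul{\hat\lambda}\overset{p}{\to}\ul\lambda^*$ as well.

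\textbf{This continuity step is the main obstacle.} The pseudo-inverse is discontinuous exactly where the support $\{I:f_I>0\}$ changes. Subsets with $f_I^*=0$ may receive a small positive $\hat f_I$, and $\hat\lambda_I$ on such subsets need not converge to anything. I would handle this by showing that their contribution is negligible in a different way. For each such $I$, the variance of its term is $\hat\lambda_I^\top\Sigma_I\hat\lambda_I/\hat n_I$, and this quantity is bounded by the total plug-in variance. That total is $a^\top S(\ul{\hat n})a$ evaluated under $\wh{\Sigma}$, which is $O_p(1/B)$ and converges to $\mathcal{V}^*/B$. Since $\Sigma\succ0$, the variances computed under $\wh{\Sigma}$ and under $\Sigma$ agree up to a factor $1+o_p(1)$. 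The conditional variance under the true $\Sigma$ therefore satisfies
\[
B\,\Var\big(\hat{\theta}\mid\wh{\Sigma}\big)\;=\;B\,a^\top S_{\wh{\Sigma}}(\ul{\hat n})\,a\,(1+o_p(1))\;\overset{p}{\to}\;\mathcal{V}^*.
\]
Here the last convergence uses continuity of the optimal value in $\Sigma'$, which holds without needing continuity of the minimizer.

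\textbf{Step 3: CLT.} Conditionally on $\wh{\Sigma}$, the error is a sum over $|\mathcal{I}|$ independent groups of i.i.d.\ mean-zero terms, with total variance $\to\mathcal{V}^*/B$. I would apply the Lindeberg--Feller CLT to the triangular array $\sqrt{B}\,\hat\lambda_I^\top(X_I^{(j)}-\mu_I)/\hat n_I$. Lindeberg's condition follows from finite second moments, because every group with non-negligible variance has $\hat n_I\to\infty$ at rate $B$. Groups whose variance contribution vanishes can be dropped by Chebyshev's inequality. This gives conditional convergence of the characteristic function in probability. Dominated convergence then yields the unconditional limit $\sqrt{B}(\hat\theta-\theta^*)\overset{d}{\to}\mathcal{N}(0,\mathcal{V}^*)$.
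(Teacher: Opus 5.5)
Your detailed argument proves the theorem only under sample splitting, and the statement is aimed at the opposite case. Steps~1--3 all run through conditioning on $\widehat{\Sigma}$. For that to work, the draws $X_I^{(I,j)}$ entering $\hat\theta$ must be i.i.d.\ and independent of $(\hat{\ul{n}},\hat{\ul{\lambda}})$ given $\widehat{\Sigma}$. Only then does $B\,\Var(\hat\theta\mid\widehat{\Sigma})$ equal a plug-in quadratic form, and only then does a conditional Lindeberg--Feller CLT apply.

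The theorem explicitly allows $\hat{\theta}_{\multiPPI(\widehat{\Sigma})}$ to depend arbitrarily on the samples used to build $\widehat{\Sigma}$. The paper's procedure relies on exactly this: the $N$ fully-labeled draws form $\widehat{\Sigma}$ and are then reused as the $I=\{1,\dots,k\}$ term of the estimator. In that regime $\hat n_I$ and $\hat\lambda_I$ are functions of the very variables being averaged, so there is no conditional i.i.d.\ structure. Neither your conditional-variance identity nor your Chebyshev disposal of the subsets with $f^*_I=0$ is then available. Your one-sentence fallback (fix the deterministic limit allocation and control partial sums) names the right idea. But in the case that matters it is the whole proof, and you leave it undone.

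The missing step is the random-index (Anscombe-type) argument the paper gives.
\begin{itemize}
\item Take the deterministic $n^*_I=\lfloor t\nu^*_I\rfloor$, and let $S_m$ be the partial sums of the fixed i.i.d.\ sequence $X_I^{(I,j)}-\mu_I$.
\item Show $S_{\hat n_I}/\sqrt{\hat n_I}-S_{n^*_I}/\sqrt{n^*_I}\overset{p}{\to}0$. Work on the event $\{|\hat n_I-n^*_I|\le\delta t\}$ and bound the increment with Kolmogorov's maximal inequality. That bound is indifferent to how $\hat n_I$ depends on the data.
\item Apply the ordinary CLT to $S_{n^*_I}/\sqrt{n^*_I}$, and conclude with Slutsky.
\end{itemize}
Slutsky, however, needs $\hat{\ul{n}}/t\overset{p}{\to}\ul{\nu}^*$ and $\hat\lambda_I\overset{p}{\to}\lambda^*_I$, i.e.\ convergence of the minimizer itself. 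This is where condition~\ref{eqn:condition} and \Cref{lemma:discrete-convergence} are used. So your shortcut through continuity of the optimal value cannot replace Step~2 here. In addition, the subsets with $\nu^*_I=0$ but $\hat n_I>0$ must be shown to be $o_p(1)$ without conditioning, for instance again via a maximal inequality. The paper's write-up also passes over this point.

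In the split-sample case your conditional route is valid and in one respect cheaper than the paper's. Take $\epsilon=\|\widehat{\Sigma}-\Sigma\|_{op}/\gamma_{\min}$. The sandwich $(1-\epsilon)\Sigma\preceq\widehat{\Sigma}\preceq(1+\epsilon)\Sigma$ bounds the variance of every feasible $(\ul{n},\ul{\lambda})$, so the optimal values under $\widehat{\Sigma}$ and $\Sigma$ agree up to a factor $1+o_p(1)$. Uniqueness of the minimizer is never needed. You would still have to prove, rather than assert, that every subset carrying non-negligible variance has $\hat n_I\to\infty$.
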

It is important to note that the estimator $\hat{\theta}_{\multiPPI(\wh{\Sigma})}$ continues to enjoy unbiasedness, budget satisfaction, and asymptotic normality regardless of mis-specification in $\wh{\Sigma}$.

%$\hat{\theta}_{\multiPPI(\wh{\Sigma})}$ remains unbiased, budget-satisfying, and asymptotically normal (as we see next) in a broad range of scenarios, regardless of mis-specification in $\wh{\Sigma}$. However, better estimates $\wh{\Sigma}$ of $\Sigma$ will lead to better performance of the estimator.

A natural question concerns the level of suboptimality of $\hat{\theta}_{\multiPPI(\wh{\Sigma})}$ as a function of the degree of mis-specification of $\wh{\Sigma}$ in finite samples. Below, we present a meta-result which serves to quantify the sensitivity of our procedure to errors in the specification of $\wh{\Sigma}$.

\begin{theorem}[Stability of MultiPPI]\label{thm:stab}
    Let $P$ be a distribution of covariance $\Sigma$, and suppose that $\Sigma$ has minimum eigenvalue $\gamma_{\min}$. Let $\sigma^2_{\classical}$ denote the least MSE of any budget-satisfying sample mean of $\theta^*$. Let $\widehat{\Sigma}$ denote any non-random symmetric positive-definite matrix. Then we have
    \[
    \Ex\left[\left(\hat{\theta}_{\multiPPI(\widehat{\Sigma})}-\theta^*\right)^2\right] \leq \mathcal{V}_B + \frac{4\sigma^2_{\classical}}{\gamma_{\min}}\|\wh{\Sigma}-\Sigma\|_{F}.
    \]
    whenever $\|\wh{\Sigma}-\Sigma\|_{F}\leq \gamma_{\min}/2$, where $\|\cdot\|_F$ denotes the Frobenius norm.
\end{theorem}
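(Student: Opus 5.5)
The plan is a perturbation argument on the variance functional. Three facts drive it: unbiasedness is preserved under any $\wh{\Sigma}$, the chosen allocation is optimal for $\wh{\Sigma}$, and the sizes of the chosen weight vectors are controlled by $\gamma_{\min}$.

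\textbf{Setup.} For an allocation--weight pair $(\ul{n},\ul{\lambda})$ satisfying \textbf{U} and \textbf{B}, and a symmetric matrix $M$, write
\[
f(\ul{n},\ul{\lambda};M)=\sum_{I:n_I>0}\frac{1}{n_I}\lambda_I^\top M_I\lambda_I .
\]
Let $(\hat{\ul{n}},\hat{\ul{\lambda}})$ be the pair selected by $\multiPPI(\wh{\Sigma})$, i.e.\ the minimizer of $f(\cdot;\wh{\Sigma})$ under \textbf{U} and \textbf{B}. Let $(\ul{n}^*,\ul{\lambda}^*)$ be the minimizer for the true $\Sigma$, so that $f(\ul{n}^*,\ul{\lambda}^*;\Sigma)=\mathcal{V}_B$ by \Cref{thm:minimax}.

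Since $\wh{\Sigma}$ is non-random and \textbf{U} is a linear constraint on $\ul{\lambda}$ that does not depend on $\wh{\Sigma}$, the estimator $\hat{\theta}_{\multiPPI(\wh{\Sigma})}$ is unbiased under $P$. Its MSE is therefore exactly its variance, $f(\hat{\ul{n}},\hat{\ul{\lambda}};\Sigma)$. Put $\Delta=\wh{\Sigma}-\Sigma$ and $\delta=\|\Delta\|_{op}\le\|\Delta\|_F\le\gamma_{\min}/2$.

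\textbf{Key chain.} For any pair, $|f(\ul{n},\ul{\lambda};\Sigma)-f(\ul{n},\ul{\lambda};\wh{\Sigma})|\le \delta\, R(\ul{n},\ul{\lambda})$, where $R=\sum_I \|\lambda_I\|^2/n_I$. Using this twice, together with the optimality of $(\hat{\ul{n}},\hat{\ul{\lambda}})$ for $\wh{\Sigma}$, gives
\[
f(\hat{\ul n},\hat{\ul\lambda};\Sigma)\le f(\hat{\ul n},\hat{\ul\lambda};\wh\Sigma)+\delta R(\hat{\ul n},\hat{\ul\lambda})\le f(\ul n^*,\ul\lambda^*;\wh\Sigma)+\delta R(\hat{\ul n},\hat{\ul\lambda})\le \mathcal{V}_B+\delta\big[R(\ul n^*,\ul\lambda^*)+R(\hat{\ul n},\hat{\ul\lambda})\big].
\]
It then suffices to show that the two $R$ terms together are at most $4\sigma^2_{\classical}/\gamma_{\min}$.

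\textbf{Bounding $R$ (the main step).}
\begin{itemize}
\item For the true optimum: $\Sigma\succeq\gamma_{\min}I$, so $R(\ul n^*,\ul\lambda^*)\le \mathcal{V}_B/\gamma_{\min}$. The classical sample mean is itself a feasible pair, so $\mathcal{V}_B\le\sigma^2_{\classical}$, and hence this term is at most $\sigma^2_{\classical}/\gamma_{\min}$.
\item For the $\wh{\Sigma}$-optimum: $\wh{\Sigma}\succeq\Sigma-\delta I\succeq(\gamma_{\min}/2)I$, so $R(\hat{\ul n},\hat{\ul\lambda})\le (2/\gamma_{\min})f(\hat{\ul n},\hat{\ul\lambda};\wh\Sigma)$. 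By optimality, this is at most $(2/\gamma_{\min})f(\text{classical};\wh{\Sigma})$.
\item Since $\delta\le\gamma_{\min}/2$, we have $\Delta\preceq\tfrac12\Sigma$, i.e.\ $\wh{\Sigma}\preceq\tfrac32\Sigma$. Hence $f(\text{classical};\wh\Sigma)\le\tfrac32\sigma^2_{\classical}$, and this term is at most $3\sigma^2_{\classical}/\gamma_{\min}$.
\end{itemize}
Adding the two bounds gives $4\sigma^2_{\classical}/\gamma_{\min}$, and using $\delta\le\|\Delta\|_F$ yields the theorem.

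The delicate point is this last bounding step, specifically the choice of comparator. The classical estimator must be one whose weights are $\lambda_I=a$ restricted to the relevant coordinates, so that it is feasible for every covariance. The relative bound $\Delta\preceq\tfrac12\Sigma$ must also be used, rather than an additive one, so that the constants come out exactly to $4$. If the definition of $\sigma^2_{\classical}$ allows several sampled subsets, the same argument applies term by term.
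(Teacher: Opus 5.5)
Your proposal is correct and follows essentially the same route as the paper, which obtains this result from \Cref{thm:finite-sample-v2-1}. Both arguments use the same three-term decomposition with the middle term nonpositive by optimality under $\wh{\Sigma}$, the same perturbation bound $\delta\sum_I\|\lambda_I\|_2^2/n_I$, and the same comparison with the classical estimator to control the weight norms. The only cosmetic difference is how you bound the classical variance under $\wh{\Sigma}$: you use $\wh{\Sigma}\preceq\tfrac32\Sigma$, whereas the paper uses $\wh{\Sigma}_{11}\le\Sigma_{11}+\delta$ together with Weyl's inequality (implicitly needing $\Sigma_{11}\ge\gamma_{\min}$), and both reach the same constant $4$.
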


In general, there are many methods for constructing an estimate $\wh{\Sigma}$ of $\Sigma$, and \Cref{thm:stab} is agnostic to the particular choice made. In \Cref{app:finite-sample-bounds}, we show how to apply the meta-result above to derive a family of finite-sample bounds in a variety of distributional settings and for a variety of estimates $\wh{\Sigma}$.
In practice, we estimate ${\Sigma}$ from data, and find the Ledoit-Wolf estimator $\wh{\Sigma}$ of the covariance matrix $\Sigma$ to perform best in our experiments. This is consistent with the fact that the Ledoit-Wolf estimate is designed to minimize $\Ex \|\wh{\Sigma}-\Sigma\|_F$, and \Cref{thm:stab} shows that the error of $\hat{\theta}_{\multiPPI(\widehat{\Sigma})}$ is controlled by $\|\wh{\Sigma}-\Sigma\|_{F}$. In \Cref{thm:ledoit}, we apply \Cref{thm:stab} to provide finite-sample performance guarantees on MultiPPI when the Ledoit-Wolf estimator is used to estimate covariance.\looseness=-1

In our experiments, we evaluate $\hat{\theta}_{\ul{n},\ul{\lambda}}$ on the same samples we used to estimate $\wh{\Sigma}$. A similar approach was taken by \citet{angelopoulos2023ppi++} for PPI++ when tuning the value of $\lambda$, and we find that it is easy to implement and yields strong empirical results in practice.

Note that while the data reuse introduces bias in finite samples---due in part to the additional dependency of $\lambda_I$ on $X_I$ in \Cref{eq:theta-n-lambda}---it preserves consistency and asymptotic normality in the limit as our budget $B$ and the number of (reused) burn-in samples tend to infinity. For an analysis of the bias introduced in finite samples, see \Cref{sec:coverage-decay}.\looseness=-1

\looseness=-1

\subsubsection{Procedure}
\label{sec:procedure}
We now specify an easy-to-implement procedure that makes use of a burn-in of $N$ fully labeled samples to estimate $\widehat{\Sigma}$, and then also reuses the $N$ samples when estimating $\hat{\theta}_{\text{MultiPPI}(\widehat{\Sigma})}$. Specifically, we target the practical setting where we are given $N$ fully-labeled samples  \textit{a priori}, and have no ability to obtain more. This is typical of real-world settings in which we are given, or have already collected, a fixed dataset of "gold" labels that we are then trying to augment with PPI related techniques---and may be encapsulated by the budget constraint $n_{\{1, \ldots, k\}}\leq N$. While we may not be able to obtain more fully-labeled samples, we may be afforded a separate computational budget for querying model predictions that then augment the $N$ fully-labeled samples; taken together, this setting is represented by a system of budget constraints.\footnote{We explain how to solve the optimization problem posed by such systems in \autoref{app:generalized}.} In summary, we propose the following:\looseness=-1
% In this setting, we propose the following procedure:
\begin{enumerate}[leftmargin=*]
    \item Estimate the covariance matrix $\widehat{\Sigma}$ on the $N$ fully-labeled samples, which we will reuse in Step 3.
    \item Solve for the $n_I,\lambda_I$ which minimize \Cref{eq:multippi_opt} given $\wh{\Sigma}$. We refer to this as MultiAllocate$(\widehat{\Sigma})$. %\footnote{We note that we leverage the ability to reuse the $N$ fully-labeled samples by including it as the set $I = \{1, \ldots, k\}$, with the additional budget constraint that $n_{I} \leq N$. We explain how to solve the optimization problem posed by such systems with \emph{multiple} types of budget constraints in \Cref{app:generalized}.}
    \item Sample the $n_I$, $\forall I \in \mathcal{I}$ additional data points accordingly, and return $\hat{\theta}_{\text{MultiPPI}(\widehat{\Sigma})}$, evaluated on both the $n_I$ additional data points for each $I\in\mathcal{I}$, and the $N$ (reused) initial samples.
\end{enumerate}

\section{Experimental setup}\label{sec:exp-setup}

In each experiment, our goal is to estimate the mean $\theta^*=\mathbb{E}[X_1]$ of some random variable $X_1$ to be specified, which we will refer to as the \textit{target}. This corresponds to the choice $a=(1,0,\dots,0)$ in our notation. We will also specify a \textit{model family} $(X_2,\dots,X_k)$, together with a \textit{cost structure} $(c_I)_{I\in\mathcal{I}}$.
In each experiment, we are given some number of samples for which the entire vector $X=(X_1,\dots,X_k)$ is visible; we refer to such samples as \textit{fully-labeled}. Given these samples, we perform the procedure outlined in \Cref{sec:procedure}: we estimate $\widehat{\Sigma}$ using these samples, sample from the auxiliary models $(X_2, \ldots, X_k)$ according to the allocation  specified by MultiAllocate$(\widehat{\Sigma})$, and return $\hat{\theta}_{\multiPPI(\widehat{\Sigma})}$, evaluated on both the $N$ fully-labeled samples and the additional auxiliary data.\looseness=-1

\textbf{Baselines:} In each experiment, we compare to several baselines. First, we compare to classical sampling. Second, we compare to PPI++ with each model included in the family (specified in \Cref{eqn:ppi++}), and to vector PPI++ with every model in the family (specified in \Cref{eqn:ppi++vector}). %TODO: additional comparisons to (i) higher order features, (ii) resampling randomly, and (iii) active procedures.

\textbf{Experiment 1: Estimating Arena win-rates by autorater ensembles.}
We focus on the Chatbot Arena dataset~\citep{chiang2024chatbot}, where of interest is the \emph{win-rate} between a pair of models, which is the probability that a given user prefers the response of one model to that of the other. The randomness is taken over the prompt, the user, and the model responses. Here, we aim to estimate the win-rate between Claude-2.1 and GPT-4-1106-Preview; this is our \emph{target}. Our \emph{model family} consists of autoraters built on Gemini 2.5 Pro (without thinking) and Gemini 2.5 Flash. In our notation, we have $(X_1,X_2,X_3)=(\textrm{human label}, \textrm{Gemini 2.5 Pro label}, \textrm{Gemini 2.5 Flash label})$. We draw model \emph{costs} from the Gemini developer API pricing guide \citep{gemini}, see \Cref{sec:experimental-details}. In this case, the cost of querying both models is simply the sum of the costs of querying each model independently. %, but we emphasize that this will not be the case in the subsequent experiments.\looseness=-1
% \begin{description}
%     \item[Target:] Win-rate between Claude-2.1 and GPT-4-1106-Preview on ChatBot Arena
%     \item[Model family:] $\{\text{Gemini 2.5 Pro, Gemini2.5 Flash}\}$
%     \item[Cost structure:] $c_\text{Pro}=1.25,\,c_\text{Flash}=0.30,\,c_{\{\text{Pro, Flash}\}}=1.55$
% \end{description}

% \textbf{Target:} Win-rate between Claude-2.1 and GPT-4-1106-Preview on ChatBot Arena. \textbf{Model family:} $\{\text{Gemini 2.5 Pro, Gemini2.5 Flash}\}$. \textbf{Costs:} $c_\text{Pro}=1.25,\,c_\text{Flash}=0.30,\,c_{\{\text{Pro, Flash}\}}=1.55$.

% \begin{table}[]
%     \centering
%     \begin{tabular}{c|c}
%         Model collection & Cost \\
%         \hline
%         Gemini 2.5 Pro & \$1.25 \\
%         Gemini 2.5 Flash & \$0.30 \\
%         Both & \$1.55
%     \end{tabular}
%     \caption{Cost structure for experiment 1 \jacob{we don't need a table for this}}
%     \label{tab:cost-exp1}
% \end{table}
\textbf{Experiment 2: Optimal test-time autorater scaling on ProcessBench.}
In this experiment, we aim to estimate the fraction of correct solutions in the ProcessBench dataset \citep{zheng2024processbench}, given a small number of labeled examples. The task is simplified from its original form to a binary classification problem: determining whether a given math proof solution contains a process error, without identifying the specific step. We employ Gemini 2.5 Pro with a variable thinking budget as our autorater. Its accuracy correlates with the number of words expended in the thought, with performance gains saturating after approximately 500 words (see \Cref{fig:scaling-budget} in the appendix). We create a family of four autoraters by checkpointing the model's thought process at 125, 250, 375, and 500 words. A key aspect of this setup is the non-additive, cascading cost structure. Generating a response from a model with a larger thinking budget makes the outputs of all smaller-budget models available at a marginal cost. Consequently, the total cost for a subset of models S is modeled with two components: an input cost proportional to the sum of the word budgets in S, and an output cost proportional to the maximum word budget in S. Explicitly, for $S\subseteq\{125,250,375,500\},$ we set
    \begin{equation}\label{eq:cost}
    c_S=\texttt{output\_cost\_per\_word}\cdot \max S+\texttt{input\_cost\_per\_word}\cdot\sum S    
    \end{equation} 
For concision, in our results we refer to the model assessments after 125, 250, 375, and 500 words by ``tiny,'' ``small,'' ``medium,'' and ``large,'' respectively.

\textbf{Experiment 3: Hybrid factuality evaluation through multi-autorater debate.}
%\charlie{Would be great if we could condense this subsection, in particular.}
Following \citet{du2023improving}, we evaluate the factual consistency of biographies for 524 computer scientists generated by Gemini 2.5 Pro. For each person $p\in\mathcal{P}$, we compare their Gemini-generated biography $b^p$ against a set of known grounding facts $\mathcal{F}^p=\{f_1^p,\dots,f_{m_p}^p\}$ about the person. 
Our target metric is the proportion of \textit{factually consistent pairs} $(b,f)$ within the total set $\mathcal{S}=\{(b^p,f^p): p\in\mathcal{P},f^p\in\mathcal{F}^p\}$. Concretely, we \textit{target} the proportion
$
|\{(b,f)\in\mathcal{S}:(b,f)\text{ is factually consistent}\}| ~/~ |\mathcal{S}|
$.

Ground-truth consistency of a pair $(b,f)$ is established by majority voting over five independent judgments from Gemini 2.5 Pro with thinking, a method validated by \citet{du2023improving} to have over 95\% agreement with human annotators. Our experiment, illustrated in \Cref{fig:scaling-accuracy-biographies}, assesses the performance of a more cost-effective model, Gemini 2.0 Flash Lite, as an autorater.
To elicit better autoratings from queries to Gemini 2.0 Flash Lite, we bootstrap performance via multi-round debate. For a fixed number of agents $A\in\{1,2,3\}$, and a fixed number of maximum rounds $R\in\{1,2\}$, we perform the following procedure: In each round, $A$ instances of Flash Lite are independently prompted to provide a reasoned judgment on the consistency of a pair $(b,f)\in\mathcal{S}$. A "pooler" instance of Flash Lite then consolidates these responses into a single \emph{yes}, \emph{no}, or \emph{uncertain} output. A definitive \emph{yes} or \emph{no} concludes the process. If the pooler outputs \emph{uncertain}, and the number of maximum rounds $R$ has not yet been reached, the $A$ agents review all prior responses and continue their debate in a new round. If the output remains \emph{uncertain} after the final round, either \emph{yes} or \emph{no} is reported with equal probability---since the dataset is balanced, this outcome is fair insofar as it is as good as random guessing. We impose the maximum round restriction to encapsulate our budget constraint. For a given $(A,R)$, the cost is $A\cdot R$; for collections, the cost follows \Cref{eq:cost}.\looseness=-1

% \begin{description}
%     \item[Target:] Proportion of factually-consistent pairs, $\#\{(b,f)\in\mathcal{S}:(b,f)\text{ is factually consistent}\}/\#\mathcal{S}$
%     \item[Model family:] $\{\text{The output of the above procedure given $(A,R)$}: A\in\{1,2,3\},R\in\{1,2\}\}$
%     \item[Cost structure:] For a given $(A,R)$, the cost is $
% A\cdot R$; for collections, the cost follows \autoref{eq:cost}.
% \end{description}

% \begin{figure}
%     \centering
%     \includegraphics[width=0.9\linewidth]{figures/Factuality task2.png}
%     \caption{Depiction of biography-fact pairs $(b,f)$ as in Experiment 3. Judgements about factual consistency of $(b,f)$ are made by a language model.}
%     \label{fig:biography-prompt}
% \end{figure}

\section{Empirical results}\label{sec:results}
%Please see appendix for additional plots. \textbf{TODO:} change color scheme, add additional comparisons for ChatBot Arena, run more trials to reduce variance, bootstrap CIs. \textbf{Other:} please suggest!
We plot MultiPPI, and the baselines described in \Cref{sec:exp-setup}, for a budgets between 0 and 2$k$ units of cost. We normalize model costs so that one unit of cost always represents exactly one query to our most expensive model. 
For each fixed budget and each method, we estimate the target, and construct asymptotic 95\% confidence intervals $\mathcal{C}$ based on \Cref{thm:asymptotic_revised}. We plot (i) coverage, $\mathbb{P}(\theta^*\in\mathcal{C})$; (ii) confidence interval width, $|\mathcal{C}|$; and (iii) mean-squared error $\mathbb{E}[(\hat{\theta}-\theta^*)^2]$. We report both the confidence interval width and the mean-squared error as a fraction of what classical sampling achieves (lower is better). In each case, the target is $\theta^*=\mathbb{E}[X_1]$, and $\mathbb{P}$ and $\mathbb{E}$ are computed with respect to the empirical distribution over the dataset observed (we perform 500$k$ random trials with 250 given labels). Note that these 250 labeled points are evidently enough for all estimators considered to achieve good coverage (in \Cref{sec:experiments-num-labeled} we also include results with 1$k$ labeled points). We implement the optimization scheme in \texttt{cvxpy}, and use CVXOPT as our choice of optimizer.\looseness=-1

\textbf{Experiment 1: Chatbot Arena.}  Results are shown in \Cref{fig:results-all} (top). Observe that different baselines dominate in different budget regimes. In the low-budget regime, scalar PPI++ with Gemini 2.5 Flash is the best baseline, while in the large-budget regime, vector PPI++ with both Gemini 2.5 Pro and Gemini 2.5 Flash is the best baseline.
However, we see that MultiPPI improves on all baselines in all regimes. In the appendix, \Cref{fig:lambdas-chatbotarena} and \Cref{fig:allocations-chatbotarena} plot the $\lambda_I$ and $n_I$ values learned by MultiPPI across budget regimes. Note that the learned values tend to the specifications for PPI++ with Gemini 2.5 Flash in the low-budget regime, and to the specifications for vector PPI++ in the large-budget regime, a finding that we rigorously prove happens in broader generality in \Cref{sec:limiting}. Lastly, note that PPI++ with Gemini 2.5 Pro is suboptimal in all regimes. In other words, PPI++ with Gemini 2.5 Pro is not included in the Pareto frontier. This is because, for this task, its correlation with the label is no more than that of PPI++ with Gemini 2.5 Flash, yet it is strictly more expensive.\looseness=-1 %Once again, we prove that this behavior occurs in greater generality in \Cref{sec:pareto}.

% \subsection{Experiment 2: ProcessBench}
% \begin{figure}
%     \centering
%     % \includegraphics[width=\linewidth]{figures/processbench_results2.png}
%     \input{figures/processbench_figure}
%     \caption{Results by budget, Experiment 2: ProcessBench.}
%     \label{fig:results-processbench}
% \end{figure}

\textbf{Experiment 2: ProcessBench.} Results are shown in \Cref{fig:results-all} (middle). Again, we see that each baseline has  a range of budgets for which it outperforms all other baselines. In particular, the cheaper models yield better performance when used in PPI++ in the smaller-budget regimes, while the more-expensive models yield better performance in the higher-budget regimes. In particular, vector PPI++, which uses all $k - 1$ models, steadily improves as the budget increases, but only outperforms the other baselines at the highest budgets. This behavior is explained by \Cref{fig:scaling-budget} in the appendix, which shows that predictive performance improves for larger thinking budgets. Thus the more expensive models yield higher correlation with the label and thus yield low-variance rectifiers; on the other hand, their high cost means that this decrease in rectifier variance is outweighed by our inability to draw an adequate number of samples from them in the low-budget regimes.\looseness=-1

Of note is the fact that \textit{the models which think for longer are not in general less biased}. This phenomenon is shown in \Cref{fig:need-to-debias}, which shows that thinking for longer is not enough to resolve the systematic bias present in the autorater. However, the figure also shows that simple debiasing schemes like PPI resolve this issue. Note that this trend is not reflected in the correlations between these models and the label, because correlation is invariant to addition of constants.

Finally, MultiPPI improves on all baselines methods in all regimes. Interestingly, \Cref{fig:allocations-processbench} and \Cref{fig:lambdas-processbench} show that the parameters $\lambda_I$ and $n_I$ learned by MultiPPI transition from emulating PPI++ with the tiny model (which is the best baseline in the low-budget regime) to emulating a \textit{cascaded version of PPI} (see \Cref{eqn:cascade}), in which the medium model is used to debias the larger model.

% TODO: Separate sections in experimental setup for
% 1. Model construction
% 2. Cost structure (can include here number of labeled samples)
% 3. Baselines. Maybe baselines can be more broad

\textbf{Experiment 3: Biography factuality evaluation.}
% \begin{figure}
%     \centering
%     %\includegraphics[width=1.0\linewidth]{figures/factuality_results2.png}
%     \input{figures/factuality_figure}
%     \caption{Result by budget, Experiment 3: biography factuality.}
%     \label{fig:results-biographies}
% \end{figure}
Results are shown in \Cref{fig:results-all} (bottom). Once again, each baseline is dominant over the others in certain regimes; MultiPPI improves on all baselines in all regimes. Of note, however, is the fact that the coverage of all estimators considered, but MultiPPI and vector PPI++ in particular, degrades slightly in the large-budget regime (i.e., the 95\% CI under-covers by $\approx 1\%)$. We discuss this interesting phenomenon in  \Cref{sec:coverage-decay}, and find that it does not occur when the number of labeled samples grows in constant proportion with the budget (see, for example, our additional results with $N = 1000$ fully-labeled samples in \Cref{sec:experiments-num-labeled}).

In terms of the performance-vs-cost profile that MultiPPI leverages: \Cref{fig:scaling-accuracy-biographies} shows that predictive performance increases, across many metrics, as the number of agents and number of rounds increases. Note, however, that a marginal increase in number of agents yields a greater increase in accuracy than a marginal increase in number of rounds (this is largely because the pooler is more likely to report "uncertain" after the end of the first round than after the end of the second; see \Cref{fig:scaling-uncertainty-biographies}).\looseness=-1

\begin{figure}[!t]
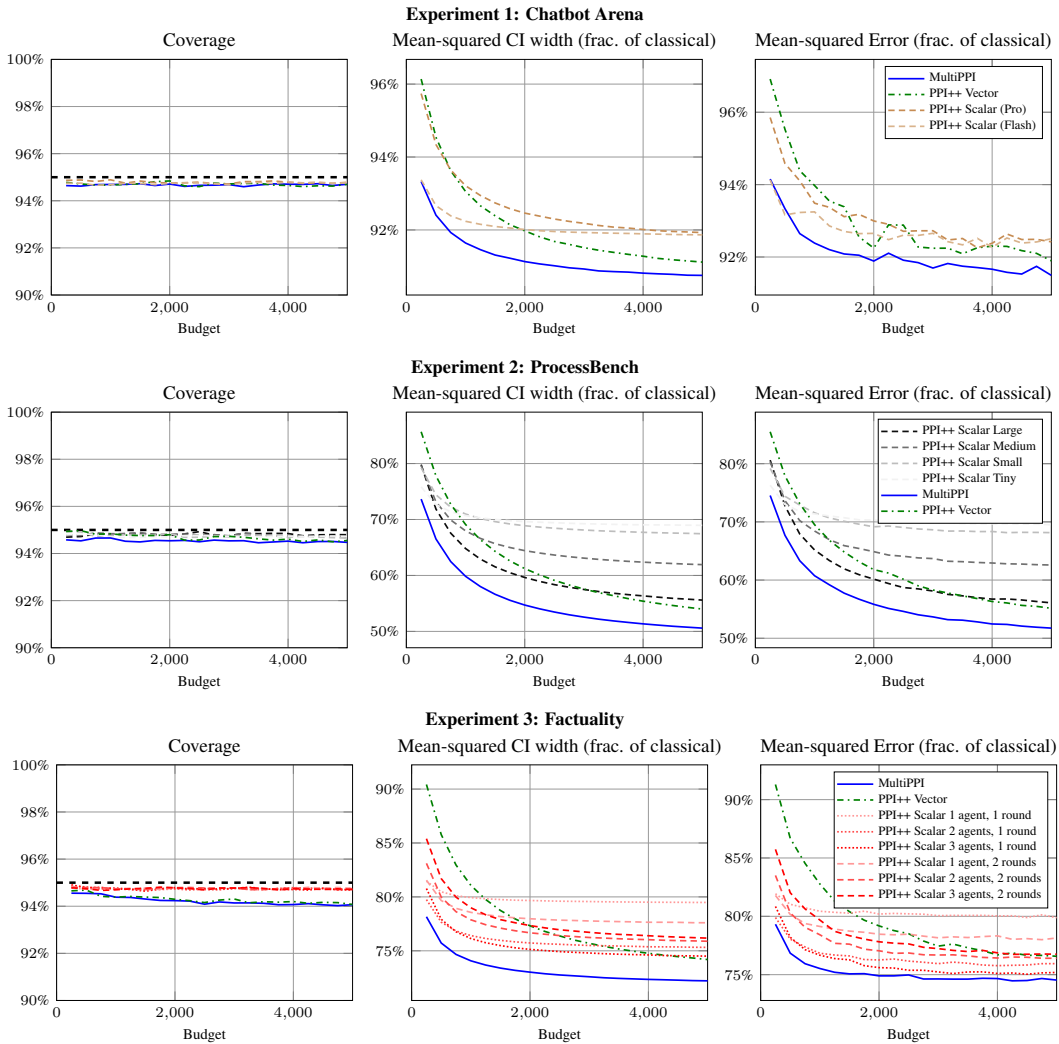

    \centering
    \textrm{\bf \scriptsize Experiment 1: Chatbot Arena}
    \scalebox{.8}{\centering
\begin{tikzpicture}
\begin{axis}[
    title={Coverage},
    xlabel={Budget},
    xlabel style={yshift=5pt},   % Moves xlabel closer to the axis
    grid=both,
    outer sep=0pt,
    enlarge x limits=false,    minor grid style={dashed, gray!50},
    major grid style={solid, gray!80},
    width=\textwidth,
    yticklabel style={/pgf/number format/fixed}, % No scientific notation
    yticklabel={\pgfmathparse{\tick*100}\pgfmathprintnumber{\pgfmathresult}\%},
    xmin=0, xmax=5000,                          % Set x-axis range
    font=\scriptsize,             % Sets the base size for everything
    title style={yshift=-5pt, font=\small},    % Optional: Make title slightly bigger than the rest
    label style={font=\scriptsize}, % Font for xlabel and ylabel
    tick label style={font=\scriptsize}, % Font for the numbers on the axes
      xshift=0.0cm,
      width=6.5cm,
      height=5.5cm,
            ymin=0.9, ymax=1,
]

\addplot[color=black, dashed, very thick] coordinates {(0,.95) (5000,.95)};
\addplot[color=blue, thick] coordinates {(250,0.946442) (500,0.946224) (750,0.946858) (1000,0.946918) (1250,0.946958) (1500,0.947248) (1750,0.94647) (2000,0.947084) (2250,0.946102) (2500,0.946536) (2750,0.946618) (3000,0.946862) (3250,0.945972) (3500,0.946684) (3750,0.947166) (4000,0.94704) (4250,0.946958) (4500,0.947234) (4750,0.94648) (5000,0.947006)};
\addlegendentry{MultiPPI}

\addplot[color=green!50!black, thick, dashdotted] coordinates {(250,0.947788) (500,0.947136) (750,0.946664) (1000,0.946666) (1250,0.946944) (1500,0.947214) (1750,0.94811) (2000,0.948416) (2250,0.946338) (2500,0.946002) (2750,0.947484) (3000,0.947148) (3250,0.947352) (3500,0.947386) (3750,0.94667) (4000,0.946496) (4250,0.946034) (4500,0.946354) (4750,0.946236) (5000,0.946988)};
\addlegendentry{PPI{\textnormal{+}}{\textnormal{+}} Vector}

\addplot[color=brown!90, thick, densely dashed] coordinates {(250,0.948656) (500,0.948826) (750,0.948184) (1000,0.948902) (1250,0.947474) (1500,0.948362) (1750,0.94773) (2000,0.94752) (2250,0.947648) (2500,0.947904) (2750,0.947262) (3000,0.9471) (3250,0.948134) (3500,0.948122) (3750,0.948418) (4000,0.947844) (4250,0.947346) (4500,0.947644) (4750,0.947386) (5000,0.947828)};
\addlegendentry{PPI{\textnormal{+}}{\textnormal{+}} Scalar (Pro)}

\addplot[color=brown!60, thick, densely dashed] coordinates {(250,0.947434) (500,0.94749) (750,0.946594) (1000,0.946776) (1250,0.94738) (1500,0.947356) (1750,0.946844) (2000,0.94737) (2250,0.947606) (2500,0.947558) (2750,0.94729) (3000,0.94675) (3250,0.94756) (3500,0.94778) (3750,0.947296) (4000,0.948016) (4250,0.947618) (4500,0.947942) (4750,0.947426) (5000,0.947662)};
\addlegendentry{PPI{\textnormal{+}}{\textnormal{+}} Scalar (Flash)}

\legend{};
\end{axis}
\begin{axis}[
    title={Mean-squared CI width (frac. of classical)},
    xlabel={Budget},
    xlabel style={yshift=5pt},   % Moves xlabel closer to the axis
    grid=both,
    outer sep=0pt,
    enlarge x limits=false,    minor grid style={dashed, gray!50},
    major grid style={solid, gray!80},
    width=\textwidth,
    yticklabel style={/pgf/number format/fixed}, % No scientific notation
    yticklabel={\pgfmathparse{\tick*100}\pgfmathprintnumber{\pgfmathresult}\%},
    xmin=0, xmax=5000,                          % Set x-axis range
    font=\scriptsize,             % Sets the base size for everything
    title style={yshift=-5pt, font=\small},    % Optional: Make title slightly bigger than the rest
    label style={font=\scriptsize}, % Font for xlabel and ylabel
    tick label style={font=\scriptsize}, % Font for the numbers on the axes
      xshift=5.9cm,
      width=6.5cm,
      height=5.5cm,
]

\addplot[color=blue, thick] coordinates {(250,0.9332786678486313) (500,0.924066156392822) (750,0.9192880015348308) (1000,0.9164218331407227) (1250,0.914601019679555) (1500,0.9131166282187997) (1750,0.912249302954298) (2000,0.9113239831110261) (2250,0.9106880371095362) (2500,0.9101785809163747) (2750,0.9096224237680951) (3000,0.9092925870311739) (3250,0.9087647933760736) (3500,0.9085761483028764) (3750,0.9084362376577875) (4000,0.9081541808688721) (4250,0.9079687484563952) (4500,0.9078216379165787) (4750,0.9075877498474475) (5000,0.9075404573839976)};
\addlegendentry{MultiPPI}

\addplot[color=green!50!black, thick, dashdotted] coordinates {(250,0.9613838679182574) (500,0.9454363865919806) (750,0.9362332874552723) (1000,0.9305577133393179) (1250,0.9266377240749157) (1500,0.9238766142893752) (1750,0.9214411775011296) (2000,0.9197619186225694) (2250,0.9181242974218763) (2500,0.916846235028651) (2750,0.9160383757256494) (3000,0.9151459675135678) (3250,0.9144354940201401) (3500,0.9138526103275946) (3750,0.9133015425620248) (4000,0.9127981888753653) (4250,0.9122010786052722) (4500,0.9118150936344519) (4750,0.9115086527972619) (5000,0.9111842343093346)};
\addlegendentry{PPI{\textnormal{+}}{\textnormal{+}} Vector}

\addplot[color=brown!90, thick, densely dashed] coordinates {(250,0.9573944397530033) (500,0.9434304732227339) (750,0.9364306548465037) (1000,0.9321265164222806) (1250,0.9294588471327857) (1500,0.9274056121603059) (1750,0.9258267064328568) (2000,0.924619693358391) (2250,0.9237958183526989) (2500,0.922998664506488) (2750,0.9223486424707397) (3000,0.9218202406910652) (3250,0.921222227181355) (3500,0.9207769408494638) (3750,0.9205208168930091) (4000,0.9201239954108568) (4250,0.9197861360748052) (4500,0.9195919125242314) (4750,0.919444726384083) (5000,0.9192736086468563)};
\addlegendentry{PPI{\textnormal{+}}{\textnormal{+}} Scalar (Pro)}

\addplot[color=brown!60, thick, densely dashed] coordinates {(250,0.9337415127010026) (500,0.9266667916217228) (750,0.9238693613479424) (1000,0.9223725341372523) (1250,0.9215488667035905) (1500,0.9209427437520867) (1750,0.9205305068075634) (2000,0.9201192900681335) (2250,0.9197809680406205) (2500,0.9195720887957706) (2750,0.9194090888329987) (3000,0.9192947838439127) (3250,0.9192021803427878) (3500,0.9191101836242109) (3750,0.9190368604739912) (4000,0.918928832183079) (4250,0.9188502386927782) (4500,0.9187508322621617) (4750,0.9186953536668976) (5000,0.9186612952049926)};
\addlegendentry{PPI{\textnormal{+}}{\textnormal{+}} Scalar (Flash)}

\legend{};
\end{axis}
\begin{axis}[
    title={Mean-squared Error (frac. of classical)},
    xlabel={Budget},
    xlabel style={yshift=5pt},   % Moves xlabel closer to the axis
    grid=both,
    outer sep=0pt,
    enlarge x limits=false,    minor grid style={dashed, gray!50},
    major grid style={solid, gray!80},
    width=\textwidth,
    yticklabel style={/pgf/number format/fixed}, % No scientific notation
    yticklabel={\pgfmathparse{\tick*100}\pgfmathprintnumber{\pgfmathresult}\%},
    xmin=0, xmax=5000,                          % Set x-axis range
    font=\scriptsize,             % Sets the base size for everything
    title style={yshift=-5pt, font=\small},    % Optional: Make title slightly bigger than the rest
    label style={font=\scriptsize}, % Font for xlabel and ylabel
    tick label style={font=\scriptsize}, % Font for the numbers on the axes
      xshift=11.7cm,
      width=6.5cm,
      height=5.5cm,
            legend pos=north east,
            legend cell align={left},
            legend style={font=\tiny,inner sep=1pt,row sep=-2pt},
]

\addplot[color=blue, thick] coordinates {(250,0.9415337353745189) (500,0.9333510753539588) (750,0.9264635446215488) (1000,0.9238424932400724) (1250,0.9220326287689277) (1500,0.9208392833276946) (1750,0.920491404604133) (2000,0.9188830769953523) (2250,0.9210484713710463) (2500,0.9191161171542401) (2750,0.9184462921865898) (3000,0.9169569949811881) (3250,0.9182015744844775) (3500,0.9174623933814354) (3750,0.9170755075974752) (4000,0.9166220389980524) (4250,0.9157667401591891) (4500,0.9152934257141438) (4750,0.9174215990020387) (5000,0.9149228282238807)};
\addlegendentry{MultiPPI}

\addplot[color=green!50!black, thick, dashdotted] coordinates {(250,0.9691687291795068) (500,0.9552789449233803) (750,0.943862393907119) (1000,0.9397525235825022) (1250,0.9354782735333677) (1500,0.9338712094583039) (1750,0.9255704544331537) (2000,0.9224674840833711) (2250,0.9288635591756605) (2500,0.9288094733365517) (2750,0.9227653524808638) (3000,0.922419880924207) (3250,0.9224392275507964) (3500,0.9209008794343542) (3750,0.9225570018955682) (4000,0.9230057788500835) (4250,0.9229316611332712) (4500,0.9217589723804286) (4750,0.9210063140616493) (5000,0.9189422080379199)};
\addlegendentry{PPI{\textnormal{+}}{\textnormal{+}} Vector}

\addplot[color=brown!90, thick, densely dashed] coordinates {(250,0.9585365772303855) (500,0.9458291383636018) (750,0.941045123275949) (1000,0.9348668408508402) (1250,0.9337263339346455) (1500,0.9311420174276922) (1750,0.9317970488372972) (2000,0.9299622949878438) (2250,0.9290298442338046) (2500,0.9271085733516884) (2750,0.9272410167978503) (3000,0.9272536243779175) (3250,0.9247041607592478) (3500,0.9251084135438157) (3750,0.922553717762973) (4000,0.9237983508193314) (4250,0.926352478369087) (4500,0.9248338187615982) (4750,0.9248807336448018) (5000,0.924189657060446)};
\addlegendentry{PPI{\textnormal{+}}{\textnormal{+}} Scalar (Pro)}

\addplot[color=brown!60, thick, densely dashed] coordinates {(250,0.9412839119307392) (500,0.931640999354131) (750,0.9323027901039446) (1000,0.9324674320012433) (1250,0.9285871623973225) (1500,0.9270601781126871) (1750,0.926462642377259) (2000,0.9265295059215264) (2250,0.9247907914744056) (2500,0.9260196418958097) (2750,0.9259669101831602) (3000,0.9265479982815203) (3250,0.9242408766193834) (3500,0.9233601310251223) (3750,0.9251584903180471) (4000,0.9226598066242832) (4250,0.925256553770054) (4500,0.9238577197920242) (4750,0.924160138384959) (5000,0.9250580754211756)};
\addlegendentry{PPI{\textnormal{+}}{\textnormal{+}} Scalar (Flash)}

\end{axis}
\end{tikzpicture}
}  
    \textrm{\bf \scriptsize Experiment 2: ProcessBench} 
    \input{figures/processbench_figure}
    \textrm{\bf \scriptsize Experiment 3: Factuality}
    \input{figures/factuality_figure}
    \vspace{-10pt}
    \caption{Results by budget for the experiments on Chatbot Arena (a), ProcessBench (b), and Factuality (c). For each estimator (all baselines and MultiPPI), the left column plots the empirical coverage of the 95\% CI, the middle column plots the width of the 95\% CI, and the right column plots the empirical mean-squared error of the point estimate. The fully-labeled sample size $N$ is 250.\looseness=-1}
    \label{fig:results-all}
    \vspace{-10pt}
\end{figure}

\section{Conclusion}
In this work, we introduce Multiple-Prediction-Powered Inference (MultiPPI), a framework for efficiently estimating expectations under budget constraints by optimally leveraging multiple information sources of varying costs. MultiPPI formulates the optimal allocation of queries across subsets of variables as a second-order cone program in the case of a single budget constraints, or a semi-definite program in the case of multiple---both can be efficiency solved using off-the-shelf tools. We provide theoretical guarantees, including minimax optimality when covariances are known, and demonstrate empirically across diverse LLM evaluation tasks that MultiPPI outperforms existing methods. By adaptively balancing cost and information, MultiPPI achieves lower error for a given budget, automatically shifting its strategy from cheaper proxies to more expensive, accurate predictors as the budget increases, thus offering a principled and practical solution for cost-effective inference.\looseness=-1
\newpage

\section*{Acknowledgements}
We are very grateful for insightful comments and suggestions by the anonymous reviewers. We are also grateful for helpful discussions with Jonathan Berant, Michael Beukman, Frederic J\o rgensen, Petros Karypis, Chris Dyer, and Martin Wainwright.

\section*{Reproducibility statement}
To ensure reproducibility, we provide a detailed specification of the algorithm in \Cref{sec:algorithm}. We also include implementation details in \Cref{sec:experimental-details}, and address computational considerations in \Cref{sec:computation}. Finally, all experiments shown in \S\ref{sec:results} were averaged over 500k trials.

\bibliographystyle{iclr2026_conference}
\bibliography{iclr2026_conference}

@article{zheng2024processbench,
  title={Processbench: Identifying process errors in mathematical reasoning},
  author={Zheng, Chujie and Zhang, Zhenru and Zhang, Beichen and Lin, Runji and Lu, Keming and Yu, Bowen and Liu, Dayiheng and Zhou, Jingren and Lin, Junyang},
  journal={arXiv preprint arXiv:2412.06559},
  year={2024}
}

@inproceedings{du2023improving,
  title={Improving factuality and reasoning in language models through multiagent debate},
  author={Du, Yilun and Li, Shuang and Torralba, Antonio and Tenenbaum, Joshua B and Mordatch, Igor},
  booktitle={Forty-first International Conference on Machine Learning},
  year={2023}
}

@misc{gemini,
    author="{Gemini API}",
	title = {Gemini {Developer} {API} {Pricing} {\textbar} {Gemini} {API}},
	url = {https://ai.google.dev/gemini-api/docs/pricing},
	language = {en},
	urldate = {2025-08-30},
	journal = {Google AI for Developers},
}

@book{wainwright2019high,
  title={High-dimensional statistics: A non-asymptotic viewpoint},
  author={Wainwright, Martin J},
  volume={48},
  year={2019},
  publisher={Cambridge university press}
}

@article{angelopoulos2023ppi++,
  title={{PPI}++: Efficient prediction-powered inference},
  author={Angelopoulos, Anastasios N and Duchi, John C and Zrnic, Tijana},
  journal={arXiv preprint arXiv:2311.01453},
  year={2023}
}

@article{model1992sarndal,
  author = {Särndal, Carl-Erik and Swensson, Bengt and Wretman, Jan},
   title = {Model Assisted Survey Sampling},
journal="Springer Series in Statistics",
  year = 1992
}

@article{chernozhukov2018dml,
    author = {Chernozhukov, Victor and Chetverikov, Denis and Demirer, Mert and Duflo, Esther and Hansen, Christian and Newey, Whitney and Robins, James},
    title = {Double/debiased machine learning for treatment and structural parameters},
    journal = {The Econometrics Journal},
    volume = {21},
    number = {1},
    pages = {C1-C68},
    year = {2018},
    month = {01},
    url = {https://doi.org/10.1111/ectj.12097},
}

@article{angelopoulos2023prediction,
  title={Prediction-powered inference},
  author={Angelopoulos, Anastasios N and Bates, Stephen and Fannjiang, Clara and Jordan, Michael I and Zrnic, Tijana},
  journal={Science},
  volume={382},
  number={6671},
  pages={669--674},
  year={2023},
  publisher={American Association for the Advancement of Science}
}

@article{boyeau2024autoeval,
  title={{AutoEval} Done Right: Using Synthetic Data for Model Evaluation},
  author={Boyeau, Pierre and Angelopoulos, Anastasios N and Yosef, Nir and Malik, Jitendra and Jordan, Michael I},
  journal={arXiv preprint arXiv:2403.07008},
  year={2024}
}

@inproceedings{chiang2024chatbot,
  title={Chatbot arena: An open platform for evaluating llms by human preference},
  author={Chiang, Wei-Lin and Zheng, Lianmin and Sheng, Ying and Angelopoulos, Anastasios Nikolas and Li, Tianle and Li, Dacheng and Zhu, Banghua and Zhang, Hao and Jordan, Michael and Gonzalez, Joseph E and others},
  booktitle={Forty-first International Conference on Machine Learning},
  year={2024}
}

@book{vershynin2018high,
  title={High-dimensional probability: An introduction with applications in data science},
  author={Vershynin, Roman},
  volume={47},
  year={2018},
  publisher={Cambridge university press}
}

@inproceedings{chaganty-etal-2018-price,
    title = "The price of debiasing automatic metrics in natural language evalaution",
    author = "Chaganty, Arun  and
      Mussmann, Stephen  and
      Liang, Percy",
    editor = "Gurevych, Iryna  and
      Miyao, Yusuke",
    booktitle = "Proceedings of the 56th Annual Meeting of the Association for Computational Linguistics (Volume 1: Long Papers)",
    month = jul,
    year = "2018",
    address = "Melbourne, Australia",
    publisher = "Association for Computational Linguistics",
    url = "https://aclanthology.org/P18-1060/",
    doi = "10.18653/v1/P18-1060",
    pages = "643--653"
}

@inproceedings{chatzi2024ppirank,
 author = {Chatzi, Ivi and Straitouri, Eleni and Thejaswi, Suhas and Rodriguez, Manuel Gomez},
 booktitle = {Advances in Neural Information Processing Systems},
 editor = {A. Globerson and L. Mackey and D. Belgrave and A. Fan and U. Paquet and J. Tomczak and C. Zhang},
 pages = {113096--113133},
 publisher = {Curran Associates, Inc.},
 title = {Prediction-Powered Ranking of Large Language Models},
 url = {https://proceedings.neurips.cc/paper_files/paper/2024/file/cd47cd67caa87f5b1944e00f6781598f-Paper-Conference.pdf},
 volume = {37},
 year = {2024}
}

@inproceedings{
fisch2024stratified,
title={Stratified Prediction-Powered Inference for Effective Hybrid Evaluation of Language Models},
author={Adam Fisch and Joshua Maynez and R. Alex Hofer and Bhuwan Dhingra and Amir Globerson and William W. Cohen},
booktitle={The Thirty-eighth Annual Conference on Neural Information Processing Systems},
year={2024},
url={https://openreview.net/forum?id=8CBcdDQFDQ}
}

@article{aipw_robins,
 ISSN = {01621459},
 URL = {http://www.jstor.org/stable/2291135},
 author = {James M. Robins and Andrea Rotnitzky},
 journal = {Journal of the American Statistical Association},
 number = {429},
 pages = {122--129},
 publisher = {[American Statistical Association, Taylor & Francis, Ltd.]},
 title = {Semiparametric Efficiency in Multivariate Regression Models with Missing Data},
 urldate = {2024-05-31},
 volume = {90},
 year = {1995}
}

@book{Ripley87,
  address = {New York, NY, USA},
  author = {Ripley, B. D.},
  isbn = {0-471-81884-4},
  publisher = {John Wiley \& Sons, Inc.},
  title = {Stochastic simulation},
  username = {dalbem},
  year = 1987
}

@article{angelopoulos2025costoptimalactiveaimodel,
      title={Cost-Optimal Active AI Model Evaluation}, 
      author={Anastasios N. Angelopoulos and Jacob Eisenstein and Jonathan Berant and Alekh Agarwal and Adam Fisch},
      year={2025},
      journal={arXiv preprint arXiv 2506.07949},
      url={https://arxiv.org/abs/2506.07949}, 
}

@article{vanderLaan2006tmle,
  author = {van der Laan, Mark J. and Rubin, Daniel},
  title = {Targeted Maximum Likelihood Learning},
  journal = {The International Journal of Biostatistics},
  volume = {2},
  number = {1},
  year = {2006},
  doi = {10.2202/1557-4679.1008},
  url = {https://www.degruyter.com/document/doi/10.2202/1557-4679.1008/html}
}

@inproceedings{saad-falcon-etal-2024-ares,
    title = "{ARES}: An Automated Evaluation Framework for Retrieval-Augmented Generation Systems",
    author = "Saad-Falcon, Jon  and
      Khattab, Omar  and
      Potts, Christopher  and
      Zaharia, Matei",
    editor = "Duh, Kevin  and
      Gomez, Helena  and
      Bethard, Steven",
    booktitle = "Proceedings of the 2024 Conference of the North American Chapter of the Association for Computational Linguistics: Human Language Technologies (Volume 1: Long Papers)",
    month = jun,
    year = "2024",
    address = "Mexico City, Mexico",
    publisher = "Association for Computational Linguistics",
    url = "https://aclanthology.org/2024.naacl-long.20/",
    doi = "10.18653/v1/2024.naacl-long.20",
    pages = "338--354"
}

@article{miao2024assumptionleandataadaptivepostpredictioninference,
      title={Assumption-Lean and Data-Adaptive Post-Prediction Inference}, 
      author={Jiacheng Miao and Xinran Miao and Yixuan Wu and Jiwei Zhao and Qiongshi Lu},
      year={2024},
      journal={arXiv preprint arXiv 2311.14220},
      url={https://arxiv.org/abs/2311.14220}, 
}

@InProceedings{pmlr-v32-neufeld14,
  title = 	 {Adaptive Monte Carlo via Bandit Allocation},
  author = 	 {Neufeld, James and Gyorgy, Andras and Szepesvari, Csaba and Schuurmans, Dale},
  booktitle = 	 {Proceedings of the 31st International Conference on Machine Learning},
  pages = 	 {1944--1952},
  year = 	 {2014},
  editor = 	 {Xing, Eric P. and Jebara, Tony},
  volume = 	 {32},
  number =       {2},
  series = 	 {Proceedings of Machine Learning Research},
  address = 	 {Bejing, China},
  month = 	 {22--24 Jun},
  publisher =    {PMLR},
  url = 	 {https://proceedings.mlr.press/v32/neufeld14.html}
}

@inproceedings{hazan2012linear,
  title     = {Linear Regression with Limited Observation},
  author    = {Hazan, Elad and Koren, Tomer},
  booktitle = {ICML},
  year      = {2012},
}

@article{JMLR:v12:cesa-bianchi11a,
  author  = {Nicol{{\'o}} Cesa-Bianchi and Shai Shalev-Shwartz and Ohad Shamir},
  title   = {Efficient Learning with Partially Observed Attributes},
  journal = {Journal of Machine Learning Research},
  year    = {2011},
  volume  = {12},
  number  = {87},
  pages   = {2857--2878},
  url     = {http://jmlr.org/papers/v12/cesa-bianchi11a.html}
}

@techreport{settles2009active,
  author = {Settles, Burr},
  institution = {University of Wisconsin--Madison},
  number = 1648,
  title = {Active Learning Literature Survey},
  type = {Computer Sciences Technical Report},
  url = {http://axon.cs.byu.edu/~martinez/classes/778/Papers/settles.activelearning.pdf},
  year = 2009
}

@inproceedings{activetesting,
  author = {Kossen, Jannik and Farquhar, Sebastian and Gal, Yarin and Rainforth, Tom},
  booktitle = {ICML},
  title = {Active Testing: Sample-Efficient Model Evaluation.},
  url = {http://dblp.uni-trier.de/db/conf/icml/icml2021.html#KossenFGR21},
  year = 2021
}

@inproceedings{zhang2015active,
 author = {Zhang, Chicheng and Chaudhuri, Kamalika},
 booktitle = {Advances in Neural Information Processing Systems},
 title = {Active Learning from Weak and Strong Labelers},
 year = {2015}
}

@article{bendavid2010theory,
author = {Ben-David, Shai and Blitzer, John and Crammer, Koby and Kulesza, Alex and Pereira, Fernando and Vaughan, Jennifer Wortman},
title = {A theory of learning from different domains},
year = {2010},
volume = {79},
number = {1–2},
url = {https://doi.org/10.1007/s10994-009-5152-4},
doi = {10.1007/s10994-009-5152-4},
journal = {Mach. Learn.},
month = may,
pages = {151–175},
}

@article{Pan2010ASO,
  title={A Survey on Transfer Learning},
  author={Sinno Jialin Pan and Qiang Yang},
  journal={IEEE Transactions on Knowledge and Data Engineering},
  year={2010},
  volume={22},
  pages={1345-1359},
  url={https://api.semanticscholar.org/CorpusID:740063}
}

@book{horn2012matrix,
  title={Matrix analysis},
  author={Horn, Roger A and Johnson, Charles R},
  year={2012},
  publisher={Cambridge university press}
}

@book{boyd,
  title={Convex optimization},
  author={Boyd, Stephen and Vandenberghe, Lieven},
  year={2004},
  publisher={Cambridge university press}
}

@article{demirel2024prediction,
  title={Prediction-powered generalization of causal inferences},
  author={Demirel, Ilker and Alaa, Ahmed and Philippakis, Anthony and Sontag, David},
  journal={arXiv preprint arXiv:2406.02873},
  year={2024}
}
\input
\clearpage
\appendix
\addcontentsline{toc}{chapter}{Appendices} % For main TOC
\etocsetnexttocdepth{3}
\localtableofcontents
\clearpage

\section{Ethics statement}
\label{sec:impacts}
This paper describes fundamental research on techniques for constructing statistically efficient estimates of a target metric by optimally allocating resources across multiple types of proxy measurements. The primary intended use case which is analyzed in this work is the evaluation of generative AI systems, for which reliable evaluation is a core technical challenge. Efficient and precise estimates of model performance  can help make AI systems easier to build, deploy, and monitor. We do not speculate about broader impacts that may follow from this technical contribution. Gemini was used for light copy-editing during the writing of this work.

\section{Generalization to multiple budget inequalities}\label{app:generalized}
We recall some notation. Fix a set $\mathcal{I}$ of index subsets $I\subseteq[k]$. For each $I\in\mathcal{I}$, let $c_I=(c_I^{(1)},\dots,c_I^{(m)})\in\R_{\geq0}^m$ denote the vector-valued cost of querying the collection of models indexed by $I$. Similarly, for each $I\in\mathcal{I}$, we let $n_I\geq0$ be an integer denoting the number of times that the collection of models indexed by $I$ is queried. We let $\underline{n}=(n_I)_{I\in\mathcal{I}}$ refer to the associated allocation.

For a vector-valued budget $B\in\R_{\geq0}^m$, we say that the allocation $\underline{n}$ satisfies the budget $B$, and write $\mathbf{B}(\underline{n},B)$, if
\[
\sum_{I\in\mathcal{I}} n_Ic_I^{(1)}\leq B^{(1)},\dots,\sum_{I\in\mathcal{I}}n_Ic_I^{(m)}\leq B^{(m)},
\]
or more succinctly, 
\[
\sum_{I\in\mathcal{I}}n_Ic_I\leq B.
\]
Similarly, for each $I\in\mathcal{I}$, we let $\lambda_I\in\R^{|I|}$, and denote by $\underline{\lambda}=(\lambda_I)_{I\in\mathcal{I}}$ their collection. Let
\[
\hat{\theta}_{\underline{n},\underline{\lambda}} = \sum_{I\in\mathcal{I}:n_I>0}\frac{1}{n_I}\sum_{j=1}^{n_I}\lambda_I^\top X_I^{(I,j)}
\]
where $X^{(I,j)}$ denote independent copies of $X$ for every $I\in\mathcal{I}$ and $1\leq j\leq n_I$. We say that the unbiased condition holds for $\underline{n},\underline{\lambda}$, and write $\mathbf{U}$, if $\Ex \hat{\theta}_{\underline{n},\underline{\lambda}}=a^\top\Ex X$ for every distribution of finite second-moment on $X$.

Note that the variance of $\hat{\theta}_{\underline{n},\underline{\lambda}}$ depends only upon $\Sigma=\Cov(X)$. Thus we let
\[
\hat{\theta}_{\text{MultiPPI}(\Sigma)} := \hat{\theta}_{\underline{n},\underline{\lambda}}
\quad\begin{array}[t]{@{}l}
\text{ where $\underline{n},\underline{\lambda}$ are chosen so that the resulting estimator} \\ 
\text{ has minimal variance under $\Sigma$ such that $\mathbf{B}$ and $\mathbf{U}$ hold.} 
\end{array}
\]
\section{Detailed specification of the algorithm}\label{sec:algorithm}

In this section, we outline the procedure used in all experiments in greater detail. First, we describe the algorithm for the case of a single budget inequality, for which a more-efficient procedure exists; second, we describe the general case, in which the procedure reduces to a semi-definite program (SDP).
We first suppose that $\Sigma$ is known, and later explain the procedure in the case that it must be estimated from data.\looseness=-1

\subsection{The case of a single budget inequality, known $\Sigma$}
We suppose that there is a random vector $X\in\R^k$ with known covariance $\Sigma$, and our goal is to estimate $\theta^*=a^\top \Ex X$ for some fixed $a\in\R^k$. There is some fixed collection $\mathcal{I}$ of index subsets $I\subset\{1,\dots,k\}$ such that we may sample $X_I:=(X_i)_{i\in I}$. We may sample $X_I$ a maximum of $n_I$ times, subject to the constraint that $\sum_{I\in\mathcal{I}} c_In_I\leq B$ for some $c_I\geq0$ and $B>0$.

\paragraph{Step 1:} Solve the SOCP
\[
\sup_{y\in \R^k} a^\top y \quad\quad\text{s.t.}\quad \bigwedge_{I\in\mathcal{I}} \big\{y_I^\top {\Sigma_I} y_I\leq c_I^{-1}\big\}
\]
and obtain the solution $y_I^\star$ and the multipliers $\alpha^\star_I\geq0$ for each $I\in\mathcal{I}$.

\paragraph{Step 2:} Set
\begin{align*}
    \lambda^\star_I &= 2\alpha^\star_I \Sigma_I^{-1} y_I^\star \\
    n^\star_I &= \left\lfloor \left(\frac{B}{c_I}\right) \frac{\sqrt{c_I (\lambda^\star_I)^\top \Sigma_I \lambda^\star_I}}{\sum_{J\in\mathcal{I}}\sqrt{c_J (\lambda^\star_I)^\top \Sigma_J \lambda^\star_J} }\right\rfloor
\end{align*}
for each $I\in\mathcal{I}$.

\paragraph{Step 3:} For each $I\in\mathcal{I}$, independently sample $X_I$ $n^\star_I$ times, and compute the sample mean $\overline{\lambda^\star_I\cdot X_I}$. Return
\[
\hat{\theta}_{\text{MultiPPI}(\Sigma)} = \sum_{I\in\mathcal{I}} \overline{\lambda_I^\star \cdot X_I}
\]
with $(1-\alpha)$-confidence intervals given by
\[
\mathcal{C} = \hat{\theta}_{\text{MultiPPI}(\Sigma)} \pm z_{1-\alpha/2} \sqrt{\sum_{I\in\mathcal{I}} \frac{1}{n^\star_I} \widehat{\sigma^2}_{\lambda_I^\star \cdot X_I}}
\]
where $\widehat{\sigma^2}_{\lambda_I^\star \cdot X_I}$ denotes the sample variance of $\lambda_I^\star \cdot X_I$, and $z_p$ denotes the $p$\textsuperscript{th} quantile of the standard normal distribution.

\subsection{The case of multiple budget inequalities, known $\Sigma$}\label{sec:multi-budget-alg}
We again suppose that there is a random vector $X\in\R^k$ with known covariance $\Sigma$, and our goal is to estimate $\theta^*=a^\top \Ex X$ for some fixed $a\in\R^k$. We may now sample $X_I$ a maximum of $n_I$ times, subject to the constraints that $\sum_{I\in\mathcal{I}} c_I^{(\ell)}n_I\leq B^{(\ell)}$ for some $c_I^{(\ell)}\geq0$ and $B^{(\ell)}>0$, with $1\leq\ell\leq m$.

\paragraph{Step 1:} Solve the SDP
\begin{align*}
    \sup_{t\in\R} t \quad\text{s.t.}\quad \begin{pmatrix}
    \sum_{I\in\mathcal{I}} n_I P_I^\top \Sigma_I^{-1}P_I & a \\
    a^\top & t
\end{pmatrix} &\succeq0, \\
n_I &\geq0\quad&\forall I\in\mathcal{I} \\
\sum_{I\in\mathcal{I}}c_I^{(\ell)}n_I&\leq B^{(\ell)}\quad &\forall \ell\leq m
\end{align*}
for real valued $n_I$, and obtain solutions $n^\star_{I,\text{frac}}$.

\paragraph{Step 2:} Set
\begin{align*}
n^\star_I &= \left\lfloor n^\star_{I,\text{frac}}\right\rfloor \\    
\lambda^\star_I &= n^\star_I\Sigma_I^{-1} P_I \left(\sum_{I\in\mathcal{I}} n^\star_I P_I^\top \Sigma_I^{-1} P_I\right)^\dagger a
\end{align*}
for all $I\in\mathcal{I}$.

\paragraph{Step 3:} 
As in the previous section, for each $I\in\mathcal{I}$, independently sample $X_I$ $n^\star_I$ times, and compute the sample mean $\overline{\lambda^\star_I\cdot X_I}$. Return
\[
\hat{\theta}_{\text{MultiPPI}(\Sigma)} = \sum_{I\in\mathcal{I}} \overline{\lambda_I^\star \cdot X_I}
\]
with $(1-\alpha)$-confidence intervals given by
\[
\mathcal{C} = \hat{\theta}_{\text{MultiPPI}(\Sigma)} \pm z_{1-\alpha/2} \sqrt{\sum_{I\in\mathcal{I}} \frac{1}{n^\star_I} \widehat{\sigma^2}_{\lambda_I^\star \cdot X_I}}
\]
where $\widehat{\sigma^2}_{\lambda_I^\star \cdot X_I}$ denotes the sample variance of $\lambda_I^\star \cdot X_I$, and $z_p$ denotes the $p$\textsuperscript{th} quantile of the standard normal distribution.

\subsection{The case of unknown $\Sigma$}\label{sec:algorithm_unknown}
In general, the approach is to construct an estimate $\widehat{\Sigma}$ of $\Sigma$ from data, and use this estimate for $\Sigma$ in the steps outlined above. In principle, it is possible to recycle the data used to construct $\wh{\Sigma}$ in step 3 of the above procedures; this preserves asymptotic normality as a consequence of \Cref{thm:asymptotic_revised}. Below, we detail one approach to doing this---the approach used in our experiments, and the approach outlined in \Cref{sec:practical}.

Suppose that $a=(1,0,\dots,0)$, and we have some hard limit $N$ on the number of samples available from $X_1$. This typically represents a ``gold'' label which is invaluable in some sense. We also suppose that these labeled samples are fully labeled---that is, that the entire vector $X=(X_1,\dots,X_k)$ is visible in each case---or alternatively, that $N$ is small enough that they are relatively inexpensive to obtain model predictions for. 

\paragraph{Step 1:} Construct the empirical covariance matrix $\wh{\Sigma}$ from the $N$ fully-labeled samples.

\paragraph{Step 2:} Take $\mathcal{I}$ to be all subsets of models---that is, all subsets of $\{2,\dots,k\}$---together with the set of all indices $\{1,\dots,k\}$. Formally, $\mathcal{I}=\{\{1,\dots,k\}\}\cup 2^{\{2,\dots,k\}}$.

\paragraph{Step 3:} Run \Cref{sec:multi-budget-alg} with any existing budget constraints, together with the constraint that $n_{\{1,\dots,k\}}\leq N$, and obtain allocations $n^\star_I,\lambda^*_I$.

\paragraph{Step 4:} Sample accordingly, with the guarantee that the number of fully labeled samples $X_{\{1,\dots,k\}}$ queried won't exceed the number available, $N$. These samples from step 1 may be reused for this.

\paragraph{Step 5:} Return the resulting estimator, as described in \Cref{sec:multi-budget-alg}.

\clearpage
\section{Additional experiments}

\subsection{Learned allocations and linear parameters}
\begin{figure}[!h]
    \centering
    \includegraphics[width=0.5\linewidth]{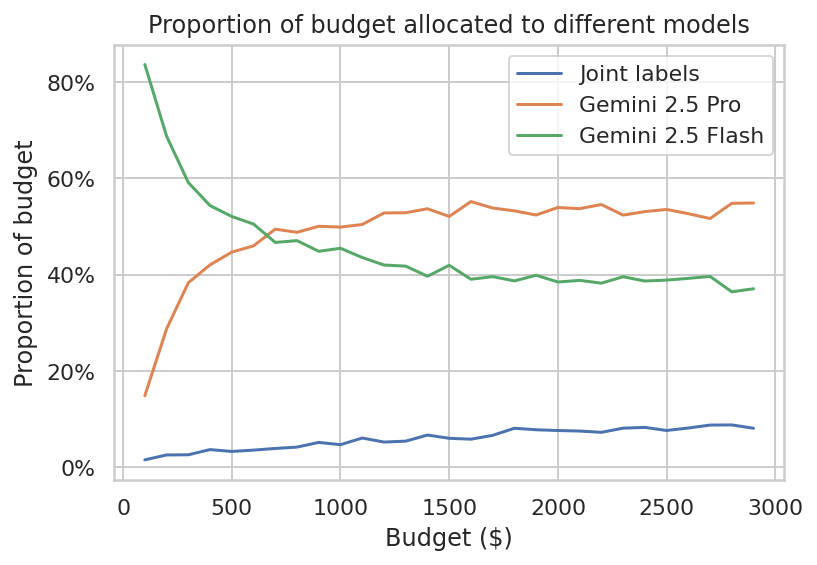}
    \caption{Proportion of budget allocated to different models in Experiment 1: ChatBot Arena. Gemini 2.5 Flash, the cheapest model, is most sampled in the low-budget regime, while the proportion of budget allocated to the joint (both models combined) increases monotonically with budget.}
    \label{fig:allocations-chatbotarena}
\end{figure}
\begin{figure}[!h]
    \centering
    \includegraphics[width=0.5\linewidth]{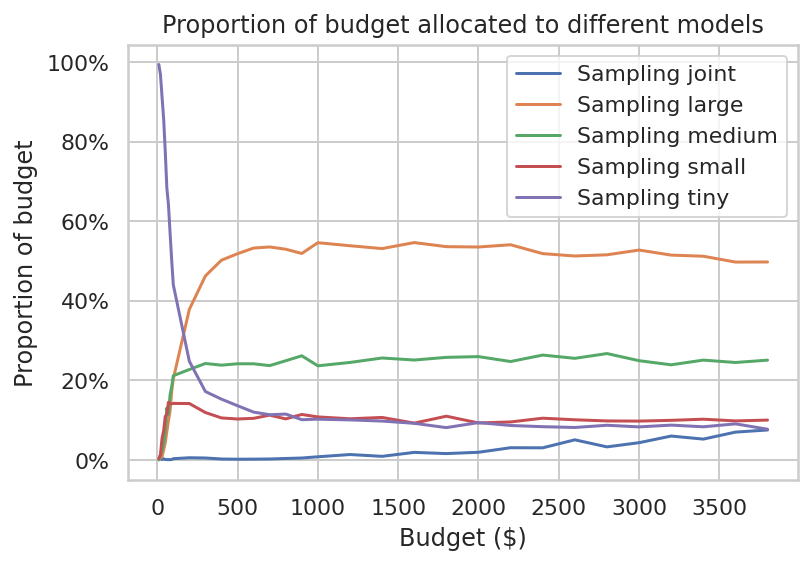}
    \caption{Proportion of budget allocated to different models in Experiment 2: ProcessBench. Tiny (125 word thinking budget) is most sampled in the low-budget regime, while the proportion of budget allocated to the joint (all models combined) increases monotonically with budget.}
    \label{fig:allocations-processbench}
\end{figure}

\begin{figure}[!h]
    \centering
    \includegraphics[width=\linewidth]{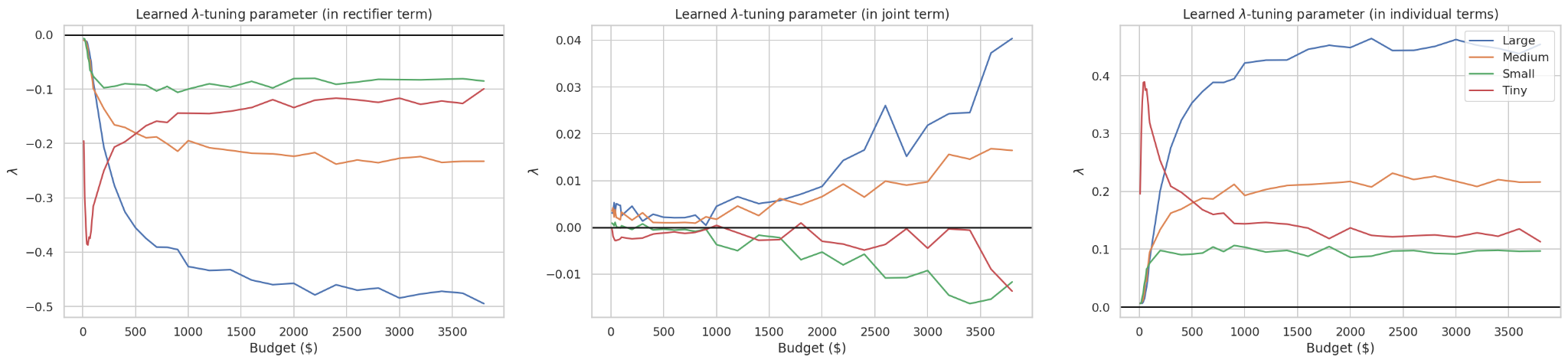}
    \caption{Linear parameters $\lambda_I$ learned across budget regimes in Experiment 2: ProcessBench. While only the tiny model (125 word thinking budget) has a nonzero linear parameter in the low-budget regime, a \textit{cascading} behavior is learned in the large-budget regime: the cheaper models are prescribed the opposite sign from the more-expensive models in the joint term.}
    \label{fig:lambdas-processbench}
\end{figure}
\clearpage

\begin{figure}[!h]
    \centering
    \includegraphics[width=\linewidth]{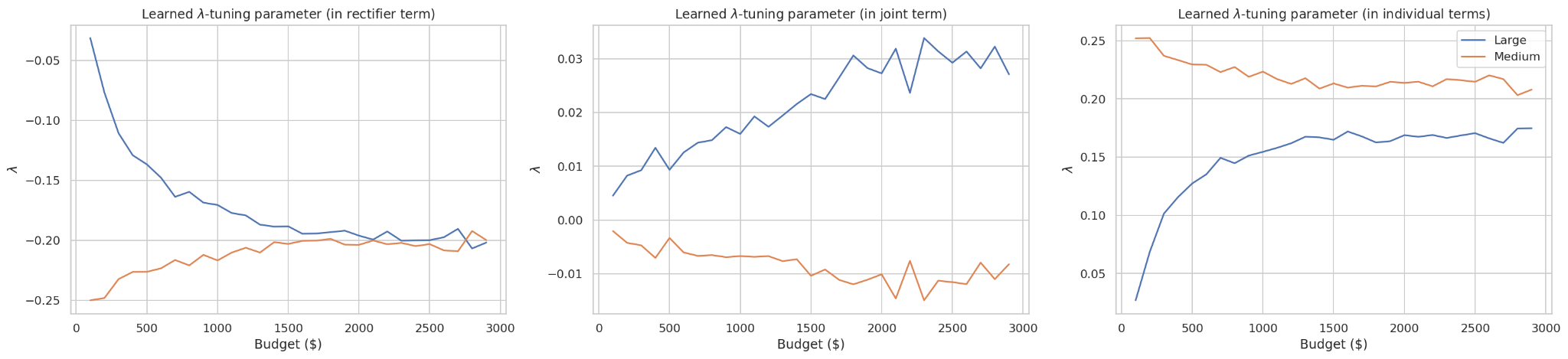}
    \caption{Linear parameters $\lambda_I$ learned across budget regimes in Experiment 1: ChatBot Arena. While only Gemini 2.5 Pro has a nonzero linear parameter in the low-budget regime, a \textit{cascading} behavior is learned in the large-budget regime: the cheaper model (Gemini 2.5 Flash) is prescribed the opposite sign from the more-expensive model (Gemini 2.5 Pro) in the joint term.}
    \label{fig:lambdas-chatbotarena}
\end{figure}
    
\subsection{MultiPPI by varying number of labeled samples}
\label{sec:experiments-num-labeled}
In this section, we compare results of MultiPPI for a variable number of fully-labeled samples. MultiPPI continues to achieve smaller MSE than all baselines in all settings considered. In \Cref{fig:varying}, we plot the performance of MultiPPI with number of fully-labeled sample varying between $N=10$ and $N=200$. The extreme case $N=10$ is shown in \Cref{fig:10}, while  $N=1000$ is shown in \Cref{fig:1000}. Even for $N=10$, we find that MultiPPI improves on all baselines in MSE. It is important to note that, for all methods, including the baselines, the coverage is significantly below 95\% due to the small sample size. Nevertheless, even in this extreme setting, MultiPPI performs best in MSE.

% \begin{figure}[!h]
%     \centering
%   \input{figures/factuality_figure_variable.tex}
%   \vspace{-10pt}
%     \caption{Results by number of labeled examples, with fixed 1:1 ratio between budget and number of labeled examples.}
%     \label{fig:chatbotarena1000}
%     \vspace{-10pt}
% \end{figure}

% \begin{figure}[!h]
%     \centering
%   \input{iclr2026/figures/factuality_figure_variable_5}
%   \vspace{-10pt}
%     \caption{Results by number of labeled examples, with fixed 5:1 ratio between budget and number of labeled examples.}
%     \label{fig:chatbotarena1000}
%     \vspace{-10pt}
% \end{figure}

\begin{figure}[!t]
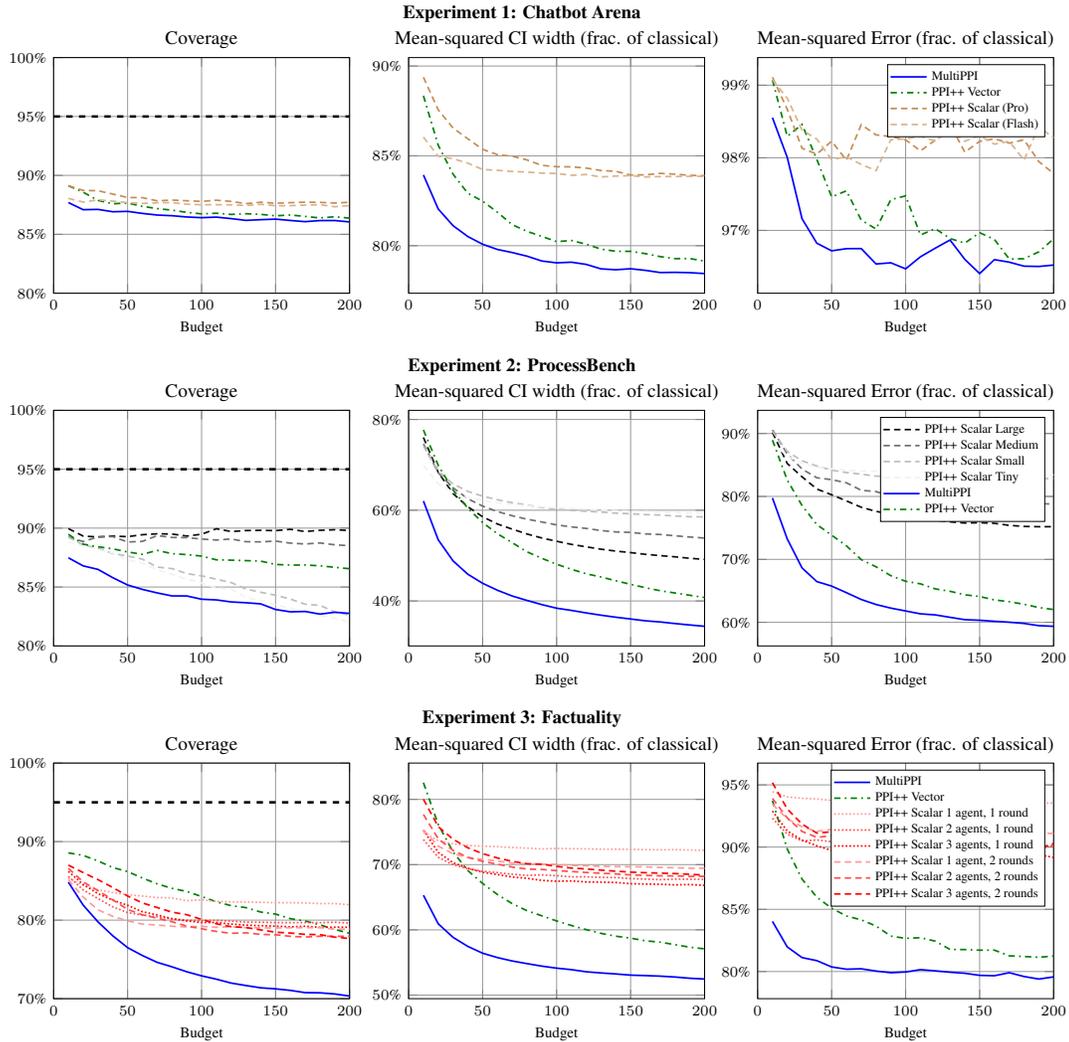

    \centering
    \textrm{\bf \scriptsize Experiment 1: Chatbot Arena}
    \scalebox{0.8}{
\centering
\begin{tikzpicture}
\begin{axis}[
    title={Coverage},
    xlabel={Budget},
    xlabel style={yshift=5pt},   % Moves xlabel closer to the axis
    grid=both,
    outer sep=0pt,
    enlarge x limits=false,    minor grid style={dashed, gray!50},
    major grid style={solid, gray!80},
    width=\textwidth,
    yticklabel style={/pgf/number format/fixed}, % No scientific notation
    yticklabel={\pgfmathparse{\tick*100}\pgfmathprintnumber{\pgfmathresult}\%},
    xmin=0, xmax=200,                          % Set x-axis range
    font=\scriptsize,             % Sets the base size for everything
    title style={yshift=-5pt, font=\small},    % Optional: Make title slightly bigger than the rest
    label style={font=\scriptsize}, % Font for xlabel and ylabel
    tick label style={font=\scriptsize}, % Font for the numbers on the axes
      xshift=0.0cm,
      width=6.5cm,
      height=5.5cm,
            ymin=0.8, ymax=1,
]

\addplot[color=black, dashed, very thick] coordinates {(0,.95) (5000,.95)};
\addplot[color=blue, thick] coordinates {(10,0.87695) (20,0.870882) (30,0.871124) (40,0.86913) (50,0.869456) (60,0.867726) (70,0.866332) (80,0.865782) (90,0.864686) (100,0.864118) (110,0.864576) (120,0.86331) (130,0.861808) (140,0.862414) (150,0.862882) (160,0.86173) (170,0.86073) (180,0.861652) (190,0.861682) (200,0.860516)};
\addlegendentry{MultiPPI}

\addplot[color=green!50!black, thick, dashdotted] coordinates {(10,0.891188) (20,0.885738) (30,0.87878) (40,0.875896) (50,0.876562) (60,0.87369) (70,0.871824) (80,0.87047) (90,0.868588) (100,0.867354) (110,0.867862) (120,0.866862) (130,0.867452) (140,0.866906) (150,0.865702) (160,0.866384) (170,0.86516) (180,0.863872) (190,0.864886) (200,0.863698)};
\addlegendentry{PPI{\textnormal{+}}{\textnormal{+}} Vector}

\addplot[color=brown!90, thick, densely dashed] coordinates {(10,0.891156) (20,0.887226) (30,0.886968) (40,0.884166) (50,0.881202) (60,0.881096) (70,0.878404) (80,0.87904) (90,0.878442) (100,0.877934) (110,0.878918) (120,0.877908) (130,0.875996) (140,0.877212) (150,0.876336) (160,0.876842) (170,0.877266) (180,0.87707) (190,0.876522) (200,0.87717)};
\addlegendentry{PPI{\textnormal{+}}{\textnormal{+}} Scalar (Pro)}

\addplot[color=brown!60, thick, densely dashed] coordinates {(10,0.880468) (20,0.87735) (30,0.879208) (40,0.878134) (50,0.876884) (60,0.876182) (70,0.876796) (80,0.876848) (90,0.875544) (100,0.87505) (110,0.875136) (120,0.874976) (130,0.874454) (140,0.875396) (150,0.874258) (160,0.874242) (170,0.874802) (180,0.8751) (190,0.873392) (200,0.874372)};
\addlegendentry{PPI{\textnormal{+}}{\textnormal{+}} Scalar (Flash)}

\legend{};
\end{axis}
\begin{axis}[
    title={Mean-squared CI width (frac. of classical)},
    xlabel={Budget},
    xlabel style={yshift=5pt},   % Moves xlabel closer to the axis
    grid=both,
    outer sep=0pt,
    enlarge x limits=false,    minor grid style={dashed, gray!50},
    major grid style={solid, gray!80},
    width=\textwidth,
    yticklabel style={/pgf/number format/fixed}, % No scientific notation
    yticklabel={\pgfmathparse{\tick*100}\pgfmathprintnumber{\pgfmathresult}\%},
    xmin=0, xmax=200,                          % Set x-axis range
    font=\scriptsize,             % Sets the base size for everything
    title style={yshift=-5pt, font=\small},    % Optional: Make title slightly bigger than the rest
    label style={font=\scriptsize}, % Font for xlabel and ylabel
    tick label style={font=\scriptsize}, % Font for the numbers on the axes
      xshift=5.9cm,
      width=6.5cm,
      height=5.5cm,
]

\addplot[color=blue, thick] coordinates {(10,0.8393697175424515) (20,0.8204322158482938) (30,0.8111769132190574) (40,0.8051062974122828) (50,0.8008182398939092) (60,0.7979173995302055) (70,0.7961997066525859) (80,0.7941863797524963) (90,0.7914922315057951) (100,0.7904646327912951) (110,0.7908260088196442) (120,0.7896023406859873) (130,0.7871118698583753) (140,0.7866320981005582) (150,0.7871743976839608) (160,0.7862906276083721) (170,0.7850717483185908) (180,0.7851825711865148) (190,0.7850028694775115) (200,0.7845046376061751)};
\addlegendentry{MultiPPI}

\addplot[color=green!50!black, thick, dashdotted] coordinates {(10,0.8834397617228924) (20,0.8558891804153179) (30,0.8398228471705688) (40,0.8292001113927214) (50,0.8248010008462136) (60,0.8186236645797176) (70,0.811686256495771) (80,0.8080079123648414) (90,0.8051968295621642) (100,0.8022687201519043) (110,0.8028975517785619) (120,0.8008412205948183) (130,0.7981046160383922) (140,0.7969028853233376) (150,0.7968657791608081) (160,0.7955830938710845) (170,0.7939316881731148) (180,0.7928099664814627) (190,0.7928675236506435) (200,0.7914193806342619)};
\addlegendentry{PPI{\textnormal{+}}{\textnormal{+}} Vector}

\addplot[color=brown!90, thick, densely dashed] coordinates {(10,0.8937688019065857) (20,0.8755820169817747) (30,0.8655121795697914) (40,0.8594951521695591) (50,0.8536337895234599) (60,0.850693616461009) (70,0.8497578675901312) (80,0.8477438292237578) (90,0.844709658642407) (100,0.8439665243975306) (110,0.8438904863654465) (120,0.843200932903508) (130,0.8418490781597662) (140,0.841393432158215) (150,0.8393645985800108) (160,0.8394308823911337) (170,0.8402557517894477) (180,0.8398177278693004) (190,0.839170583132366) (200,0.8390010564311886)};
\addlegendentry{PPI{\textnormal{+}}{\textnormal{+}} Scalar (Pro)}

\addplot[color=brown!60, thick, densely dashed] coordinates {(10,0.8604351496071132) (20,0.8498434647940092) (30,0.848428312965607) (40,0.845943023490771) (50,0.8424448415576357) (60,0.84194856464545) (70,0.8413149862998526) (80,0.8409753981011674) (90,0.840421841487838) (100,0.8401966730458358) (110,0.839181053540726) (120,0.8396164707746125) (130,0.8382583861239175) (140,0.8387871855446078) (150,0.8390075387690442) (160,0.8382604179128181) (170,0.8384797631973508) (180,0.838528264308914) (190,0.838619322456463) (200,0.8386660213207286)};
\addlegendentry{PPI{\textnormal{+}}{\textnormal{+}} Scalar (Flash)}

\legend{};
\end{axis}
\begin{axis}[
    title={Mean-squared Error (frac. of classical)},
    xlabel={Budget},
    xlabel style={yshift=5pt},   % Moves xlabel closer to the axis
    grid=both,
    outer sep=0pt,
    enlarge x limits=false,    minor grid style={dashed, gray!50},
    major grid style={solid, gray!80},
    width=\textwidth,
    yticklabel style={/pgf/number format/fixed}, % No scientific notation
    yticklabel={\pgfmathparse{\tick*100}\pgfmathprintnumber{\pgfmathresult}\%},
    xmin=0, xmax=200,                          % Set x-axis range
    font=\scriptsize,             % Sets the base size for everything
    title style={yshift=-5pt, font=\small},    % Optional: Make title slightly bigger than the rest
    label style={font=\scriptsize}, % Font for xlabel and ylabel
    tick label style={font=\scriptsize}, % Font for the numbers on the axes
      xshift=11.7cm,
      width=6.5cm,
      height=5.5cm,
            legend pos=north east,
            legend cell align={left},
            legend style={font=\tiny,inner sep=1pt,row sep=-2pt},
]

\addplot[color=blue, thick] coordinates {(10,0.9855233237477353) (20,0.9800532412030322) (30,0.9716025805386603) (40,0.9682306999194581) (50,0.9671833287906229) (60,0.9674806583863179) (70,0.9674910888723504) (80,0.9653720524127563) (90,0.9655341164810459) (100,0.9647034323069652) (110,0.9663598935042933) (120,0.9675463877223046) (130,0.9686743781260481) (140,0.9660033333779143) (150,0.9640527678637557) (160,0.9659707591641421) (170,0.9656213021260939) (180,0.9650723448751212) (190,0.9650298167337702) (200,0.9652229018265256)};
\addlegendentry{MultiPPI}

\addplot[color=green!50!black, thick, dashdotted] coordinates {(10,0.9907194664793624) (20,0.9829503902141552) (30,0.9845996343251454) (40,0.9797696013451132) (50,0.9746103064375347) (60,0.9754516243642646) (70,0.9714149960743317) (80,0.9701967201034377) (90,0.9742374178801364) (100,0.9747526843468401) (110,0.9693636568770007) (120,0.9702475238015471) (130,0.9688773030121346) (140,0.9682430045490189) (150,0.9696849908248601) (160,0.9687526690549582) (170,0.9660575711684225) (180,0.9660975137592577) (190,0.9670412355772597) (200,0.9688302457363824)};
\addlegendentry{PPI{\textnormal{+}}{\textnormal{+}} Vector}

\addplot[color=brown!90, thick, densely dashed] coordinates {(10,0.9910978370307952) (20,0.9866786274420013) (30,0.9813061290778713) (40,0.9805107703310276) (50,0.9822856242238708) (60,0.9796663737442669) (70,0.9845703701401789) (80,0.9831511550444273) (90,0.9829263045644364) (100,0.982465515016734) (110,0.98095357292486) (120,0.9824336176383369) (130,0.9846037950630674) (140,0.9807708924412083) (150,0.9822683903579161) (160,0.9826207831515937) (170,0.9820177248636136) (180,0.9824811558659589) (190,0.9794791196576359) (200,0.9778352369905925)};
\addlegendentry{PPI{\textnormal{+}}{\textnormal{+}} Scalar (Pro)}

\addplot[color=brown!60, thick, densely dashed] coordinates {(10,0.9907475707940188) (20,0.9882060823219496) (30,0.9838207134353095) (40,0.9825688642570305) (50,0.9797548111203452) (60,0.9801463686570374) (70,0.9790859565287556) (80,0.978231528940719) (90,0.9824938594750814) (100,0.9826049049156026) (110,0.9829501530458838) (120,0.9823130167863096) (130,0.9837958722705573) (140,0.9822752760602019) (150,0.9830867352459651) (160,0.9818789148571772) (170,0.9822650467976909) (180,0.9796811555207197) (190,0.9843210702872266) (200,0.9827099274811516)};
\addlegendentry{PPI{\textnormal{+}}{\textnormal{+}} Scalar (Flash)}

\end{axis}
\end{tikzpicture}
}  
    \textrm{\bf \scriptsize Experiment 2: ProcessBench} 
    \input{figures/processbench_figure_10}
    \textrm{\bf \scriptsize Experiment 3: Factuality}
    \input{figures/factuality_figure_10}
    \vspace{-10pt}
    \caption{Results given only $N=10$ labeled examples. Results are shown by budget for the experiments on Chatbot Arena (top), ProcessBench (middle), and Factuality (bottom). For each estimator (all baselines and MultiPPI), the left column plots the empirical coverage of the 95\% CI, the middle column plots the width of the 95\% CI, and the right column plots the empirical mean-squared error of the point estimate. \looseness=-1}
    \label{fig:10}
    \vspace{-10pt}
\end{figure}

\begin{figure}[!t]
    \centering
    \textrm{\bf \scriptsize Experiment 1: Chatbot Arena}
    \scalebox{.8}{\centering
\begin{tikzpicture}
\begin{axis}[
    title={Coverage},
    xlabel={Budget},
    xlabel style={yshift=5pt},   % Moves xlabel closer to the axis
    grid=both,
    outer sep=0pt,
    enlarge x limits=false,    minor grid style={dashed, gray!50},
    major grid style={solid, gray!80},
    width=\textwidth,
    yticklabel style={/pgf/number format/fixed}, % No scientific notation
    yticklabel={\pgfmathparse{\tick*100}\pgfmathprintnumber{\pgfmathresult}\%},
    xmin=0, xmax=20000,                          % Set x-axis range
    font=\scriptsize,             % Sets the base size for everything
    title style={yshift=-5pt, font=\small},    % Optional: Make title slightly bigger than the rest
    label style={font=\scriptsize}, % Font for xlabel and ylabel
    tick label style={font=\scriptsize}, % Font for the numbers on the axes
      xshift=0.0cm,
      width=6.5cm,
      height=5.5cm,
            ymin=0.9, ymax=1,
]

\addplot[color=black, dashed, very thick] coordinates {(0,.95) (5000,.95)};
\addplot[color=blue, thick] coordinates {(1000,0.949274) (2000,0.949068) (3000,0.949544) (4000,0.949628) (5000,0.949098) (6000,0.94912) (7000,0.949168) (8000,0.949224) (9000,0.949262) (10000,0.94917) (11000,0.9495) (12000,0.949082) (13000,0.948256) (14000,0.948504) (15000,0.94937) (16000,0.948826) (17000,0.948922) (18000,0.948998) (19000,0.94862) (20000,0.949084)};
\addlegendentry{MultiPPI}

\addplot[color=green!50!black, thick, dashdotted] coordinates {(1000,0.949586) (2000,0.949714) (3000,0.947598) (4000,0.947668) (5000,0.949256) (6000,0.949566) (7000,0.949148) (8000,0.948438) (9000,0.949978) (10000,0.94927) (11000,0.948962) (12000,0.948836) (13000,0.949516) (14000,0.949496) (15000,0.948638) (16000,0.949436) (17000,0.949308) (18000,0.948924) (19000,0.949052) (20000,0.94919)};
\addlegendentry{PPI{\textnormal{+}}{\textnormal{+}} Vector}

\addplot[color=brown!90, thick, densely dashed] coordinates {(1000,0.950554) (2000,0.950276) (3000,0.950218) (4000,0.95046) (5000,0.94905) (6000,0.949046) (7000,0.948746) (8000,0.949476) (9000,0.949514) (10000,0.949248) (11000,0.948976) (12000,0.94894) (13000,0.949134) (14000,0.949586) (15000,0.949906) (16000,0.949334) (17000,0.949342) (18000,0.949378) (19000,0.94924) (20000,0.94944)};
\addlegendentry{PPI{\textnormal{+}}{\textnormal{+}} Scalar (Pro)}

\addplot[color=brown!60, thick, densely dashed] coordinates {(1000,0.94958) (2000,0.949078) (3000,0.94932) (4000,0.949528) (5000,0.949268) (6000,0.949678) (7000,0.949804) (8000,0.948732) (9000,0.94917) (10000,0.949392) (11000,0.949014) (12000,0.949008) (13000,0.94928) (14000,0.949488) (15000,0.94961) (16000,0.94867) (17000,0.950424) (18000,0.949852) (19000,0.9498) (20000,0.9489)};
\addlegendentry{PPI{\textnormal{+}}{\textnormal{+}} Scalar (Flash)}

\legend{};
\end{axis}
\begin{axis}[
    title={Mean-squared CI width (frac. of classical)},
    xlabel={Budget},
    xlabel style={yshift=5pt},   % Moves xlabel closer to the axis
    grid=both,
    outer sep=0pt,
    enlarge x limits=false,    minor grid style={dashed, gray!50},
    major grid style={solid, gray!80},
    width=\textwidth,
    yticklabel style={/pgf/number format/fixed}, % No scientific notation
    yticklabel={\pgfmathparse{\tick*100}\pgfmathprintnumber{\pgfmathresult}\%},
    xmin=0, xmax=20000,                          % Set x-axis range
    font=\scriptsize,             % Sets the base size for everything
    title style={yshift=-5pt, font=\small},    % Optional: Make title slightly bigger than the rest
    label style={font=\scriptsize}, % Font for xlabel and ylabel
    tick label style={font=\scriptsize}, % Font for the numbers on the axes
      xshift=5.9cm,
      width=6.5cm,
      height=5.5cm,
]

\addplot[color=blue, thick] coordinates {(1000,0.9365540172347733) (2000,0.9279940885903003) (3000,0.923518110627097) (4000,0.9208088633342846) (5000,0.9189570792325394) (6000,0.9175921832790981) (7000,0.9164910606169235) (8000,0.9156748506793676) (9000,0.9150781006109757) (10000,0.9145540661240397) (11000,0.9141006460793741) (12000,0.9137310845770253) (13000,0.9134594032113424) (14000,0.9131799481667346) (15000,0.9128952893824723) (16000,0.912657871522569) (17000,0.9124753940024685) (18000,0.912324172787189) (19000,0.9121151782585596) (20000,0.9119629639458621)};
\addlegendentry{MultiPPI}

\addplot[color=green!50!black, thick, dashdotted] coordinates {(1000,0.9639657044565335) (2000,0.9484556175677382) (3000,0.9398404248930575) (4000,0.9342783597182173) (5000,0.9304986332528292) (6000,0.927630510500333) (7000,0.9254801094919953) (8000,0.9237780902903421) (9000,0.9223576063447055) (10000,0.9212179955030245) (11000,0.9202306240530287) (12000,0.919398244873218) (13000,0.9186878019002828) (14000,0.9180912703905452) (15000,0.9175073593788518) (16000,0.9170270755620303) (17000,0.916639811612487) (18000,0.9162503238381591) (19000,0.9158526476543661) (20000,0.9155650273802454)};
\addlegendentry{PPI{\textnormal{+}}{\textnormal{+}} Vector}

\addplot[color=brown!90, thick, densely dashed] coordinates {(1000,0.9597935282311227) (2000,0.9461787935240409) (3000,0.9391989174472801) (4000,0.935062650116862) (5000,0.9322726836032441) (6000,0.9302708517772372) (7000,0.9287760122331542) (8000,0.9276465291701175) (9000,0.9267095031599436) (10000,0.9259700455431293) (11000,0.9252672736862207) (12000,0.9247044332813973) (13000,0.9242885819279071) (14000,0.9239083420304247) (15000,0.9235369394312855) (16000,0.923218542339616) (17000,0.9229730258621824) (18000,0.9227166176605403) (19000,0.9224446370488687) (20000,0.9222080248575197)};
\addlegendentry{PPI{\textnormal{+}}{\textnormal{+}} Scalar (Pro)}

\addplot[color=brown!60, thick, densely dashed] coordinates {(1000,0.9365769002447676) (2000,0.929569197730841) (3000,0.9268631179392386) (4000,0.9254108510119781) (5000,0.9245446208985812) (6000,0.9239996604145924) (7000,0.9235710638924928) (8000,0.9232375507141142) (9000,0.9229116262846602) (10000,0.922751222594485) (11000,0.9225922220084404) (12000,0.9224273004809259) (13000,0.9223118599236867) (14000,0.9221943468283949) (15000,0.922073921732708) (16000,0.9220673190371799) (17000,0.9220005414898103) (18000,0.9219040613941991) (19000,0.9218194508250789) (20000,0.92173896587307)};
\addlegendentry{PPI{\textnormal{+}}{\textnormal{+}} Scalar (Flash)}

\legend{};
\end{axis}
\begin{axis}[
    title={Mean-squared Error (frac. of classical)},
    xlabel={Budget},
    xlabel style={yshift=5pt},   % Moves xlabel closer to the axis
    grid=both,
    outer sep=0pt,
    enlarge x limits=false,    minor grid style={dashed, gray!50},
    major grid style={solid, gray!80},
    width=\textwidth,
    yticklabel style={/pgf/number format/fixed}, % No scientific notation
    yticklabel={\pgfmathparse{\tick*100}\pgfmathprintnumber{\pgfmathresult}\%},
    xmin=0, xmax=20000,                          % Set x-axis range
    font=\scriptsize,             % Sets the base size for everything
    title style={yshift=-5pt, font=\small},    % Optional: Make title slightly bigger than the rest
    label style={font=\scriptsize}, % Font for xlabel and ylabel
    tick label style={font=\scriptsize}, % Font for the numbers on the axes
      xshift=11.7cm,
      width=6.5cm,
      height=5.5cm,
            legend pos=north east,
            legend cell align={left},
            legend style={font=\tiny,inner sep=1pt,row sep=-2pt},
]

\addplot[color=blue, thick] coordinates {(1000,0.9384498810371166) (2000,0.9313113294917891) (3000,0.9242329734308448) (4000,0.9226711796567072) (5000,0.9226758407203406) (6000,0.9207988662423808) (7000,0.9194148756502066) (8000,0.91894157986197) (9000,0.9172800163160336) (10000,0.9171373619178241) (11000,0.9166653334989701) (12000,0.9160451525371209) (13000,0.9173553451668081) (14000,0.917143839336111) (15000,0.9158584252305314) (16000,0.9150898854591314) (17000,0.9161098047416942) (18000,0.9161982471464877) (19000,0.9173728379559655) (20000,0.9150856808585291)};
\addlegendentry{MultiPPI}

\addplot[color=green!50!black, thick, dashdotted] coordinates {(1000,0.9666514694736841) (2000,0.9507786271722175) (3000,0.9458251048289251) (4000,0.940485854401145) (5000,0.9324614086405651) (6000,0.9289962410137284) (7000,0.9267412898253855) (8000,0.9271386153574581) (9000,0.923623095786627) (10000,0.923975208123204) (11000,0.9231448949832828) (12000,0.9225715264675385) (13000,0.9191996062498826) (14000,0.9201477136864146) (15000,0.9213456699466588) (16000,0.9189904094619791) (17000,0.9184862578829008) (18000,0.921003823341632) (19000,0.9192943308712991) (20000,0.919147650886344)};
\addlegendentry{PPI{\textnormal{+}}{\textnormal{+}} Vector}

\addplot[color=brown!90, thick, densely dashed] coordinates {(1000,0.9575793011001906) (2000,0.9450526691261466) (3000,0.9393146526181683) (4000,0.9344926966308179) (5000,0.9331603008938709) (6000,0.9321926247363359) (7000,0.9316087290243962) (8000,0.9300077317181406) (9000,0.9282055883223109) (10000,0.928359816952962) (11000,0.9277688141970771) (12000,0.927564152350475) (13000,0.9270350763787834) (14000,0.9256058316398537) (15000,0.9220084915586295) (16000,0.924681740023388) (17000,0.9261377834754526) (18000,0.9253119263663484) (19000,0.9243482754663763) (20000,0.9247081312328499)};
\addlegendentry{PPI{\textnormal{+}}{\textnormal{+}} Scalar (Pro)}

\addplot[color=brown!60, thick, densely dashed] coordinates {(1000,0.9395527659908117) (2000,0.9314545581633918) (3000,0.9284324614355017) (4000,0.9252810376040651) (5000,0.9265215102173489) (6000,0.9238010873553013) (7000,0.9243901480966175) (8000,0.9262675607884072) (9000,0.9253685309615148) (10000,0.9246033829229643) (11000,0.925446353107072) (12000,0.9255972310075582) (13000,0.9224224035289161) (14000,0.9237850592327386) (15000,0.9229810042636704) (16000,0.9245512819674873) (17000,0.9209141785702917) (18000,0.9217949335005752) (19000,0.9212295044842476) (20000,0.9239553395180174)};
\addlegendentry{PPI{\textnormal{+}}{\textnormal{+}} Scalar (Flash)}

\end{axis}
\end{tikzpicture}}  
    \textrm{\bf \scriptsize Experiment 2: ProcessBench} 
    \input{figures/processbench_figure_1000}
    \textrm{\bf \scriptsize Experiment 3: Factuality}
    \input{figures/factuality_figure_1000}
    \vspace{-10pt}
    \caption{Results given $N=1000$ labeled examples. Results are shown by budget for the experiments on Chatbot Arena (top), ProcessBench (middle), and Factuality (bottom). For each estimator (all baselines and MultiPPI), the left column plots the empirical coverage of the 95\% CI, the middle column plots the width of the 95\% CI, and the right column plots the empirical mean-squared error of the point estimate. \looseness=-1}
    \label{fig:1000}
    \vspace{-10pt}
\end{figure}

\begin{figure}[!h]
    \centering
  \textrm{\bf \scriptsize Experiment 1: Chatbot Arena}
  \includegraphics[width=1.0\textwidth]{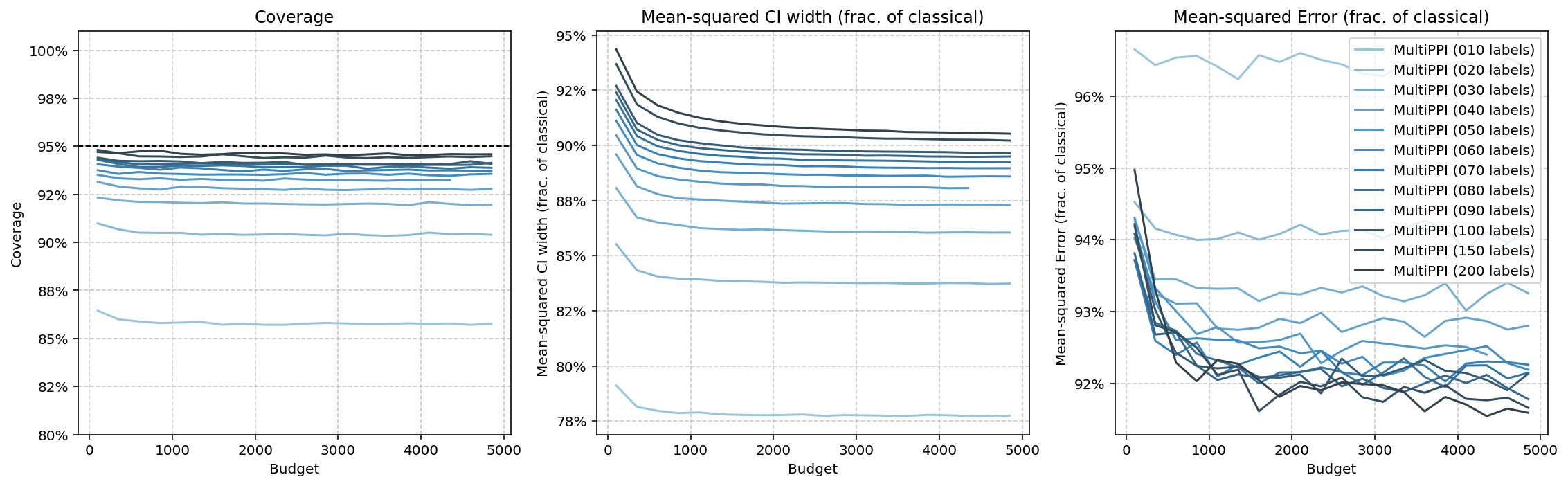}
  %\vspace{-10pt}
  \textrm{\bf \scriptsize Experiment 2: ProcessBench}
  \includegraphics[width=1.0\textwidth]{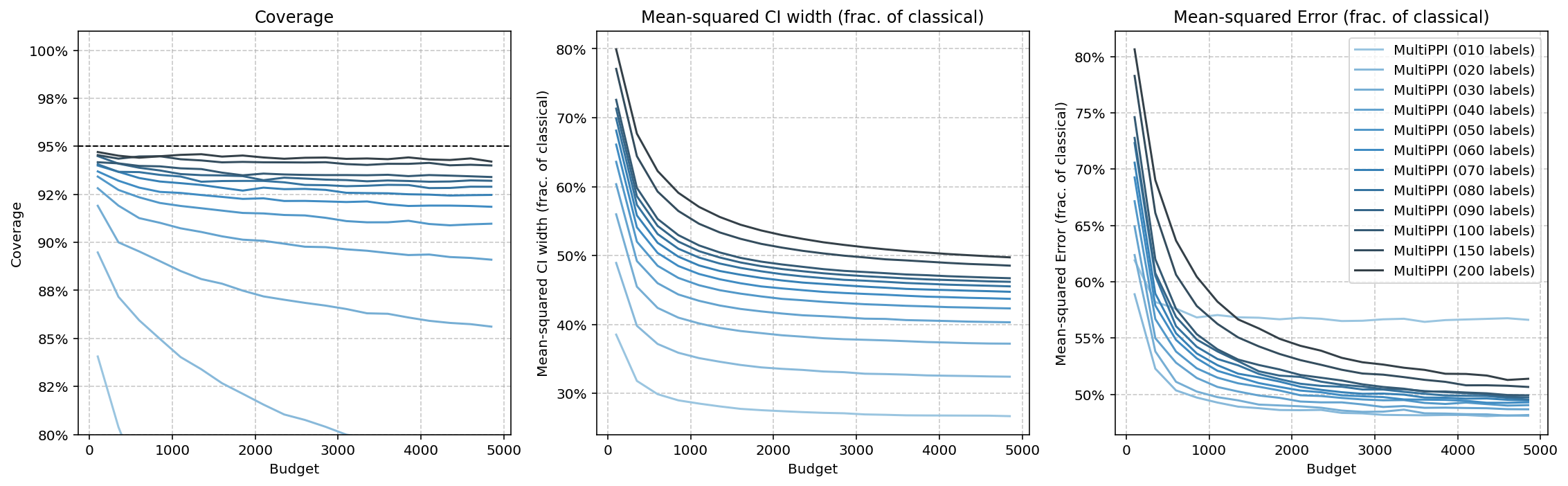}
  \textrm{\bf \scriptsize Experiment 3: Factuality}
  
\includegraphics[width=1.0\textwidth]{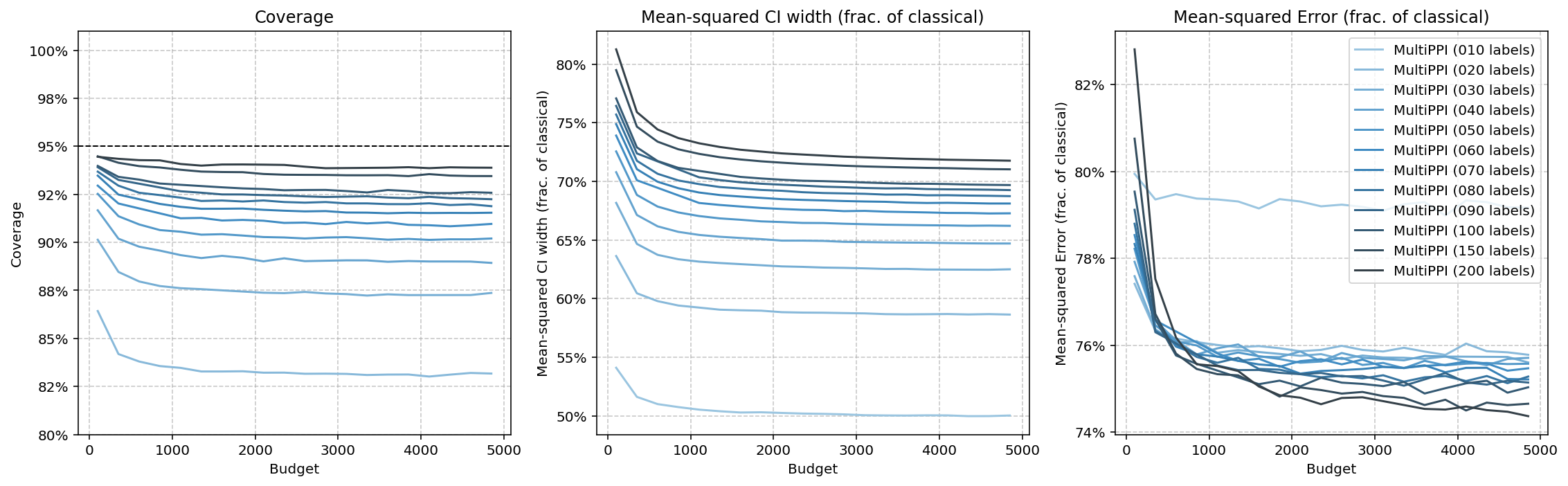}
  %\vspace{-10pt}
    \caption{MultiPPI with varying number of fully labeled examples.}
    \label{fig:varying}
    \vspace{-10pt}
\end{figure}

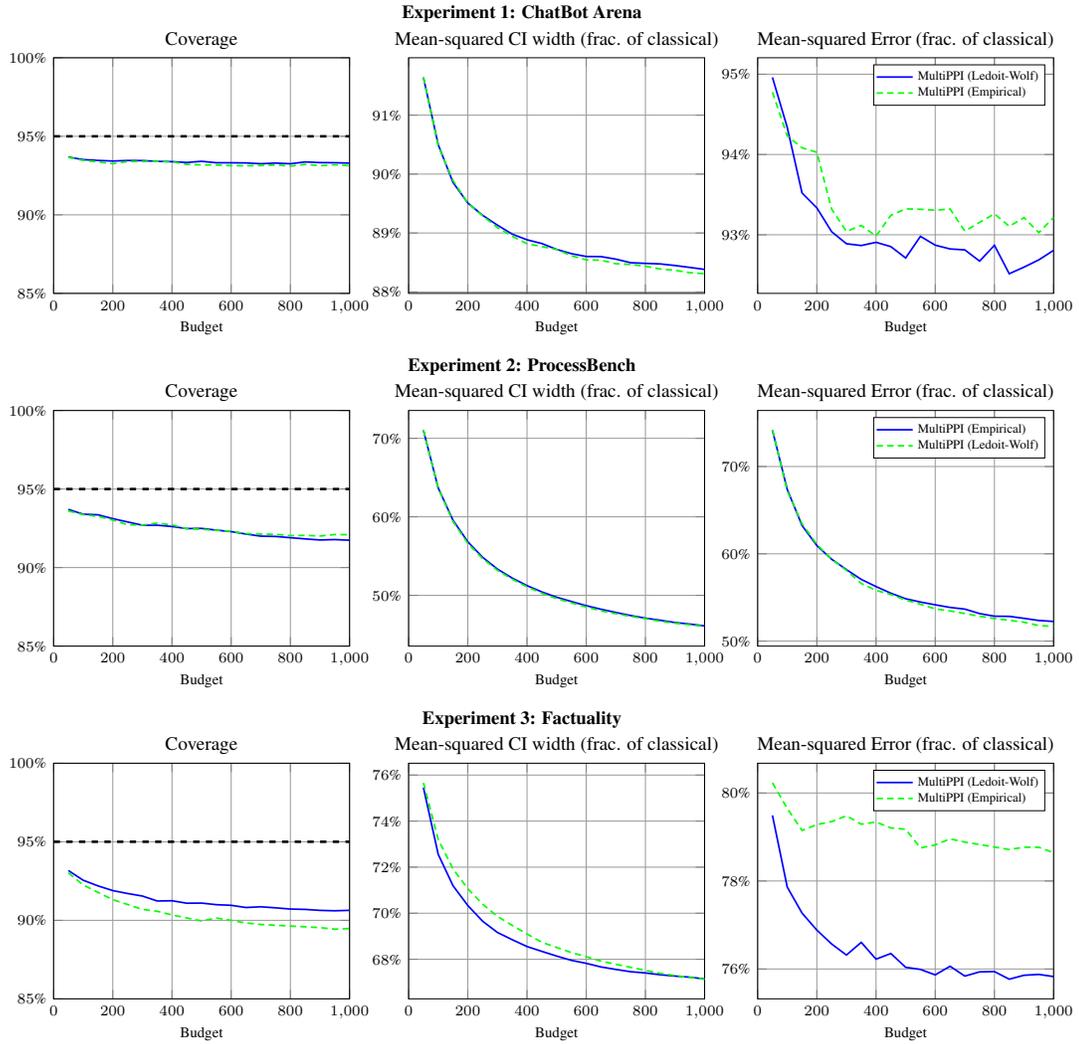
\begin{figure}
    \centering
    \textrm{\bf \scriptsize Experiment 1: ChatBot Arena}
    \scalebox{0.8}{
\centering
\begin{tikzpicture}
\begin{axis}[
    title={Coverage},
    xlabel={Budget},
    xlabel style={yshift=5pt},   % Moves xlabel closer to the axis
    grid=both,
    outer sep=0pt,
    enlarge x limits=false,    minor grid style={dashed, gray!50},
    major grid style={solid, gray!80},
    width=\textwidth,
    yticklabel style={/pgf/number format/fixed}, % No scientific notation
    yticklabel={\pgfmathparse{\tick*100}\pgfmathprintnumber{\pgfmathresult}\%},
    xmin=0, xmax=1000,                          % Set x-axis range
    font=\scriptsize,             % Sets the base size for everything
    title style={yshift=-5pt, font=\small},    % Optional: Make title slightly bigger than the rest
    label style={font=\scriptsize}, % Font for xlabel and ylabel
    tick label style={font=\scriptsize}, % Font for the numbers on the axes
      xshift=0.0cm,
      width=6.5cm,
      height=5.5cm,
            ymin=0.85, ymax=1,
]

\addplot[color=black, dashed, very thick] coordinates {(0,.95) (5000,.95)};
\addplot[color=blue, thick] coordinates {(50,0.93678) (100,0.935164) (150,0.934636) (200,0.934248) (250,0.934596) (300,0.934542) (350,0.934052) (400,0.933864) (450,0.933336) (500,0.934078) (550,0.933158) (600,0.933138) (650,0.933054) (700,0.93252) (750,0.932956) (800,0.932508) (850,0.933692) (900,0.933256) (950,0.933172) (1000,0.93293)};
\addlegendentry{MultiPPI (Ledoit-Wolf)}

\addplot[color=green, thick, densely dashed] coordinates {(50,0.936586) (100,0.934652) (150,0.93371) (200,0.932556) (250,0.93396) (300,0.933984) (350,0.934012) (400,0.933768) (450,0.932148) (500,0.931784) (550,0.931752) (600,0.931314) (650,0.93122) (700,0.93139) (750,0.931762) (800,0.931034) (850,0.932174) (900,0.931256) (950,0.931896) (1000,0.93131)};
\addlegendentry{MultiPPI (Empirical)}

\legend{};
\end{axis}
\begin{axis}[
    title={Mean-squared CI width (frac. of classical)},
    xlabel={Budget},
    xlabel style={yshift=5pt},   % Moves xlabel closer to the axis
    grid=both,
    outer sep=0pt,
    enlarge x limits=false,    minor grid style={dashed, gray!50},
    major grid style={solid, gray!80},
    width=\textwidth,
    yticklabel style={/pgf/number format/fixed}, % No scientific notation
    yticklabel={\pgfmathparse{\tick*100}\pgfmathprintnumber{\pgfmathresult}\%},
    xmin=0, xmax=1000,                          % Set x-axis range
    font=\scriptsize,             % Sets the base size for everything
    title style={yshift=-5pt, font=\small},    % Optional: Make title slightly bigger than the rest
    label style={font=\scriptsize}, % Font for xlabel and ylabel
    tick label style={font=\scriptsize}, % Font for the numbers on the axes
      xshift=5.9cm,
      width=6.5cm,
      height=5.5cm,
]

\addplot[color=blue, thick] coordinates {(50,0.9164090567391647) (100,0.9049872637176533) (150,0.8985714007169173) (200,0.8950913169506546) (250,0.8930025583082056) (300,0.8913318872342815) (350,0.8898057784642975) (400,0.8888455117949398) (450,0.8882116513110412) (500,0.8872140193872418) (550,0.8864905387046085) (600,0.8860129898831816) (650,0.8859909852961145) (700,0.8855633168885012) (750,0.8849541299493545) (800,0.8848428321495893) (850,0.8847501257046745) (900,0.8844771325699236) (950,0.8841572752710469) (1000,0.8838202537781186)};
\addlegendentry{MultiPPI (Ledoit-Wolf)}

\addplot[color=green, thick, densely dashed] coordinates {(50,0.9164029710465458) (100,0.9047662091539396) (150,0.8989154284434913) (200,0.8950622554627548) (250,0.8929169138791052) (300,0.8909064918500701) (350,0.8894354278999306) (400,0.8881723370314002) (450,0.887696819710712) (500,0.8872038010490143) (550,0.8861289726336172) (600,0.8854424788329114) (650,0.8853771044336499) (700,0.8847831248440161) (750,0.8846527373313299) (800,0.8843505298766667) (850,0.8838887206099857) (900,0.8836686375682391) (950,0.8832629261198237) (1000,0.8830848544201532)};
\addlegendentry{MultiPPI (Empirical)}

\legend{};
\end{axis}
\begin{axis}[
    title={Mean-squared Error (frac. of classical)},
    xlabel={Budget},
    xlabel style={yshift=5pt},   % Moves xlabel closer to the axis
    grid=both,
    outer sep=0pt,
    enlarge x limits=false,    minor grid style={dashed, gray!50},
    major grid style={solid, gray!80},
    width=\textwidth,
    yticklabel style={/pgf/number format/fixed}, % No scientific notation
    yticklabel={\pgfmathparse{\tick*100}\pgfmathprintnumber{\pgfmathresult}\%},
    xmin=0, xmax=1000,                          % Set x-axis range
    font=\scriptsize,             % Sets the base size for everything
    title style={yshift=-5pt, font=\small},    % Optional: Make title slightly bigger than the rest
    label style={font=\scriptsize}, % Font for xlabel and ylabel
    tick label style={font=\scriptsize}, % Font for the numbers on the axes
      xshift=11.7cm,
      width=6.5cm,
      height=5.5cm,
            legend pos=north east,
            legend cell align={left},
            legend style={font=\tiny,inner sep=1pt,row sep=-2pt},
]

\addplot[color=blue, thick] coordinates {(50,0.9496007015558312) (100,0.9433701489854492) (150,0.935205341841942) (200,0.9333489943887195) (250,0.9303813985404417) (300,0.9288850774062749) (350,0.9286613601463324) (400,0.929061804216448) (450,0.9285255334520865) (500,0.9271125030967432) (550,0.9298041075363036) (600,0.9287077367063746) (650,0.9282378376854579) (700,0.9281181416259152) (750,0.9267185063687005) (800,0.9286897879317082) (850,0.9251499124939538) (900,0.9259727394002606) (950,0.9268771123599007) (1000,0.9280628073961095)};
\addlegendentry{MultiPPI (Ledoit-Wolf)}

\addplot[color=green, thick, densely dashed] coordinates {(50,0.9477604301113974) (100,0.9423172663752447) (150,0.9408311341002188) (200,0.9402707618912466) (250,0.9331995540470072) (300,0.9304126940619714) (350,0.9311661356375273) (400,0.9298385124955051) (450,0.9324201433070022) (500,0.9332272788407633) (550,0.9331771339775407) (600,0.9330576213012906) (650,0.933213186601883) (700,0.9304565037865629) (750,0.9315459620824166) (800,0.9326117505678091) (850,0.9310526444364026) (900,0.9321344956674121) (950,0.9302626538538268) (1000,0.9321676634249987)};
\addlegendentry{MultiPPI (Empirical)}

\end{axis}
\end{tikzpicture}
}
    \textrm{\bf \scriptsize Experiment 2: ProcessBench}
    \scalebox{0.8}{
\centering
\begin{tikzpicture}
\begin{axis}[
    title={Coverage},
    xlabel={Budget},
    xlabel style={yshift=5pt},   % Moves xlabel closer to the axis
    grid=both,
    outer sep=0pt,
    enlarge x limits=false,    minor grid style={dashed, gray!50},
    major grid style={solid, gray!80},
    width=\textwidth,
    yticklabel style={/pgf/number format/fixed}, % No scientific notation
    yticklabel={\pgfmathparse{\tick*100}\pgfmathprintnumber{\pgfmathresult}\%},
    xmin=0, xmax=1000,                          % Set x-axis range
    font=\scriptsize,             % Sets the base size for everything
    title style={yshift=-5pt, font=\small},    % Optional: Make title slightly bigger than the rest
    label style={font=\scriptsize}, % Font for xlabel and ylabel
    tick label style={font=\scriptsize}, % Font for the numbers on the axes
      xshift=0.0cm,
      width=6.5cm,
      height=5.5cm,
            ymin=0.85, ymax=1,
]

\addplot[color=black, dashed, very thick] coordinates {(0,.95) (5000,.95)};
\addplot[color=blue, thick] coordinates {(50,0.93701) (100,0.933986) (150,0.933592) (200,0.931134) (250,0.929048) (300,0.92696) (350,0.926958) (400,0.926142) (450,0.92481) (500,0.92493) (550,0.923816) (600,0.922892) (650,0.92134) (700,0.919996) (750,0.919808) (800,0.919018) (850,0.918258) (900,0.91756) (950,0.917824) (1000,0.917406)};
\addlegendentry{MultiPPI (Empirical)}

\addplot[color=green, thick, densely dashed] coordinates {(50,0.936016) (100,0.933684) (150,0.93258) (200,0.93024) (250,0.927264) (300,0.927026) (350,0.928326) (400,0.927124) (450,0.924768) (500,0.924354) (550,0.9237) (600,0.923138) (650,0.921582) (700,0.921322) (750,0.9212) (800,0.92029) (850,0.920528) (900,0.920106) (950,0.921034) (1000,0.920898)};
\addlegendentry{MultiPPI (Ledoit-Wolf)}

\legend{};
\end{axis}
\begin{axis}[
    title={Mean-squared CI width (frac. of classical)},
    xlabel={Budget},
    xlabel style={yshift=5pt},   % Moves xlabel closer to the axis
    grid=both,
    outer sep=0pt,
    enlarge x limits=false,    minor grid style={dashed, gray!50},
    major grid style={solid, gray!80},
    width=\textwidth,
    yticklabel style={/pgf/number format/fixed}, % No scientific notation
    yticklabel={\pgfmathparse{\tick*100}\pgfmathprintnumber{\pgfmathresult}\%},
    xmin=0, xmax=1000,                          % Set x-axis range
    font=\scriptsize,             % Sets the base size for everything
    title style={yshift=-5pt, font=\small},    % Optional: Make title slightly bigger than the rest
    label style={font=\scriptsize}, % Font for xlabel and ylabel
    tick label style={font=\scriptsize}, % Font for the numbers on the axes
      xshift=5.9cm,
      width=6.5cm,
      height=5.5cm,
]

\addplot[color=blue, thick] coordinates {(50,0.7102507256517969) (100,0.63674577184455) (150,0.5955944924444864) (200,0.5681884225793133) (250,0.5484604545677358) (300,0.5334150894353978) (350,0.5220121368478097) (400,0.5123486686220909) (450,0.5043375167882358) (500,0.497590007884571) (550,0.49211114135887196) (600,0.4867996682241361) (650,0.48227301513358023) (700,0.47818276731063464) (750,0.4742931805136345) (800,0.4709444817307608) (850,0.468286633009942) (900,0.46534583873440616) (950,0.4634117190756791) (1000,0.46106954686359936)};
\addlegendentry{MultiPPI (Empirical)}

\addplot[color=green, thick, densely dashed] coordinates {(50,0.7104284432326666) (100,0.6361804462741223) (150,0.5940047059860405) (200,0.5661303592061417) (250,0.5467412687554667) (300,0.53187574786712) (350,0.5205079192547275) (400,0.5108925702552023) (450,0.5026650395712908) (500,0.4958221120760036) (550,0.4903616984599653) (600,0.48518346433162696) (650,0.480221652270261) (700,0.47659221682635233) (750,0.47322376929977994) (800,0.4702380765212926) (850,0.4669210231427845) (900,0.46449234999354055) (950,0.4624221010837662) (1000,0.4603441459609113)};
\addlegendentry{MultiPPI (Ledoit-Wolf)}

\legend{};
\end{axis}
\begin{axis}[
    title={Mean-squared Error (frac. of classical)},
    xlabel={Budget},
    xlabel style={yshift=5pt},   % Moves xlabel closer to the axis
    grid=both,
    outer sep=0pt,
    enlarge x limits=false,    minor grid style={dashed, gray!50},
    major grid style={solid, gray!80},
    width=\textwidth,
    yticklabel style={/pgf/number format/fixed}, % No scientific notation
    yticklabel={\pgfmathparse{\tick*100}\pgfmathprintnumber{\pgfmathresult}\%},
    xmin=0, xmax=1000,                          % Set x-axis range
    font=\scriptsize,             % Sets the base size for everything
    title style={yshift=-5pt, font=\small},    % Optional: Make title slightly bigger than the rest
    label style={font=\scriptsize}, % Font for xlabel and ylabel
    tick label style={font=\scriptsize}, % Font for the numbers on the axes
      xshift=11.7cm,
      width=6.5cm,
      height=5.5cm,
            legend pos=north east,
            legend cell align={left},
            legend style={font=\tiny,inner sep=1pt,row sep=-2pt},
]

\addplot[color=blue, thick] coordinates {(50,0.7415286970695942) (100,0.6737538510757545) (150,0.6324481711200541) (200,0.609449288231811) (250,0.5937982800230911) (300,0.5819009858444025) (350,0.5707004864461146) (400,0.5624693247741782) (450,0.5549002287591317) (500,0.5485429902953898) (550,0.5447669752806734) (600,0.5415916054305423) (650,0.5385901159003871) (700,0.5366599178143323) (750,0.5315263112459652) (800,0.5285338348235763) (850,0.5283562644628885) (900,0.5260687078176699) (950,0.5236186783193508) (1000,0.5225354285857369)};
\addlegendentry{MultiPPI (Empirical)}

\addplot[color=green, thick, densely dashed] coordinates {(50,0.7416967483228185) (100,0.6718947012152297) (150,0.634458521221496) (200,0.6108025173290396) (250,0.5937414057867306) (300,0.5810104924542603) (350,0.5660112713132612) (400,0.5581287489166212) (450,0.5533334511453084) (500,0.5470577605999188) (550,0.5422202868635849) (600,0.5368751052839603) (650,0.5345443236510281) (700,0.5315798752273319) (750,0.5284327821128794) (800,0.5258934507824748) (850,0.5238361690821306) (900,0.5216083150432199) (950,0.5178458620961298) (1000,0.5167790894161537)};
\addlegendentry{MultiPPI (Ledoit-Wolf)}

\end{axis}
\end{tikzpicture}
}
    \textrm{\bf \scriptsize Experiment 3: Factuality}
    \scalebox{0.8}{
\centering
\begin{tikzpicture}
\begin{axis}[
    title={Coverage},
    xlabel={Budget},
    xlabel style={yshift=5pt},   % Moves xlabel closer to the axis
    grid=both,
    outer sep=0pt,
    enlarge x limits=false,    minor grid style={dashed, gray!50},
    major grid style={solid, gray!80},
    width=\textwidth,
    yticklabel style={/pgf/number format/fixed}, % No scientific notation
    yticklabel={\pgfmathparse{\tick*100}\pgfmathprintnumber{\pgfmathresult}\%},
    xmin=0, xmax=1000,                          % Set x-axis range
    font=\scriptsize,             % Sets the base size for everything
    title style={yshift=-5pt, font=\small},    % Optional: Make title slightly bigger than the rest
    label style={font=\scriptsize}, % Font for xlabel and ylabel
    tick label style={font=\scriptsize}, % Font for the numbers on the axes
      xshift=0.0cm,
      width=6.5cm,
      height=5.5cm,
            ymin=0.85, ymax=1,
]

\addplot[color=black, dashed, very thick] coordinates {(0,.95) (5000,.95)};
\addplot[color=blue, thick] coordinates {(50,0.931684) (100,0.925516) (150,0.921914) (200,0.918916) (250,0.917062) (300,0.915444) (350,0.912292) (400,0.91245) (450,0.910884) (500,0.910956) (550,0.909946) (600,0.909544) (650,0.90814) (700,0.908614) (750,0.90794) (800,0.907174) (850,0.906936) (900,0.906322) (950,0.906064) (1000,0.906346)};
\addlegendentry{MultiPPI (Ledoit-Wolf)}

\addplot[color=green, thick, densely dashed] coordinates {(50,0.93021) (100,0.922508) (150,0.917794) (200,0.913154) (250,0.910076) (300,0.906918) (350,0.905752) (400,0.90341) (450,0.901504) (500,0.89958) (550,0.901462) (600,0.899956) (650,0.898336) (700,0.897342) (850,0.89585) (900,0.895328) (950,0.894384) (1000,0.894756)};
\addlegendentry{MultiPPI (Empirical)}

\legend{};
\end{axis}
\begin{axis}[
    title={Mean-squared CI width (frac. of classical)},
    xlabel={Budget},
    xlabel style={yshift=5pt},   % Moves xlabel closer to the axis
    grid=both,
    outer sep=0pt,
    enlarge x limits=false,    minor grid style={dashed, gray!50},
    major grid style={solid, gray!80},
    width=\textwidth,
    yticklabel style={/pgf/number format/fixed}, % No scientific notation
    yticklabel={\pgfmathparse{\tick*100}\pgfmathprintnumber{\pgfmathresult}\%},
    xmin=0, xmax=1000,                          % Set x-axis range
    font=\scriptsize,             % Sets the base size for everything
    title style={yshift=-5pt, font=\small},    % Optional: Make title slightly bigger than the rest
    label style={font=\scriptsize}, % Font for xlabel and ylabel
    tick label style={font=\scriptsize}, % Font for the numbers on the axes
      xshift=5.9cm,
      width=6.5cm,
      height=5.5cm,
]

\addplot[color=blue, thick] coordinates {(50,0.7545364668479843) (100,0.7255458162063846) (150,0.7119201995335014) (200,0.7033429812767781) (250,0.696524071161093) (300,0.6916367464120456) (350,0.6884894463427089) (400,0.6855218289916332) (450,0.6834237878735445) (500,0.6813894001504142) (550,0.67951864487514) (600,0.6782205725951692) (650,0.6766564862096361) (700,0.6755847039022301) (750,0.6745882831724183) (800,0.6740555022443864) (850,0.6732813897423021) (900,0.6727000496701429) (950,0.6722167921964767) (1000,0.6714457175976168)};
\addlegendentry{MultiPPI (Ledoit-Wolf)}

\addplot[color=green, thick, densely dashed] coordinates {(50,0.7566034640949048) (100,0.7320025753376791) (150,0.7192117416496288) (200,0.7106197783539396) (250,0.7039413156688763) (300,0.6986149833866808) (350,0.6946833525705519) (400,0.6909818862011394) (450,0.6873755873510305) (500,0.6851209171934288) (550,0.6828992114450426) (600,0.6811096255461176) (650,0.6791931202266553) (700,0.6778154505171291) (850,0.6738732937145907) (900,0.6729310667357478) (950,0.6720623691217819) (1000,0.6713383487546645)};
\addlegendentry{MultiPPI (Empirical)}

\legend{};
\end{axis}
\begin{axis}[
    title={Mean-squared Error (frac. of classical)},
    xlabel={Budget},
    xlabel style={yshift=5pt},   % Moves xlabel closer to the axis
    grid=both,
    outer sep=0pt,
    enlarge x limits=false,    minor grid style={dashed, gray!50},
    major grid style={solid, gray!80},
    width=\textwidth,
    yticklabel style={/pgf/number format/fixed}, % No scientific notation
    yticklabel={\pgfmathparse{\tick*100}\pgfmathprintnumber{\pgfmathresult}\%},
    xmin=0, xmax=1000,                          % Set x-axis range
    font=\scriptsize,             % Sets the base size for everything
    title style={yshift=-5pt, font=\small},    % Optional: Make title slightly bigger than the rest
    label style={font=\scriptsize}, % Font for xlabel and ylabel
    tick label style={font=\scriptsize}, % Font for the numbers on the axes
      xshift=11.7cm,
      width=6.5cm,
      height=5.5cm,
            legend pos=north east,
            legend cell align={left},
            legend style={font=\tiny,inner sep=1pt,row sep=-2pt},
]

\addplot[color=blue, thick] coordinates {(50,0.7949081400250686) (100,0.7786140849376558) (150,0.7727034268397065) (200,0.7688334978778444) (250,0.7656795057417095) (300,0.7631636433328535) (350,0.7660535033967734) (400,0.7622304664272552) (450,0.7635189532789504) (500,0.7603842564882731) (550,0.7598959203567006) (600,0.7586542968122403) (650,0.7606197542972088) (700,0.7583829198571969) (750,0.7593437537641989) (800,0.7594086697945458) (850,0.7576758823212617) (900,0.7585728076381071) (950,0.7587632228894517) (1000,0.7582761252461898)};
\addlegendentry{MultiPPI (Ledoit-Wolf)}

\addplot[color=green, thick, densely dashed] coordinates {(50,0.8023032442083847) (100,0.7964836333926696) (150,0.7914962378638787) (200,0.7928243127532993) (250,0.7934967123564726) (300,0.7948219009149913) (350,0.792902406025407) (400,0.7933934550562645) (450,0.7920635997248212) (500,0.7917695854520442) (550,0.7875103310661308) (600,0.7882057823869585) (650,0.7895671701337454) (700,0.7888430856556551) (850,0.7871563503970929) (900,0.7876793837210879) (950,0.7876552652257569) (1000,0.7864314600139845)};
\addlegendentry{MultiPPI (Empirical)}

\end{axis}
\end{tikzpicture}
}
    \caption{Comparison of results with different techniques for covariance estimation, for $N=50$. We find that Ledoit-Wolf shrinkage covariance estimation yields best performance in all regimes.}
    \label{fig:shrinkage}
\end{figure}

\subsection{The impact of shrinkage covariance estimation}

In this section, we discuss the impact of shrinkage covariance estimation on MultiPPI. We provide finite-sample bounds on the induced performance, and empirical results.

In \autoref{fig:shrinkage}, we compare performance of MultiPPI with covariance estimation via (a) the empirical covariance matrix, and (b) the Ledoit-Wolf estimated covariance matrix.

For a finite-sample bound on our estimator with shrinkage estimation, see \Cref{thm:ledoit}.
For more general results on {sensitivity to mis-specification,} please refer to \Cref{thm:stab}. 

\subsection{Scalability and computational tractability of the estimator}\label{sec:restricted}
SOCPs and SDPs are known to run in polynomial time in the number of contraints, which is, in our formulation, $|\mathcal{I}|$. In the preceding sections we have made the choice $\mathcal{I}=2^{\{1,\dots,k\}}$, but we show in this section that we may recover much of the same performance with a choice of $\mathcal{I}\subseteq 2^{\{1,\dots,k\}}$ which grows only linearly in $k$. Specifically, we take $\mathcal{I}=\{\{1,\dots,k\},\{2,\dots,k\},\{2\},\dots,\{k\}\}$, which corresponds to including terms for each model individually, as well as for their joint. We label the version of MultiPPI induced by this choice ``MultiPPI (Restricted).'' \Cref{fig:restricted} shows that the results of this method are very comparable to those of standard MultiPPI, in which we take $\mathcal{I}$ to be the collection of \textit{all} subsets of $\{1,\dots,k\}$. For more information on the computational tractability of the procedure, please see \Cref{sec:computation}. While our restricted formulation empirically recovers the performance of the full exponential search space by growing only linearly in $k$, a formal theoretical guarantee for this heuristic remains an open question. Proving that the restricted estimator incurs a small variance penalty in comparison to the full $O(2^k)$ formulation would validate its use for larger autorater ensembles.

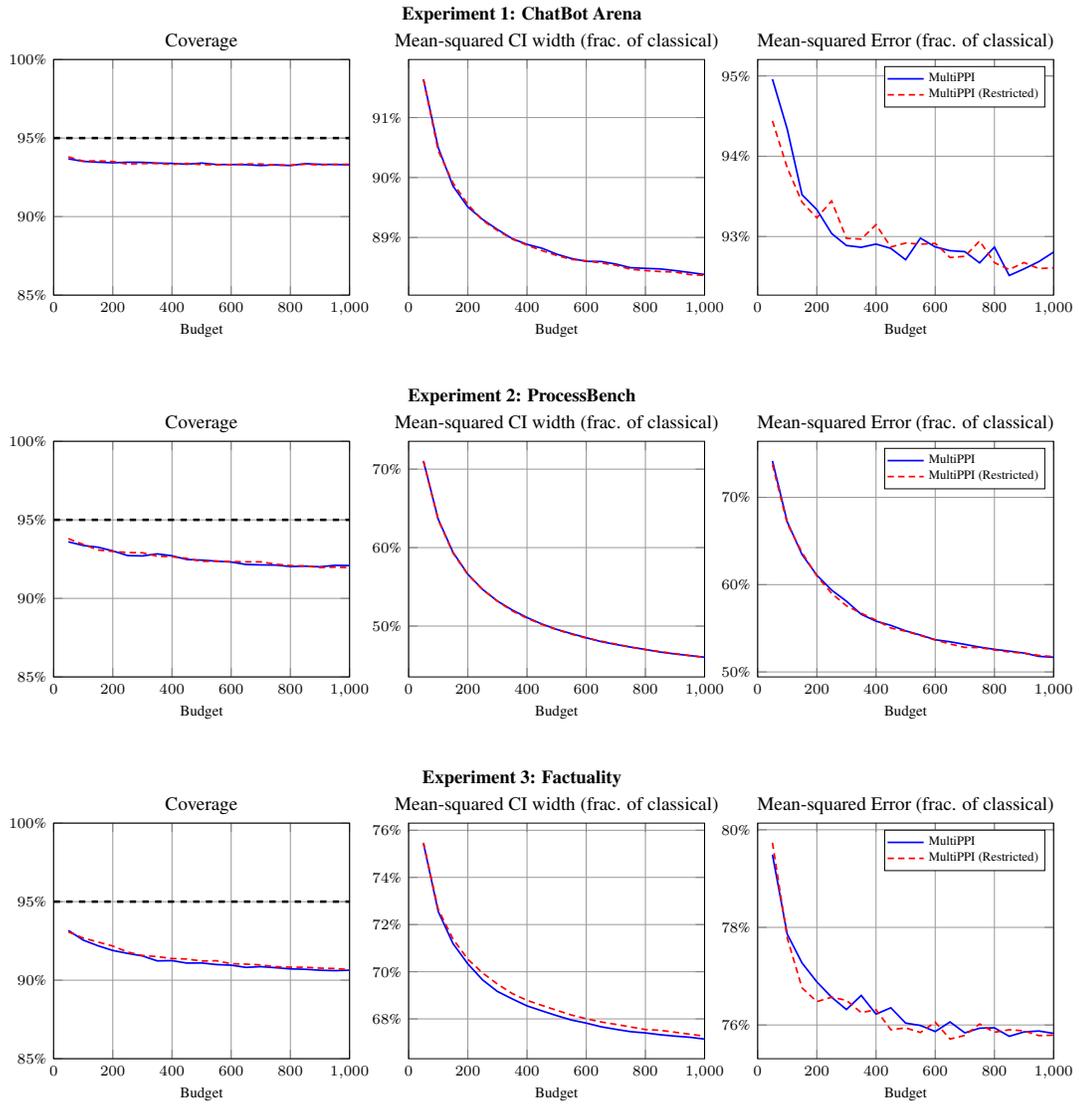
\begin{figure}
    \centering

    \textrm{\bf \scriptsize Experiment 1: ChatBot Arena}
    \scalebox{0.8}{
\centering
\begin{tikzpicture}
\begin{axis}[
    title={Coverage},
    xlabel={Budget},
    xlabel style={yshift=5pt},   % Moves xlabel closer to the axis
    grid=both,
    outer sep=0pt,
    enlarge x limits=false,    minor grid style={dashed, gray!50},
    major grid style={solid, gray!80},
    width=\textwidth,
    yticklabel style={/pgf/number format/fixed}, % No scientific notation
    yticklabel={\pgfmathparse{\tick*100}\pgfmathprintnumber{\pgfmathresult}\%},
    xmin=0, xmax=1000,                          % Set x-axis range
    font=\scriptsize,             % Sets the base size for everything
    title style={yshift=-5pt, font=\small},    % Optional: Make title slightly bigger than the rest
    label style={font=\scriptsize}, % Font for xlabel and ylabel
    tick label style={font=\scriptsize}, % Font for the numbers on the axes
      xshift=0.0cm,
      width=6.5cm,
      height=5.5cm,
            ymin=0.85, ymax=1,
]

\addplot[color=black, dashed, very thick] coordinates {(0,.95) (5000,.95)};
\addplot[color=blue, thick] coordinates {(50,0.93678) (100,0.935164) (150,0.934636) (200,0.934248) (250,0.934596) (300,0.934542) (350,0.934052) (400,0.933864) (450,0.933336) (500,0.934078) (550,0.933158) (600,0.933138) (650,0.933054) (700,0.93252) (750,0.932956) (800,0.932508) (850,0.933692) (900,0.933256) (950,0.933172) (1000,0.93293)};
\addlegendentry{MultiPPI}

\addplot[color=red, thick, densely dashed] coordinates {(50,0.938018) (100,0.935344) (150,0.935494) (200,0.935086) (250,0.933386) (300,0.933802) (350,0.933838) (400,0.933184) (450,0.933878) (500,0.933122) (550,0.9329) (600,0.933118) (650,0.933506) (700,0.933468) (750,0.932696) (800,0.932768) (850,0.933362) (900,0.932846) (950,0.93331) (1000,0.933302)};
\addlegendentry{MultiPPI (Restricted)}

\legend{};
\end{axis}
\begin{axis}[
    title={Mean-squared CI width (frac. of classical)},
    xlabel={Budget},
    xlabel style={yshift=5pt},   % Moves xlabel closer to the axis
    grid=both,
    outer sep=0pt,
    enlarge x limits=false,    minor grid style={dashed, gray!50},
    major grid style={solid, gray!80},
    width=\textwidth,
    yticklabel style={/pgf/number format/fixed}, % No scientific notation
    yticklabel={\pgfmathparse{\tick*100}\pgfmathprintnumber{\pgfmathresult}\%},
    xmin=0, xmax=1000,                          % Set x-axis range
    font=\scriptsize,             % Sets the base size for everything
    title style={yshift=-5pt, font=\small},    % Optional: Make title slightly bigger than the rest
    label style={font=\scriptsize}, % Font for xlabel and ylabel
    tick label style={font=\scriptsize}, % Font for the numbers on the axes
      xshift=5.9cm,
      width=6.5cm,
      height=5.5cm,
]

\addplot[color=blue, thick] coordinates {(50,0.9164090567391647) (100,0.9049872637176533) (150,0.8985714007169173) (200,0.8950913169506546) (250,0.8930025583082056) (300,0.8913318872342815) (350,0.8898057784642975) (400,0.8888455117949398) (450,0.8882116513110412) (500,0.8872140193872418) (550,0.8864905387046085) (600,0.8860129898831816) (650,0.8859909852961145) (700,0.8855633168885012) (750,0.8849541299493545) (800,0.8848428321495893) (850,0.8847501257046745) (900,0.8844771325699236) (950,0.8841572752710469) (1000,0.8838202537781186)};
\addlegendentry{MultiPPI}

\addplot[color=red, thick, densely dashed] coordinates {(50,0.9163886372616877) (100,0.9044658002309899) (150,0.89908123822742) (200,0.8955149156342862) (250,0.8928428412781502) (300,0.8911642728378455) (350,0.8897300000768683) (400,0.8887156283964417) (450,0.8878069985685989) (500,0.8870060766255824) (550,0.8863330039471056) (600,0.8861074112773726) (650,0.8857462622050291) (700,0.8853649617148853) (750,0.8846886779986036) (800,0.8844806315326778) (850,0.8843125174909024) (900,0.8842326446976574) (950,0.8838120435921081) (1000,0.8836551330629692)};
\addlegendentry{MultiPPI (Restricted)}

\legend{};
\end{axis}
\begin{axis}[
    title={Mean-squared Error (frac. of classical)},
    xlabel={Budget},
    xlabel style={yshift=5pt},   % Moves xlabel closer to the axis
    grid=both,
    outer sep=0pt,
    enlarge x limits=false,    minor grid style={dashed, gray!50},
    major grid style={solid, gray!80},
    width=\textwidth,
    yticklabel style={/pgf/number format/fixed}, % No scientific notation
    yticklabel={\pgfmathparse{\tick*100}\pgfmathprintnumber{\pgfmathresult}\%},
    xmin=0, xmax=1000,                          % Set x-axis range
    font=\scriptsize,             % Sets the base size for everything
    title style={yshift=-5pt, font=\small},    % Optional: Make title slightly bigger than the rest
    label style={font=\scriptsize}, % Font for xlabel and ylabel
    tick label style={font=\scriptsize}, % Font for the numbers on the axes
      xshift=11.7cm,
      width=6.5cm,
      height=5.5cm,
            legend pos=north east,
            legend cell align={left},
            legend style={font=\tiny,inner sep=1pt,row sep=-2pt},
]

\addplot[color=blue, thick] coordinates {(50,0.9496007015558312) (100,0.9433701489854492) (150,0.935205341841942) (200,0.9333489943887195) (250,0.9303813985404417) (300,0.9288850774062749) (350,0.9286613601463324) (400,0.929061804216448) (450,0.9285255334520865) (500,0.9271125030967432) (550,0.9298041075363036) (600,0.9287077367063746) (650,0.9282378376854579) (700,0.9281181416259152) (750,0.9267185063687005) (800,0.9286897879317082) (850,0.9251499124939538) (900,0.9259727394002606) (950,0.9268771123599007) (1000,0.9280628073961095)};
\addlegendentry{MultiPPI}

\addplot[color=red, thick, densely dashed] coordinates {(50,0.944409601227813) (100,0.9385569764187278) (150,0.9342760713931344) (200,0.9323295544745394) (250,0.9344328055133783) (300,0.929775106353744) (350,0.9296865067456744) (400,0.931480279927532) (450,0.9287010319765756) (500,0.9291913440631457) (550,0.9290507498888383) (600,0.929153195213068) (650,0.9273878580012023) (700,0.9275339395822341) (750,0.9294422219180304) (800,0.9267493637694326) (850,0.9258932418576218) (900,0.9267631287256513) (950,0.9260098509426862) (1000,0.9261048393939854)};
\addlegendentry{MultiPPI (Restricted)}

\end{axis}
\end{tikzpicture}
}
    
    \textrm{\bf \scriptsize Experiment 2: ProcessBench}
    \scalebox{0.8}{
\centering
\begin{tikzpicture}
\begin{axis}[
    title={Coverage},
    xlabel={Budget},
    xlabel style={yshift=5pt},   % Moves xlabel closer to the axis
    grid=both,
    outer sep=0pt,
    enlarge x limits=false,    minor grid style={dashed, gray!50},
    major grid style={solid, gray!80},
    width=\textwidth,
    yticklabel style={/pgf/number format/fixed}, % No scientific notation
    yticklabel={\pgfmathparse{\tick*100}\pgfmathprintnumber{\pgfmathresult}\%},
    xmin=0, xmax=1000,                          % Set x-axis range
    font=\scriptsize,             % Sets the base size for everything
    title style={yshift=-5pt, font=\small},    % Optional: Make title slightly bigger than the rest
    label style={font=\scriptsize}, % Font for xlabel and ylabel
    tick label style={font=\scriptsize}, % Font for the numbers on the axes
      xshift=0.0cm,
      width=6.5cm,
      height=5.5cm,
            ymin=0.85, ymax=1,
]

\addplot[color=black, dashed, very thick] coordinates {(0,.95) (5000,.95)};
\addplot[color=blue, thick] coordinates {(50,0.936016) (100,0.933684) (150,0.93258) (200,0.93024) (250,0.927264) (300,0.927026) (350,0.928326) (400,0.927124) (450,0.924768) (500,0.924354) (550,0.9237) (600,0.923138) (650,0.921582) (700,0.921322) (750,0.9212) (800,0.92029) (850,0.920528) (900,0.920106) (950,0.921034) (1000,0.920898)};
\addlegendentry{MultiPPI}

\addplot[color=red, thick, densely dashed] coordinates {(50,0.938018) (100,0.934368) (150,0.930828) (200,0.92988) (250,0.929184) (300,0.929022) (350,0.926872) (400,0.926476) (450,0.92551) (500,0.923524) (550,0.923768) (600,0.923436) (650,0.923254) (700,0.92325) (750,0.921804) (800,0.92087) (850,0.920632) (900,0.919598) (950,0.919906) (1000,0.919544)};
\addlegendentry{MultiPPI (Restricted)}

\legend{};
\end{axis}
\begin{axis}[
    title={Mean-squared CI width (frac. of classical)},
    xlabel={Budget},
    xlabel style={yshift=5pt},   % Moves xlabel closer to the axis
    grid=both,
    outer sep=0pt,
    enlarge x limits=false,    minor grid style={dashed, gray!50},
    major grid style={solid, gray!80},
    width=\textwidth,
    yticklabel style={/pgf/number format/fixed}, % No scientific notation
    yticklabel={\pgfmathparse{\tick*100}\pgfmathprintnumber{\pgfmathresult}\%},
    xmin=0, xmax=1000,                          % Set x-axis range
    font=\scriptsize,             % Sets the base size for everything
    title style={yshift=-5pt, font=\small},    % Optional: Make title slightly bigger than the rest
    label style={font=\scriptsize}, % Font for xlabel and ylabel
    tick label style={font=\scriptsize}, % Font for the numbers on the axes
      xshift=5.9cm,
      width=6.5cm,
      height=5.5cm,
]

\addplot[color=blue, thick] coordinates {(50,0.7104284432326666) (100,0.6361804462741223) (150,0.5940047059860405) (200,0.5661303592061417) (250,0.5467412687554667) (300,0.53187574786712) (350,0.5205079192547275) (400,0.5108925702552023) (450,0.5026650395712908) (500,0.4958221120760036) (550,0.4903616984599653) (600,0.48518346433162696) (650,0.480221652270261) (700,0.47659221682635233) (750,0.47322376929977994) (800,0.4702380765212926) (850,0.4669210231427845) (900,0.46449234999354055) (950,0.4624221010837662) (1000,0.4603441459609113)};
\addlegendentry{MultiPPI}

\addplot[color=red, thick, densely dashed] coordinates {(50,0.7103746351985277) (100,0.6353667103229861) (150,0.5930623028207371) (200,0.5655594921813271) (250,0.5466380308208925) (300,0.5317741803673091) (350,0.5193398117109714) (400,0.5100757028603162) (450,0.5020793202965687) (500,0.4955333871227922) (550,0.48971347354611766) (600,0.4847425943890366) (650,0.4808520638209038) (700,0.477073501214465) (750,0.47325741735640775) (800,0.4699571518255548) (850,0.46747143402388447) (900,0.46489478489925323) (950,0.4623208294779253) (1000,0.4602580838886658)};
\addlegendentry{MultiPPI (Restricted)}

\legend{};
\end{axis}
\begin{axis}[
    title={Mean-squared Error (frac. of classical)},
    xlabel={Budget},
    xlabel style={yshift=5pt},   % Moves xlabel closer to the axis
    grid=both,
    outer sep=0pt,
    enlarge x limits=false,    minor grid style={dashed, gray!50},
    major grid style={solid, gray!80},
    width=\textwidth,
    yticklabel style={/pgf/number format/fixed}, % No scientific notation
    yticklabel={\pgfmathparse{\tick*100}\pgfmathprintnumber{\pgfmathresult}\%},
    xmin=0, xmax=1000,                          % Set x-axis range
    font=\scriptsize,             % Sets the base size for everything
    title style={yshift=-5pt, font=\small},    % Optional: Make title slightly bigger than the rest
    label style={font=\scriptsize}, % Font for xlabel and ylabel
    tick label style={font=\scriptsize}, % Font for the numbers on the axes
      xshift=11.7cm,
      width=6.5cm,
      height=5.5cm,
            legend pos=north east,
            legend cell align={left},
            legend style={font=\tiny,inner sep=1pt,row sep=-2pt},
]

\addplot[color=blue, thick] coordinates {(50,0.7416967483228185) (100,0.6718947012152297) (150,0.634458521221496) (200,0.6108025173290396) (250,0.5937414057867306) (300,0.5810104924542603) (350,0.5660112713132612) (400,0.5581287489166212) (450,0.5533334511453084) (500,0.5470577605999188) (550,0.5422202868635849) (600,0.5368751052839603) (650,0.5345443236510281) (700,0.5315798752273319) (750,0.5284327821128794) (800,0.5258934507824748) (850,0.5238361690821306) (900,0.5216083150432199) (950,0.5178458620961298) (1000,0.5167790894161537)};
\addlegendentry{MultiPPI}

\addplot[color=red, thick, densely dashed] coordinates {(50,0.7375200693029724) (100,0.6699749165571814) (150,0.6366488045293939) (200,0.6099611093948141) (250,0.590014579123059) (300,0.5757540022226781) (350,0.5672717043225987) (400,0.5593309996398323) (450,0.5503492663348937) (500,0.5465708543474609) (550,0.5417653690168481) (600,0.5366346354003534) (650,0.5319919037207459) (700,0.5281181825316223) (750,0.5279104855264986) (800,0.5252051916333937) (850,0.5228088675905042) (900,0.5215331564885237) (950,0.5189633470154591) (1000,0.5174524723413267)};
\addlegendentry{MultiPPI (Restricted)}

\end{axis}
\end{tikzpicture}
}
    
    \textrm{\bf \scriptsize Experiment 3: Factuality}
    \scalebox{0.8}{
\centering
\begin{tikzpicture}
\begin{axis}[
    title={Coverage},
    xlabel={Budget},
    xlabel style={yshift=5pt},   % Moves xlabel closer to the axis
    grid=both,
    outer sep=0pt,
    enlarge x limits=false,    minor grid style={dashed, gray!50},
    major grid style={solid, gray!80},
    width=\textwidth,
    yticklabel style={/pgf/number format/fixed}, % No scientific notation
    yticklabel={\pgfmathparse{\tick*100}\pgfmathprintnumber{\pgfmathresult}\%},
    xmin=0, xmax=1000,                          % Set x-axis range
    font=\scriptsize,             % Sets the base size for everything
    title style={yshift=-5pt, font=\small},    % Optional: Make title slightly bigger than the rest
    label style={font=\scriptsize}, % Font for xlabel and ylabel
    tick label style={font=\scriptsize}, % Font for the numbers on the axes
      xshift=0.0cm,
      width=6.5cm,
      height=5.5cm,
            ymin=0.85, ymax=1,
]

\addplot[color=black, dashed, very thick] coordinates {(0,.95) (5000,.95)};
\addplot[color=blue, thick] coordinates {(50,0.931684) (100,0.925516) (150,0.921914) (200,0.918916) (250,0.917062) (300,0.915444) (350,0.912292) (400,0.91245) (450,0.910884) (500,0.910956) (550,0.909946) (600,0.909544) (650,0.90814) (700,0.908614) (750,0.90794) (800,0.907174) (850,0.906936) (900,0.906322) (950,0.906064) (1000,0.906346)};
\addlegendentry{MultiPPI}

\addplot[color=red, thick, densely dashed] coordinates {(50,0.93072) (100,0.926848) (150,0.924302) (200,0.92173) (250,0.91794) (300,0.915694) (350,0.915026) (400,0.91378) (450,0.913466) (500,0.912188) (550,0.91237) (600,0.910498) (650,0.91021) (700,0.90968) (750,0.908588) (800,0.908268) (850,0.908248) (900,0.907662) (950,0.907436) (1000,0.906722)};
\addlegendentry{MultiPPI (Restricted)}

\legend{};
\end{axis}
\begin{axis}[
    title={Mean-squared CI width (frac. of classical)},
    xlabel={Budget},
    xlabel style={yshift=5pt},   % Moves xlabel closer to the axis
    grid=both,
    outer sep=0pt,
    enlarge x limits=false,    minor grid style={dashed, gray!50},
    major grid style={solid, gray!80},
    width=\textwidth,
    yticklabel style={/pgf/number format/fixed}, % No scientific notation
    yticklabel={\pgfmathparse{\tick*100}\pgfmathprintnumber{\pgfmathresult}\%},
    xmin=0, xmax=1000,                          % Set x-axis range
    font=\scriptsize,             % Sets the base size for everything
    title style={yshift=-5pt, font=\small},    % Optional: Make title slightly bigger than the rest
    label style={font=\scriptsize}, % Font for xlabel and ylabel
    tick label style={font=\scriptsize}, % Font for the numbers on the axes
      xshift=5.9cm,
      width=6.5cm,
      height=5.5cm,
]

\addplot[color=blue, thick] coordinates {(50,0.7545364668479843) (100,0.7255458162063846) (150,0.7119201995335014) (200,0.7033429812767781) (250,0.696524071161093) (300,0.6916367464120456) (350,0.6884894463427089) (400,0.6855218289916332) (450,0.6834237878735445) (500,0.6813894001504142) (550,0.67951864487514) (600,0.6782205725951692) (650,0.6766564862096361) (700,0.6755847039022301) (750,0.6745882831724183) (800,0.6740555022443864) (850,0.6732813897423021) (900,0.6727000496701429) (950,0.6722167921964767) (1000,0.6714457175976168)};
\addlegendentry{MultiPPI}

\addplot[color=red, thick, densely dashed] coordinates {(50,0.7546919891124703) (100,0.7264575235638037) (150,0.7138411916595542) (200,0.705318049356559) (250,0.6994838122370611) (300,0.6947826491856386) (350,0.6907713549920115) (400,0.6878615793518422) (450,0.6857378326776609) (500,0.6836570874070872) (550,0.6816466010392656) (600,0.6800658785909818) (650,0.6787083486202546) (700,0.6776799697953044) (750,0.6765784277590318) (800,0.6754752258388018) (850,0.6751655396684646) (900,0.6743394913152534) (950,0.6734953468258082) (1000,0.6727259078236244)};
\addlegendentry{MultiPPI (Restricted)}

\legend{};
\end{axis}
\begin{axis}[
    title={Mean-squared Error (frac. of classical)},
    xlabel={Budget},
    xlabel style={yshift=5pt},   % Moves xlabel closer to the axis
    grid=both,
    outer sep=0pt,
    enlarge x limits=false,    minor grid style={dashed, gray!50},
    major grid style={solid, gray!80},
    width=\textwidth,
    yticklabel style={/pgf/number format/fixed}, % No scientific notation
    yticklabel={\pgfmathparse{\tick*100}\pgfmathprintnumber{\pgfmathresult}\%},
    xmin=0, xmax=1000,                          % Set x-axis range
    font=\scriptsize,             % Sets the base size for everything
    title style={yshift=-5pt, font=\small},    % Optional: Make title slightly bigger than the rest
    label style={font=\scriptsize}, % Font for xlabel and ylabel
    tick label style={font=\scriptsize}, % Font for the numbers on the axes
      xshift=11.7cm,
      width=6.5cm,
      height=5.5cm,
            legend pos=north east,
            legend cell align={left},
            legend style={font=\tiny,inner sep=1pt,row sep=-2pt},
]

\addplot[color=blue, thick] coordinates {(50,0.7949081400250686) (100,0.7786140849376558) (150,0.7727034268397065) (200,0.7688334978778444) (250,0.7656795057417095) (300,0.7631636433328535) (350,0.7660535033967734) (400,0.7622304664272552) (450,0.7635189532789504) (500,0.7603842564882731) (550,0.7598959203567006) (600,0.7586542968122403) (650,0.7606197542972088) (700,0.7583829198571969) (750,0.7593437537641989) (800,0.7594086697945458) (850,0.7576758823212617) (900,0.7585728076381071) (950,0.7587632228894517) (1000,0.7582761252461898)};
\addlegendentry{MultiPPI}

\addplot[color=red, thick, densely dashed] coordinates {(50,0.7973455913189056) (100,0.7778161281512482) (150,0.7675868115691713) (200,0.7647830323119169) (250,0.7656927768168506) (300,0.7650503047923587) (350,0.7625271652819368) (400,0.763126888177619) (450,0.7590195808850055) (500,0.7593930233868563) (550,0.7584588800310204) (600,0.7605764124389637) (650,0.7571096526959882) (700,0.7578682686786946) (750,0.7601932925579366) (800,0.7585011213429403) (850,0.7590207310747368) (900,0.7587973215673303) (950,0.757816564819311) (1000,0.7579175443679933)};
\addlegendentry{MultiPPI (Restricted)}

\end{axis}
\end{tikzpicture}
}
    \caption{Comparison of MultiPPI for $\mathcal{I}=2^{\{1,\dots,k\}}$ (default settings) with MultiPPI (Restricted), as defined in \Cref{sec:restricted}.}
    \label{fig:restricted}
\end{figure}

\subsection{Autorater accuracy scaling}

\begin{figure}[!h]
    \centering
    \includegraphics[width=0.8\linewidth]{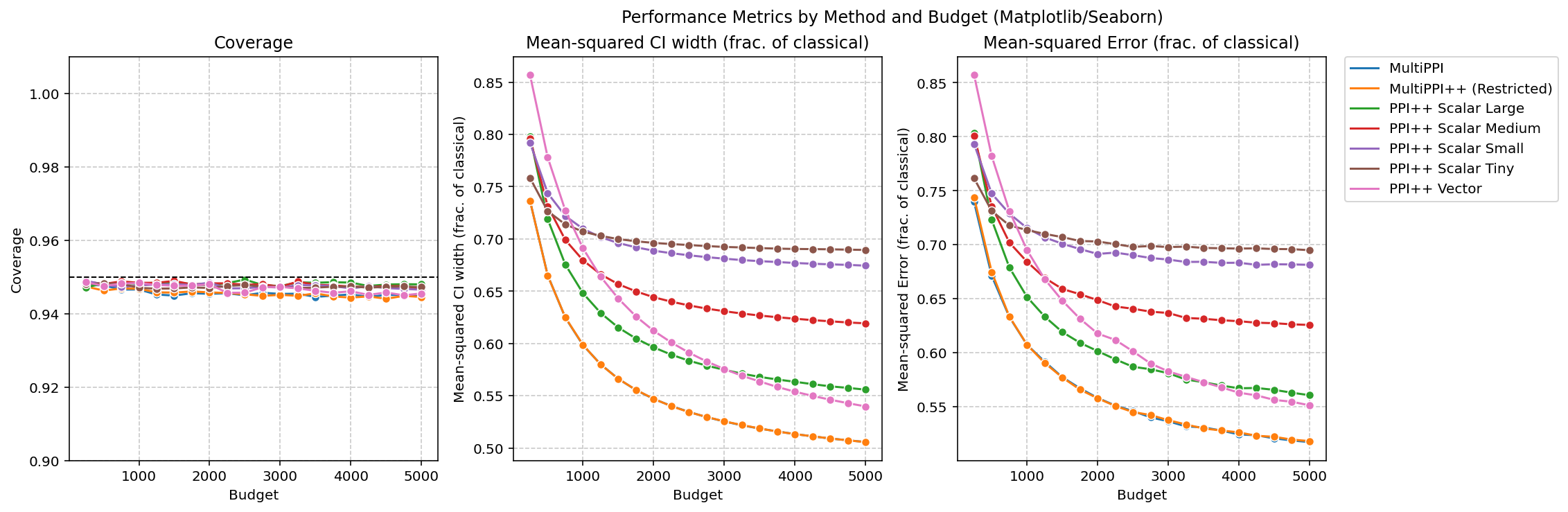}
    \caption{Performance at determination of process error vs. word budget. This is calculated via the procedure described in \autoref{sec:experimental-details}. The majority of the improvement observed due to thinking occurs once 500 words of thought is reached, and plateaus around 1,000 words of thought.\looseness=-1}
    \label{fig:scaling-budget}
\end{figure}
\begin{figure}[!h]
    \centering
    \includegraphics[width=0.5\linewidth]{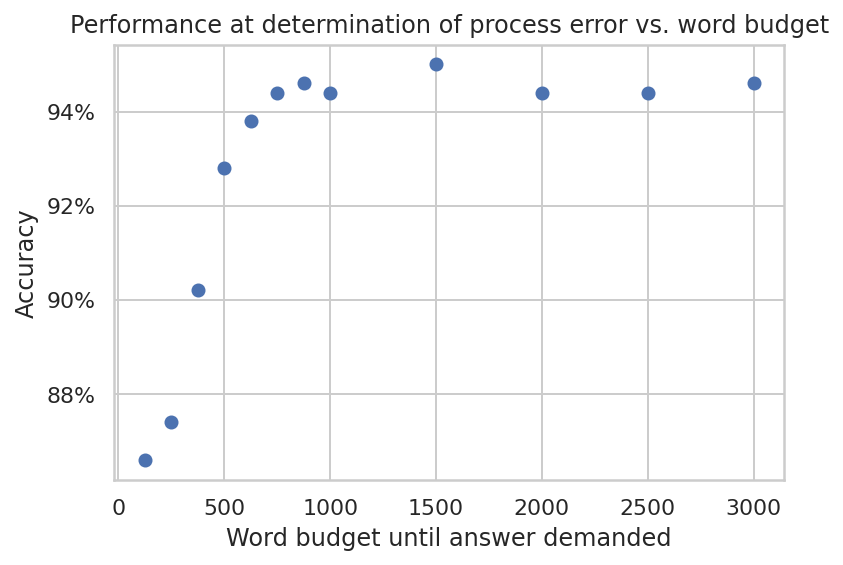}
    \caption{Performance at determination of process error vs. word budget. This is calculated via the procedure described in \autoref{sec:experimental-details}. The majority of the improvement observed due to thinking occurs once 500 words of thought is reached, and plateaus around 1,000 words of thought.\looseness=-1}
    \label{fig:scaling-budget}
\end{figure}
\begin{figure}[!h]
    \centering
    \includegraphics[width=\linewidth]{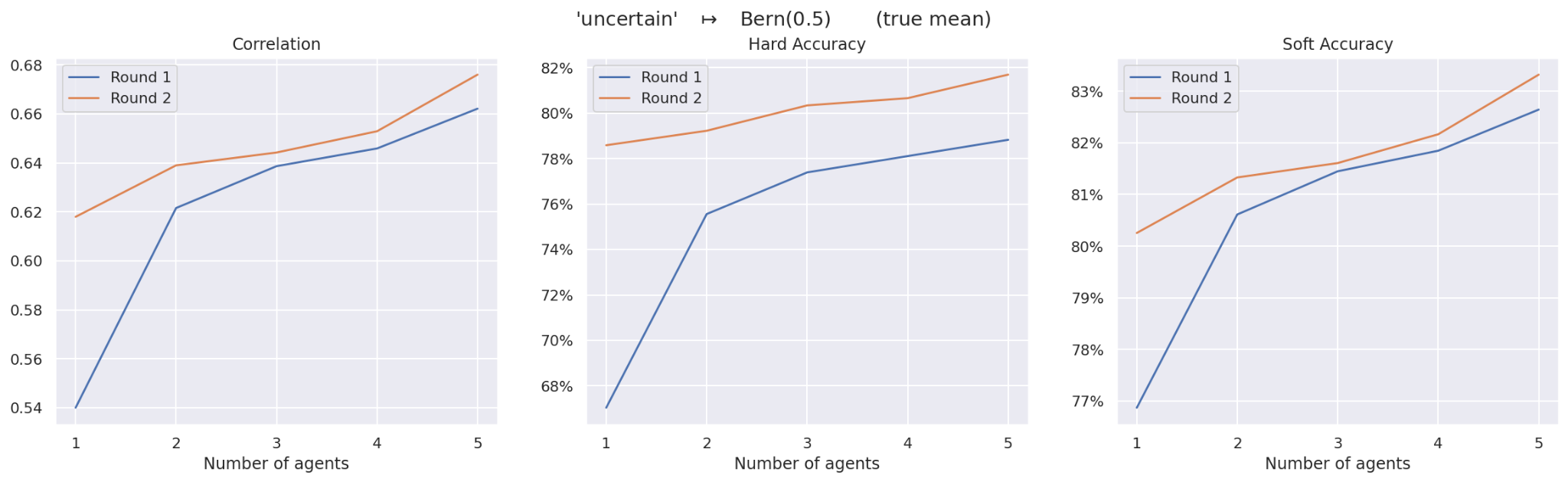}
    \caption{Performance at factuality evaluation with increasing number of agents and rounds of debate. Soft accuracy awards half a point to reporting an uncertain answer, while hard accuracy awards nothing. }
    \label{fig:scaling-accuracy-biographies}
\end{figure}
\begin{figure}[!h]
    \centering
    \includegraphics[width=0.5\linewidth]{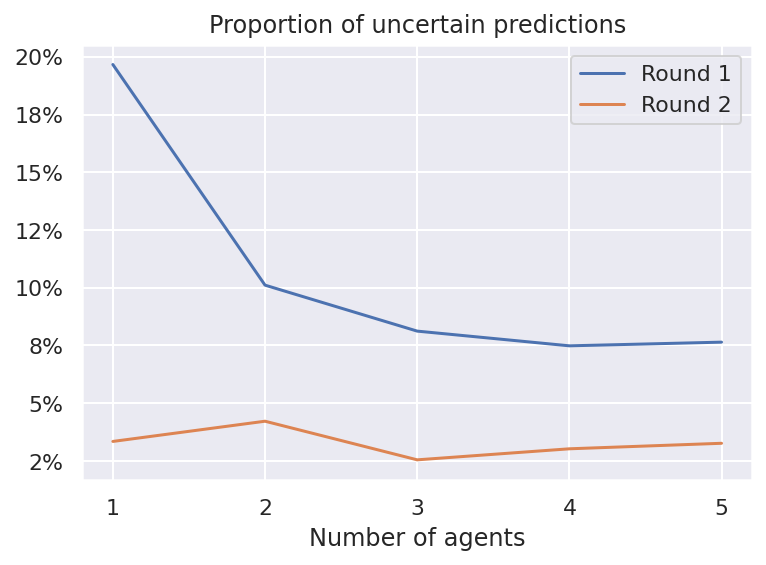}
    \caption{Proportion of uncertain predictions by number of agents and rounds of debate. An increased number of agents leads to fewer uncertain predictions, and almost all predictions are certain by the end of the second round of debate.}
    \label{fig:scaling-uncertainty-biographies}
\end{figure}

\begin{figure}[!h]
    \centering
    \includegraphics[width=\linewidth]{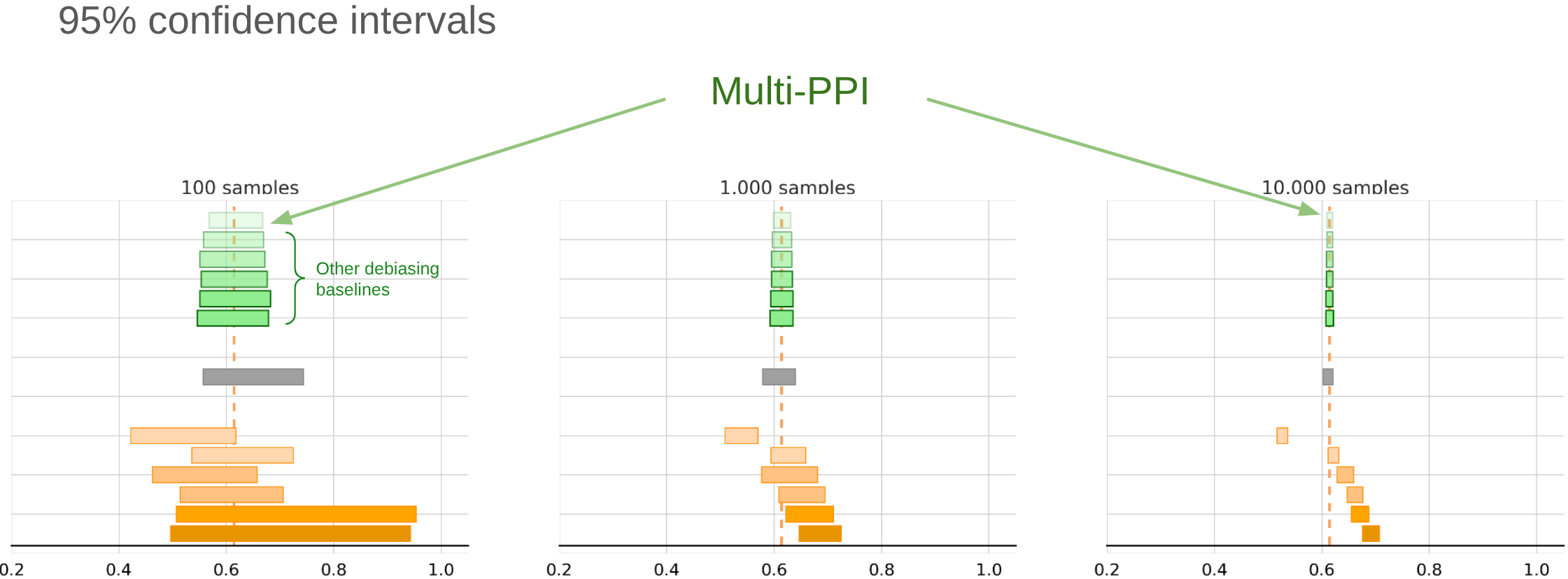}
    \caption{Different schemes for evaluation with autoraters on the ProcessBench dataset. Gray: classical sampling---no autoraters. Orange: pure autoraters in increasing order of thinking budget (from top to bottom, word budget is 50, 500, 1000, 1500, 2000 and 3000) ---note that the bias is increasingly pronounced with thinking budget. Green: various schemes for debiasing autoraters (from top to bottom, MultiPPI, vector PPI++, followed by scalar PPI++ with 3000, 2000, 1500, and 1000 word thinking budget, respectively).} %\adam{I don't quite understand this figure. The takeaway seems to be that, counter-intuitively, simply making models smarter won't necessarily solve the bias problem: debiasing methods like multi-ppi are necessary. What are the different levels of green?} \charlie{That's right for the takeaway. The different levels of green are different debiasing methods (e.g. our baselines: PPI++ with different models) with MultiPPI at the top. I've updated the figure to clarify this, but I think maybe it's not a very good figure, and we can remove it.}}
    \label{fig:need-to-debias}
\end{figure}

\clearpage
\section{Additional theoretical results}

%\adam{Can you add a brief paragraph here explaining what these next theoretical results address, and why they are important / interesting?}

In this section, we provide additional theoretical guarantees on the MultiPPI estimator. First, we establish finite-sample bounds on the estimation risk, under bounded, sub-Gaussian, and auto-regressive data distributions, and extend these results to the Ledoit-Wolf shrinkage estimator. Next, we characterize the behavior of the estimator at the boundaries of the budget spectrum. We prove that in the extreme low-budget limit, the allocation deterministically isolates the single model with the best correlation-to-cost ratio; conversely, in the large-budget regime, we show that the continuous relaxation of our discrete allocation problem is asymptotically optimal. Finally, we formally explain the empirically observed phenomenon of coverage decay, which occurs when the unlabeled budget grows while the labeled sample size remains fixed.

\subsection{Finite-sample bounds}
\label{app:finite-sample-bounds}
We consider the setting of \autoref{app:generalized}, in which we may have several budget constraints. For the time being, we fix $a=(1,0,\dots,0)$ as in all experiments. Let $I^0\in\mathcal{I}$ contain $1$. A procedure which is similar to classical sampling is the following: Consider the choice $\ul{n}^0,\ul{\lambda}^0$ defined such that $n^0_I=0$ if $I\neq I^0$, and let $n^0_{I^0}$ be the maximal choice afforded by the budget (i.e. $n^0_{I^0} = \max_{1\leq \ell\leq m}\left \lfloor B^{(\ell)}/c_{I^0}^{(\ell)}\right\rfloor$). Then setting $\lambda^0_I=0$ if $I\neq I^0$, and $\lambda^0_{I^0}$ to be $a$ restricted to $I^0$, we recover the classical estimator
\[
\frac{1}{n^0_{I^0}} \sum_{j=1}^{n^0_{I^0}} X_1^{(j)}
\]
which has MSE $\sigma^2_1/n^0_{I^0}$, where $\sigma_1^2=\Sigma_{11}$. We let $\sigma^2_{\text{classical}}:=\sigma^2_1/n^0_{I^0}$ denote this quantity.

We will compare $\hat{\theta}_{\text{MultiPPI}}$ to this in finite samples. Let $\wh{\Sigma}_N$ denote the empirical covariance matrix constructed from $N$ i.i.d. samples from $P$, and let $\wh{\underline{n}},\wh{\underline{\lambda}}$ denote the solution to MultiAllocate$(\wh{\Sigma}_N)$, i.e. the minimizer of
\[
\wh{R}_N(\ul{n},\ul{\lambda}) = \sum_{I\in\mathcal{I}:n_I>0}\frac{1}{n_I}\lambda_I^\top\wh{\Sigma}_N\lambda_I
\]
such that $\mathbf{U}$ and $\mathbf{B}$ hold. On the other hand, let ${\ul{n}}^*,{\ul{\lambda}}^*$ denote the solution to MultiAllocate$(\Sigma)$, i.e. the minimizer of
\[
R(\ul{n},\ul{\lambda}) = \sum_{I\in\mathcal{I}:n_I>0}\frac{1}{n_I}\lambda_I^\top\Sigma\lambda_I
\]
such that $\mathbf{U}$ and $\mathbf{B}$ hold. In this section, we bound
\[
R(\wh{\ul{n}},\wh{\ul{\lambda}}) - R({\ul{n}}^*,{\ul{\lambda}}^*).
\]

\begin{theorem}\label{thm:finite-sample-v2-1}
    Let $\gamma_{\min}$ denote the minimal eigenvalue of $\Sigma$, and $\delta=\|\Sigma-\wh{\Sigma}_N\|_{op}$. Then for all $\delta\leq\gamma_{\min}/2$,
    \[
    R(\wh{\ul{n}},\wh{\ul{\lambda}}) \leq R({\ul{n}}^*,{\ul{\lambda}}^*) + 4\frac{\delta}{\gamma_{\min}}\cdot \sigma^2_{\classical}
    \]
\end{theorem}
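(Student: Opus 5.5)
We use a perturbation sandwich: the risk functional $R$ is an ordinary sum of quadratic forms, and $\wh{\Sigma}_N$ is close to $\Sigma$ in operator norm. The key comparison is that for every feasible pair $(\ul{n},\ul{\lambda})$,
\[
|\wh{R}_N(\ul{n},\ul{\lambda})-R(\ul{n},\ul{\lambda})| = \Big|\sum_{I:n_I>0}\tfrac{1}{n_I}\lambda_I^\top(\wh{\Sigma}_N-\Sigma)\lambda_I\Big| \le \delta\sum_{I}\tfrac{1}{n_I}\|\lambda_I\|^2 \le \frac{\delta}{\gamma_{\min}}R(\ul{n},\ul{\lambda}).
\]
The last step uses $\lambda_I^\top\Sigma\lambda_I\ge\gamma_{\min}\|\lambda_I\|^2$, where $\lambda_I$ is embedded in $\R^k$ with zeros off $I$. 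This gives $(1-\epsilon)R\le\wh{R}_N\le(1+\epsilon)R$ uniformly, with $\epsilon=\delta/\gamma_{\min}\le 1/2$. Since $\gamma_{\min}(\wh{\Sigma}_N)\ge\gamma_{\min}-\delta$, the same argument run with $\wh{\Sigma}_N$ as reference would give the symmetric version. We will not need it.

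Next we chain optimality. Since $(\wh{\ul n},\wh{\ul\lambda})$ minimizes $\wh R_N$ over the feasible set defined by $\mathbf{U}$ and $\mathbf{B}$, which does not depend on the covariance,
\[
(1-\epsilon)R(\wh{\ul n},\wh{\ul\lambda})\le \wh R_N(\wh{\ul n},\wh{\ul\lambda})\le \wh R_N(\ul n^*,\ul\lambda^*)\le(1+\epsilon)R(\ul n^*,\ul\lambda^*).
\]
Dividing gives
\[
R(\wh{\ul n},\wh{\ul\lambda})\le \frac{1+\epsilon}{1-\epsilon}R^* .
\]
For $\epsilon\le1/2$ we have $\frac{1+\epsilon}{1-\epsilon}-1=\frac{2\epsilon}{1-\epsilon}\le4\epsilon$, so
\[
R(\wh{\ul n},\wh{\ul\lambda})\le R^*+\frac{4\delta}{\gamma_{\min}}R^*.
\]

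Finally, the classical choice $(\ul n^0,\ul\lambda^0)$ satisfies $\mathbf{U}$ and $\mathbf{B}$ and has $R(\ul n^0,\ul\lambda^0)=\sigma^2_{\classical}$. Hence $R^*\le\sigma^2_{\classical}$, which yields the claim.

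The main obstacle is the bookkeeping behind this argument. First, the minimizers may have to be taken over a closed or compact feasible set, or attained as infima. With integer $n_I$ the feasible set of allocations is finite, and for each allocation $\ul{\lambda}$ ranges over an affine set on which the objective is coercive, so minimizers exist. Second, $\lambda_I^\top\Sigma\lambda_I$ should be read via the principal submatrix $\Sigma_I$. Its minimum eigenvalue is at least $\gamma_{\min}$ by Cauchy interlacing, and $\|(\wh{\Sigma}_N-\Sigma)_I\|_{op}\le\delta$, so the comparison inequality holds blockwise. Third, the uniform relative bound is what lets a single constant $4$ appear rather than something depending on the allocation.
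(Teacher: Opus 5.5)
Your proof is correct, but it takes a different route from the paper's.

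\textbf{The paper's route.} The paper argues additively. It writes $R(\wh{\ul n},\wh{\ul\lambda})-R(\ul n^*,\ul\lambda^*)$ as $[R-\wh R_N](\wh{\ul n},\wh{\ul\lambda})+[\wh R_N(\wh{\ul n},\wh{\ul\lambda})-\wh R_N(\ul n^*,\ul\lambda^*)]+[\wh R_N-R](\ul n^*,\ul\lambda^*)$, drops the nonpositive middle term, and bounds each outer term by $\delta\sum_I\|\lambda_I\|_2^2/n_I$. It then has to control $\sum_I\|\lambda_I\|_2^2/n_I$ separately at the two optima, each time by comparison with the classical allocation $(\ul n^0,\ul\lambda^0)$.
\begin{itemize}
\item At $(\ul n^*,\ul\lambda^*)$ this comparison is made under $\Sigma$.
\item At $(\wh{\ul n},\wh{\ul\lambda})$ it is made under $\wh\Sigma_N$. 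This requires Weyl's inequality, $\gamma_{\min}(\wh\Sigma_N)\ge\gamma_{\min}-\delta$, together with $(\wh\Sigma_N)_{11}\le\Sigma_{11}+\delta$. It also uses, implicitly, $\gamma_{\min}\le\Sigma_{11}$ to absorb the resulting constants ($1+3$) into the final $4$.
\end{itemize}

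\textbf{Your route.} You observe that $R(\ul n,\ul\lambda)\ge\gamma_{\min}\sum_I\|\lambda_I\|_2^2/n_I$ holds for \emph{every} feasible pair. So the perturbation is uniformly relative, $|\wh R_N-R|\le\epsilon R$ with $\epsilon=\delta/\gamma_{\min}$, measured against the true $\Sigma$ only. Chaining this through the optimality of $(\wh{\ul n},\wh{\ul\lambda})$ for $\wh R_N$ needs no spectral information about $\wh\Sigma_N$ at all.

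\textbf{What each buys.} Your argument is shorter and gives a strictly stronger intermediate conclusion: $R(\wh{\ul n},\wh{\ul\lambda})\le\frac{1+\epsilon}{1-\epsilon}R(\ul n^*,\ul\lambda^*)$, i.e.\ excess risk at most $4\epsilon\,R(\ul n^*,\ul\lambda^*)=4\epsilon\,\mathcal{V}_B$ rather than $4\epsilon\,\sigma^2_{\classical}$. The classical comparison enters only at the very end, to pass from $R^*$ to $\sigma^2_{\classical}$. The paper's additive decomposition is the more generic uniform-deviation template. Here, though, it pays for that generality with two separate norm bounds and a weaker final constant-times-$\sigma^2_{\classical}$ form.
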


\begin{corollary}\label{cor:bounded}
    Suppose that $X_i\in[0,1]$ almost surely. Then with high probability,
    \[
    R(\wh{\ul{n}},\wh{\ul{\lambda}}) \leq R({\ul{n}}^*,{\ul{\lambda}}^*) + c\left(\frac{\gamma_{\max}^{1/2}}{\gamma_{\min}} \sqrt{\frac{k\log k}{N}} + \frac{1}{\gamma_{\min}}\frac{k\log k}{N}\right) \sigma^2_{\classical}
    \]
    for a universal constant $c$, and so
    \[
    \mathbb{E}R(\wh{\ul{n}},\wh{\ul{\lambda}}) \leq R({\ul{n}}^*,{\ul{\lambda}}^*) + c'\left(\frac{\gamma_{\max}^{1/2}}{\gamma_{\min}} \sqrt{\frac{k}{N}} + \frac{1}{\gamma_{\min}} \frac{k}{N}\right)\sigma^2_{\classical}
    \]
    for another constant $c'$, where the expectation is taken over the $N$ labeled samples used to construct $\wh{\Sigma}_N$.
\end{corollary}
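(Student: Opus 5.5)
The corollary follows from \Cref{thm:finite-sample-v2-1} once $\delta=\|\Sigma-\wh{\Sigma}_N\|_{op}$ is controlled for bounded data. The high-probability statement needs a bound of the form
\[
\delta \lesssim \gamma_{\max}^{1/2}\sqrt{\tfrac{k\log k}{N}}+\tfrac{k\log k}{N}.
\]
Substituting this into $R(\wh{\ul n},\wh{\ul\lambda})\le R(\ul n^*,\ul\lambda^*)+4(\delta/\gamma_{\min})\sigma^2_{\classical}$ gives exactly the displayed form.

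\textbf{Step 1 (concentration).} Write
\[
\wh{\Sigma}_N=\frac1N\sum_j (X^{(j)}-\bar X)(X^{(j)}-\bar X)^\top .
\]
Split $\wh\Sigma_N-\Sigma$ into two parts: the centered term $\frac1N\sum_j\big[(X^{(j)}-\mu)(X^{(j)}-\mu)^\top-\Sigma\big]$, and the mean-correction term $-(\bar X-\mu)(\bar X-\mu)^\top$.

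For the first term, apply the matrix Bernstein inequality to the i.i.d. mean-zero summands. Since $X\in[0,1]^k$, we have $\|X-\mu\|_2^2\le k$, so each summand has operator norm at most $k+\gamma_{\max}\le 2k$. The matrix variance satisfies
\[
\|\Ex[(X-\mu)(X-\mu)^\top(X-\mu)(X-\mu)^\top]\|\le k\,\|\Sigma\|=k\gamma_{\max}.
\]
Matrix Bernstein then yields, with probability $1-k^{-c}$,
\[
\Big\|\tfrac1N\textstyle\sum_j\cdots\Big\|_{op}\lesssim\sqrt{\gamma_{\max}k\log k/N}+k\log k/N .
\]
For the mean correction, $\|\bar X-\mu\|_2^2$ has expectation $\tr\Sigma/N\le k/N$ and concentrates by Hoeffding/McDiarmid at level $k\log k/N$. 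It is therefore absorbed into the second term.

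\textbf{Step 2 (applicability).} \Cref{thm:finite-sample-v2-1} requires $\delta\le\gamma_{\min}/2$. On the event where this fails, the right-hand side of the corollary already exceeds $c\,\sigma^2_{\classical}$ once $c$ is large. So it suffices to bound $R(\wh{\ul n},\wh{\ul\lambda})$ by a constant multiple of $\sigma^2_{\classical}$ unconditionally; this, together with $\delta/\gamma_{\min}\ge1/2$ on that event, then gives the high-probability claim outright.

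I expect this unconditional bound to be the main obstacle. The difficulty is that the allocation chosen under $\wh\Sigma_N$ could in principle be poor under the true $\Sigma$. My first attempt would use that, after shrinkage or regularization, the estimated problem's optimum is no worse than the classical allocation under $\wh\Sigma$. Failing that, I would restrict the expectation bound to the good event and handle the complement by a tail probability times a crude bound. The crude bound can take the form $\sigma^2_{\classical}\cdot\mathrm{poly}(k)$, since boundedness gives $\lambda$ controlled by the cone constraints.

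\textbf{Step 3 (expectation).} Integrate the tail of $\delta$ from matrix Bernstein, using $\Ex\delta\le\int_0^\infty\p(\delta>t)\,dt$. The expected version of matrix Bernstein gives
\[
\Ex\delta\lesssim\sqrt{\gamma_{\max}k\log k/N}+k\log k/N.
\]
Removing the $\log k$ as stated needs a sharper moment bound for sums of bounded rank-one matrices, for instance a Rudelson/Vershynin-type covariance estimate for bounded vectors. Alternatively, the $\log k$ can be absorbed into $c'$ when $k$ is treated as fixed. Combining this with the bad-event contribution from Step 2, whose probability is polynomially small, gives the expectation bound with constant $c'$.
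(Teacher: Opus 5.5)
Your Step 1 is the paper's proof. The paper just combines \Cref{thm:finite-sample-v2-1} with Corollary 6.20 of Wainwright, which is matrix Bernstein for vectors with $\|X-\mu\|_2^2\le k$ and variance proxy $k\Sigma$. Your bookkeeping there (summand norm $O(k)$, matrix variance $\preceq k\Sigma$, centering term $\|\bar X-\mu\|_2^2=O(k\log k/N)$) is correct, and more careful than the paper's.

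The gap is Step 2, which cannot be completed: for the empirical covariance there is no unconditional bound $R(\wh{\ul n},\wh{\ul\lambda})\lesssim\mathrm{poly}(k)\,\sigma^2_{\classical}$.
\begin{itemize}
\item The only control on $\wh{\ul\lambda}$ is the one in the theorem's proof, $\sum_I\|\wh\lambda_I\|_2^2/\wh n_I\le\wh\Sigma_{N,11}/(n^0_{I^0}\gamma_{\min}(\wh\Sigma_N))$. It degenerates exactly on the bad event.
\item The SOCP cone constraints bound the dual variable $y$, not $\lambda$, and impose nothing along $\ker\wh\Sigma_N$.
\item The fact that the estimated optimum is ``no worse than classical under $\wh\Sigma$'' holds for every $\wh\Sigma$, but it controls $\wh R_N$, not $R$.
\item Shrinkage or an eigenvalue floor would help only by changing the estimator the corollary is about.
\end{itemize}

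In fact the inequality genuinely fails in the regime you are trying to cover. Take $k=2$, $\mathcal I=\{\{1,2\},\{2\}\}$, $c_{\{1,2\}}=1$, $c_{\{2\}}=0.01$, $X_2\sim\mathrm{Bernoulli}(1/2)$, and $X_1=X_2$ except that it is flipped with probability $p=1/N$. Then $\gamma_{\min}=p/2$ and $\sigma^2_{\classical}=1/(4B)$.
\begin{itemize}
\item With probability about $e^{-1}$, every sample has $X_1=X_2$, so $\wh\Sigma_N$ is singular.
\item On that event the estimated problem sets $n_{\{1,2\}}=1$ with $\lambda_{\{1,2\}}\approx(1,-1)$, and the true risk is about $p=1/N$.
\item The claimed right-hand side is of order $c\sqrt N/B$, so it is violated once $B\gg cN^{3/2}$.
\end{itemize}
Similarly, for $\{0,1\}$-valued data, $\wh\Sigma_N=0$ with probability $\sum_x\p(X=x)^N>0$ for every $N$. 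So ``tail probability times crude bound'' cannot control $\Ex R(\wh{\ul n},\wh{\ul\lambda})$.

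The paper's one-line proof tacitly assumes $N\ge C'(\gamma_{\max}/\gamma_{\min}^2+1/\gamma_{\min})\,k\log k$. You should state that assumption explicitly instead of seeking an unconditional bound. Under it, the Bernstein event $E$ itself forces $\delta\le\gamma_{\min}/2$, so the theorem applies on $E$. The expectation claim should then be stated for $\Ex[R(\wh{\ul n},\wh{\ul\lambda})\I_E]$, or for a regularized $\wh\Sigma$.

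Step 3 has a second problem. Rudelson's and Vershynin's covariance bounds for bounded vectors carry the same $\log k$ as matrix Bernstein. For bounded vectors this factor cannot be dropped from $\Ex\delta$ in general. Integrating the tail therefore yields the expectation bound with $\sqrt{k\log k/N}$ and $k\log k/N$. Absorbing $\log k$ into $c'$ is not legitimate for a constant meant to be independent of $k$. The paper's cited result does not remove the logarithm either, so this route honestly gives the expectation bound with the $\log k$ factors retained.
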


\begin{corollary}\label{cor:subg}
    Suppose that $X$ is a subgaussian with variance proxy $K$. Then
    \[
    \Ex R(\wh{\ul{n}},\wh{\ul{\lambda}})\leq R(\ul{n}^*,\ul{\lambda}^*)+c' K^2\left(\sqrt{\frac{k}{N}}+\frac{k}{N}\right)\sigma^2_{\classical}
    \]
\end{corollary}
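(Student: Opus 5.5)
The plan is to obtain \Cref{cor:subg} directly from \Cref{thm:finite-sample-v2-1}, with the same two-case structure used for \Cref{cor:bounded}. Write $\delta=\|\Sigma-\wh{\Sigma}_N\|_{op}$ and split on the event $E=\{\delta\le\gamma_{\min}/2\}$. On $E$, \Cref{thm:finite-sample-v2-1} gives
\[
R(\wh{\ul{n}},\wh{\ul{\lambda}})-R(\ul{n}^*,\ul{\lambda}^*)\le \frac{4\delta}{\gamma_{\min}}\sigma^2_{\classical}.
\]
On $E^c$, I would use the crude bound $R(\wh{\ul{n}},\wh{\ul{\lambda}})\le C\,\sigma^2_{\classical}$. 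Since the classical allocation $(\ul{n}^0,\ul{\lambda}^0)$ is feasible, $\wh R_N(\wh{\ul{n}},\wh{\ul{\lambda}})\le \wh\sigma_1^2/n^0_{I^0}$. One then converts $\wh R_N$ into $R$, either by eigenvalue comparison or by appealing to the safeguard implicit in the estimator. If the safeguard is unavailable, the alternative is to bound $R(\wh{\ul{n}},\wh{\ul{\lambda}})$ by $\frac{\gamma_{\max}}{\hat\gamma_{\min}}$ times the classical risk. In that case Cauchy--Schwarz is needed to integrate over $E^c$.

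Taking expectations gives
\[
\Ex R(\wh{\ul{n}},\wh{\ul{\lambda}})\le R(\ul{n}^*,\ul{\lambda}^*)+\frac{4\sigma^2_{\classical}}{\gamma_{\min}}\Ex\delta+C\sigma^2_{\classical}\,\p(E^c).
\]
The remaining input is the standard sub-Gaussian covariance concentration bound (e.g.\ \citet[Thm.~6.5]{wainwright2019high} or \citet[Ch.~4]{vershynin2018high}). For $X$ sub-Gaussian with proxy $K$, it gives
\[
\Ex\|\wh{\Sigma}_N-\Sigma\|_{op}\le cK^2\left(\sqrt{\frac{k}{N}}+\frac{k}{N}\right),
\]
together with the tail bound $\p(\delta\ge cK^2(\sqrt{(k+t)/N}+(k+t)/N))\le 2e^{-t}$. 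The centering by the empirical mean only contributes a rank-one term of the same order, so it is absorbed into the constant.

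The main obstacle is the term $\p(E^c)$. The statement carries no explicit $1/\gamma_{\min}$ factor, so it must be absorbed into the constant $c'$ or into $K^2$. For the first term, sub-Gaussianity with proxy $K$ forces $\gamma_{\min}\lesssim K$ only in one direction, so I would treat the stated constant as depending on the normalization $\gamma_{\min}\gtrsim K$ or allowing $c'$ to depend on $\gamma_{\min}$, exactly as in \Cref{cor:bounded}. For the second term, I would use Markov's inequality:
\[
\p(E^c)\le \frac{2\Ex\delta}{\gamma_{\min}}.
\]
This reduces it to the same order $K^2(\sqrt{k/N}+k/N)$ and yields the stated bound.
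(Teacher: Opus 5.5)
Your route is the paper's. Its entire proof is the sentence that \Cref{cor:subg} follows from \Cref{thm:finite-sample-v2-1} and Theorem~4.7.1 of \citet{vershynin2018high}: plug an expectation bound on $\delta=\|\wh{\Sigma}_N-\Sigma\|_{op}$ into the deterministic stability bound. That one-liner silently ignores two things: the restriction $\delta\le\gamma_{\min}/2$, and the prefactor $1/\gamma_{\min}$. Under Vershynin's normalization, where $K$ is a ratio and the bound carries $\|\Sigma\|$, that prefactor is really $\gamma_{\max}/\gamma_{\min}$. So you are right that it must sit inside $c'$; that is a defect of the statement, not of your argument. Your split on $E=\{\delta\le\gamma_{\min}/2\}$, with Markov for $\p(E^c)$, is exactly the bookkeeping the paper skips.

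The step that does not go through is the crude bound $R(\wh{\ul{n}},\wh{\ul{\lambda}})\le C\sigma^2_{\classical}$ on $E^c$. The estimator as defined has no safeguard, and eigenvalue comparison only yields
\[
R(\wh{\ul{n}},\wh{\ul{\lambda}})\le \frac{\gamma_{\max}}{\gamma_{\min}(\wh{\Sigma}_N)}\cdot\frac{(\wh{\Sigma}_N)_{11}}{\Sigma_{11}}\,\sigma^2_{\classical},
\]
which is not bounded by a constant. Your Cauchy--Schwarz fallback would need $\Ex[\gamma_{\min}(\wh{\Sigma}_N)^{-2}]<\infty$, and sub-Gaussianity gives nothing of the sort.

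Concretely, for bounded discrete $X$ (e.g.\ binary autorater scores), or whenever $N\le k$, $\wh{\Sigma}_N$ is singular with positive probability. Suppose $v\in\ker\wh{\Sigma}_N$ is supported in $I\cap J$ for two sets used by the allocation. For instance, two binary raters could agree on all $N$ samples, giving $v=e_2-e_3$ with $I=[k]$ and $J=\{2,\dots,k\}$. Then the shift $\lambda_I\mapsto\lambda_I+tv_I$, $\lambda_J\mapsto\lambda_J-tv_J$ preserves both \textbf{U} and $\wh{R}_N$ for every $t$, while $R$ grows like $t^2$. So on that event $R(\wh{\ul{n}},\wh{\ul{\lambda}})$ is not even determined, let alone integrable.

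To close the gap you need an extra ingredient. Options are an estimate $\wh{\Sigma}$ whose smallest eigenvalue is bounded below (regularization or shrinkage), so the comparison bound becomes a constant; a fallback rule on $E^c$; or a weaker conclusion stated on the event $E$, i.e.\ with high probability, as in the first bound of \Cref{cor:bounded}.
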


In the AR(1) model, and with bounded observations, choosing $N\gg k$ in the limit $k,N\to\infty$ is enough that $\mathbb{E}R(\wh{\ul{n}},\wh{\ul{\lambda}}) \to R({\ul{n}}^*,{\ul{\lambda}}^*)$. This follows as a special case of the following result.

\begin{corollary}\label{cor:gersch}
Suppose, in addition to the conditions of \Cref{thm:finite-sample-v2-1}, that $X_1,X_2,\dots$ is a stochastic process such that $\Var X_t>c$ for all $t$, and $\Corr(X_t,X_s)\leq (1-\rho)\rho^{|t-s|}$ for some $0<c,\rho<1$. Then we have
\[
\Ex R(\wh{\ul{n}},\wh{\ul{\lambda}}) = R(\ul{n},\ul{\lambda}) +o(1)
\]
whenever $k/N = o(1)$.
\end{corollary}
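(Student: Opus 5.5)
The argument runs through \Cref{thm:finite-sample-v2-1}: it suffices to show that $\Ex\,\delta/\gamma_{\min}\to0$, where $\delta=\|\wh{\Sigma}_N-\Sigma\|_{op}$. Since the bound in \Cref{thm:finite-sample-v2-1} only holds on the event $\{\delta\le\gamma_{\min}/2\}$, we split the expectation on this event and its complement. On the good event the excess risk is at most $4(\delta/\gamma_{\min})\sigma^2_{\classical}$. On the bad event we use the crude bound that the excess risk is at most $R(\wh{\ul{n}},\wh{\ul{\lambda}})$. This quantity is controlled by a constant multiple of $\sigma^2_{\classical}$ times $\|\Sigma\|_{op}\|\wh\Sigma_N^{-1}\|_{op}$-type factors, or more simply by the MSE of the worst admissible allocation. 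We then multiply by $\p(\delta>\gamma_{\min}/2)$, which must tend to zero. If no uniform a priori bound on the bad-event risk is available, we instead assume, as the paper does implicitly, that MultiAllocate is clipped to the classical fallback, so that the risk there is at most $\sigma^2_{\classical}$.

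\textbf{Step 1: uniform lower bound on $\gamma_{\min}$ via Gershgorin.} Write $\Sigma=D^{1/2}CD^{1/2}$, with $D=\mathrm{diag}(\Var X_t)$ and $C$ the correlation matrix. The Gershgorin disc for row $t$ of $C$ gives
\[
\lambda_{\min}(C)\ge 1-\sum_{s\ne t}|\Corr(X_t,X_s)|\ge 1-2(1-\rho)\sum_{j\ge1}\rho^j=1-2\rho.
\]
This is useful when $\rho<1/2$. For general $\rho$, the correlation hypothesis should be read (as in AR(1)) as bounding the absolute correlation by $(1-\rho)\rho^{|t-s|}$, and we would refine the estimate by summing only over one side or by using the Toeplitz symbol $1+2(1-\rho)\sum_j\rho^j\cos(j\omega)\ge 1-\tfrac{2(1-\rho)\rho}{1+\rho}>0$. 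Either route gives $\gamma_{\min}\ge c\,\kappa(\rho)>0$, using $\Var X_t>c$, with $\kappa(\rho)$ independent of $k$. The same row sums give $\gamma_{\max}\le \max_t\Var X_t\cdot\frac{1+\rho}{1-\rho}\cdot O(1)$, also independent of $k$.

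\textbf{Step 2: concentration of $\wh\Sigma_N$.} Under the conditions of \Cref{cor:bounded} or \Cref{cor:subg} (bounded or subgaussian observations), the standard covariance estimation bounds (Vershynin, Wainwright) give
\[
\Ex\delta\lesssim \gamma_{\max}^{1/2}\sqrt{k/N}+k/N \quad\text{and}\quad \p(\delta>\gamma_{\min}/2)\le e^{-c''N}+o(1)
\]
whenever $k/N\to0$. By Step 1, $\gamma_{\min}$ and $\gamma_{\max}$ are bounded uniformly in $k$. Therefore $\Ex[\delta/\gamma_{\min}]=O(\sqrt{k/N}+k/N)=o(1)$, and the bad-event contribution is $o(1)$. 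Finally, $\sigma^2_{\classical}=\Var X_1/n^0_{I^0}$ is bounded, so the total excess is $o(1)$.

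The main obstacle is Step 1 when $\rho\ge1/2$, since the naive Gershgorin bound fails there. I would first try the refined row sum using the factor $(1-\rho)$, which gives $\sum_{s\ne t}\le 2\rho$ and is again insufficient. I would then fall back on the Toeplitz/spectral-density argument, which gives a $k$-uniform eigenvalue floor for any $\rho<1$ when the correlations have exactly AR(1) form. If the hypothesis is only an upper bound on the correlations, I would instead state the corollary for $\rho<1/2$.
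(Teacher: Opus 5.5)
Your argument follows the paper's proof, which is a one-liner: Gershgorin gives $k$-uniform bounds on $\gamma_{\min}$ and $\gamma_{\max}$, and these are plugged into \Cref{cor:bounded}. Where you depart from it, you are right and the paper is not.

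The row-sum inequality in the paper's Gershgorin step does not follow from the hypotheses. Off-diagonal correlation row sums reach $\sum_{s\neq t}(1-\rho)\rho^{|t-s|}\to 2\rho$. So a floor exists only for $\rho<1/2$, and only if $|\Corr|$ is bounded. Because the paper applies Gershgorin to $\Sigma$ rather than to $C$ as you do, it also needs control of variance ratios.

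Your restriction cannot be lifted by any argument that, like \Cref{thm:finite-sample-v2-1}, needs a $k$-uniform floor on $\gamma_{\min}$. For $\rho\ge 1/2$, take the Toeplitz matrix with $C_{tt}=1$ and $C_{ts}=-\frac{1-\rho}{2\rho}\rho^{|t-s|}$.
\begin{itemize}
\item It satisfies $|C_{ts}|\le(1-\rho)\rho^{|t-s|}$.
\item It is positive definite, since its symbol $1-\frac{1-\rho}{\rho}\sum_{j\ge1}\rho^j\cos(j\omega)$ is nonnegative and not identically zero.
\item Yet $\mathbf{1}^\top C\mathbf{1}/k\to 0$, so $\gamma_{\min}\to0$.
\end{itemize}
Under the literal one-sided hypothesis, the same matrix works for every $\rho\in(0,1)$. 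So your Toeplitz-symbol floor $1-\frac{2\rho(1-\rho)}{1+\rho}$ for exactly AR(1)-form correlations (attained at $\omega=\pi$, correctly computed) is genuinely what covers $\rho\ge 1/2$. The paper's route does not.

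The paper never separates the bad event $\{\delta>\gamma_{\min}/2\}$; it simply asserts the expectation form of \Cref{cor:bounded}. On this event your first option fails in general. For bounded (e.g.\ binary) data, $\wh{\Sigma}_N$ is singular with positive probability. The minimizer of $\wh{R}_N$ then need not be unique, and minimizers can carry arbitrarily large weights along null directions of $\wh{\Sigma}_N$. Hence neither a $\|\wh{\Sigma}_N^{-1}\|_{op}$ factor nor the worst admissible allocation gives an integrable bound.

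Your data-driven fallback is what actually makes $\Ex R(\wh{\ul{n}},\wh{\ul{\lambda}})$ finite. For instance, revert to the classical allocation whenever $\lambda_{\min}(\wh{\Sigma}_N)<c\,\kappa(\rho)/2$. This never triggers on the good event, and it bounds the bad-event risk by a constant times $\sigma^2_{\classical}$.

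Finally, the matrix-Bernstein bound the paper cites carries a $\log k$ for merely bounded coordinates; it is visible in the high-probability half of \Cref{cor:bounded}. So your Step 2 needs either $k\log k/N\to0$ or a $k$-uniform sub-Gaussian constant, not just $k/N\to0$. With these caveats your proof is correct.
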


Lastly, below we include a finite-sample bound for the performance of the estimator using Ledoit-Wolf shrinkage.
\begin{theorem}[Finite-sample bounds specialized to Ledoit-Wolf shrinkage]\label{thm:ledoit}
    Let $\widehat{\Sigma}_N^
    \text{LW}$ denote the Ledoit-Wolf shrinkage estimator of $\Sigma$ based on $N$ samples. Let $\gamma_{\min}$ denote the minimum eigenvalue of $\Sigma$, and suppose that $X\in \R^k$ is sub-Gaussian with proxy $K$. Lastly, suppose that $\Sigma$ is not a multiple of the identity. Then for absolute constants $c_1,c_2$, we have
    \[
    \Ex \left[\left(\hat{\theta}_{\multiPPI(\wh{\Sigma})}-\theta^*\right)^2\right] \leq \mathcal{V}_B+\frac{4\sigma^2_{\text{classical}}}{\gamma_{\min}} \frac{1}{\sqrt{N}}\sqrt{c_1K^4\gamma_{\max}^2 k^2 + c_2 K^8 \gamma_{\max} k^3/a^2}
    \]
    where $a^2:=\frac{1}{k}\left\|\Sigma - I\cdot \frac{\tr(\Sigma)}{k}\right\|_F^2$.
\end{theorem}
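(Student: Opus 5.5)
The natural route is to combine \Cref{thm:stab} with a bound on $\Ex\|\wh{\Sigma}_N^{\text{LW}}-\Sigma\|_F$. One point needs care: \Cref{thm:stab} requires $\wh{\Sigma}$ to be non-random. I will treat $\wh{\Sigma}^{\text{LW}}_N$ as built from an independent burn-in sample. Conditional on that sample, \Cref{thm:stab} applies and gives
\[
\Ex\left[\left(\hat{\theta}_{\multiPPI(\wh{\Sigma})}-\theta^*\right)^2 \,\middle|\, \wh{\Sigma}\right] \le \mathcal{V}_B + \frac{4\sigma^2_{\classical}}{\gamma_{\min}}\|\wh{\Sigma}-\Sigma\|_F
\]
on the event $\|\wh{\Sigma}-\Sigma\|_F\le \gamma_{\min}/2$. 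Taking expectations then leaves the factor $\Ex\|\wh{\Sigma}^{\text{LW}}_N-\Sigma\|_F$. By Jensen this is at most $(\Ex\|\wh{\Sigma}^{\text{LW}}_N-\Sigma\|_F^2)^{1/2}$, which is exactly the square-root form in the statement.

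The complementary event $\|\wh{\Sigma}-\Sigma\|_F>\gamma_{\min}/2$ needs separate handling. On it I would not use \Cref{thm:stab}, and instead use the fact that the allocation is still unbiased. I would argue that MultiPPI with any positive-definite $\wh{\Sigma}$ has MSE bounded by a constant multiple of $\sigma^2_{\classical}$. Then Markov's inequality gives $\p(\|\wh{\Sigma}-\Sigma\|_F>\gamma_{\min}/2)\le 2\Ex\|\wh{\Sigma}-\Sigma\|_F/\gamma_{\min}$. This contributes a term of the same form, $\sigma^2_{\classical}\gamma_{\min}^{-1}\Ex\|\wh{\Sigma}-\Sigma\|_F$, up to constants. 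This step is delicate: a badly mis-specified $\wh{\Sigma}$ could in principle yield a large true variance. If the constant blows up, I would fall back on the observation that the bound is vacuous unless its right-hand side is small. Alternatively, I would enforce the bound by noting that the LW estimator's eigenvalues are shrunk toward $\tr(\wh{\Sigma})/k$, which keeps it well conditioned.

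The main step is the second-moment bound on the Ledoit-Wolf estimator. Write $\wh{\Sigma}^{\text{LW}}=\hat\rho\,\hat m I+(1-\hat\rho)S_N$, where $S_N$ is the empirical covariance. Ledoit and Wolf's oracle shrinkage is $\rho^*=b^2/(a^2+b^2)$, with $b^2=\frac1k\Ex\|S_N-\Sigma\|_F^2$, and the oracle estimator has risk $\frac1k\Ex\|\Sigma^*-\Sigma\|_F^2=a^2b^2/(a^2+b^2)\le b^2$. For sub-Gaussian $X$, $\Ex\|S_N-\Sigma\|_F^2\lesssim K^4\gamma_{\max}^2k^2/N$, by summing the variances of the $k^2$ entries. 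This yields the $c_1$ term.

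The error from estimating $\hat\rho$, $\hat m$, $\hat a^2$, $\hat b^2$ in place of the oracle values produces the second term. The plan is to bound $\Ex(\hat\rho-\rho^*)^2\|S_N-\hat m I\|_F^2$. Fourth-moment (eighth-moment of $X$) concentration of $\hat b^2$ gives variance $\lesssim K^8\gamma_{\max}k^3/N$ after scaling. Dividing by $a^2$ comes from the denominator of $\hat\rho$, and is well defined because $\Sigma$ is not a multiple of the identity, so $a^2>0$. I expect this perturbation analysis of the data-dependent shrinkage weight to be the main obstacle. I would try to import it from Ledoit–Wolf's consistency lemmas rather than rederive it. Combining the two terms under the square root and multiplying by $4\sigma^2_{\classical}/\gamma_{\min}$ then finishes the proof.
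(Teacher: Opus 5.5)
Your overall route is the paper's. The paper proves the statement by feeding an $L^2$ bound on $\wh\Sigma^{\mathrm{LW}}_N-\Sigma$ into \Cref{thm:finite-sample-v2-1} (equivalently \Cref{thm:stab}). Your conditioning on an independent burn-in also matches the implicit setting of \Cref{app:finite-sample-bounds}, where the quantity bounded is $\Ex R(\wh{\ul{n}},\wh{\ul{\lambda}})$. The paper, however, simply applies \Cref{thm:finite-sample-v2-1} to the random $\wh\Sigma^{\mathrm{LW}}_N$ and takes expectations, never addressing the event $\|\wh\Sigma-\Sigma\|>\gamma_{\min}/2$. You are right that this event needs an argument, but the one you give fails: MultiPPI with an arbitrary positive-definite $\wh\Sigma$ does \emph{not} have MSE at most a constant times $\sigma^2_{\classical}$. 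Comparing with the classical allocation only yields
\[
\Ex\bigl[(\hat\theta_{\multiPPI(\wh\Sigma)}-\theta^*)^2\,\big|\,\wh\Sigma\bigr]\le\gamma_{\max}\bigl(\wh\Sigma^{-1}\Sigma\bigr)\,\frac{\wh\Sigma_{11}}{n^0_{I^0}},
\]
and the ratio to $\sigma^2_{\classical}$ is genuinely unbounded. Take $\Sigma=I_2$, $a=(1,0)$, $\mathcal{I}=\{\{1,2\},\{2\}\}$, $c_{\{1,2\}}=1$, $c_{\{2\}}=\epsilon$, and let $\wh\Sigma$ have off-diagonal entry $1-\eta$ with $\eta\ll\epsilon$; this is in the bad event, since $\|\wh\Sigma-\Sigma\|_F\approx\sqrt2$. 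Then MultiPPI believes $X_2\approx X_1$, spends only $n_{\{1,2\}}\approx B\sqrt{2\eta/\epsilon}$ on joint samples, and has true MSE $\approx\sqrt{2\epsilon/\eta}\,\sigma^2_{\classical}$. Neither fallback rescues this. There is no a priori bound on the MSE for the right-hand side to be ``vacuous'' against. Shrinkage only gives $\gamma_{\min}(\wh\Sigma^{\mathrm{LW}}_N)\ge\hat\rho\,\hat m$, and the data-dependent $\hat\rho$ can be arbitrarily close to $0$. Closing the step requires controlling $\Ex[\gamma_{\max}(\wh\Sigma^{-1}\Sigma)\wh\Sigma_{11}\I\{\|\wh\Sigma-\Sigma\|_F>\gamma_{\min}/2\}]$. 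One way is Cauchy--Schwarz with a sub-Gaussian (not Markov) tail bound on the bad event and a moment bound on $1/(\hat\rho\,\hat m)$. Markov's $O(\Ex\|\wh\Sigma-\Sigma\|_F/\gamma_{\min})$ multiplied by an unbounded factor does not give the claimed form.

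For the Ledoit--Wolf bound itself, your oracle-plus-perturbation decomposition differs from the paper's cruder route. The paper writes $\wh\Sigma^{\mathrm{LW}}_N-\Sigma=(1-\hat\rho)(S_N-\Sigma)+\hat\rho(\hat mI-\Sigma)$ and bounds $\|\wh\Sigma^{\mathrm{LW}}_N-\Sigma\|_F^2\lesssim\|S_N-\Sigma\|_F^2+\hat\rho^2\,ka^2$. It then controls $\Ex\hat\rho^2$ via $\hat\rho\le 2\hat b^2/a^2+\I\{\hat d^2\le a^2/2\}$, using the truncation $\hat\rho\in[0,1]$, together with Chebyshev for $\hat d^2$ (an average of $N$ quartics). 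In that argument the $k^3/a^2$ term comes from $ka^2\,\p(\hat d^2\le a^2/2)$, not from the variance of $\hat b^2$. Your plan is missing exactly this case split. You justify dividing by the denominator of $\hat\rho$ via $a^2>0$, but that only keeps the population quantity $d^2=a^2+b^2$ away from zero. The estimator $\hat\rho=\hat b^2/\hat d^2$ has the random denominator $\hat d^2$, so $\Ex(\hat\rho-\rho^*)^2\|S_N-\hat mI\|_F^2$ cannot be bounded by $K^8\gamma_{\max}k^3/(Na^2)$ without separately handling $\{\hat d^2\le a^2/2\}$. Importing Ledoit--Wolf's lemmas will not supply this. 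They are consistency statements (convergence in quartic mean as $N\to\infty$ with $k/N$ bounded), not nonasymptotic bounds with absolute constants.
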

For proofs, see \Cref{sec:finite_sample_proofs}.

\subsection{Behavior of the estimator in the limiting regimes}\label{sec:limiting}

In this section, we explain a certain limiting behavior of the estimator in the regime of very low budget. Let $X=(X_1,\dots,X_k)$ be a random vector of bounded second moment. We take $a=(1,0,\dots,0)$, so that our target is $\mathbb{E}[X_1]$. We consider the setting (as is the case in all experiments) in which $\mathcal{I}=\{1,\dots,k\}\cup \mathcal{I}_{\text{models}}$, where for each $I\in\mathcal{I}_{\text{models}}$ we have $1\not\in I$.

As in the experiments, we consider the budget model in which we have a fixed number of 

For $I\in\mathcal{I}_{\text{models}}$, $\rho_I$ denote the multiple correlation coefficient of $X_I$ with $X_1$; that is, let $\rho_I = \text{Cov}_I^\top \Sigma_I^{-1} \text{Cov}_I$, where we define $\text{Cov}_I:=(\text{Cov}(X_i,X_1))_{i\in I}$. The following result shows that, in the low-budget regime, MultiAllocate$(\Sigma)$ returns $n_I$ such that the only $I\in\mathcal{I}_{\text{models}}$ for which $n_I\neq0$ is the one which minimizes the correlation/cost ratio $\rho_I/c_I$.

\begin{theorem}\label{thm:limiting}
    Fix $B>0$ and consider the limit as $n_{[k]}\to\infty$. For each $I\in\mathcal{I}$, let $\alpha_I:=\rho_I/c_I$. Suppose that $I^*$ uniquely minimizes $\alpha_I$ over $I\in\mathcal{I}_{\text{models}}$. Then the solution to MultiAllocate$(\Sigma)$ satisfies 
    \[
    n_I\longrightarrow \frac{B}{c_{I}}\cdot \begin{cases}
        1 & I=I^* \\
        0 & I\neq I^*
    \end{cases} 
    \]
\end{theorem}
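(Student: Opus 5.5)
The plan is to reduce the allocation problem, in the limit $n_{[k]}=N\to\infty$ with the model budget $B$ fixed, to a linear program in the model allocations $(n_I)_{I\in\mathcal{I}_{\text{models}}}$ whose objective coefficients are the ratios $\alpha_I=\rho_I/c_I$. Unique extremality of $\alpha_{I^*}$ then forces all of $B$ onto $I^*$. One thing will need care: the expansion below makes the LP \emph{maximize} $\sum_I n_I\rho_I$, so I have to check it against the theorem's wording that $I^*$ \emph{minimizes} $\alpha_I$.

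\emph{Step 1 (variance formula).} By \Cref{thm:minimax} and \Cref{eq:variance}, MultiAllocate$(\Sigma)$ minimizes $a^\top S(\ul n)a$ with $a=e_1$ and $S(\ul n)=\bigl(N\Sigma^{-1}+M(\ul n)\bigr)^{-1}$. Here $M(\ul n)=\sum_{I\in\mathcal{I}_{\text{models}}}n_I\Sigma_I^\dagger$, subject to $\sum_I n_Ic_I\le B$. Since $B$ is fixed and the costs are positive, each $n_I\le B/c_I$. Hence $\|M(\ul n)\|$ is bounded uniformly, while $N\Sigma^{-1}$ grows linearly in $N$.

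\emph{Step 2 (Neumann expansion).} Write $S=N^{-1}\Sigma^{1/2}\bigl(I+N^{-1}\Sigma^{1/2}M\Sigma^{1/2}\bigr)^{-1}\Sigma^{1/2}$ and expand. This gives
\[
e_1^\top S e_1=\frac{\Sigma_{11}}{N}-\frac{1}{N^2}(\Sigma e_1)^\top M(\ul n)(\Sigma e_1)+O(N^{-3}),
\]
with the $O(N^{-3})$ term uniform over the compact feasible set. Since $1\notin I$, we have $(\Sigma e_1)_I=\text{Cov}_I$, so $(\Sigma e_1)^\top\Sigma_I^\dagger(\Sigma e_1)=\rho_I$. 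The second-order term is therefore $-N^{-2}\sum_I n_I\rho_I$. Define the limiting objective
\[
L(\ul n)=\lim_{N\to\infty} N^2\Bigl(e_1^\top S e_1-\frac{\Sigma_{11}}{N}\Bigr)=-\sum_I n_I\rho_I,
\]
which is linear in $\ul n$ and converges uniformly.

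\emph{Step 3 (LP and argmin convergence).} Minimizing $L$ over the simplex-like polytope $\{\ul n\ge0,\ \sum_I n_Ic_I\le B\}$ is an LP. Substituting $m_I=n_Ic_I$ turns the objective into $-\sum_I m_I\alpha_I$ with $\sum_I m_I\le B$. Its solution is the vertex putting $m_{I}=B$ on the unique extremal index of $\alpha$ and $m_I=0$ elsewhere; by hypothesis this index is $I^*$. Because the extremal index is unique, the LP has a unique minimizer. Uniform convergence of $N^2(e_1^\top Se_1-\Sigma_{11}/N)$ to $L$ on a compact set, together with uniqueness of the limit minimizer, then gives convergence of the minimizers by a standard argmin-continuity argument. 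This yields $n_{I^*}\to B/c_{I^*}$ and $n_I\to0$ for $I\ne I^*$.

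\emph{Main obstacle.} The delicate step is matching the orientation of the LP to the theorem's statement. Step 2 gives an objective $-\sum_I n_I\rho_I$, which rewards large $\rho_I/c_I$. Identifying $I^*$ with the \emph{minimizer} of $\alpha_I$, as the theorem is worded, therefore requires checking the convention for $\alpha_I$ and the sign of the correction term. I would first verify this on the $k=2$ scalar PPI++ case, where everything is explicit. If the sign does not flip, the argument proves the conclusion with $I^*$ the maximizer of $\alpha_I$ (equivalently the minimizer of $c_I/\rho_I$), and the statement's "minimizes" must be read in that sense.

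A secondary issue is integrality of the $n_I$. If they are integers, I would run the argument for the relaxed problem and note that flooring changes each $n_I$ by at most one. For fixed $B$ this leaves the limiting allocation intact up to that rounding.
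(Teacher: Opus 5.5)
Your proposal is correct and follows essentially the paper's route: both isolate the $n_{[k]}^{-2}$ term of the optimal variance to get the limiting linear objective $L(\ul{n})=-\sum_I n_I\rho_I$, then conclude from uniform convergence on the compact feasible set and uniqueness of the LP vertex. You obtain $L$ by a Neumann expansion of $e_1^\top(N\Sigma^{-1}+M(\ul{n}))^{-1}e_1$, which stays continuous up to $n_I=0$, whereas the paper explicitly minimizes the quadratic in $\ul{\lambda}$ over the active subsets and controls the remainder with a Woodbury-type bound.

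Your orientation worry is justified. The paper's own proof minimizes this same $L(\ul{n})$, so the budget concentrates on the unique \emph{maximizer} of $\alpha_I=\rho_I/c_I$, and ``minimizes'' in the statement must be read as ``maximizes''. For example, a model with $\rho_I=0$ has the smallest possible $\alpha_I$ yet contributes nothing to $L$.
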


% \subsection{A note on the PPI++ Pareto frontier}\label{sec:pareto}
% Here we briefly discuss the Pareto frontier of models with which to perform PPI++, described in \autoref{sec:results}. 
% \[
% \hat{\theta}
% \]

\subsection{Rounding in the large budget regime}\label{sec:rounding}
In this section, we consider the suboptimality of the rounding scheme in the large budget regime. We consider the general setup in which we optimize
\[
V_B(\underline{n}) = a^\top \left(\sum_{I} n_I P_I^\top \Sigma_I^{-1} P_I\right)^\dagger a\quad\text{s.t.}\quad n_I\geq0,\, \sum_I c_In_I\leq B,\, \supp(a)\subseteq\bigcup\{I:n_I>0\}
\]
We let $\underline{n}^*_\text{frac}$ denote the solution to this problem over all $\underline{n}\in\mathbb{R}^{|\mathcal{I}|}_{\geq0}$, and $\underline{n}^*_{\text{int}}$ denote the solution over all $\underline{n}\in\mathbb{Z}^{|\mathcal{I}|}_{\geq0}$. Let $\underline{n}_{\text{round}}$ denote the component-wise floor of $\underline{n}^*_{\text{frac}}$. Here we show that
\[
\lim_{B\to\infty} \frac{V_B(\underline{n}_{\text{frac}})}{V_B(\underline{n}^*_{\text{int}})}=1
\]
This follows from the fact that
\[
V_B(\underline{n}^*_{\text{frac}}) \leq V_B(\underline{n}^*_{\text{int}}) \leq V_B(\underline{n}_{\text{round}})
\]
and the limit $V_B(\underline{n}^*_{\text{frac}}) / V_B(\underline{n}_{\text{round}}) \to1$, to be proven next. Consider the difference vector $\underline{\delta} = \underline{n}^*_{\text{frac}}-\underline{n}_{\text{round}}\in[0,1]^{|\mathcal{I}|}$. Now observe that there is some $\underline{\nu}^*\in\mathbb{R}_{\geq0}^{|\mathcal{I}|}$ such that
\[
BV_B(\underline{n}_{\text{frac}}^*)= V_1(\underline{\nu}^*)
\]
for all $B$, and equality holds if we take $\underline{n}_{\text{frac}}^*=B\underline{\nu}^*$. In particular, since we must have $\bigcup\{I:n^*_{\text{frac},I}>0\}\supseteq \supp(a)$, we may take the same to hold for $\underline{\nu}^*$. We therefore have
\begin{align*}
    BV_B(\underline{n}_{\text{round}})&=Ba^\top\left(B\sum_I \nu_I P_I^\top\Sigma_I^{-1}P_I + \sum_I \delta_I P_I^\top \Sigma_I^{-1}P_I\right)^\dagger a \\
    &= a^\top \left(\sum_I \nu_I P_I^\top\Sigma_I^{-1}P_I + \frac{1}{B}\sum_I \delta_I P_I^\top \Sigma_I^{-1}P_I\right)^\dagger a 
\end{align*}
Now since $\bigcup\{I:{\nu}^*_I>0\}\supseteq\supp(a)$, we may apply continuity of the inverse to conclude that
\[
\lim_{B\to\infty} BV_B(\underline{n}_{\text{round}}) = a^\top \left(\sum_I \nu_I^* P_I^\top\Sigma_I^{-1}P_I \right)^\dagger a =V_1(\underline{\nu}^*)
\]
and the limit is proven.

\subsection{The continuous problem and its properties}\label{sec:continuous_problem}
In this section, we study several properties of the the following continuous version of \Cref{eq:variance}, stated below:
\begin{equation}\label{eqn:continous-problem}
    \min_{\substack{\ul{\nu}\geq0,\, \sum_I \nu_I c_I\leq B \\ \supp(a) \subseteq \{I:\nu_I>0\}}} a^\top S(\ul{\nu})a,\quad\quad S(\ul{\nu}) = \left(\sum_{I\in\mathcal{I}} n_I\Sigma_I^\dagger\right)^\dagger
\end{equation}
where $\ul{\nu}$ may vary over $\R^{|\mathcal{I}|}$. Here, as is the case throughout, the inequalities $\ul{\nu}\geq0,\, \sum_I \nu_I c_I\leq B $ should be interpreted in the vector-valued sense (see \Cref{app:generalized}).

We are interested in the (set-valued) minimizers of \Cref{eqn:continous-problem}. More specifically, we are interested in the continuity of these minimizers in $\Sigma$, and how this relates to the discrete optimization problem of \Cref{eq:variance}.
To address these topics, we introduce the notation, following \citet{boyd}, that $\mathbf{S}^k_{++}$ denotes the set of symmetric positive definite $k\times k$ matrices. We are interested in the set-valued solution to the minimization problem
\begin{equation}\label{eqn:nu}
        \underline{\nu}^*(A) = \underset{\substack{\ul{\nu} :\ul{\nu}\geq0 \\ \sum_I \nu_Ic_I\leq B_0 \\ \supp(a)\subseteq \bigcup\{I:\nu_I>0\}}}{\argmin} a^\top \left(\sum_I \nu_I A_I^\dagger \right)^\dagger a
\end{equation}
for $A\in \mathbf{S}^k_{++}$. More formally, recalling the set of feasible allocations $K=\{\ul{\nu}:\ul{\nu}\geq0,\sum_I\nu_Ic_I\leq B\}\subseteq \R^{|\mathcal{I}|}$, we have
\begin{equation}\label{eqn:nustar}
\begin{split}
    \ul{\nu}^*:\mathbf{S}^k_{++}&\rightrightarrows K \\
    A&\mapsto \argmin_{\ul{\nu}\in K} F(\ul{\nu},A)
\end{split}    
\end{equation}
where
\begin{equation}\label{eqn:F}
F(\ul{\nu},A)=\begin{cases}
    a^\top \left(\sum_I \nu_I A_I^\dagger \right)^\dagger a & a \in \text{range}\left(\sum_I\nu_I A_I^\dagger\right) \\
    \infty & \text{otherwise.}    
\end{cases}
\end{equation}

%\adam{Can we move the supporting lemmas to the proofs section, and only state Theorem E.9 here?}\charlie{Yes, good call, done thanks!}

We now connect the continuous problem to the discrete problem. Below we recall the discrete problem: define
\begin{equation}\label{eqn:n}
        \underline{n}^*_t(A) = \underset{\substack{\ul{n}\in\mathbb{Z}^{|\mathcal{I}|} :\ul{n}\geq0 \\ \sum_I n_Ic_I\leq tB_0 \\ \supp(a)\subseteq \bigcup\{I:n_I>0\}}}{\argmin} a^\top \left(\sum_I n_I A_I^\dagger \right)^\dagger a
\end{equation}
Then we have the following:
\begin{lemma}\label{lemma:discrete-convergence}
    Suppose that $\ul{\nu}^*(A^*)=\{\ul{{\nu}}^0\}$ is a singleton and that $A^{(N)}\to A^*$. Then for every $\epsilon>0$, there is some $N_1$ so that
    \[
    \left\| \frac{\ul{n}}{t}-\ul{\nu}^0 \right\| <\epsilon
    \]
    whenever $\ul{n}\in\ul{n}^*_t(A^{(N)})$ and $N,t\geq N_1$. 
\end{lemma}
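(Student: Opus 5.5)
We argue by contradiction, combining scale invariance with joint continuity of $F$ and a compactness argument. The first step is to normalize. If $\ul{n}$ is feasible for \Cref{eqn:n} with budget $tB_0$, then $\ul{n}/t$ is feasible for \Cref{eqn:nu}, that is, $\ul{n}/t\in K$ with $B=B_0$. Moreover, $F(\ul{n},A)=t^{-1}F(\ul{n}/t,A)$, since $(\sum_I n_I A_I^\dagger)^\dagger = t^{-1}(\sum_I (n_I/t)A_I^\dagger)^\dagger$. Hence $\ul{n}\in\ul{n}^*_t(A)$ exactly when $\ul{n}/t$ minimizes $F(\cdot,A)$ over the lattice $K\cap t^{-1}\mathbb{Z}^{|\mathcal{I}|}_{\geq0}$ (with the support constraint). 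Suppose the claim fails. Then there are $\epsilon>0$ and sequences $N_j,t_j\to\infty$ with $\ul{n}^{(j)}\in\ul{n}^*_{t_j}(A^{(N_j)})$ and $\|\ul{n}^{(j)}/t_j-\ul{\nu}^0\|\geq\epsilon$. Since $K$ is compact (assuming each $I$ has some positive cost; otherwise that coordinate is unconstrained and must be handled separately), we may pass to a subsequence with $\ul{\mu}^{(j)}:=\ul{n}^{(j)}/t_j\to\ul{\mu}\in K$ and $\|\ul{\mu}-\ul{\nu}^0\|\geq\epsilon$.

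Next comes the upper bound. Let $\ul{\nu}^{0,j}$ be the componentwise floor of $t_j\ul{\nu}^0$, divided by $t_j$. This point is feasible for the lattice problem at budget $t_jB_0$; it satisfies the support condition once $t_j$ is large, because $\nu^0_I>0$ on a covering family. It also converges to $\ul{\nu}^0$. On the set where a fixed covering family of $I$ has $\nu_I$ bounded below, the matrix $\sum_I\nu_IA_I^\dagger$ has range containing $\supp(a)$ with stable rank. There $F$ is jointly continuous in $(\ul{\nu},A)$ near $(\ul{\nu}^0,A^*)$, by the same continuity-of-pseudoinverse argument used in \Cref{sec:rounding}, noting that $A_I^\dagger$ is continuous on $\mathbf{S}^k_{++}$ because $A_I$ has constant rank $|I|$. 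Hence
\[
\limsup_j F(\ul{\mu}^{(j)},A^{(N_j)})\leq \lim_j F(\ul{\nu}^{0,j},A^{(N_j)})=F(\ul{\nu}^0,A^*).
\]

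The lower bound is where the real work lies. We need lower semicontinuity of $F$ jointly in $(\ul{\nu},A)$ on $K\times\mathbf{S}^k_{++}$, including at points where the support pattern degenerates, that is, where some $\mu^{(j)}_I\to0$. We use the variational form
\[
F(\ul{\nu},A)=\sup_{y}\Bigl(2a^\top y - y^\top\bigl(\textstyle\sum_I\nu_IA_I^\dagger\bigr)y\Bigr),
\]
which equals $+\infty$ when $a\notin\range$. This expresses $F$ as a supremum of functions that are continuous in $(\ul{\nu},A)$, so $F$ is lower semicontinuous, and we get $F(\ul{\mu},A^*)\leq\liminf_j F(\ul{\mu}^{(j)},A^{(N_j)})\leq F(\ul{\nu}^0,A^*)$. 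In particular $F(\ul{\mu},A^*)<\infty$, so $\ul{\mu}$ is a minimizer of the continuous problem. Its support may fail the constraint $\supp(a)\subseteq\bigcup\{I:\mu_I>0\}$, but finiteness of $F$ already forces $a\in\range(\sum_I\mu_IA_I^\dagger)$. We therefore take $F$ itself, with its $\infty$ convention, as the objective defining $\ul{\nu}^*$, as in \Cref{eqn:nustar}. Uniqueness then gives $\ul{\mu}=\ul{\nu}^0$, contradicting $\|\ul{\mu}-\ul{\nu}^0\|\geq\epsilon$.

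I expect the main obstacle to be this lower semicontinuity step and the reconciliation of the support constraint in \Cref{eqn:nu} with the range condition in \Cref{eqn:F}. The variational formula handles the first issue cleanly. The second requires checking that any $\ul{\nu}$ with $F<\infty$ has the same value as a feasible point of \Cref{eqn:nu}, or simply that the two formulations share the argmin set, which holds because $\range(A_I^\dagger)=\mathrm{span}(I)$.
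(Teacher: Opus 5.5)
Your proof is correct, but it is organized differently from the paper's. The paper first shows that $F$ is continuous as an extended-real-valued map on $K\times\mathbf{S}^k_{++}$ (\Cref{lemma:Fcont}, via Schur-complement perturbation bounds). It then uses Berge's theorem to get upper hemicontinuity of the relaxed argmin correspondence $\ul{\nu}^*$ (\Cref{lemma:nucont}). Inside the contradiction argument, it compares the discrete minimizers with floors of $t_\alpha\,\ul{\nu}^*(A^{(N_\alpha)})$, i.e.\ roundings of the \emph{perturbed} relaxed minimizers. Hemicontinuity is what makes those roundings converge to $\ul{\nu}^0$, and full continuity of $F$ is used both in Berge's theorem and in passing to the limit on each side.

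You instead compare with $\lfloor t_j\ul{\nu}^0\rfloor$, the rounding of the \emph{fixed} limiting minimizer, so you need neither hemicontinuity of $\ul{\nu}^*$ nor Berge's theorem. Your upper bound then needs continuity of $F$ only along one sequence. For large $j$ its support is exactly that of $\ul{\nu}^0$, since coordinates with $\nu^0_I=0$ stay at zero. So the rank really is constant along it, and continuity of the pseudo-inverse on constant-rank matrices suffices. Your remark about a covering family being bounded below is more than you need, and ``stable rank'' would not hold on that whole set. Your lower bound needs only lower semicontinuity, which the formula $F(\ul{\nu},A)=\sup_y\bigl(2a^\top y-y^\top(\sum_I\nu_IA_I^\dagger)y\bigr)$ gives cleanly, even where the support pattern degenerates.

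What your route buys is economy and robustness. It avoids the delicate part of \Cref{lemma:Fcont}: upper semicontinuity at points where new coordinates switch on. There the paper applies \Cref{lemma:l4}, stated for a PSD perturbation, to the difference $\sum_I\nu_IA_I^\dagger-\sum_I\nu_I^*(A_I^*)^\dagger$, which need not be PSD. The paper's route, in exchange, yields the stand-alone hemicontinuity statement for the relaxed problem.

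Finally, the reconciliation you flag at the end is automatic. For $A\succ0$, the range of $\sum_I\mu_IA_I^\dagger$ is the span of the coordinates in $\bigcup\{I:\mu_I>0\}$ (\Cref{lemma:l1}). Hence $F(\ul{\mu},A^*)<\infty$ is equivalent to $\supp(a)\subseteq\bigcup\{I:\mu_I>0\}$, and $\ul{\mu}$ cannot fail the support constraint once that value is finite.
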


For the proofs, see \Cref{app:proofs}.

\subsection{Decay of coverage in the large budget regime}\label{sec:coverage-decay}

In this section, we discuss the phenomenon of decaying coverage as $B\to\infty$. Note that this is not unique to MultiPPI: it can be seen occuring to all baselines we compare to, and is especially pronounced for PPI++ vector. After discussing the phenomenon, we describe one way to avoid it.

Since, to the best of our knowledge, this phenomenon has not been observed in other works concerning PPI++, we focus our discussion on the PPI++ estimator and explain why it happens in that setting. Recall from \Cref{eqn:ppi++} the PPI++ estimator
\[
\hat{\theta}_{\text{PPI++}} = \frac{1}{n}\sum_{i=1}^n\left(Y_i-\widehat{\lambda}X_i\right) + \frac{1}{N}\sum_{j=1}^N \widehat{\lambda}\tilde{X}_j
\]
where $\{(X_i,Y_i)\}_{i\leq n}$ are i.i.d. according to some joint distribution $\p$, and $\{\tilde{X}_j\}_{j\leq N}$ are i.i.d. $\p_X$.

\citet{angelopoulos2023ppi++} (as well as many works before, in the context of control variates) propose a choice of $\widehat{\lambda}$ which depends on $\{(X_i,Y_i)\}_{i\leq n}$; namely, they let
\[
\widehat{\lambda} = \frac{N}{n+N}\frac{\widehat{\Cov}(X_{1:n},Y_{1:n})}{\widehat{\Var}(X_{1:n})}
\]
where $\widehat{\Cov}(X_{1:n},Y_{1:n})$ and $\widehat{\Var}(X_{1:n})$ are the relevant empirical covariance and variance computed from $\{(X_i,Y_i)\}_{i\leq n}$. This choice introduces bias in finite samples, and MultiPPI exhibits a similar behavior, as discussed in \Cref{sec:methods}. In the limit theorems provided in this work, c.f. \Cref{thm:asymptotic_revised}, and in \citet{angelopoulos2023ppi++}, it is assumed that the number of labeled samples (here, denoted $n$) tends to infinity. But this is not the situation presented in our experimental results.

Here we consider the bias of $\hat{\theta}_{\text{PPI++}}$ for fixed $n$ as $N\to\infty$. This bias is exactly
\[
\text{bias}(\hat{\theta}_{\text{PPI++}}):=\left|\Ex[\hat{\theta}_{\text{PPI++}}]-\Ex[Y]\right| = \left|\Ex[\widehat{\lambda}(X_1-\tilde{X}_1)]\right|=\frac{N}{n+N}\left|\Cov\left(X_1,\frac{\widehat{\Cov}(X_{1:n},Y_{1:n})}{\widehat{\Var}(X_{1:n})}\right)\right|
\]
by independence of $\widehat{\lambda}$ and $\tilde{X}_1$. Now for fixed $n$, and $N\to\infty$, the right-hand side converges upward precisely to the covariance of $X_1$ with the sample regression slope of $Y$ onto $X$, which is not in general zero. Therefore, the bias will increase but stay bounded as $N\to\infty$, as observed.

Note that this analysis does not apply to the setting in which the ratio $N/n$ is bounded. We find, accordingly, that this decay is unobserved in our experiments in which the number of labeled samples is in constant proportion with the budget.

\clearpage
\section{Proofs}
\label{app:proofs}
Unless explicitly stated otherwise, we prove results for the generalized setup outlined in \Cref{app:generalized}.

\subsection{Proof of \Cref{thm:minimax}}\label{app:minimax_proof}
For $\Sigma\in\R^{k\times k}$ symmetric positive-definite, let $\mathcal{P}_\Sigma$ denote the set of distributions on $\R^k$ with covariance $\Sigma$. For a fixed collection of index subsets $\mathcal{I}$ with associated costs $c_I$, let $\Theta_B$ denote the set of budget satisfying estimators $\hat{\theta}$, i.e. the estimators $\hat{\theta}$ which are measurable functions of $n_I$ independent copies of $X_I=(X_i)_{i\in I}$, for each $I\in\mathcal{I}$, such that $\mathbf{B}(\underline{n})$ holds. Note that here we allow for multiple budget inequalities, as described in \Cref{app:generalized}, and so $\mathbf{B}(\underline{n})$ denotes the proposition that all budget constraints are satisfied simultaneously. We emphasize that we make no explicit restriction to linear or unbiased estimators.

%\adam{Make an explicit note that this solves the more general case}

\begin{theorem}[Minimax optimality for general budget constraints]\label{thm:1}
    We have
    \[
    \inf_{\hat{\theta}\in\Theta_B}\sup_{P\in\mathcal{P}_\Sigma} \Ex\left[(\hat{\theta}-\theta^*)^2\right] = \Var 
    \left(\hat{\theta}_{\text{Multi-allocate$(\Sigma)$}}\right) = \mathcal{V}_B
    \]
    where the variance is with respect to any distribution $P\in\mathcal{P}_\Sigma$.
\end{theorem}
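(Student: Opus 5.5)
The proof has three parts: an upper bound via the MultiPPI estimator, a lower bound via Gaussian distributions, and an identification of the resulting variance with $\mathcal{V}_B$.

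\emph{Upper bound.} Fix a budget-feasible allocation $\ul{n}$ with $\supp(a)\subseteq\bigcup\{I:n_I>0\}$. For a linear estimator $\hat{\theta}(\ul{n},\ul{\lambda})$, condition \textbf{U} is equivalent to the linear constraint $\sum_{I:n_I>0} P_I^\top\lambda_I = a$, where $\lambda_I$ is embedded into $\R^k$. Under this constraint, for every $P\in\mathcal{P}_\Sigma$ the MSE equals the variance
\[
\sum_I \frac{1}{n_I}\lambda_I^\top \Sigma_I\lambda_I ,
\]
which depends only on $\Sigma$. Minimizing this quadratic form subject to the linear constraint, by Lagrange multipliers, gives $\lambda_I = n_I\Sigma_I^\dagger S(\ul{n})a$ and optimal value $a^\top S(\ul{n})a$, with $S(\ul{n})=(\sum_I n_I\Sigma_I^\dagger)^\dagger$. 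This is the usual BLUE / generalized least squares computation. Minimizing over $\ul{n}$ then shows that $\hat{\theta}_{\multiPPI(\Sigma)}$ has worst-case risk exactly $\mathcal{V}_B$ over $\mathcal{P}_\Sigma$. This gives the upper bound and the middle equality.

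\emph{Lower bound.} Take any $\hat{\theta}\in\Theta_B$; it uses some feasible allocation $\ul{n}$. Restrict the supremum to the Gaussian submodel $\{\mathcal{N}(\mu,\Sigma):\mu\in\R^k\}\subseteq\mathcal{P}_\Sigma$. For this submodel the data are independent blocks $X_I^{(j)}\sim\mathcal{N}(\mu_I,\Sigma_I)$. This is a Gaussian location model whose Fisher information for $\mu$ is $J(\ul{n})=\sum_I n_I\Sigma_I^\dagger$, the embedded inverses.

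\begin{itemize}
\item If $a\notin\range J(\ul{n})$, there is a direction $v$ with $J(\ul{n})v=0$ and $a^\top v\neq0$. Moving $\mu$ along $v$ leaves the distribution of the data unchanged while changing $\theta^*$ arbitrarily, so the supremum of the risk is infinite. This case also covers allocations that fail the support condition.
\item If $a\in\range J(\ul{n})$, the sufficient statistic is Gaussian with mean $J(\ul{n})\mu$ and covariance $J(\ul{n})$. The minimax risk for the linear functional $a^\top\mu$ is $a^\top J(\ul{n})^\dagger a = a^\top S(\ul{n})a$. I would prove this with a Bayesian argument: place the prior $\mu\sim\mathcal{N}(0,\tau^2 I)$, note that the Bayes risk of the posterior mean is $a^\top(J(\ul{n})+\tau^{-2}I)^{-1}a$, and let $\tau\to\infty$. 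Since minimax risk is at least Bayes risk, this gives the bound.
\end{itemize}

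Taking the infimum over feasible $\ul{n}$ gives a lower bound of $\mathcal{V}_B$, which matches the upper bound.

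The main obstacle is the Bayes-limit step when $J(\ul{n})$ is singular but $a\in\range J(\ul{n})$. I need the convergence
\[
a^\top (J+\tau^{-2}I)^{-1}a \;\longrightarrow\; a^\top J^\dagger a .
\]
I would obtain it by diagonalizing $J$: the components of $a$ lie only in nonzero eigendirections, so each term converges to the corresponding term of $a^\top J^\dagger a$. A second point to handle is that the allocation may itself be chosen by the estimator. I treat $\ul{n}$ as fixed per estimator, as \Cref{def:budget} allows, and note that randomized allocations can be handled by conditioning on the allocation.
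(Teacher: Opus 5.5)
Your proposal is correct and follows essentially the same route as the paper. You reduce to a fixed feasible allocation $\underline{n}$ and get the upper bound from the best linear unbiased estimator with $\lambda_I = n_I\Sigma_I^\dagger S(\underline{n})a$; you get the matching lower bound from the Gaussian location family with prior $\mu\sim\mathcal{N}(0,\tau^2\,\mathrm{Id})$ and $\tau\to\infty$, handling the non-identifiable case by shifting the unobserved mean. Your diagonalization argument for $a^\top(J+\tau^{-2}I)^{-1}a\to a^\top J^\dagger a$ just makes explicit a limit the paper asserts in a footnote.
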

    
\begin{proof}
    We first reduce to the case of known and fixed $\underline{n}$.
    \begin{lemma}\label{lemma:1}
        Let $\Theta^{(\underline{n})}$ denote the set of measurable functions $\hat{\theta}$ which are functions of $n_I$ independent copies of $X_I$, for each $I\in\mathcal{I}$. Then if $\supp(a)\subseteq\bigcup\{I:n_I>0\}$,
        \[
        \inf_{\hat{\theta}\in\Theta^{(\underline{n})}}\sup_{P\in\mathcal{P}_\Sigma} \Ex\left[(\hat{\theta}-\theta^*)^2\right] = \min_{\substack{\underline{\lambda}
\,:\,\mathbf{U}(\underline{n},\underline{\lambda})}} \sum_{I:n_I>0} \frac{1}{n_I}\lambda_I^\top \Sigma_I\lambda_I;
        \]
        otherwise, $\sup_{P\in\mathcal{P}_\Sigma} \Ex\left[(\hat{\theta}-\theta^*)^2\right]$ is unbounded for all $\hat{\theta}\in\Theta_B$.
    \end{lemma}

    We now reduce the conjecture to this lemma. Observe that
    \[
    \Theta_B = \bigcup_{\underline{n}\,:\,\mathbf{B}(\underline{n})} \Theta^{(n)}
    \]
    and so the left hand-side of the conjecture is equal to
    \[
    \inf_{\underline{n}\,:\,\mathbf{B}(\underline{n})} \inf_{\hat{\theta}\in\Theta^{(\underline{n})}}\sup_{P\in\mathcal{P}_\Sigma} \Ex\left[(\hat{\theta}-\theta^*)^2\right] = \inf_{\underline{n}\,:\,\mathbf{B}(\underline{n})} \min_{\underline{\lambda}\,:\,\mathbf{U}(\underline{n},\underline{\lambda})} \sum_{I:n_I>0} \frac{1}{n_I}\lambda_I^\top \Sigma_I\lambda_I =: \Var(\hat{\theta}_{\text{Multi-allocate$(\Sigma)$}})
    \]
    since $\mathbf{U}(\underline{n},\underline{\lambda})$ is feasible for $\underline{\lambda}$ if and only if $\supp(a)\subseteq \bigcup\{I:n_I>0\}$.
    
    It now suffices to prove the lemma.
\end{proof}

\begin{proof}[Proof of Lemma~\ref{lemma:1}] The claim that  $\sup_{P\in\mathcal{P}_\Sigma} \Ex\left[(\hat{\theta}-\theta^*)^2\right]$ is unbounded for all $\hat{\theta}\in\Theta_B$ if $\supp(a)\not\subseteq\bigcup\{I:n_I>0\}$ follows from the observation that if $i\in \supp(a)\setminus \bigcup\{I:n_I>0\}$, there exist distributions $P\in\mathcal{P}_\Sigma$ such that $\theta^*_i=\Ex[X_i]$ may be made arbitrary large, while $\hat{\theta}$ cannot depend on such $X_i$.

    Therefore, in what follows, we assume $\supp(a)\subseteq \bigcup\{I:n_I>0\}$. The upper bound is clear from that fact that
    \[
    \{\hat{\theta}_{\underline{n},\underline{\lambda}} : \,\mathbf{U}(\underline{n},\underline{\lambda})\}\subseteq \Theta^{(\underline{n})}
    \]
    i.e., the set of unbiased linear estimators depending on $\underline{n}$ samples is a subset of the set of all estimators depending on $\underline{n}$ samples; and from the fact that $\Var (\hat{\theta}_{\underline{n},\underline{\lambda}}) = \sum_{I:n_I>0}\frac{1}{n_I}\lambda_I^\top\Sigma_I\lambda_I$ for every $P\in\mathcal{P}_\Sigma$, hence the minimal MSE of such estimators is precisely the right-hand side.

    We now prove the lower bound. Since the Bayes risk for any prior $\mu$ lower bounds the minimax risk, it suffices to construct a sequence of priors $\mu$ for which the risk of the Bayes estimator tends upward to our claimed lower bound. Let us choose the distribution $X\sim \mathcal{N}(\mu,\Sigma)$, and supply the prior $\mu\sim \mathcal{N}(0,\tau^2\text{Id}_k)$ for $\tau>0$ arbitrary; we will later take $\tau\to\infty$. Note that we then have $X_I=P_IX\sim\mathcal{N}(P_I\mu,P_I\Sigma P_I^\top)$.
    
    By construction, any estimator $\hat{\theta}\in\Theta^{(\underline{n})}$ depends on the independent set $
    \bigcup_{I\in\mathcal{I}}\{X_I^{(j)}\}_{1\leq j\leq n_I}$ where each $X_I^{(j)}$ is distributed according to $\mathcal{N}(\mu_I,\Sigma_I)$. The posterior\footnote{Morally, we are done at this point: the posterior mean is linear in $(\overline{X}_I)_I$, and the Multi-PPI estimator is the best such linear estimator. However, this does not yet directly imply the result. See next page for calculation of the posterior.} is then
    
    \begin{align*}
        \mu\,\,\bigg|\,\,\bigcup_{I\in\mathcal{I}}\{X_I^{(j)}\}_{1\leq j\leq n_I} \,&\sim\, \mathcal{N}(m_\tau,S_\tau) \\
        S_\tau &= \left(\frac{1}{\tau^2}\text{Id}_k + \sum_{I\in\mathcal{I}}n_I P_I^\top \Sigma_I^{-1}P_I \right)^{-1} \\
        m_\tau &= S_\tau\left(\sum_{I}n_I P_I^\top \Sigma_I^{-1}\overline{X}_I\right)
    \end{align*}
    The Bayes risk of estimating $\theta=a^\top \mu$ is then $a^\top S_\tau a$. Letting $\tau\to\infty$, we have shown that the minimax risk is at least\footnote{Here we use the assumption that $\text{supp}(a)\subseteq \bigcup\{I:n_I>0\}$, and thus $a$ lies in the range of $\sum_I n_IP_I^\top \Sigma_I^{-1}P_I$.}
    \[
    a^\top Sa,\quad S = \left(\sum_{I\in\mathcal{I}}n_I P_I^\top \Sigma_I^{-1}P_I \right)^\dagger.
    \]
    It remains to show that this risk is achievable by the $\hat{\theta}_{\underline{n},\underline{\lambda}}$ for some choice of $\underline{\lambda}$ satisfying $\mathbf{U}(\underline{n},\underline{\lambda})$. We quickly verify this below:

    Putting\footnote{To find this choice organically, one may solve an infimal norm convolution with Lagrange multipliers.}
    \[
    \lambda_I = \left(n_I \Sigma_I^{-1}P_I\right)S a
    \]
    we see that indeed $\mathbf{U}(\underline{n},\underline{\lambda})$ holds. Moreover, we calculate
    \[
    \Var(\hat{\theta}_{\underline{n},\underline{\lambda}})=\sum_{I} n_Ia^\top SP_I^\top \Sigma_I^{-1}\Sigma_I\Sigma_I^{-1}P_ISa = a^\top S \left(\sum_{I:n_I>0}n_I P_I^\top \Sigma_I^{-1}P_I\right)S a = a^\top Sa
    \]
    as desired. This concludes the proof.
\end{proof}

\subsection{Proof of \Cref{thm:asymptotic_revised}}\label{sec:asymptotic_proof}
We prove a generalization of \Cref{thm:asymptotic_revised} in which we allow for multiple budget inequalities.

Fix a vector $B_0\in \R^m_{>0}$. We consider the limit in which our budget is $B=t\cdot B_0$ and let $t\to\infty$. Suppose that $\widehat{\Sigma}\overset{p}{\to} \Sigma$ in the operator norm, potentially dependent on the variables sampled $X_I$.

We assume the following condition: Suppose that the following problem has a unique minimizer $\ul{\nu}$:
\begin{equation}\label{eqn:condition}
\begin{split}
    \ul{\nu}^\star = &\argmin_{\ul{\nu}} V(\ul{\nu}) :=a^\top\left(\sum_I \nu_I P_I^\top \Sigma_I^{-1} P_I\right)^\dagger a \\
    & \text{s.t.}\quad \ul{\nu}\geq0,\quad \sum_I \nu_I c_I\leq B_0,\quad \supp(a)\subseteq\bigcup \{I:\nu_I>0\}
\end{split}
\end{equation}

\begin{theorem}[Asymptotic normality in the general setting]\label{thm:asymptotic2} Suppose $X\in\R^k$ has finite second moment, and suppose that $\Sigma=\Cov(X)$ satisfies \Cref{eqn:condition}. Suppose that $\wh{\Sigma}\overset{p}{\to}\Sigma$ in the operator norm as $t\to\infty$. Let $\hat{\theta}_{\multiPPI(\widehat{\Sigma})}$ denote the MultiPPI estimator with budget $tB_0$. Then for $\hat{\theta}_{\multiPPI(\widehat{\Sigma})}$ arbitrarily dependent on any potential samples used to estimate $\widehat{\Sigma}$, we have
    \[
    \sqrt{t}\left(\hat{\theta}_{\multiPPI(\widehat{\Sigma})}-\theta^*\right)\overset{d}{\to}\mathcal{N}(0,\mathcal{V}^*)
    \]
    as $B\to\infty$, where $\mathcal{V}^*=\lim_{B\to\infty} B\mathcal{V}_{B}$, and $\mathcal{V}_{B}$ is defined in \Cref{eq:variance}.
\end{theorem}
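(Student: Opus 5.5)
We prove \Cref{thm:asymptotic2} by conditioning on the data-dependent allocation and weights, showing that they converge to their oracle limits, and then applying a Lindeberg central limit theorem to the sample-mean terms together with Slutsky's lemma.

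\textbf{Step 1: the allocation converges.} Write $(\wh{\ul{n}},\wh{\ul{\lambda}})$ for the output of MultiAllocate$(\wh{\Sigma})$ at budget $tB_0$. Condition \eqref{eqn:condition} says $\ul{\nu}^*(\Sigma)=\{\ul{\nu}^\star\}$ is a singleton. By \Cref{lemma:discrete-convergence}, applied along subsequences via the subsequence characterization of convergence in probability, we get $\wh{\ul{n}}/t\overset{p}{\to}\ul{\nu}^\star$. Let $\mathcal{I}^\star=\{I:\nu^\star_I>0\}$. For every $I\in\mathcal{I}^\star$ we then have $\wh{n}_I\to\infty$ in probability.

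\textbf{Step 2: the weights converge.} The weights are $\wh{\lambda}_I=\wh{n}_I\wh{\Sigma}_I^{-1}P_I\,\wh{S}a$ with $\wh{S}=(\sum_J \wh{n}_JP_J^\top\wh{\Sigma}_J^{-1}P_J)^\dagger$. Dividing numerator and denominator by $t$ gives
\[
\wh{\lambda}_I=(\wh{n}_I/t)\,\wh{\Sigma}_I^{-1}P_I\Big(\sum_J (\wh{n}_J/t)P_J^\top\wh{\Sigma}_J^{-1}P_J\Big)^\dagger a .
\]
The matrix $M^\star=\sum_{J\in\mathcal{I}^\star}\nu^\star_JP_J^\top\Sigma_J^{-1}P_J$ has $a$ in its range, since $\supp(a)\subseteq\bigcup\mathcal{I}^\star$. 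The main technical point is continuity of the pseudo-inverse, because $\sum_J(\wh{n}_J/t)P_J^\top\wh{\Sigma}_J^{-1}P_J$ may have larger rank than $M^\star$ when some $\wh{n}_J>0$ with $\nu^\star_J=0$.

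\begin{itemize}
\item I would restrict to the coordinate block $U=\bigcup\mathcal{I}^\star$. On $U$ the limit matrix is positive definite, so the inverse is continuous there.
\item Coordinates outside $U$ only enter through sets $J\notin\mathcal{I}^\star$, and those sets carry vanishing weight $\wh{n}_J/t\to0$.
\item Rather than a full pseudo-inverse perturbation argument, I would use the variational characterization $a^\top M^\dagger a=\sup_y\{2a^\top y-y^\top My\}$ together with the explicit minimum-variance formula. This gives $\lambda_I$-limits $\lambda_I^\star=\nu^\star_I\Sigma_I^{-1}P_IM^{\star\dagger}a$ for $I\in\mathcal{I}^\star$. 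It also gives $\sum_{J\notin\mathcal{I}^\star}\wh{\lambda}_J^\top\Sigma_J\wh{\lambda}_J/\wh{n}_J=o_p(1/t)$.
\end{itemize}

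I expect this step to be the main obstacle. If the sup-characterization is awkward, the fallback is to note that $\wh{\ul{n}}$ minimizes $\wh V$. Uniform convergence of $t\wh V(\cdot)$ to $V$ on a neighborhood of $\ul{\nu}^\star$ then forces the terms from sets outside $\mathcal{I}^\star$ to contribute negligibly.

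\textbf{Step 3: CLT and Slutsky.} Since $\Sigma\succ0$ and the estimator satisfies \textbf{U} exactly, we can decompose
\[
\hat{\theta}-\theta^*=\sum_I \wh{\lambda}_I^\top(\bar{X}_I-\mu_I),
\]
which holds for any $\ul{\lambda}$ satisfying \textbf{U}. The weights $\wh{\lambda}_I$ may depend arbitrarily on the samples, so I would freeze them at their limits and split
\[
\sqrt{t}(\hat{\theta}-\theta^*)=\sum_I \lambda_I^{\star\top}\sqrt{t}(\bar{X}_I-\mu_I)+\sum_I(\wh{\lambda}_I-\lambda_I^\star)^\top\sqrt{t}(\bar{X}_I-\mu_I).
\]
\begin{itemize}
\item For $I\in\mathcal{I}^\star$, the factor $\sqrt{t}(\bar{X}_I-\mu_I)$ is $O_p(1)$: $\wh{n}_I\ge ct$ with probability tending to one, and a maximal inequality over $n\ge ct$ controls the random sample size. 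The second sum is then $o_p(1)$ for these $I$.
\item For $I\notin\mathcal{I}^\star$, the contribution is $o_p(1)$ by the variance bound from Step 2, applied through Chebyshev conditionally on $\wh{n}$.
\end{itemize}

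The first sum uses fixed weights and independent blocks with sample sizes $\wh{n}_I/t\to\nu^\star_I$. Anscombe's theorem, or the multivariate CLT after replacing $\wh{n}_I$ by $\lfloor t\nu^\star_I\rfloor$ at an $o_p(1)$ cost, shows it converges to $\mathcal{N}\big(0,\sum_I \lambda_I^{\star\top}\Sigma_I\lambda_I^\star/\nu^\star_I\big)=\mathcal{N}(0,V(\ul{\nu}^\star))$.

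Finally, $V(\ul{\nu}^\star)=\lim_t tV_{tB_0}=\mathcal{V}^*$ by the rounding argument of \Cref{sec:rounding}, and Slutsky's lemma concludes the proof.
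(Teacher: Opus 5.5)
Your skeleton matches the paper's proof. \Cref{lemma:discrete-convergence} gives $\hat n/t\to\nu^\star$. A Kolmogorov-inequality (Anscombe) argument then replaces $\hat n_I$ by $\lfloor t\nu^\star_I\rfloor$ for $I\in\mathcal{I}^\star$, and the CLT over independent blocks finishes. You are more careful than the paper, whose proof simply writes down the limit with weights $\lambda^\star_I$ and silently drops the sets with $\nu^\star_J=0$. Your variational route for Step 2 is sound. Set $\tilde M=\hat M/t$, $\hat y=\tilde M^\dagger a$, $\phi(y)=2a^\top y-y^\top\tilde M y$, and let $y^0$ be the zero-extension of $(M^\star_U)^{-1}a_U$. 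Then $\phi(\hat y)-\phi(y^0)=(\hat y-y^0)^\top\tilde M(\hat y-y^0)$, and both values tend to $a^\top M^{\star\dagger}a$. This seminorm bound gives $\hat\lambda_I\to\lambda^\star_I$ on $\mathcal{I}^\star$ and also your $o_p(1/t)$ variance claim off $\mathcal{I}^\star$.

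The step that does not go through is the second bullet of Step 3. Chebyshev conditionally on $\hat n$ needs the block-$J$ samples to be independent of $(\hat n,\hat\lambda)$. The theorem, however, explicitly lets $\hat\Sigma$ — and hence $\hat n_J$ and $\hat\lambda_J$ — depend arbitrarily on those samples. Conditionally on $\hat n$, they are no longer i.i.d.\ with covariance $\Sigma_J$.

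Unconditionally, your bound only gives $|T_J|\le o_p(1)\cdot\|\Sigma_J^{-1/2}W_{J,\hat n_J}\|$, where $W_{J,n}=n^{-1/2}\sum_{j\le n}(X_J^{(J,j)}-\mu_J)$. But $W_{J,n}$ is not tight uniformly over $n\le\delta t$: its running maximum grows like $\sqrt{2\log\log t}$. So a data-chosen $\hat n_J$ defeats the bound.

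You have two possible repairs.
\begin{enumerate}
\item Write $T_J=\hat c_J^\top S_{J,\hat n_J}/\sqrt t$, with $\hat c_J=\hat\Sigma_J^{-1}P_J\hat y$ and $S_{J,n}=\sum_{j\le n}(X_J^{(J,j)}-\mu_J)$. Since $\hat n_J\le\delta t$ with probability tending to one, Kolmogorov's maximal inequality gives $\|S_{J,\hat n_J}\|/\sqrt t=o_p(1)$ with no independence needed. Then $T_J=o_p(1)$ once $\hat c_J=O_p(1)$. That boundedness is not supplied by Step 2: your seminorm estimate controls only $\sqrt{\hat n_J/t}\,\|\hat c_J\|$, because the coordinates of $\hat y$ outside $\bigcup\mathcal{I}^\star$ are not pinned down by it. So it must be proved separately.
\item Restrict the dependence clause to blocks with $\nu^\star_J>0$. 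This still covers the paper's procedure. There only the fully labeled block $\{1,\dots,k\}$ reuses the burn-in samples. When $a=e_1$, that block is the only set containing index $1$, so it must carry positive limiting weight. Under this restriction your conditional Chebyshev argument is legitimate.
\end{enumerate}
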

While $\hat{\theta}_{\multiPPI(\Sigma)}$ is minimax optimal in the setting of fixed and known covariance $\Sigma$, it is in general not efficient, and the variance $\mathcal{V}$ can in general be improved by slowly concatenating onto $X$ nonlinear functions of its components. %It may be that such a version of $\hat{\theta}_{\multiPPI(\Sigma)}$, in which $k$ is increased slowly by adding appropriate nonlinear transformations of the components of $X$, is semiparametrically efficient if this is done at such a rate that $k\ll B^{1/2},N^{1/2}$.

\begin{proof}[Proof of \Cref{thm:asymptotic2}]
% Let $\hat{\theta} = \hat{\theta}_{\text{MultiPPI}(\widehat{\Sigma})}$. Note that \ref{eqn:condition} is simply a rounded version of the optimization problem which is solved by $\hat{\theta}_{\text{MultiPPI}(\widehat{\Sigma})}$. Let $\wh{\ul{\nu}}$ denote the solution to \ref{eqn:condition} with $\Sigma$ replaced by $\wh{\Sigma}$.

% We first show that, as a result of the assumed condition, we have $\hat{\nu}_I\to \nu_I^*$ whenever $\wh{\Sigma}\to\Sigma$; that rounded solutions are optimal in the limit $t\to\infty$ is justified by \autoref{sec:rounding}. Since $a$ lies in the range of $\sum_I \nu^*_I P_I^\top \Sigma_I^{-1} P_I$, the objective function is continuous in $(\nu,\Sigma)$ at $\nu^*$. 

% The allocation $\underline{\hat{n}}$ and weights $\underline{\hat{\lambda}}$ are chosen to minimize the variance under $\widehat{\Sigma}$ subject to the budget $B = tB_0$. Let $\hat{\nu}_I = \hat{n}_I/t$. As $t \to \infty$, the optimal proportions $\underline{\hat{\nu}}$ converge to the solution $\underline{\nu}^*$ of the continuous optimization problem \ref{eqn:condition}. The convergence $\underline{\hat{\nu}} \xrightarrow{p} \underline{\nu}^*$ follows from $\widehat{\Sigma} \xrightarrow{p} \Sigma$ and Berge's Maximum Theorem, as the objective function is continuous and the feasible set is compact. By the continuous mapping theorem, we similarly have $\hat{\lambda}_I\overset{p}{\to} \lambda^*_I$.

% EDIT: 

For fixed $t$, we let allocation $\underline{\hat{n}}$ and weights $\underline{\hat{\lambda}}$ be chosen to minimize the variance of  $\hat{\theta} = \hat{\theta}_{\text{MultiPPI}(\widehat{\Sigma})}$ under $\widehat{\Sigma}$ subject to the budget $B = tB_0$, as in the MultiPPI procedure. Since $\wh{\Sigma}\overset{p}{\to}\Sigma$, \Cref{lemma:discrete-convergence} ensures that $\ul{\hat{n}}/t\overset{p}{\to}\ul{\nu}^*$, as defined in \Cref{eqn:condition}.

We can write $\sqrt{t}(\hat{\theta} - \theta^*) = \sum_{I \in \mathcal{I}} \hat{\lambda}_I^\top \sqrt{\frac{t}{\hat{n}_I}} W_{I, \hat{n}_I}$, where $W_{I, \hat{n}_I} = \frac{1}{\sqrt{\hat{n}_I}} \sum_{j=1}^{\hat{n}_I} (X_I^{(I,j)} - \mu_I)$. For indices $I$ with $\nu_I^* > 0$, we have $\hat{n}_I \xrightarrow{p} \infty$. Define $n_I^* = \lfloor t\nu_I^*\rfloor $, and let 
\[
W^*_I:=\frac{1}{\sqrt{n_I^*}}\sum_{i=1}^{n_I^*} (X_I^{(I,j)}-\mu_I)
\]
It is now enough to show that $W_{I,\hat{n}_I}-W^*_I\overset{p}{\to}0$, and this will follow from Kolmogorov's inequality. To simplify notation, let us focus on a single subset $I$, and define $Y_j = X_I^{(I,j)}-\mu_I$. Let us also define $S_m=\sum_{j=1}^m Y_j$. We must show that
\[
\frac{S_{\hat{n}}}{\sqrt{\hat{n}}}-\frac{S_{n^*}}{\sqrt{n^*}}\overset{p}{\to}0
\]
where we have dropped dependence on $I$ for convenience. We decompose
\[
\frac{S_{\hat{n}}}{\sqrt{\hat{n}}}-\frac{S_{n^*}}{\sqrt{n^*}} = \underbrace{\frac{S_{\hat{n}}-S_{n^*}}{\sqrt{n^*}}}_A + \underbrace{\frac{S_{\hat{n}}}{\sqrt{\hat{n}}}\left(1-\sqrt{\hat{n}/n^*}\right)}_B
\]
Fix $0<\delta<1$. We work on the event $E_\delta(t)=\{|\hat{n}-n^*|\leq \delta t\}$, which holds with high probability.

We first control $A$. On $E_\delta(t)$, $\sqrt{n^*}|A|$ is a sum of at most $\delta t+1$ i.i.d. copies of $Y_j$. Kolmogorov's inequality then yields
\[
\p\left(A>\epsilon \right) \leq \frac{\delta t+1}{\epsilon^2 n^*}\leq 4\frac{\delta}{\epsilon^2}
\]
because $n^*=\lfloor t\nu^*\rfloor$. Taking $\delta\to0$ yields that $A\overset{p}{\to}0$.

We next control $B$. Working again on $E_\delta(t)$, we have
\[
\frac{S_{\hat{n}}}{\sqrt{\hat{n}}} \leq \frac{1}{\sqrt{1-\delta}}\left(\underbrace{\frac{S_{n^*}}{\sqrt{n^*}}}_{O_p(1)}  + \underbrace{\frac{S_{\hat{n}}-S_{n^*}}{\sqrt{n^*}}}_A\right) 
\]
Recognizing the second term as $A\overset{p}{\to}0$, and the first term as tight by the central limit theorem, we conclude that $S_{\hat{n}}/\sqrt{\hat{n}}$ is tight. Now we conclude that $B\overset{p}{\to}0$ because $\hat{n}/n^*\overset{p}{\to}1$.

Having proven $W_{I,\hat{n}_I}-W^*_I\overset{p}{\to}0$, we conclude that
\[
\sqrt{t}(\hat{\theta}-\theta^*) = \sum_{I:\nu_I^*>0} \frac{1}{n^*_I} \sum_{j=1}^{n^*_I} (\lambda^*_I)^\top (X_I^{(I,j)}-\mu_I) + o_p(1)
\]
But this is precisely the desired result, since this is the solution to the continuous optimization problem, and we are done.
\end{proof}

\subsection{Proof of \Cref{thm:stab}}
\begin{proof} 
The proof is immediate from \Cref{thm:finite-sample-v2-1}, proven in \Cref{sec:finite_sample_proofs}.
\end{proof}

\subsection{Proofs for \Cref{app:finite-sample-bounds}}\label{sec:finite_sample_proofs}

\subsubsection{Proof of \Cref{thm:finite-sample-v2-1}}
\begin{proof}
    We have
    \begin{align}\label{eqn:R}
    \begin{split}
        R(\wh{\ul{n}},\wh{\ul{\lambda}}) - R({\ul{n}}^*,{\ul{\lambda}}^*) &= R(\wh{\ul{n}},\wh{\ul{\lambda}}) - \wh{R}_N(\wh{\ul{n}},\wh{\ul{\lambda}}) \\
        &+ \underbrace{\wh{R}_N(\wh{\ul{n}},\wh{\ul{\lambda}}) - \wh{R}_N({\ul{n}}^*,{\ul{\lambda}}^*)}_{\leq0} \\
        &+ \wh{R}_N({\ul{n}}^*,{\ul{\lambda}}^*) - R({\ul{n}}^*,{\ul{\lambda}}^*)
    \end{split}
    \end{align}
and so it suffices to bound $|R(\wh{\ul{n}},\wh{\ul{\lambda}}) - \wh{R}_N(\wh{\ul{n}},\wh{\ul{\lambda}})|$ and $|\wh{R}_N({\ul{n}}^*,{\ul{\lambda}}^*) - R({\ul{n}}^*,{\ul{\lambda}}^*)|$. Define
\begin{align}\label{eqn:Delta}
\begin{split}
    \Delta_N(\ul{n},\ul{\lambda}) &= |R({\ul{n}},{\ul{\lambda}}) - \wh{R}_N({\ul{n}},{\ul{\lambda}})| \\
    &=\left|\sum_{I\in\mathcal{I}:n_I>0}\frac{1}{n_I}\lambda_I^\top(\Sigma-\wh{\Sigma}_N)\lambda_I\right|\\
    &\leq \|\Sigma-\wh{\Sigma}_N\|\sum_{I\in\mathcal{I}:n_I>0}\frac{1}{n_I}\|\lambda_I\|_2^2
\end{split}
\end{align}
Now since $\ul{n}^0,\ul{\lambda}^0$ satisfies $\mathbf{U}$ and $\mathbf{B}$, we have
\[
R({\ul{n}}^*,{\ul{\lambda}}^*)\leq R(\ul{n}^0,\ul{\lambda}^0),\quad\quad \wh{R}_N(\wh{\ul{n}},\wh{\ul{\lambda}})\leq \wh{R}_N(\ul{n}^0,\ul{\lambda}^0)
\]
from which it follows that
\begin{equation}\label{eqn:bound-norm}
    \sigma^2_1/n^0_{I^0} \geq \sum_{I\in\mathcal{I}:n_I^*>0}\frac{1}{n_I^*}(\lambda_I^*)^\top\Sigma(\lambda_I^*) \geq \gamma_{\min}(\Sigma)\sum_{I\in\mathcal{I}:n_I^*>0} \frac{1}{n_I^*} \|\lambda_I^*\|_2^2
\end{equation}
and similarly
\[
\wh{\sigma^2_1}/n^0_{I^0} \geq \sum_{I\in\mathcal{I}:\wh{n}_I>0}\frac{1}{\wh{n}_I}\wh{\lambda}_I^\top\wh{\Sigma}_N\wh{\lambda_I} \geq \gamma_{\min}(\wh{\Sigma}_N)\sum_{I\in\mathcal{I}:\wh{n}_I>0} \frac{1}{\wh{n}_I} \|\wh{\lambda}_I\|_2^2,
\]
where $\gamma_{\min}(A)$ denotes the minimum eigenvalue of the matrix $A$. We deduce that
\begin{align*}
    \sum_{I\in\mathcal{I}:n_I^*>0} \frac{1}{n_I^*} \|\lambda_I^*\|_2^2 &\leq \frac{\Sigma_{11}}{n^0_{I^0}\gamma_{\min}(\Sigma)} \\
    \sum_{I\in\mathcal{I}:\wh{n}_I>0} \frac{1}{\wh{n}_I} \|\wh{\lambda}_I\|_2^2 &\leq \frac{\wh{\Sigma}_{N,11}}{n^0_{I^0}(\gamma_{\min}(\Sigma)-\delta)} \leq \frac{{\Sigma}_{11}+\delta}{n^0_{I^0}(\gamma_{\min}(\Sigma)-\delta)}
\end{align*}
by Weyl's inequality, where we let $\delta=\|\Sigma-\wh{\Sigma}_N\|$. Coupled with \Cref{eqn:Delta}, we have
\begin{align*}
\Delta_N(\ul{n}^*,\ul{\lambda}^*) &\leq \delta \frac{\Sigma_{11}}{n^0_{I^0}\gamma_{\min}(\Sigma)} \\
\Delta_N(\wh{\ul{n}},\wh{\ul{\lambda}}) &\leq \delta \frac{{\Sigma}_{11}+\delta}{n^0_{I^0}(\gamma_{\min}(\Sigma)-\delta)}
\end{align*}
Taken together with \Cref{eqn:R} and the definition of $\Delta_N$, we conclude that
\[
R(\wh{\ul{n}},\wh{\ul{\lambda}}) \leq R({\ul{n}}^*,{\ul{\lambda}}^*) + 4\frac{\delta}{\gamma_{\min}(\Sigma)}\cdot \frac{\sigma^2_1}{n^0_{I^0}}
\]
for all $\delta \leq \gamma_{\min}(\Sigma)/2$.
\end{proof}

\subsubsection{Proof of \Cref{cor:bounded}}
\begin{proof}
    This follows immediately from \Cref{thm:finite-sample-v2-1} and Corollary 6.20 of \citet{wainwright2019high}.
\end{proof}

\subsubsection{Proof of \Cref{cor:subg}}
\begin{proof}
    This follows immediately from \Cref{thm:finite-sample-v2-1} and Theorem 4.7.1 of \citet{vershynin2018high}.
\end{proof}

\subsubsection{Proof of \Cref{cor:gersch}}
\begin{proof}
    This follows immediately from the Gershgorin circle theorem, as $\sum_{t\neq s} \Cov(X_t,X_s) \leq \sqrt{\Var(X_t)\Var(X_s)}<c$, and so $\lambda_{\min}(\Sigma)$ is bounded below for all $k$. On the other hand, $\lambda_{\max}(\Sigma)$ is bounded above on account of the same argument and the assumption that $X_i$ are bounded.
\end{proof}

\subsubsection{Proof of \Cref{thm:ledoit}}
\begin{proof}
The result follows immediate from \Cref{thm:finite-sample-v2-1} after the following lemma.
\end{proof}
\begin{lemma}
    Suppose that $\Sigma$ is not a multiple of the identity, and that $X\in\R^k$ is sub-Gaussian with proxy $K$. Let $\gamma_{\max}$ denote the maximum eigenvalue of $\Sigma$. Then the Ledoit-Wolf shrinkage estimator $\widehat{\Sigma}_N^\text{LW}$ satisfies the bound
    \[
    \Ex \|\wh{\Sigma}_N^\text{LW}-\Sigma\|_{op} \leq \frac{1}{\sqrt{N}}\sqrt{c_1K^4\gamma_{\max}^2 k^2 + c_2 K^8 \gamma_{\max} k^3/a^2}
    \]
    where $a^2:=\frac{1}{k}\left\|\Sigma - I\cdot \frac{\tr(\Sigma)}{k}\right\|_F^2$.
\end{lemma}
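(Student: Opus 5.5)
The plan is to bound $\Ex\|\wh{\Sigma}_N^{\text{LW}}-\Sigma\|_{op}$ by the Frobenius error and then by its second moment: $\Ex\|\wh{\Sigma}_N^{\text{LW}}-\Sigma\|_{op}\le \Ex\|\wh{\Sigma}_N^{\text{LW}}-\Sigma\|_F\le (\Ex\|\wh{\Sigma}_N^{\text{LW}}-\Sigma\|_F^2)^{1/2}$ by Jensen. This explains the square root in the claimed bound. Write $S=\wh{\Sigma}_N$ for the sample covariance, $m=\tr(S)/k$, and $\mu=\tr(\Sigma)/k$. The Ledoit--Wolf estimator is $\wh{\Sigma}^{\text{LW}}=\hat\rho\, mI+(1-\hat\rho)S$. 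Here $\hat\rho=\min(1,\hat b^2/\hat d^2)$, with $\hat d^2=\frac1k\|S-mI\|_F^2$ and $\hat b^2$ the plug-in estimate of $\frac1k\Ex\|S-\Sigma\|_F^2$. With this in hand,
\[
\wh{\Sigma}^{\text{LW}}-\Sigma=(1-\hat\rho)(S-\Sigma)+\hat\rho\,(mI-\Sigma),
\]
and since $\hat\rho\in[0,1]$,
\[
\|\wh{\Sigma}^{\text{LW}}-\Sigma\|_F^2\le 2\|S-\Sigma\|_F^2+2\hat\rho^2\|mI-\Sigma\|_F^2 .
\]

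The first term is the standard sample covariance term. For sub-Gaussian $X$ with proxy $K$, each entry of $S-\Sigma$ has variance $O(K^4\gamma_{\max}^2/N)$; this follows from fourth-moment bounds on sub-Gaussian coordinates, with the centering correction only changing constants. Summing over the $k^2$ entries gives $\Ex\|S-\Sigma\|_F^2\le c_1K^4\gamma_{\max}^2k^2/N$, which is the first summand.

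For the second term I split $\|mI-\Sigma\|_F\le \|\Sigma-\mu I\|_F+\sqrt{k}\,|m-\mu|$. The part $\sqrt{k}|m-\mu|$ is at most $\|S-\Sigma\|_F$ by Cauchy--Schwarz, so it is absorbed into the first summand. What remains is $\hat\rho^2\|\Sigma-\mu I\|_F^2=\hat\rho^2\,k a^2$, and this is where the assumption that $\Sigma$ is not a multiple of the identity enters: it gives $a^2>0$. I would use the crude bound $\hat\rho\le \hat b^2/\hat d^2$ and estimate the numerator and denominator separately. The numerator $\hat b^2$ is an average of $\frac1N\|x_ix_i^\top-S\|_F^2/k$. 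Its expectation is $O(K^4\gamma_{\max} k/N)$, up to how the variance proxy is normalized, and its second moment is $O(K^8\gamma_{\max}^2 k^2/N^2)$. The denominator $\hat d^2$ concentrates around $a^2$.

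The main obstacle is controlling the ratio $\hat b^2/\hat d^2$ in expectation, because $\hat d^2$ can be small. I would handle this by splitting on the event $E=\{\hat d^2\ge a^2/2\}$.
\begin{itemize}
\item On $E$, $\hat\rho^2 ka^2\le 4k(\hat b^2)^2/a^2$. Its expectation is $O(K^8\gamma_{\max}k^3/(a^2N^2))$ or better, which is within the claimed second summand $c_2K^8\gamma_{\max}k^3/(a^2N)$.
\item On $E^c$, I use $\hat\rho\le1$ and bound $ka^2\,\p(E^c)$. The event $E^c$ forces $|\hat d-a|\ge (1-2^{-1/2})a$. Since $\hat d$ and $a$ differ by at most $k^{-1/2}\|S-\Sigma\|_F$, Markov's inequality gives
\[
\p(E^c)\lesssim \frac{\Ex\|S-\Sigma\|_F^2}{k a^2}.
\]
Hence $ka^2\,\p(E^c)\lesssim K^4\gamma_{\max}^2k^2/N$, which is again absorbed into the first summand.
\end{itemize}

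Collecting constants gives the stated bound. Substituting it into \Cref{thm:finite-sample-v2-1}, with $\delta=\|\wh{\Sigma}^{\text{LW}}_N-\Sigma\|_{op}$ taken in expectation on the event $\delta\le\gamma_{\min}/2$, yields \Cref{thm:ledoit}.
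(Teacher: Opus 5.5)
Your proposal is correct. Its skeleton matches the paper's: Jensen to pass from $\Ex\|\cdot\|_{op}$ to $(\Ex\|\cdot\|_F^2)^{1/2}$, the decomposition $(1-\hat\rho)(S-\Sigma)+\hat\rho(mI-\Sigma)$, and a split on $\{\hat d^2\ge a^2/2\}$ with $\hat\rho\le 2\hat b^2/a^2$ on the good event and $\hat\rho\le 1$ on the bad one. Where you differ is which event produces the $a^{-2}$ term.

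\begin{itemize}
\item The paper bounds $\p(\hat d^2\le a^2/2)$ by Chebyshev on $\hat d^2-d^2$. This needs the variance of a quartic statistic, i.e.\ eighth moments of $X$, and is exactly where its $K^8k^3/(a^2N)$ term comes from. On the good event it uses $\hat\delta^2\le\hat\delta\le 2\hat b^2/a^2$, so only $\Ex\hat b^2$ enters.
\item You instead use the deterministic bound $|\hat d-a|\le k^{-1/2}\|S-\Sigma\|_F$, which holds because $(S-\Sigma)-(m-\mu)I$ is the traceless part of $S-\Sigma$. This gives $ka^2\I_{E^c}\le(1-2^{-1/2})^{-2}\|S-\Sigma\|_F^2$ pointwise. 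So the bad event costs nothing beyond the sample-covariance error, and you need no higher moments of $\hat d^2$.
\item Your $a^{-2}$ term comes instead from squaring $\hat\rho\le 2\hat b^2/a^2$ on $E$. That needs $\Ex(\hat b^2)^2$ but gains a factor $1/N$.
\item You also correctly separate $m=\tr(S)/k$ from $\mu=\tr(\Sigma)/k$. The paper's decomposition glosses over this by writing $\Sigma-mI$ as if the shrinkage target were deterministic.
\end{itemize}

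\textbf{Bookkeeping caveat.} Under the normalization that makes the first summand $K^4\gamma_{\max}^2k^2$ correct, $\Ex(\hat b^2)^2\lesssim K^8\gamma_{\max}^4k^2/N^2$. So your good-event term carries $\gamma_{\max}^4$, not the $\gamma_{\max}$ in the statement, and does not literally fit under the stated second summand. The paper's own derivation also ends with $\gamma_{\max}^4$, so the exponent in the statement looks like a typo.

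\textbf{A simple repair that also sharpens the result.} Pair your bad-event bound with the paper's good-event trick: $\hat\rho^2ka^2\I_E\le\hat\rho\,ka^2\I_E\le 2k\hat b^2$, whose expectation is $O(K^4\gamma_{\max}^2k^2/N)$. Everything is then absorbed into the first summand. The lemma therefore holds with $c_2=0$, and the assumption that $\Sigma$ is not a multiple of the identity becomes unnecessary. Even more directly, $\wh{\Sigma}^{\text{LW}}-\Sigma=(S-\Sigma)+\hat\rho(mI-S)$, and $\hat\rho\|mI-S\|_F=\sqrt{k}\,\hat b^2/\hat d\le\sqrt{k}\,\hat b$ because $\hat b\le\hat d$.

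Your closing sentence about \Cref{thm:ledoit} lies outside this lemma. Note that \Cref{thm:finite-sample-v2-1} needs $\delta\le\gamma_{\min}/2$ pointwise. Passing to expectations therefore requires separate control of the complementary event, which neither your sketch nor the paper provides.
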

\begin{proof}
    Let $\wh{\Sigma}_N$ denote the empirical covariance matrix. Recall that by definition
    \[
    \wh{\Sigma}^{LW}_N = (1-\hat{\delta})\wh{\Sigma}_N +\hat{\delta} \hat{m}I
    \]
    where $\hat{m}=\tr(\wh{\Sigma}_N)/k$, and $\hat{\delta}=\hat{b}^2/\hat{d}^2$; we have $b^2= \Ex\|\wh{\Sigma}_N-\Sigma\|_F^2/k$ and $d^2=a^2+b^2$, and $\hat{b}$ and $\hat{d}$ are such that $\hat{b}\to b$ and $\hat{d}\to d$ in quartic mean. Our strategy will be to employ the observation that
    \begin{align*}
        \|\wh{\Sigma}_N^{LW}-\Sigma\|_F^2 &= \|(1-\hat{\delta})(\wh{\Sigma}_N-\Sigma)+\hat{\delta}(\Sigma-mI)\|_F^2 \\
        &\leq \left(|1-\hat{\delta}|\|\wh{\Sigma}_N-\Sigma\|_F + |\hat{\delta}|\|\Sigma-mI\|_F\right)^2 \\
        &\leq 6\|\wh{\Sigma}_N-\Sigma\|_F^2 + 4\hat{\delta}^2 \|\Sigma-mI\|_F^2
    \end{align*}
    using the coarse bounds that $|1-\hat{\delta}|\leq1,|\hat{\delta}|\leq1$ and $(u+v)^2\leq2u^2+2v^2$. It therefore suffices to bound $\Ex \|\wh{\Sigma}_N-\Sigma\|_F^2$ and $\Ex\hat{\delta}^2$.
    
    Since $X$ is sub-Gaussian with proxy $K$, $\wh{\Sigma}_N$ satisfies
    \[
    \Ex \|\wh{\Sigma}_N-\Sigma\|_F^2 \lesssim \frac{K^4}{N}\gamma_{\max}(k^2+k)
    \]
    by \citet{wainwright2019high}. This provides a bound on $b^2$; the estimator $\hat{b}$ is (after truncation) a average of $N$ i.i.d. quartic functionals of $X$ of the form $\|XX^\top-\wh{\Sigma}_N\|_F^2/k$, each of which have finite second-moment bounded by $cK^8\gamma_{\max}^4 k^2$ by the sub-Gaussian assumption.
    
    We conclude that we may bound
    \[
    \Ex \hat{b}^2 \lesssim \frac{K^4}{N}\gamma_{\max}k.
    \]
    We proceed by cases to bound $\Ex \hat{\delta}^2$. On the event $\{\hat{d}^2>a^2/2\}$, we have $\hat{\delta}\leq 2\hat{b}^2/a^2$, so it will suffices to bound the probability that $\{\hat{d}^2\leq a^2/2\}$. Since $\hat{d}^2$ is again an average of $N$ i.i.d. quartics in $X$, each of which have second moment bounded by $cK^8\gamma_{\max}^4k^2$, we have
    \[
    \Ex(\hat{d}^2-d^2)^2 \lesssim \frac{K^8}{N}\gamma_{\max}^4p^2
    \]
    We conclude that by Chebyshev's inequality, we have
    \[
    \p(\hat{d}^2\leq a^2/2) \leq c''\frac{K^8}{a^4N}\gamma_{\max}^4p^2
    \]
    Lastly, since $0\leq \hat{\delta}\leq 1$ (since $\hat{b}$ is truncated by $\hat{d}$), we conclude that in all cases
    \[
    \hat{\delta}^2\leq \hat{\delta} \leq \frac{2\hat{b}^2}{a^2} + \I_{\{\hat{d^2}\leq a^2/2\}}
    \]
    and so
    \[
    \Ex \hat{\delta}^2 \leq \frac{2}{a^2}\Ex \hat{b}^2 + \p(\hat{d}^2\leq a^2/2) \leq \frac{1}{N}\left(c'''K^4\gamma_{\max}^2\frac{k}{a^2} + c''''K^8\gamma_{\max}^4\frac{k^2}{a^4}\right)
    \]
    Taken together, we have shown that
    \[
    \Ex\|\wh{\Sigma}_N^{LW}-\Sigma\|_F^2 \leq \frac{1}{N}\left[c_1 K^4\gamma_{\max}^2k^2+c_2K^8\gamma_{\max}^4\frac{k^3}{a^2}\right]
    \]
    as desired.
\end{proof}

\subsection{Proofs for \Cref{sec:limiting}}

\subsubsection{Proof of \Cref{thm:limiting}}
\textbf{Note:} For the purpose of this proof only, we slightly change notation, letting $m$ denote the number of labeled samples rather than $n$. This just has the purpose of clarifying the potential conflict with the notation $n_I$.

\begin{proof}
Let us introduce the notation that $P_I$ is the orthogonal projection onto coordinates $I$, and thus $P_I^\top \lambda_I$ shares its values with $\lambda_I$ on coordinates $I$, and is 0 elsewhere. As a result, note that we have required
\[
\sum_{I:n_I>0} P_I^\top \lambda_I = \mu.
\]
Now we aim to minimize
\[
\frac{1}{m}\left(\sigma^2_Y - 2\mu^\top \Cov + \mu^\top \Sigma\mu\right) + \sum_{I:n_I>0} \frac{1}{n_I} \lambda_I^\top\Sigma_I\lambda_I
\]
or, expanding,
\[
V(n,\lambda):=\frac{1}{m}\left(\sigma^2_Y - 2\sum_{I:n_I>0} \lambda_I^\top \Cov_I + \sum_{I,J:n_I,n_J>0}\lambda_I^\top \Sigma_{IJ}\lambda_J\right) + \sum_{I:n_I>0} \frac{1}{n_I} \lambda_I^\top\Sigma_I\lambda_I
\]
We are interested in minimizing $V(n,\lambda)$ over all $\lambda$ (by which we mean $(\lambda_I)_{I\in\mathcal{I}}$) and $n$ satisfying the budget constraint $\sum_I c_In_I\leq C$. We will first minimize over $\lambda$ for fixed $n$: define $U(n):=\min_\lambda V(n,\lambda)$. But
\[
V(n,\lambda) = \lambda^\top \begin{pmatrix}
    \left(\frac{1}{m}+\frac{1}{n_{I_1}}\right)\Sigma_{I_1}\I_{n_I>0} & \dots & \frac{1}{m}\Sigma_{I_1I_k}\I_{n_{I_1},n_{I_k}>0} \\
    \vdots & \ddots & \vdots\\
    \frac{1}{m}\Sigma_{I_kI_1}\I_{n_{I_k},n_{I_1}>0} & \dots & \left(\frac{1}{m}+\frac{1}{n_{I_1}}\right)\Sigma_{I_1}\I_{n_I>0}
\end{pmatrix}\lambda  -2 \lambda^\top\begin{pmatrix}
    \frac{1}{m}\Cov_{I_1}\I_{n_{I_1}>0} \\
    \vdots \\
    \frac{1}{m}\Cov_{I_k}\I_{n_{I_k}>0}
\end{pmatrix} +\frac{\sigma^2_Y}{m}
\]
is a quadratic form in $\lambda$, where we define $\Sigma_{IJ}=(\Sigma_{ij})_{i\in I, j\in J} = P_I\Sigma P_J^\top$. This is of the form
\[
V(n,\lambda) = \lambda^\top \left(\frac{1}{m}S_1 + S_2\right)\lambda - 2\frac{1}{m}\lambda^\top T + d
\]
where
\[
S_1 = \begin{pmatrix}
    \Sigma_{IJ} \I_{n_I,n_J>0}
\end{pmatrix}_{I,J\in\mathcal{I}},\quad\quad S_2 = \texttt{block\_diag}\left(\frac{1}{n_I}\Sigma_I \I_{n_I>0}\right)_{I\in\mathcal{I}},\quad\quad T = \begin{pmatrix}
    \Cov_{I}\I_{n_I>0}
\end{pmatrix}_{I\in\mathcal{I}}
\]
and $d$ is constant in $n,\lambda$. It is known that the minimum value of such a quadratic form is
\[
U(n) = \min_\lambda V(n,\lambda) = -\frac{1}{m^2}T^\top \left(\frac{1}{m}S_1+S_2\right)^+T.
\]
This is because $T$ lies in the range of $\frac{1}{m}S_1+S_2$. To see this, let us introduce the notation that $\mathcal{I}^+=\{I\in\mathcal{I}:n_I>0\}$ and let $\mathcal{I}^0$ be its complement. Reorder $\mathcal{I}$ if necessary so that $\mathcal{I}^+$ strictly precedes $\mathcal{I}^0$. Then $\frac{1}{m} S_1+S_2$ takes the block form
\[
\frac{1}{m}\begin{pmatrix}
    (\Sigma_{IJ})_{I,J\in\mathcal{I}^+} & 0 \\
    0 & 0
\end{pmatrix} + \begin{pmatrix}
    \texttt{block\_diag}(\Sigma_I/n_I)_{I\in\mathcal{I}^+} & 0 \\
    0 & 0
\end{pmatrix}.
\]
Now, both $(\Sigma_{IJ})_{I,J\in\mathcal{I}^+}$ and $\texttt{block\_diag}(\Sigma_I/n_I)_{I\in\mathcal{I}^+}$ are symmetric positive-definite, hence invertible, on the coordinates $\mathcal{I}^+$, and $T$ has support in the span of the coordinates $\mathcal{I}^+$.

Given the block form shown above, we see that
\[
\left(\frac{1}{m}S_1 + S_2\right)^+ = \begin{pmatrix}
    \left(\frac{1}{m}(\Sigma_{IJ})_{I,J\in\mathcal{I}^+}+\texttt{block\_diag}(\Sigma_I/n_I)_{I\in\mathcal{I}^+}\right)^{-1} & 0 \\
    0 & 0
\end{pmatrix}
\]
again in the coordinates in which $\mathcal{I}^+$ precedes $\mathcal{I}^0$.

Continuity of the inverse is now enough to conclude that
\[
\lim_{m\to0} m^2U(n) = -T^\top\texttt{block\_diag}\left(n_I\Sigma_I^{-1}\right)_{I\in\mathcal{I}}  T = -\sum_{I\in\mathcal{I}} n_I \Cov_I^\top\Sigma_I^{-1}\Cov_I =: L(n)
\]
But now this is a linear function $L(n)$ in $n$. Consider minimizing this in $n$, subject to the (simplex) budget constraint $n_I\geq0$, $\sum_I c_In_I\leq C$. The minimum is achieved on a vertex of the simplex, and the minimizer is unique except in the unlikely situation that
\[
\frac{\Cov_I^\top\Sigma_I^{-1}\Cov_I}{c_I} = \text{constant in $I$}
\]
assuming that $\Cov_I\neq0$ for some $I$.

Now we claim that $m^2 U(n)\rightarrow L(n)$ uniformly in $n$ subject to the budget constraint. For this, it suffices to show that
\[
\left(\frac{1}{m}(\Sigma_{IJ})_{I,J\in\mathcal{I}^+}+\texttt{block\_diag}(\Sigma_I/n_I)_{I\in\mathcal{I}^+}\right)^{-1} \to \texttt{block\_diag}(\Sigma_I/n_I)_{I\in\mathcal{I}^+}^{-1}
\]
in the operator norm, uniformly in $n$. The Woodbury matrix identity implies that the difference is exactly
\[
\texttt{block\_diag}(n_I\Sigma_I^{-1})_{I\in\mathcal{I}^+} (I+m\texttt{block\_diag}(\Sigma_I^{-1}/n_I)_{\mathcal{I}^+}(\Sigma_{IJ})_{I,J\in\mathcal{I}^+})^{-1}
\]
Now, we have $0<n_I\leq C/c_I$ for all $I\in\mathcal{I}^+$ by the constraint. The operator norm is sub-multiplicative, and the first factor is bounded in norm by a constant multiple of $1/\min_I c_I$. Similarly, we have
\[
I+m\texttt{block\_diag}(\Sigma_I^{-1}/n_I)_{\mathcal{I}^+}(\Sigma_{IJ})_{I,J\in\mathcal{I}^+} \succ I+\frac{mC}{\min_I c_I}\texttt{block\_diag}(\Sigma_I^{-1})_{\mathcal{I}^+}(\Sigma_{IJ})_{I,J\in\mathcal{I}^+}
\]
The operator norm of the right-hand side goes to $\infty$ uniformly in $n$, so the operator norm of its inverse goes to $0$ uniformly as well. In conclusion, we have uniform convergence. Therefore, we have
\[
n^*(m):=\argmin_n \min_\lambda V(n,\lambda)\xrightarrow[m\to\infty]{} n^*
\]
\end{proof}

\subsection{Proofs for \Cref{sec:continuous_problem}}

In this section, we prove \Cref{lemma:discrete-convergence}. The following suffices for most of the proof:
\begin{lemma}\label{lemma:Fcont}
    The map $F:K\times \mathbf{S}^k_{++}\longrightarrow\R\cup\{\infty\}$ is continuous.
\end{lemma}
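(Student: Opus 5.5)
To prove \Cref{lemma:Fcont}, I would split according to whether $a$ lies in the range of $M(\ul{\nu},A):=\sum_I \nu_I A_I^\dagger$, and handle the finite and infinite parts of $F$ separately. The first thing to record is that $A\mapsto A_I^\dagger=(P_IAP_I^\top)^\dagger$ is continuous on $\mathbf{S}^k_{++}$. For positive definite $A$, the compression $P_IAP_I^\top$ has constant rank $|I|$. It is an invertible block on $\mathrm{span}(I)$ and zero elsewhere, and its pseudo-inverse is the ordinary inverse of that block embedded back. The pseudo-inverse is continuous along constant-rank families, so $(\ul{\nu},A)\mapsto M(\ul{\nu},A)$ is jointly continuous.

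Next I would pin down the range of $M$. Each $A_I^\dagger$ is positive semidefinite with range exactly $\mathrm{span}(I)$. A sum of PSD matrices has range equal to the sum of their ranges, so $\range M(\ul{\nu},A)=\mathrm{span}\bigl(\bigcup\{I:\nu_I>0\}\bigr)$. This range depends only on the support pattern of $\ul{\nu}$, not on $A$. Hence $F<\infty$ exactly when $\supp(a)\subseteq\bigcup\{I:\nu_I>0\}$.

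For the finite case, suppose $(\ul{\nu}^{(N)},A^{(N)})\to(\ul{\nu},A)$ with $F(\ul{\nu},A)<\infty$. For $N$ large, every $I$ with $\nu_I>0$ also has $\nu^{(N)}_I>0$. Therefore the range $V_N$ of $M_N:=M(\ul{\nu}^{(N)},A^{(N)})$ contains the range $V$ of $M:=M(\ul{\nu},A)$, but it may be strictly larger because of vanishing extra weights. So I cannot appeal to constant-rank continuity of the pseudo-inverse directly, and this is the main obstacle. To get around it, I would use the variational formula
\[
a^\top M^\dagger a=\sup_y\,\bigl(2a^\top y-y^\top My\bigr),
\]
valid for PSD $M$ with $a\in\range M$; the supremum is $+\infty$ otherwise. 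Write $M_N=M'_N+E_N$, where $M'_N$ collects the terms with $\nu_I>0$ and $E_N\succeq0$ collects the rest, with $E_N\to0$. Then $F_N\le a^\top (M'_N)^\dagger a$, which follows from the variational formula since $M_N\succeq M'_N$. The right-hand side converges to $F(\ul{\nu},A)$, because $M'_N\to M$ has constant range $V$ and $a\in V$. This gives $\limsup F_N\le F$. For the matching lower bound, I plug $y=M^\dagger a$ into the variational formula for $M_N$. This yields $F_N\ge 2a^\top M^\dagger a-a^\top M^\dagger M_N M^\dagger a\to a^\top M^\dagger a$, so $\liminf F_N\ge F$. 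Together these give continuity at finite points.

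For the infinite case, suppose $F(\ul{\nu},A)=\infty$, so some coordinate $i\in\supp(a)$ lies outside $\bigcup\{I:\nu_I>0\}$, and I need $F_N\to\infty$. If $F_N=\infty$ eventually, there is nothing to show. Otherwise, I would test the variational formula with $y=s e_i$, where $e_i$ is the $i$th standard basis vector and $s>0$ is a free scale. This gives
\[
F_N\ge 2s a_i-s^2 e_i^\top M_N e_i .
\]
Only terms with $i\in I$ contribute to $e_i^\top M_N e_i$, and all of these have $\nu^{(N)}_I\to0$ while the entries of $A_I^{(N)\dagger}$ stay bounded. Hence $e_i^\top M_N e_i\to0$. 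Choosing $s$ of the order of $a_i/e_i^\top M_N e_i$ gives $F_N\gtrsim a_i^2/e_i^\top M_N e_i\to\infty$. This proves continuity into $\R\cup\{\infty\}$ with its usual topology.
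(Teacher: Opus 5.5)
Your proof is correct, and after a shared first step it takes a genuinely different route from the paper.

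\textbf{Shared start.} Both arguments begin from the same structural fact (the paper's \Cref{lemma:l1} and \Cref{lemma:l2}). The range of $\sum_I \nu_I A_I^\dagger$ is the coordinate subspace on $\bigcup\{I:\nu_I>0\}$. So near a point where $F$ is finite, this range can only grow.

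\textbf{The paper's route.} It restricts to those coordinates via $x^\top M^\dagger x = x_{(J)}^\top M_{(J)}^{-1}x_{(J)}$. It then writes the nearby matrix as the base matrix plus a perturbation $\Xi$. Finally it applies a Schur-complement perturbation bound (\Cref{lemma:l4}) to get the explicit estimate $|F(\ul{\nu},A)-F(\ul{\nu}^*,A^*)|\le 2\|M_{(J)}^{-1}\|^2\|\Xi\|\|a\|^2$.

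\textbf{Your route.} You use $a^\top M^\dagger a=\sup_y(2a^\top y-y^\top My)$. The growing-range obstacle is then handled by Loewner monotonicity (discarding the PSD terms $E_N$) for the upper bound, and by a single test vector for the lower bound.

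\textbf{What your route buys.}
\begin{itemize}
\item It avoids all Schur-complement and invertibility bookkeeping.
\item It isolates what actually makes the argument work: the extra terms are PSD. The paper's lemma is stated for PSD $\Xi$, but the $\Xi$ it is applied to is in general indefinite; only the contribution of the terms with $\nu^*_I=0$ is PSD.
\item It covers base points with $F=\infty$. The paper's concluding step does not treat these: it assumes $a\in\R^{I^+_*}$, and never invokes the second half of \Cref{lemma:l4}, which is meant for this case.
\end{itemize}
Your formula in fact gives the infinite case for free. $F$ is a pointwise supremum of the continuous functions $(\ul{\nu},A)\mapsto 2a^\top y-y^\top M(\ul{\nu},A)y$, hence lower semicontinuous. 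That yields both the $\liminf$ half of the finite case and the entire infinite case at once.

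\textbf{What the paper's route buys.} It gives an explicit Lipschitz-type bound in $\|\Xi\|$ at finite points. Your argument would give this only after quantifying the constant-rank continuity of the pseudo-inverse in your upper bound.

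\textbf{Two cosmetic fixes in your infinite case.} First, take $s=a_i/(e_i^\top M_N e_i)$, so that $s$ carries the sign of $a_i$ rather than being positive. Second, note that $e_i^\top M_N e_i=0$ forces $M_N e_i=0$, and hence $F_N=\infty$ directly.
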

From this, we have the following:
\begin{lemma}\label{lemma:nucont}
    The correspondence $\ul{\nu}^*$ is upper hemi-continuous. Consequently, if $\ul{\nu}^*(A^*)=\{\ul{\nu}^0\}$ is a singleton and $A^{(N)}\to A^*$, then for every $\epsilon>0$ there is some $N_0$ so that $\|\ul{\nu}-\ul{\nu}^0\|<\epsilon$ whenever $\ul{\nu}\in\ul{\nu}^*(A^{(N)})$ and $N\geq N_0$.
\end{lemma}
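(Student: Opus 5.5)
The plan is to apply Berge's Maximum Theorem in its minimization form, with \Cref{lemma:Fcont} supplying continuity of the objective. The feasible correspondence $A\mapsto K$ is constant in $A$. Its values are compact and nonempty: $K$ is a polytope cut out by $\ul{\nu}\geq0$ and $\sum_I\nu_Ic_I\leq B_0$, and I will take bounded costs for granted, as the budget setup implicitly requires (each $I$ has some $c_I^{(\ell)}>0$). A constant correspondence with compact values is trivially continuous. The objective $F$ takes values in $\R\cup\{\infty\}$, so I will work with $\overline{\R}$ and its order topology, where Berge's theorem still applies. Since $F$ is continuous on $K\times\mathbf{S}^k_{++}$ by \Cref{lemma:Fcont}, Berge gives that $\ul{\nu}^*$ has nonempty compact values and is upper hemi-continuous.

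To guard against subtleties with extended values, I will also check the conclusion directly by a sequential argument.

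\begin{enumerate}
\item \emph{Nonemptiness.} Fix $A$. The set $K$ contains some $\ul{\nu}$ with $F(\ul{\nu},A)<\infty$, for instance $\ul{\nu}$ placing positive mass on sets covering $\supp(a)$. A continuous map into $\overline{\R}$ on a compact set attains its infimum, and that infimum is finite, so $\ul{\nu}^*(A)\neq\emptyset$.
\item \emph{Closed graph.} Take $A^{(N)}\to A$, $\ul{\nu}_N\in\ul{\nu}^*(A^{(N)})$ and $\ul{\nu}_N\to\ul{\nu}$. For any $\ul{\mu}\in K$ we have $F(\ul{\nu}_N,A^{(N)})\leq F(\ul{\mu},A^{(N)})$. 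Passing to the limit with joint continuity of $F$ gives $F(\ul{\nu},A)\leq F(\ul{\mu},A)$, hence $\ul{\nu}\in\ul{\nu}^*(A)$.
\item \emph{Upper hemi-continuity.} A closed graph together with a compact range $K$ yields upper hemi-continuity.
\end{enumerate}

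For the consequence, suppose $\ul{\nu}^*(A^*)=\{\ul{\nu}^0\}$ and argue by contradiction. If the claim fails, there are $\epsilon>0$, a subsequence $N_j\to\infty$ and points $\ul{\nu}_j\in\ul{\nu}^*(A^{(N_j)})$ with $\|\ul{\nu}_j-\ul{\nu}^0\|\geq\epsilon$. By compactness of $K$, a further subsequence converges to some $\ul{\nu}$ with $\|\ul{\nu}-\ul{\nu}^0\|\geq\epsilon$. The closed-graph step then puts $\ul{\nu}$ in $\ul{\nu}^*(A^*)=\{\ul{\nu}^0\}$, which is a contradiction. Equivalently, this follows from upper hemi-continuity applied to the open ball of radius $\epsilon$ around $\ul{\nu}^0$.

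The main obstacle is the limit passage in the closed-graph step when $F$ can be infinite. If $F(\ul{\mu},A)=\infty$ the inequality is trivial. If $F(\ul{\mu},A)<\infty$, continuity of $F$ into $\overline{\R}$ gives $F(\ul{\mu},A^{(N)})\to F(\ul{\mu},A)$, so the values $F(\ul{\nu}_N,A^{(N)})$ stay bounded. Continuity at $(\ul{\nu},A)$ then forces $F(\ul{\nu},A)$ to be finite and to equal their limit. This is precisely where the full extended-valued continuity of \Cref{lemma:Fcont} is needed; lower semicontinuity of $F$ together with upper semicontinuity along the comparison points $\ul{\mu}$ would already suffice.
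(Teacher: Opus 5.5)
Your proposal is correct and follows essentially the paper's route: Berge's maximum theorem over the constant compact feasible set $K$, with \Cref{lemma:Fcont} supplying joint continuity. The paper handles the value $\infty$ by composing $F$ with the homeomorphism $x\mapsto 1-e^{-x}$ onto a bounded real interval, which is equivalent to your use of the order topology on $\overline{\R}$. Your closed-graph argument and your explicit derivation of the singleton consequence spell out details the paper leaves implicit.
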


\begin{proof}[Proof of \Cref{lemma:Fcont}]
We need three warm-up lemmas.

\begin{lemma}\label{lemma:l1}
    Suppose that, for $I\in\mathcal{J}$, $M_I\in\R^{k\times k}$ is a symmetric PSD matrix which is SPD on the subspace $\R^I$, and zero off of the coordinates. Then $\range(\sum_{I\in\mathcal{I}} M_I)=\R^{\bigcup \mathcal{J}}$.
\end{lemma}
\begin{proof}
    It is clear that $\sum_I M_I$ is SPD on $\bigcup \mathcal{I}$; since it is symmetric, it is invertible on $\bigcup\mathcal{I}$.
\end{proof}

\begin{lemma}\label{lemma:l2}
        For all $\epsilon$ small enough, $\range(\sum_I \nu_I^*(A_I^*)^\dagger)\subseteq \range(\sum_I \nu_IA_I^\dagger)$.
\end{lemma}
\begin{proof}
    Let us take $\epsilon$ small enough that $\nu_I\geq \nu_I^*/2>0$ whenever $\nu_I^*>0$. Let us also take $\epsilon$ small enough that $A\succ0$ by Weyl's inequality. Then each $A_I^\dagger$ and $(A_I^*)^\dagger$ is symmetric PSD and SPD on the subspace $\R^I$, and zero off the coordinates.
    
    It now follows from \Cref{lemma:l1} that $\range(\sum_I \nu_I^*(A^*_I)^\dagger) = \text{span}(e_i:i\in\bigcup_{I:\nu_I^*>0}I)\subseteq \text{span}(e_i:i\in\bigcup_{I:\nu_I>0} I)=\range(\sum_I \nu_I A_I^\dagger)$.
\end{proof}

It follows from the assumption and this lemma that $a\in\range(\sum_I \nu_I A_I^\dagger)$.

%Since $a\in\range(\sum_I \nu_I^*(A_I^*)^\dagger)$, we have $\supp(a)\subseteq I^*_+ := \bigcup_{I:\nu_I^*>0} I$.

We have two final lemmas:

\begin{lemma}\label{lemma:l3}
    Suppose that $M$ is symmetric PSD and that $\range(M)=\text{span}(J)$. Then we have
    \[
    x^\top M^\dagger x = x_{(J)}^\top M_{(J)}^{-1} x_{(J)}
    \]
    where $x_{(J)}$ denotes the restriction of $x$ to the coordinates $J$, and $M_{(J)}$ denotes the restriction of $M$ to the coordinates $J\times J$.
\end{lemma}
\begin{proof}
    Since $My\in \text{span}(J)$ for all $y\in\R^k$, we have $(My)_i=0$ whenever $i\in J^c$. Thus every row of $M$ indexed by an element of $J^c$ is zero. By symmetry, the same is true for every column. Thus $M$ takes the block form
    \[
    M = \begin{pmatrix}
        M_{(J)} &0 \\
        0 & 0
    \end{pmatrix}
    \]
    in the ordered coordinates such that $J$ precedes $J^c$. Now the fact that $M$ is symmetric means that it is invertible on its range, and so $M_{(J)}$ is invertible, and we are done. 
\end{proof}

Our last lemma is this:

\begin{lemma}\label{lemma:l4}
    Assume that $M,\Xi \in \R^{\ell\times\ell}$ are symmetric PSD, and suppose that $\range(M)=\R^J$. Assume that $\|\Xi\|\leq \lambda_{\min}(M_{(J)})/2$ and $M+\Xi$ is invertible. Then if $x\in \range(M)$, we have
    \[
    \left|x^\top (M+\Xi)^{-1}x -  x_{(J)}^\top M_{(J)}^{-1}x_{(J)}\right| \leq 2 \|M_{(J)}^{-1}\|^2 \|\Xi\| \|x\|^2.
    \]
    On the other hand, if $x\not\in \range(M)$, we have
    \[
    \left|x^\top (M+\Xi)^{-1}x\right| \geq \frac{1}{2\|\Xi\|} \|x_{(J^c)}\|^2.
    \]
\end{lemma}
% \begin{lemma}\label{lemma:l4}
%     Assume that $M,\Xi \in \R^{\ell\times\ell}$ are symmetric PSD, and let $\widetilde{M}$ denote the restriction of $M$ to its range. Similarly, for a vector $x\in\R^{\ell}$, let $\widetilde{x}$ denote the projection of $x$ onto $\range(M)$, so that $x-\widetilde{x}$ is the projection of $x$ onto the kernel of $M$. Suppose that $\|\Xi\|\leq \|M_{(J)}^{-1}\|/2$ and $M+\Xi$ is invertible. Then if $x\in \range(M)$, we have
%     \[
%     \left|x^\top (M+\Xi)^{-1}x -  \widetilde{x}^\top \widetilde{M}^{-1}\widetilde{x}\right| \leq 2 \|\widetilde{M}^{-1}\|^2 \|\Xi\| \|x\|^2.
%     \]
%     On the other hand, if $x\not\in \range(M)$, we have
%     \[
%     \left|x^\top (M+\Xi)^{-1}x\right| \geq \frac{1}{2\|\Xi\|} \|x-\widetilde{x}\|^2.
%     \]
% \end{lemma}
% \begin{proof}
% Since $M$ is symmetric, we may choose a basis in which 
    
% \end{proof}

\begin{proof}
    In block form, again in ordered coordinates such that $J$ precedes $J^c$, we have
    \[
    x^\top (M+\Xi)^{-1}x = \begin{pmatrix}
        x_{(J)} \\
        0
    \end{pmatrix}^\top\begin{pmatrix}
        M_{(J,J)}+\Xi_{(J,J)} & \Xi_{(J,J^c)} \\
        \Xi_{J^c,J} & \Xi_{J^c,J^c}
    \end{pmatrix}^{-1} \begin{pmatrix}
        x_{(J)} \\ 0
    \end{pmatrix} = x_{(J)}^\top \left[M_{(J)} + S\right]^{-1}x_{(J)}
    \]
    where $S=\Xi / \Xi_{(J^c,J^c)}$ denotes the Schur complement. The norm of the Schur complement is bounded $\|S\|\leq \|\Xi\|$, as a result of the forthcoming lemma. By submultiplicativity, we have
    \[
    \left\|(M_{(J)}+S)^{-1}-M_{(J)}^{-1}\right\| \leq \left\| M_{(J)}^{-1}\right\| \|S\|\left\|(M_{(J)}+S)^{-1}\right\|
    \]
    Now the minimum eigenvalue of $M_{(J)}+S$ is at least $\lambda_{\min}(M_{(J)}) - \|S\| \geq \lambda_{\min}(M_{(J)})-\|\Xi\|\geq \lambda_{\min}(M_{(J)})/2=\|M_{(J)}^{-1}\|^{-1}/2$ by Weyl's inequality, and so we conclude that
    \[
    \left\|(M_{(J)}+S)^{-1}-M_{(J)}^{-1}\right\|\leq 2 \|M_{(J)}^{-1}\|^2 \|\Xi\|
    \]
    as desired, and we are done.

    On the other hand, by standard properties of the Schur complement, we have
    \[
    x^\top(M+\Xi)^{-1}x = \begin{pmatrix}
        x_{(J)} \\
        x_{(J^c)}
    \end{pmatrix}^\top\begin{pmatrix}
        M_{(J,J)}+\Xi_{(J,J)} & \Xi_{(J,J^c)} \\
        \Xi_{J^c,J} & \Xi_{J^c,J^c}
    \end{pmatrix}^{-1} \begin{pmatrix}
        x_{(J)} \\ x_{(J^c)}
    \end{pmatrix} \geq x_{(J^c)}^\top (S')^{-1}x_{(J^c)}
    \]
    where
    \[
    S'=(M+\Xi)/(M+\Xi)_{(J^c,J^c)} = \Xi_{(J^c,J^c)} - \Xi_{(J,J^c)}(M_{(J,J)}+\Xi_{(J,J)})^{-1}\Xi_{(J^c,J)} 
    \]
    denotes the Schur complement. We proceed to bound $\|S'\|$:
    \[
    \|S'\|\leq \|\Xi\| + \|\Xi\|^2\|(M_{(J,J)}+\Xi_{(J,J)})^{-1}\|\leq \|\Xi\|\left(1+\|\Xi\|\cdot 2\|M_{(J))}^{-1}\|\right)\leq 2\|\Xi\|
    \]
    since we assume $\|\Xi\|\leq \lambda_{\min}(M_{(J)})/2$. We conclude that $\lambda_{\min}((S')^{-1})\geq 1/(2\|\Xi\|)$, establishing the desired result.
\end{proof}

Below, we summarize the properties of the Schur complement used above:

\begin{lemma}[Properties of Schur complement]\label{lemma:schur}
Let $N$ be a symmetric SPD matrix, and $S$ denote one of its Schur complements. Then we have
\[
x^\top S x \leq \begin{pmatrix}
    x \\
    y
\end{pmatrix}^\top N\begin{pmatrix}
    x \\ y
\end{pmatrix}
\]
and, if $N$ is invertible,
\[
x^\top S^{-1} x \leq \begin{pmatrix}
    x \\
    y
\end{pmatrix}^\top N^{-1}\begin{pmatrix}
    x \\ y
\end{pmatrix}
\]
for all $x,y$, whence $\|S\|\leq \|N\|$.
\end{lemma}
\begin{proof}
    The proof is immediate from equation 7.7.5. of \citet{horn2012matrix}, using the fact that the invertibility of $N$ implies that of $S$ by Schur's formula.
\end{proof}

Now we conclude the proof. Let $I^+_*$ denote $\bigcup_{I:\nu_I^*>0} I$ and $I^+$ denote $\bigcup_{I:\nu_I>0}I$. By Lemma~\ref{lemma:l1} and Lemma~\ref{lemma:l2} we have $\range(\sum_I\nu_I^*(A^*_I)^\dagger)=\R^{I^+_*} \subseteq \R^{I^+}=\range(\sum_I\nu_I(A_I)^\dagger)$. By assumption, we have $a\in \R^{I^+_*}$.

Applying Lemma~\ref{lemma:l3} to $F$, we have
\[
F(\ul{\nu},A) = a^\top\left(\sum_I\nu_I A_I^\dagger\right)^\dagger a = a_{(I^+)}^\top\left(\left(\sum_I\nu_I A_I^\dagger\right)_{(I^+)}\right)^{-1} a_{(I^+)}
\]
Now we write
\[
\left(\sum_I\nu_I A_I^\dagger\right) = \left(\sum_I\nu_I^* (A_I^*)^\dagger\right) + \Xi
\]
whence
\[
\left(\sum_I\nu_I A_I^\dagger\right)_{(I^+)} = \left(\sum_I\nu_I^* (A_I^*)^\dagger\right)_{(I^+)} + \Xi_{(I^+)}
\]
with $\|\Xi_{(I^+)}\|\leq\|\Xi\|$. Now, the first term is symmetric PSD with range $\R^{I^+_*}$, since $I^+_*\subseteq I^+$. Thus by Lemma~\ref{lemma:l4} we conclude
\[
|F(\ul{\nu},A)-F(\ul{\nu}^*,A^*)|\leq 2\left\|\left(\sum_I \nu^*_I(A_I^*)^\dagger\right)_{(I^+_*)}^{-1}\right\|^2\|\Xi\|\|a\|^2
\]
and we are done.
\end{proof}

\subsubsection{Proof of \Cref{lemma:nucont} }
\begin{proof}
    The result follows immediately from Berge's theorem after an appropriate compactification of the domain. Let $\tilde{F}(\ul{\nu},A)$ represent the composition of a homeomorphism $\R_{\geq0}\longrightarrow \R_{\geq0}\cup\{\infty\}$ with $F$, e.g. $\tilde{F}(\ul{\nu},A)=1-\exp\{- F(\ul{\nu},A)\}$. We may then alternatively write
\[
\ul{\nu}^*(A)=\argmin_{\ul{\nu}\in K} \tilde{F}(\ul{\nu},A)
\]
where, importantly, $\tilde{F}(\ul{\nu},A)$ is now real-valued.

It remains to show that $\tilde{F}$ is jointly continuous in $(\ul{\nu},A)$; Berge's theorem will then imply the desired result. This is equivalent to showing that $F$ is continuous, and so \Cref{lemma:Fcont} suffices.
\end{proof}

Finally, we prove the required lemma regarding convergence of the discrete problem to the continuous one.
\begin{proof}[Proof of Lemma~\ref{lemma:discrete-convergence}]
    Suppose to the contrary that there is some sequence $N_{\alpha},t_{\alpha}$ of arbitrarily large $N,t$ so that
    \[
    \left\| \frac{\ul{n}}{t_\alpha}-\ul{\nu}^0 \right\| \geq \epsilon
    \]
    for some $\ul{n}\in \ul{n}^*_{t_\alpha}(A^{(N_\alpha)})$, for each $\alpha$. By the preceding lemma, we know that there is some $N_0$ so that we may assume $\|\ul{\nu}-\ul{\nu}^0\|<\epsilon/2$ whenever $N\geq N_0$, for all $\ul{\nu}\in\ul{\nu}^*(A^{(N)})$.

    For large enough $\alpha$, therefore, we have $\|\ul{\nu}-\ul{\nu}^0\|<\epsilon/2$ for all $\ul{\nu}\in\ul{\nu}^*(A^{(N_\alpha)})$. By the reverse triangle inequality, then, we must have
    \[
    \left\| \frac{\ul{n}}{t_\alpha}-\ul{\nu} \right\| \geq \epsilon/2
    \]
    for all large $\alpha$, for each $\ul{n}\in \ul{n}_{t_\alpha}^*(A^{(N_\alpha)})$ and $\ul{\nu}\in \ul{\nu}^*(A^{(N_\alpha)})$.

    Since every such $\ul{n}/t_\alpha$ is contained in the compact set $K$, we may find a limit point $\ul{\nu}'\in K$ so that for all $\delta>0$, there is some $\alpha$ so that $\|\ul{n}/t_\alpha-\ul{\nu}'\|<\delta$ for some $\ul{n}\in\ul{n}^*_{t_\alpha}(A^{(N_\alpha)})$. For this $x'$ we must then also have
    \[
    \|\ul{\nu}'-\ul{\nu}\|\geq\epsilon/2
    \]
    for each $\ul{\nu}\in\ul{\nu}^*(A^{(N_\alpha)})$. 

    We now argue that in fact $\ul{\nu}'\in \ul{\nu}^*(A^*)$, which will serve to contradict the assumption that $\ul{\nu}^*(A^*)$ is a singleton. Recall that every $\ul{n} \in \ul{n}^*_{t_\alpha}(A^{(N_\alpha)})$ is optimal, and therefore minimizes $F(\ul{n},A^{(N_\alpha)})$ over the set $K_t\cap \mathbb{Z}^{|\mathcal{I}|}$, where
    \[
    K_t:=\left\{\ul{\nu}:\ul{\nu}\geq0,\sum_I\nu_Ic_I\leq tB_0\right\}
    \]
    Since we have $F(\kappa \ul{\nu},A)=\frac{1}{\kappa}F(\ul{\nu},A)$ for all $\kappa>0$, it is not hard to see that if $\ul{\nu}$ is a minimizer of $F(\cdot,A)$ over $K_t$, then $\ul{\nu}/t$ is a minimizer of $F(\cdot,A)$ over $K$. Let us define
    \[
    \ul{n}^*_{t,\text{round}}(A)=\text{round}(t\ul{\nu}^*(A))
    \]
    in the set-valued sense. It follows that $\ul{n}^*_{t,\text{round}}(A)\subseteq  K_t\cap \mathbb{Z}^{|\mathcal{I}|}$ for all $A$, and so
    \[
    F(\ul{n},A^{(N_\alpha)})\leq F(\ul{n}_{\text{round}},A^{(N_\alpha)})
    \]
    for all $\ul{n}\in\ul{n}^*_{t_\alpha}(A^{(N_\alpha)})$ and $\ul{n}_{\text{round}}\in \ul{n}^*_{t,\text{round}}(A^{(N_\alpha)})$. By definition, it follows that
    \[
    \ul{n}^*_{t_\alpha,\text{round}}(A^{(N_\alpha)})/t_\alpha\to \ul{\nu^0}
    \]
    in the set valued sense, and so by continuity of $F$ it follows that
    \[
    t_\alpha F(\ul{n}_{\text{round}},A^{(N_\alpha)}) = F(\ul{n}_{\text{round}}/t_\alpha,A^{(N_\alpha)}) \to F(\ul{\nu}^0,A^*).
    \]
    We conclude that
    \[
    t_\alpha F(\ul{n},A^{(N_\alpha)}) = F(\ul{n}/t_\alpha ,A^{(N_\alpha)}) \leq F(\ul{\nu}^0,A^*)
    \]
    and so taking the limit along the sequence of $\alpha$ previously specified we conclude that
    \[
    F(\ul{\nu}',A^*)\leq F(\ul{\nu}^0,A^*)
    \]
    by continuity of $F$ again. Thus $\ul{\nu}'\in \ul{\nu}^*(A^*)$, yet $\ul{\nu}'$ is separated from $\ul{\nu}^0$ by at least $\epsilon/2$, contradicting the assumption that $\ul{\nu}^*(A^*)$ is a singleton. This concludes the proof.

    % By the preceding section, we have that
    % \[
    % \lim_{t\to\infty} tV_t(\ul{n}^*_{\text{int}})=V_1(\ul{x}^*)
    % \]
    % For $t$ large enough, therefore, and any fixed $N$, we have $tF(\ul{n},A^{(N)}) \geq F(\ul{\nu}^*(A^{(N)}),A^{(N)})-\epsilon/2$ for each $\ul{n} \in \ul{n}^*_{t}(A^{(N)})$.

    % By Berge's theorem, the value function is continuous, and so 

    % From scratch: 
    
    % Now each $\ul{n}$ considered is optimal, so in particular, by the analysis of the preceding section, 

    % We now compare $\ul{\nu}^*(A^{(N)})$ to $\ul{n}^*_t(A^{(N)})$. Suppose to the contrary that for arbitrarily large $t$ we may choose $\ul{n}^{(t)}\in \ul{n}^*_t(A^{(N)})$
    % it suffices to show that there is some $N_1$ so that 
\end{proof}
\section{Computational considerations}\label{sec:computation}
% \begin{center}
% \fbox{%
%   \begin{minipage}{1.0\textwidth}\label{alg_multiallocate}
%     \paragraph{Subroutine 1:} 
%     Multi-Allocate$(\Sigma)$
%     \paragraph{Input:} a symmetric positive-define matrix $\Sigma\in\R^{k\times k}$. Implicitly given $a$, $\mathcal{I}$, $(c_I)_I$, $B$.
%     \paragraph{Output:} the solution $(n_I,\lambda_I)_{I\in\mathcal{I}}$ to the problem
%     \begin{align*}
%         \min_{(n_I,\lambda_I)_{I\in\mathcal{I}}} &\sum_{I\in\mathcal{I}:n_I>0} \frac{1}{n_I}\lambda_I^\top \Sigma_I \lambda_I \\
%         \text{s.t.} & \text{ \textbf{U} and \textbf{B} hold} 
%     \end{align*}
%     where $\Sigma_I=(\Sigma_{ij})_{i,j\in I}$.
%   \end{minipage}
% }\label{fact}
% \end{center}

Here we show that the Multi-Allocate procedure reduces to a SOCP in the case of a single budget constraint, and to an SDP in the general case. The proof of \Cref{prop:og-primal} shows that the minimization problem over $\underline{n},\underline{\lambda}$ may be reduced to one only over $\underline{\lambda}$ via the Cauchy-Schwartz inequality. This minimization over $\underline{\lambda}$ is the dual of an SOCP, as shown by \Cref{prop:primal-dual}, and the KKT conditions hold. This is
\[
\sup_x a^\top x
\]
where the supremum is taken over all $x\in\R^k$ such that $x_I^\top\Sigma_Ix_I\leq c_I^{-1}$ for all $I\in\mathcal{I}$. This SOCP is simple to implement in the Python package cvxpy.

In the general case, \Cref{thm:minimax} shows that the optimal choice of $\underline{n}$ is
\[
\argmin_{\underline{n}\,:\,\mathbf{B}(\underline{n})} a^\top\left(\sum_{I\in\mathcal{I}} n_I P_I^\top \Sigma_I^{-1} P_I\right)^\dagger a
\]
Let us denote
\[
M(\underline{n}) = \sum_{I\in\mathcal{I}} n_IP_I^\top \Sigma_I^{-1}P_I
\]
so that our goal is to solve
\[
\min t
\]
subject to the constraints that
\[
a^\top M(\underline{n})^\dagger a \geq t
\]
and $\mathbf{B}(\underline{n})$, which denotes a set of linear constraints on $\underline{n}$. But this is equivalent to the SDP
\[
\min t
\]
subject to the constraint that
\[
\begin{pmatrix}
    M(\underline{n}) & a \\
    a^\top & t 
\end{pmatrix}\succeq0
\]\
and linear constraints on $\underline{n}$. Once again, this is straightforward to implement in cvxpy.
\section{The dual problem}\label{sec:dual}
We briefly recall the setup. Let $\Sigma\in\R^{k\times k}$ be SPD, let $\mathcal{I}$ denote a collection of index subsets $I\subseteq \{1,\dots,k\}$, and let $c_I$ be a positive scalar defined for every $I\in\mathcal{I}$. It will be convenient to define, for every $I\in\mathcal{I}$, a vector $\lambda_I\in\R^{|I|}$. We denote the concatenation of such vectors by $\underline{\lambda}\in\Lambda = \prod_{I\in\mathcal{I}} \R^{|I|}$. We further recall that $P_I:\R^k\longrightarrow\R^{|I|}$ is the orthogonal projection onto the coordinates indexed by $I$, and set $\Sigma_I=P_I\Sigma P_I^\top$. We define the norm $\|v\|_{\Sigma_I}=\sqrt{v^\top\Sigma_I v}$ on $\R^{|I|}$; this induces the seminorms $\|y\|_{\Sigma_I}=\|P_Iy\|_{\Sigma_I}$ on $\R^k$, and $\|\underline{\lambda}\|_{\Sigma_I}=\|\lambda_I\|_{\Sigma_I}$ on $\Lambda$. Lastly, we employ
\[
A:\Lambda\rightarrow\R^k,\quad A(\underline{\lambda}) = \sum_{I\in\mathcal{I}} P_I^\top \lambda_I
\]
to enforce the linear (unbiasedness) constraint $A(\underline{\lambda})=a$, for some fixed $a\neq0\in\R^k$.

Our first step will be to show how to alleviate the budget constraint. To do so, we first briefly recall this constraint. To describe the budget, recall that we define $\underline{n}=(n_I)_{I\in\mathcal{I}}\in \mathbb{Z}_{\geq0}^{|\mathcal{I}|}$, and employ a budget constraint of the form $\sum_{I\in\mathcal{I}} n_Ic_I \leq B$ for a fixed $B>0$. Denoting $\underline{c}=(c_I)_{I\in\mathcal{I}}\in\R_{>0}^{|\mathcal{I}|}$, our budget constraint may be written $\underline{c}^\top\underline{n}\leq B$. With all of this said, recall that our original problem of interest is
\begin{equation}\label{eqn:og}
    V(a) = \min_{\underline{n},\underline{\lambda}} \sum_{I\in\mathcal{I}:n_I>0} \frac{1}{n_I}\lambda_I^\top\Sigma_I\lambda_I\quad\quad\text{s.t.}\quad 
    \sum_{I\in\mathcal{I}:n_I>0}P_I^\top \lambda_I=a,\quad     \underline{c}^\top\underline{n}\leq B
\end{equation}
We begin by deriving tractable methods to solve \Cref{eqn:og}. Let us assume for the moment that $\underline{n}\in\R_{\geq0}^{|\mathcal{I}|}$; we will later construct the final budget allocation by rounding. Our first step is to remove the dependence on $\underline{n}$: we show that the above problem is equivalent to the following:
\begin{equation}\label{eqn:primal}
    U(a)=\min_{\underline{\lambda}\in\Lambda} \sum_{I\in\mathcal{I}} \sqrt{c_I}\|\lambda_I\|_{\Sigma_I}\quad\quad\text{s.t.}\quad A\underline{\lambda} = a
\end{equation}
We next show that this is equivalent to the dual problem
\begin{equation}\label{eqn:dual}
    U(a) = \sup_{y\in \R^k} a^\top y \quad\quad\text{s.t.}\quad \bigwedge_{I\in\mathcal{I}} \big\{\|y\|_{\Sigma_I}^2\leq c_I\big\}
\end{equation}
Finally, this is a second order cone program, and can be solved with off-the-shelf tools. After we have shown these things, we describe how to convert solutions to \Cref{eqn:dual} into solutions to \Cref{eqn:og}.

\begin{proposition}\label{prop:og-primal}
    The problems described in \Cref{eqn:og} and \Cref{eqn:primal} yield the same optimum $V=U^2/B$.
\end{proposition}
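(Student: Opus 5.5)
The plan is to optimize over $\underline{n}$ first, with $\underline{\lambda}$ held fixed, treating $\underline{n}\in\R_{\geq0}^{|\mathcal{I}|}$ as continuous. The aim is to show that for fixed $\underline{\lambda}$,
\[
\min_{\underline{n}\geq0,\ \underline{c}^\top\underline{n}\leq B}\ \sum_{I:n_I>0}\frac{1}{n_I}\|\lambda_I\|_{\Sigma_I}^2=\frac{1}{B}\Big(\sum_{I}\sqrt{c_I}\|\lambda_I\|_{\Sigma_I}\Big)^2 .
\]
Here the minimization must allow $n_I=0$ exactly when $\lambda_I=0$. If $\lambda_I\neq0$ and $n_I=0$, the term $P_I^\top\lambda_I$ does not appear in the constraint of \Cref{eqn:og}, so without loss of generality such $\lambda_I$ may be set to zero. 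Once this reduction is in place, minimizing over $\underline{\lambda}$ subject to $A\underline{\lambda}=a$ gives $V=U^2/B$. This uses that $t\mapsto t^2/B$ is increasing on $t\geq0$, so the minimizer of the squared objective coincides with that of \Cref{eqn:primal}.

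\textbf{Lower bound.} Write $w_I=\|\lambda_I\|_{\Sigma_I}$ and restrict to the set $\{I:n_I>0\}$. By Cauchy--Schwarz,
\[
\Big(\sum_I \sqrt{c_I}\,w_I\Big)^2=\Big(\sum_I \sqrt{c_In_I}\cdot\frac{w_I}{\sqrt{n_I}}\Big)^2\leq\Big(\sum_I c_In_I\Big)\Big(\sum_I\frac{w_I^2}{n_I}\Big)\leq B\sum_I\frac{w_I^2}{n_I}.
\]
Terms with $n_I=0$ must have $\lambda_I=0$ by the reduction above, so they contribute nothing to either side.

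\textbf{Attainment.} Equality in Cauchy--Schwarz holds when $\sqrt{c_In_I}\propto w_I/\sqrt{n_I}$, that is, when $n_I\propto w_I/\sqrt{c_I}$. Normalizing to use the full budget gives
\[
n_I=\frac{B\,w_I/\sqrt{c_I}}{\sum_J\sqrt{c_J}\,w_J}.
\]
This allocation is nonnegative and satisfies the budget with equality. It is zero exactly when $w_I=0$, and because $\Sigma_I\succ0$ this happens exactly when $\lambda_I=0$, so the support condition is consistent. Substituting this $n_I$ back into the objective yields $(\sum_I\sqrt{c_I}w_I)^2/B$. The same formula matches the $n^\star_I$ of Step 2 in the algorithm section.

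\textbf{Main obstacle.} The step needing the most care is the handling of the indicator $n_I>0$ in \Cref{eqn:og} together with the degenerate case $\sum_J\sqrt{c_J}w_J=0$. That sum cannot vanish, because $a\neq0$ forces some $\lambda_I\neq0$. With this observed, the two problems are equivalent in both directions: every feasible $\underline{\lambda}$ for \Cref{eqn:primal} yields a feasible pair $(\underline{n},\underline{\lambda})$ for \Cref{eqn:og} with value $U(\underline{\lambda})^2/B$, and every feasible pair for \Cref{eqn:og} has value at least that. Taking the infimum over $\underline{\lambda}$ on both sides then gives $V=U^2/B$.
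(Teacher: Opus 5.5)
Your proof is correct and follows the paper's argument essentially step for step. The paper likewise proves $U^2\le BV$ by Cauchy--Schwarz after replacing $\lambda_I$ by $0$ whenever $n_I=0$ (its $\tilde{\lambda}$), and proves $BV\le U^2$ with the same explicit allocation $n_I=(B/c_I)\sqrt{c_I}\|\lambda_I\|_{\Sigma_I}/\sum_J\sqrt{c_J}\|\lambda_J\|_{\Sigma_J}$, handling a vanishing denominator via $a\neq0$ exactly as you do; your remark that $\Sigma_I\succ0$ gives $\{I:n_I>0\}=\{I:\lambda_I\neq0\}$, so the constraint is preserved, only spells out a point the paper leaves implicit.
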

\begin{proposition}\label{prop:primal-dual}
    The problems described in \Cref{eqn:primal} and \Cref{eqn:dual} yield the same optimum $U$.
\end{proposition}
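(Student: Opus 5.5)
The statement to prove is \Cref{prop:primal-dual}: the problems \Cref{eqn:primal} and \Cref{eqn:dual} have the same optimum $U$. I would prove this by conic/Lagrangian duality. The primal \Cref{eqn:primal} minimizes a sum of weighted norms $\sum_I \sqrt{c_I}\|\lambda_I\|_{\Sigma_I}$ subject to a linear constraint $A\underline{\lambda}=a$. Each $\|\cdot\|_{\Sigma_I}$ is a genuine norm on $\R^{|I|}$, since $\Sigma_I\succ0$ as a principal submatrix of $\Sigma\succ0$. Its dual norm is $\|\cdot\|_{\Sigma_I^{-1}}$, so the proof reduces to a standard norm-duality computation.

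\textbf{Step 1 (weak duality).} Fix a primal-feasible $\underline{\lambda}$, meaning $\sum_I P_I^\top\lambda_I=a$, and a dual-feasible $y$. Using the adjoint of $A$, which is $A^*y=(P_Iy)_{I\in\mathcal{I}}$, write
\[
a^\top y=\sum_I \lambda_I^\top P_I y.
\]
Apply Cauchy--Schwarz in the $\Sigma_I$ geometry to each term:
\[
\lambda_I^\top P_Iy\le \|\lambda_I\|_{\Sigma_I}\,\|P_Iy\|_{\Sigma_I^{-1}}.
\]
This requires reading the dual constraint as a bound $\|P_Iy\|_{\Sigma_I^{-1}}\le\sqrt{c_I}$. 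I would flag that the constraint in \Cref{eqn:dual} must be read with the dual norm for the statement to be correct. Step 1 of \Cref{sec:algorithm} instead writes $y_I^\top\Sigma_Iy_I\le c_I^{-1}$, which corresponds to a reparametrization. I would fix one convention at the outset; the argument is identical up to the substitution $y_I\mapsto \Sigma_I^{-1}y_I$ and rescaling. Under this reading the weak-duality inequality is $a^\top y\le\sum_I\sqrt{c_I}\|\lambda_I\|_{\Sigma_I}$.

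\textbf{Step 2 (strong duality).} Form the Lagrangian
\[
L(\underline{\lambda},y)=\sum_I\sqrt{c_I}\|\lambda_I\|_{\Sigma_I}+y^\top\bigl(a-A\underline{\lambda}\bigr).
\]
Minimizing over $\underline{\lambda}$ separates across $I$. For each $I$,
\[
\inf_{\lambda_I}\Bigl(\sqrt{c_I}\|\lambda_I\|_{\Sigma_I}-\lambda_I^\top P_Iy\Bigr)
\]
equals $0$ if $\|P_Iy\|_{\Sigma_I^{-1}}\le\sqrt{c_I}$ and $-\infty$ otherwise, because the conjugate of a norm is the indicator of the dual-norm unit ball. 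The dual function is therefore $a^\top y$ restricted to the feasible set of \Cref{eqn:dual}. The primal is convex with only affine equality constraints, so Slater's condition reduces to feasibility. The primal is feasible whenever $\supp(a)\subseteq\bigcup\mathcal{I}$; for instance, split each coordinate $a_i$ onto a single $I\ni i$. Hence the duality gap is zero.

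\textbf{Step 3 (attainment and the main obstacle).} The dual feasible set need not be compact, since $y$ is unconstrained on coordinates outside $\bigcup\mathcal{I}$. The main care point is therefore showing that both optima are finite and attained. If $\supp(a)\not\subseteq\bigcup\mathcal{I}$, both values are $+\infty$: the primal is infeasible, and in the dual $y$ can be taken along a coordinate $e_i$ with $i\notin\bigcup\mathcal{I}$. In that case the identity holds trivially. Otherwise, $a^\top y$ depends only on coordinates in $\bigcup\mathcal{I}$, where the constraints confine $y$ to a compact set, so the supremum is attained. Primal attainment follows from coercivity of the objective on the affine feasible set. Finally, I would record the KKT alignment conditions $\lambda_I\propto\Sigma_I^{-1}P_Iy^\star$ with multipliers $\alpha_I\ge0$, which recover the formula $\lambda_I^\star=2\alpha_I^\star\Sigma_I^{-1}y_I^\star$ used in \Cref{sec:algorithm}.
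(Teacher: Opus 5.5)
Your proposal is correct and is essentially the paper's argument. The paper applies the Fenchel duality theorem to $g(\underline{\lambda})+\iota_{\{a\}}(A\underline{\lambda})$, with $\iota_{\{a\}}^*(y)=a^\top y$ and $g^*(A^\top y)=\sum_I g_I^*(P_Iy)$ an indicator of norm balls; this is exactly the dual function you obtain by minimizing the Lagrangian blockwise, and your feasibility/Slater, attainment, and $+\infty$-case discussion supplies the constraint qualification that the paper leaves implicit.

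You are also right that the conjugate of $\sqrt{c_I}\|\cdot\|_{\Sigma_I}$ is the indicator of $\{z:\ z^\top\Sigma_I^{-1}z\le c_I\}$. So \Cref{eqn:dual} and the paper's conjugate computation must be read with $\Sigma_I^{-1}$: for $k=1$ the literal constraint $\|y\|_{\Sigma_I}^2\le c_I$ gives $\sqrt{c}\,|a|/\sigma$ instead of $\sqrt{c}\,\sigma|a|$. However, do not call the algorithm-section constraint $y_I^\top\Sigma_I y_I\le c_I^{-1}$ a reparametrization of the correct one. A single $y\in\R^k$ is shared across overlapping $I$, so no blockwise substitution $y_I\mapsto\Sigma_I^{-1}y_I$ is available. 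Simply commit to the dual-norm form, which is also the one consistent with $\lambda_I^\star=2\alpha_I^\star\Sigma_I^{-1}y_I^\star$.
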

\begin{proof}[Proof of \cref{prop:og-primal}] We now begin the proof.
    \paragraph{$(2)\leq  (3)$:} Let $A\underline{\lambda}=a$. Define $\underline{n}$ by\footnote{This is defined as long as $\lambda_J\neq0$ for some $J$; if this fails then $\underline{\lambda}=0$ and $A\underline{\lambda}=0$ yields a contradiction.}
    \[
    n_I:=\left(\frac{B}{c_I}\right) \frac{\sqrt{c_I}\|\lambda_I\|_{\Sigma_I}}{\sum_{J\in\mathcal{I}} \sqrt{c_J} \|\lambda_J\|_{\Sigma_J}}
    \]
    It is clear that $\underline{c}^\top\underline{n}=B$ by construction, and we have
    \[
    BV(a) \leq \sum_{I:n_I>0} \frac{B}{n_I} \lambda_I^\top\Sigma_I\lambda_I = \sum_{I:\lambda_I\neq0} \sqrt{c_I}\|\lambda_I\|_{\Sigma_I} \sum_{J}\sqrt{c_J}\|\lambda_J\|_{\Sigma_J} = \left(\sum_{I\in\mathcal{I}}\sqrt{c_I}\|\lambda_I\|_{\Sigma_I}\right)^2
    \]
    \paragraph{$(3)\leq(2)$:} Let $\underline{n},\underline{\lambda}$ satisfy the constraints of \Cref{eqn:og}. Consider the vectors $\underline{c}^{1/2}\odot \underline{n}^{1/2}=(\sqrt{c_In_I})_{I\in\mathcal{I}}$ and $\left(\I_{n_I>0}n_I^{-1/2} \|\lambda_I\|_{\Sigma_I}\right)_{I\in\mathcal{I}}$ in $\R^{|\mathcal{I}|}$. The Cauchy-Schwartz inequality yields that the product of their squared norms is
    \[
    \left(\sum_I c_In_I\right)\left(\sum_{I:n_I>0} \frac{1}{n_I}\|\lambda_I\|_{\Sigma_I}^2\right) \geq \left(\sum_{I:n_I>0} \sqrt{c_I}\|\lambda_I\|_{\Sigma_I}\right)^2
    \]
    Now let us define $\underline{\tilde{\lambda}}$ by $\tilde{\lambda}_I=\lambda_I$ if $n_I>0$, and $\tilde{\lambda_I}=0$ otherwise. Then we have
    \[
    A\underline{\tilde{\lambda}} = \sum_I P_I^\top\tilde{\lambda_I} = \sum_{I:n_I>0} P_I^\top\lambda_I=a
    \]
    by assumption, and
    \[
    U(a)^2\leq \left(\sum_I \sqrt{c_I}\|\tilde{\lambda_I}\|_{\Sigma_I}\right)^2 = \left(\sum_{I:n_I>0} \sqrt{c_I} \|\lambda_I\|_{\Sigma_I}\right)^2 \leq BV(a)
    \]
    and we are done.
\end{proof}
\begin{remark}
    Note that in general, many $n_I$ will be zero.
\end{remark}

\begin{proof}[Proof of \cref{prop:primal-dual}] Let $\iota_{\{a\}}$ denote the indicator $b\mapsto \begin{cases}
    0 & a= b\\
    \infty & a\neq b
\end{cases}$.

Then \Cref{eqn:primal} is alternatively written
\[
V(a) = \min_{\underline{\lambda}\in\Lambda} g(\underline{\lambda}) + \iota_{\{a\}}(A\underline{\lambda})
\]
where $g(\underline{\lambda})=\sum_I g_I(\lambda_I)$ and $g_I(\lambda_I)=\sqrt{c_I}\|\lambda_I\|_{\Sigma_I}$. We now apply the Fenchel duality theorem. Note that $\iota_{\{a\}}^*(y)=a^\top y$, and $g^*(A^\top y) = \sum_I g_I^*(P_I^\top y) = \sum_I \iota_{\{\|y_I\|_{\Sigma_I}\leq c_I\}} = \iota_{\bigwedge_I \|y_I\|_{\Sigma_I}^2\leq c_I}$.
\end{proof}
\section{Experimental details}\label{sec:experimental-details}

Here we detail the experimental setup used. We do so in two parts: first, we explain the details for generating the model predictions $(X_2,\dots,X_k)$ in each experiment; second, we explain the details for constructing the proposed estimator, $\hat{\theta}_{\multiPPI}$, and the baselines from such predictions.

\subsection{Generating model predictions}

\subsubsection{Experiment 1: Chatbot Arena}
We follow the implementation of \citet{angelopoulos2025costoptimalactiveaimodel} to request autoratings from Gemini 2.5 Pro and Gemini 2.5 Flash. See section E of \citet{angelopoulos2025costoptimalactiveaimodel} for implementation details.

\begin{table}[]
    \centering
    \begin{tabular}{c|c}
        Model collection & Cost \\
        \hline
        Gemini 2.5 Pro & \$1.25 \\
        Gemini 2.5 Flash & \$0.30 \\
        Both & \$1.55
    \end{tabular}
    \caption{Cost structure for experiment 1.}
    \label{tab:cost-exp1}
\end{table}

\subsubsection{Experiment 2: Processbench}
We evaluate our method on 500 samples from the OlympiadBench subset of the ProcessBench dataset \citep{zheng2024processbench}. Binary labels are determined according to whether or not a process error occurred in the given (problem, attempted solution) pair.

To generate autoratings, we use Gemini 2.5 Pro and truncate its reasoning process at various checkpoints. Specifically, using the prompt shown in \autoref{fig:processbench-prompt}, we instruct the model to think for up to 3,000 tokens but interrupt it and demand an answer after $B$ words of thought have been produced, for $B\in\{125,250,375,500\}$, as described in \S\ref{sec:exp-setup}. To elicit a definite judgement at each checkpoint, we provide ``So, the answer is:'' as the assistant and attempt to extract an answer from the subsequent 20 tokens of output with our template.

\begin{figure}
    \centering
    \includegraphics[width=\linewidth]{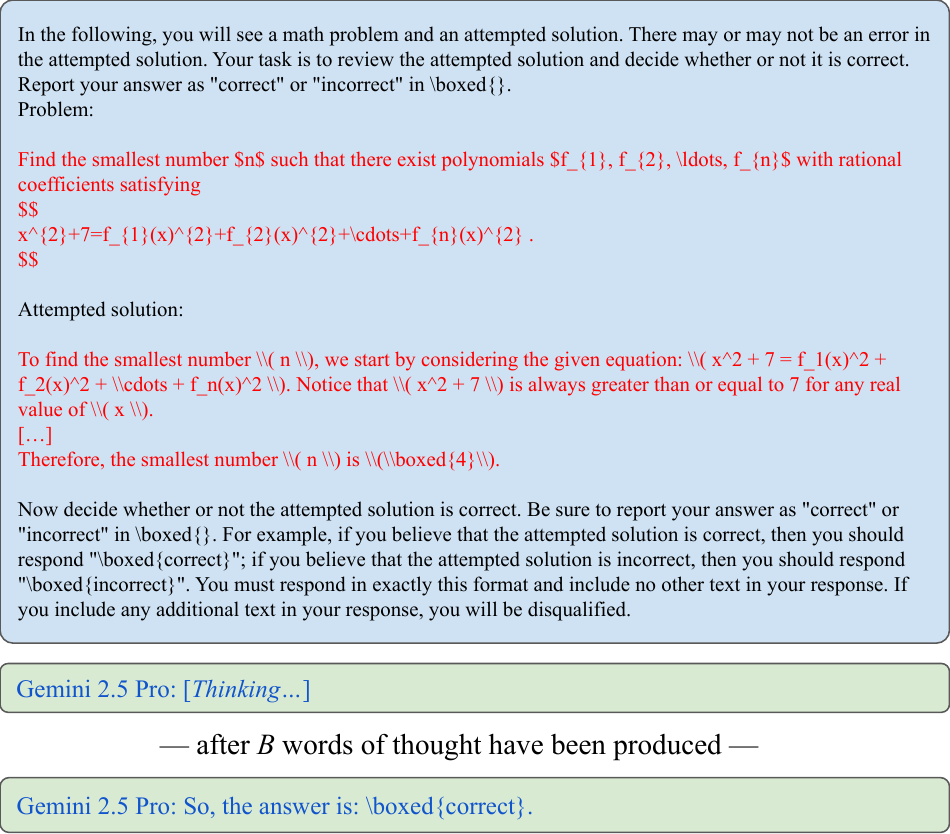}
    \caption{Prompt used to generate autoratings for Experiment 2.}
    \label{fig:processbench-prompt}
\end{figure}

\subsubsection{Experiment 3: Biography factuality}

We consider evaluating the factuality of a set of biographies generated by Gemini 2.5 Pro. We replicate the setting of \citet{du2023improving}: Gemini 2.5 Pro is asked to generate biographies for 524 computer scientists, and we evaluate the factual consistency of such biographies with a set of grounding facts collected by \citet{du2023improving}.

More specifically, for every person $p\in\mathcal{P}$, we associate a Gemini-generated biography $b^p$ and a set of collected grounding facts $\mathcal{F}^p=\{f_1^p,\dots,f_{m_p}^p\}$ about the person. Following \citet{du2023improving}, we estimate the proportion of \textit{factually consistent pairs} $(b^p,f^p_i)$ of generated biographies $b^p$ with each of the collected grounding facts $f^p_i$. Concretely, given the set of all pairs
\[
\mathcal{S}=\{(b^p,f^p): p\in\mathcal{P},f^p\in\mathcal{F}^p\}
\]
we \textit{target} the proportion of factually consistent pairs
\[
\frac{\#\{(b,f)\in\mathcal{S}:(b,f)\text{ is factually consistent}\}}{\# \mathcal{S}}
\]
We determine the factual consistency, or lack thereof, of a pair $(b,f)$ by majority voting over 5 independent judgments from Gemini 2.5 Pro with thinking. \citet{du2023improving} found that judgments by ChatGPT achieved over 95\% agreement with human labelers on a set of 100 samples. This level of agreement is evidently not achieved by certain cheaper models, as we proceed to demonstrate experimentally. In \autoref{fig:scaling-accuracy-biographies}, we explore using Gemini 2.0 Flash Lite as an autorater for evaluating the factuality consistency of pairs $(b,f)\in\mathcal{S}$.

To elicit better autoratings from queries to Gemini 2.0 Flash Lite, we bootstrap performance via multi-round debate. For a fixed number of agents $A\in\{1,\dots,5\}$, and a fixed number of maximum rounds $R\in\{1,2\}$, we perform the following procedure:
\begin{outline}[enumerate]
    \1 $A$ instances of Flash Lite are independently prompted to consider the factual consistency of pairs $(b,f)\in\mathcal{S}$, and provide an explanation for their reasoning.
    \1 A ``pooler'' instance of Flash Lite is then asked to review the pair $(b,f)$ and the responses generated by each of the $A$ other instances, and output a judgment in the form of a single word: yes, no, or uncertain.
    \2 If the pooler outputs ``yes'' or ``no,'' the judgment is final.
    \2 If the pooler outputs ``uncertain'' and the number of maximum rounds $R$ has not yet been reached, the $A$ instances of Flash Lite are independently shown their prior responses, and the prior responses of each other, and prompted to continue reasoning given this additional information. This procedure continues until either the pooler no longer reports ``uncertain,'' or the maximum number of rounds $R$ has been reached.
    \2 If the pooler outputs ``uncertain'' and the maximum number of rounds $R$ has been reached, a fair coin is flipped and ``yes'' or ``no'' are reported with equal probability.
\end{outline}
Since the dataset is balanced, the outcome described in (c) is fair insofar as it is as good as random guessing. We impose the maximum round restriction to encapsulate our budget constraint. To reduce randomness, we generate all autoratings twice, so that the resulting dataset has an effective size of 1048.

\begin{description}
    \item[Target:] Proportion of factually-consistent pairs, $\#\{(b,f)\in\mathcal{S}:(b,f)\text{ is factually consistent}\}/\#\mathcal{S}$
    \item[Model family:] $\{\text{The output of the above procedure given $(A,R)$}: A\in\{1,\dots,5\},R\in\{1,2\}\}$
    \item[Cost structure:] For a given $(A,R)$, the cost is $
A\cdot R$. For collections of models, the cost is additive.%\footnote{This we can obviously change. Just a simplification for draft purposes.}
\end{description}

\begin{figure}
    \centering
    \includegraphics[width=\linewidth]{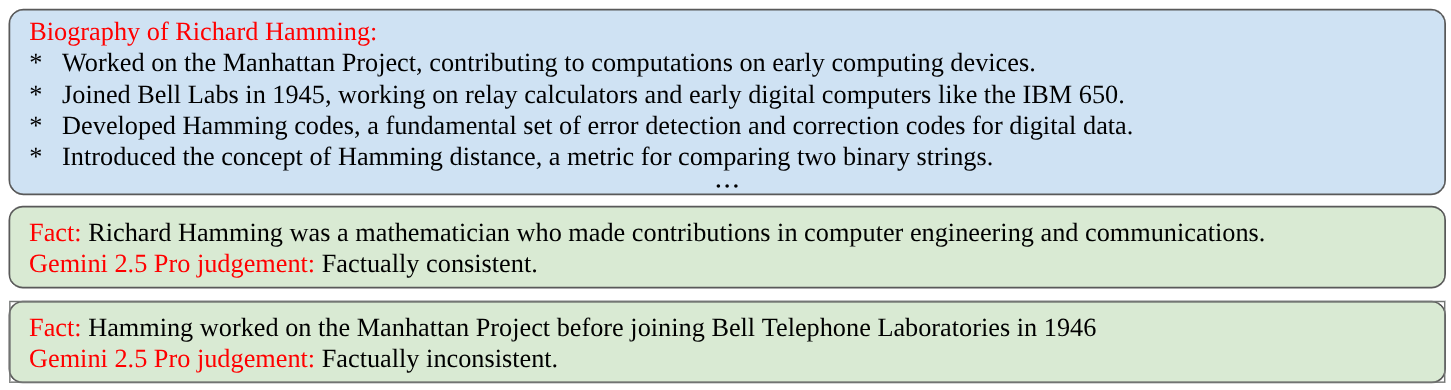}
    \caption{Depiction of biography-fact pairs $(b,f)$ as in Experiment 3. Judgements about factual consistency of $(b,f)$ are made by a language model.}
    \label{fig:biography-prompt}
\end{figure}

\subsection{Constructing the MultiPPI estimator}
For the results shown in \S\ref{sec:results}, we draw 250 fully-labeled samples from each dataset above. We then follow the procedure described in \S\ref{sec:algorithm_unknown} for $N=250$, using the empirical distribution over each dataset as our source of randomness. In \autoref{sec:experiments-num-labeled}, we replicate the study over a range of values of $N$.

\end{document}